\documentclass[a4paper]{article}

\usepackage[utf8]{inputenc}
\usepackage[T1]{fontenc}
\usepackage{newtxmath}

\usepackage{amsmath}

\usepackage{amsthm}
\usepackage{bbm}
\usepackage{mathtools}
\usepackage{calrsfs}
\usepackage{tensor}  
\usepackage{dsfont}  
\DeclareMathAlphabet{\pazocal}{OMS}{zplm}{m}{n} 

\usepackage{fixmath} 

\usepackage[verbose=true,margin=1.1in]{geometry} 
\usepackage{parskip}

\usepackage[usenames,dvipsnames,svgnames,table]{xcolor} 
\usepackage[inline,shortlabels]{enumitem}
\usepackage{relsize} 

\usepackage{hyperref}
\hypersetup{
  bookmarksnumbered,
  colorlinks=true,
  linkcolor=DarkGreen,
  citecolor=Purple,
  urlcolor=Purple
}
\usepackage[capitalise]{cleveref}

\crefmultiformat{defn}{Definitions~#2#1#3}{ and #2#1#3}{, #2#1#3}{ and~#2#1#3}

\usepackage{tikz-cd}
\usepackage{comment}

\usepackage{todonotes}

\usepackage[inline,shortlabels]{enumitem}
\usepackage{etoolbox}

\setlist[enumerate]{nosep,label=(\arabic*),font=\normalfont,leftmargin=2.8em}
\AtBeginEnvironment{enumerate}{~}

\newcommand*{\wipp}{\noindent\bgroup\color{Purple}} 

\usepackage{amsmath}
\usepackage{amsthm}
\usepackage{mathrsfs}
\usepackage{fixmath}
\usepackage{upgreek}
\usepackage{xparse} 
\usepackage{mathtools} 

\makeatletter
\theoremstyle{plain}
\newtheorem{thm}{Theorem}[section] 
\newtheorem{prop}[thm]{Proposition}
\newtheorem{lem}[thm]{Lemma}
\newtheorem{cor}[thm]{Corollary}

\theoremstyle{definition}
\newtheorem{defn}[thm]{Definition}
\newtheorem{rem}[thm]{Remark}
\newtheorem{example}[thm]{Example}

\theoremstyle{plain}

\newcommand{\ncmd}[1]{\newcommand{#1}}
\newcommand{\rncmd}[1]{\renewcommand{#1}}
\ncmd{\opname}[1]{\operatorname{#1}}

\ncmd{\mrm}[1]{\mathrm{#1}}
\ncmd{\mcal}[1]{\mathcal{#1}}
\ncmd{\pcal}[1]{\pazocal{#1}}
\ncmd{\mfr}[1]{\mathfrak{#1}}
\ncmd{\mbm}[1]{\mathbbm{#1}}
\ncmd{\mbb}[1]{\mathbb{#1}}
\ncmd{\mbf}[1]{\mathbf{#1}}

\ncmd{\tild}[1]{\widetilde{#1}}

\ncmd{\Scomment}[1]{{\color{teal}{\bf [#1]}}}

\ncmd{\donetodo}[1]{\todo[color=green!50]{\checkmark\quad #1}}
\ncmd{\hl}[1]{\textcolor{purple}{#1}}

\ncmd{\xto}[1]{\xrightarrow{#1}}
\ncmd{\mono}{\hookrightarrow}
\ncmd{\epi}{\twoheadrightarrow}
\ncmd{\iso}{\xrightarrow{\,\smash{\raisebox{-0.5ex}{\ensuremath{\scriptstyle\sim}}}\,}}
\ncmd{\adj}{\rightleftarrows}
\ncmd{\too}{\longrightarrow}

\ncmd{\categories}{$\infty$-categories}
\ncmd{\category}{$\infty$-category}
\ncmd{\forget}{\text{forget}}

\rncmd{\amalg}{\sqcup}
\newcommand{\colim}{\operatornamewithlimits{colim}}
\ncmd{\xlim}[1]{\underset{#1}{\lim}}
\ncmd{\heart}{\heartsuit}

\ncmd{\BAut}{\mrm{BAut}}
\ncmd{\Aut}{\mrm{Aut}}
\ncmd{\unit}{\mathds{1}}
\ncmd{\Psh}{\mrm{Psh}}
\ncmd{\Cat}{\mathrm{Cat}}
\ncmd{\Ar}{\mrm{Ar}}
\ncmd{\Sect}{\mrm{Sect}}
\ncmd{\fib}{\mrm{fib}}
\ncmd{\pt}{\mathrm{pt}}
\ncmd{\Fun}{\mathrm{Fun}}
\ncmd{\op}{\mathrm{op}}
\ncmd{\Id}{\mathrm{Id}}
\ncmd{\can}{\mrm{can}}
\ncmd{\Map}{\mathrm{Map}}
\ncmd{\map}{\mathrm{map}}
\ncmd{\PrL}{{\mathbf{Pr}^{\mathrm{L}}}}
\ncmd{\PrLst}{\mathbf{Pr}^{\mathrm{L}}_{\st}}
\ncmd{\End}{\mrm{End}}
\ncmd{\Ind}{\mrm{Ind}}
\ncmd{\Sp}{\mrm{Sp}}
\ncmd{\st}{\mrm{st}}

\ncmd{\conn}{\mrm{conn}}
\ncmd{\trun}{truncated}
\ncmd{\cof}{\mrm{cof}}
\ncmd{\Cell}{\mathsf{C}}
\ncmd{\Conn}{\widehat{\mathsf{C}}}
\ncmd{\Null}{\mrm{Null}}
\ncmd{\Conull}{\mrm{Conull}}
\ncmd{\type}{\mrm{type}}
\ncmd{\parL}{\pcal{L}^f}

\ncmd{\Alg}{\mrm{Alg}}
\ncmd{\Mon}{\mrm{Mon}}
\ncmd{\Mod}{\mrm{Mod}}
\ncmd{\Free}{\mrm{Free}}
\ncmd{\HFp}{H\mbb{F}_p}
\ncmd{\HQ}{H\mbb{Q}}
\ncmd{\Fp}{\mbb{F}_p}
\ncmd{\coker}{\opname{coker}}

\ncmd{\calC}{\mcal{C}}
\ncmd{\calB}{\pcal{B}}
\ncmd{\calF}{\pcal{F}}
\ncmd{\calL}{\pcal{L}}
\ncmd{\bbZ}{\mbm{Z}}
\ncmd{\bbS}{\mbm{S}}
\ncmd{\calA}{\mcal{A}}
\ncmd{\bfC}{\mbf{C}}
\ncmd{\bfP}{\mbf{P}}
\ncmd{\bbQ}{\mbm{Q}}
\ncmd{\bfL}{\mbf{L}}
\ncmd{\bbE}{\mbb{E}}
\ncmd{\calS}{\pcal{S}}
\ncmd{\calK}{\pcal{K}}
\ncmd{\calU}{\pcal{U}}
\ncmd{\frM}{\mfr{M}}
\ncmd{\frZ}{\mfr{Z}}

\DeclareMathOperator{\Loc}{L} 

\newcommand*{\Loop}{\operatorname{\Omega}\mathopen{}}
\newcommand*{\susp}{\operatorname{\Sigma}\mathopen{}}

\ncmd{\cell}[1]{\langle #1\rangle}

\ExplSyntaxOn
\NewDocumentCommand{\mathcalUpper}{m}{
  \tl_set:Nn \l_tmpa_tl { #1 }
  \regex_replace_all:nnN { ([A-Z]+) } { \c{mathcal}\cB\{\1\cE\} } \l_tmpa_tl
  \tl_use:N \l_tmpa_tl
  }
\ExplSyntaxOff

\usepackage{xspace}
\usepackage{xpunctuate}

\let\oldiff\iff
\renewcommand*{\iff}{\ifmmode\oldiff\else{if and only if}\xspace\fi}

\usepackage[style=alphabetic, backend=biber, maxbibnames=99,
  minbibnames=99, doi=false, isbn=false, url=false, giveninits = true]{biblatex}
\title{On periodic homotopy and homology equivalences of spaces}
\author{Shaul Barkan, Gijs Heuts and Yuqing Shi}

\begin{document}

  \maketitle

\begin{abstract}
There are at least two ways to approach the homotopy theory of spaces `at chromatic height $n$': one may localize with respect to $T(n)$-homology or with respect to $v_n$-periodic homotopy groups. It was already observed by Bousfield that these two options yield rather different results. We build on his work to prove precise comparison results between the two notions. A crucial concept is a more robust notion of $T(n)$-equivalence that we call `parametric $T(n)$-equivalence': this is a map of spaces that induces an equivalence on $\infty$-categories of local systems valued in $T(n)$-local spectra. Our results are sharpest in the case of infinite loop spaces, where amongst other things we prove a $T(n)$-local version of a result of Kuhn on the Morava $K$-theory of the Whitehead tower. As a corollary of our results we also produce a formula for the $\Loc_n^f$-localization of an infinite loop space $\Loop^\infty E$ of a spectrum satisfying $\Loc_{n-1}^f E \simeq 0$.
\end{abstract} 
  
  \tableofcontents

  \section{Introduction}

The chromatic philosophy is the central organizing principle of stable homotopy theory. One studies the $\infty$-category of spectra by decomposing it into `monochromatic' pieces. This can either mean localizing with respect to the prime fields $K(n)$, the Morava $K$-theories, or localizing with respect to the telescopes $T(n)$, recording the $v_n$-periodic homotopy groups of spectra. The two options are closely related, but have recently been shown to differ when $n \geq 2$ \cite{telescopeconj}.

In this paper we are concerned with unstable homotopy theory,~i.e., the homotopy theory of spaces. As in the stable case one can decompose the $\infty$-category of spaces into \emph{periodic} or \emph{monochromatic} pieces, but there is potential ambiguity in what one means by this precisely. There are at least the following two possibilities to consider:
\begin{itemize}
\item[(1)] The localization of the $\infty$-category of pointed spaces $\mathcal{S}_*$ with respect to $v_n$-periodic homotopy groups, denoted $\mathcal{S}_{v_n}$. We will review the $\infty$-category $\mathcal{S}_{v_n}$ in Section \ref{sec:periodichomotopy}. This localization can be viewed as a generalization of rational homotopy theory to heights $n > 0$. The $\infty$-category $\mathcal{S}_{v_n}$ admits a description in terms of Lie algebras analogous to Quillen's results on rational homotopy theory \cite{heutslie}.
\item[(2)] The localization of the $\infty$-category of spaces $\mathcal{S}$ with respect to $T(n)$-homology isomorphisms (or $K(n)$-homology isomorphisms). This is an example of Bousfield localization with respect to a homology theory \cite{Bou75}.
\end{itemize}
The first perspective emphasizes homotopy groups, the second focuses on homology. For the $\infty$-category of spectra, $v_n$-periodic homotopy and $T(n)$-homology groups are the same thing, so the distinction above does not play a role in stable homotopy theory.
Also, in the case $n=0$ the theory of Serre classes shows that for spaces rational homotopy equivalences and rational homology isomorphisms agree, at least in the simply-connected case, so that (1) and (2) are almost identical. However, in the cases $n>0$, the localizations (1) and (2) behave \emph{very} differently for the category of spaces. Let us illustrate this by some examples:
\begin{itemize}
\item[(a)] A $v_n$-periodic equivalence of pointed spaces $f\colon X \to Y$ need not be a $T(n)$-homology isomorphism. For example, any truncated space $X = \tau_{\leq k} X$ has vanishing $v_n$-periodic homotopy groups, but need not have vanishing $T(n)$-homology. Indeed, the $T(n)$-homology of Eilenberg--MacLane spaces $K(\mathbb{F}_p,k)$ is nontrivial for $k \leq n$ by the calculations of Ravenel--Wilson~\cite{RW80}.
\item[(b)] There exist more sophisticated examples of $v_n$-periodic equivalences that are not $T(n)$-homology isomorphisms and which cannot be avoided by imposing connectivity hypotheses. Indeed, consider the space $X = BU$. Then $X$ has vanishing $v_2$-periodic homotopy groups, but it does not have vanishing $T(2)$-homology. One can show that any highly connected cover of $BU$ will have the same features. (See \Cref{rmk:multipleheights} for more on this phenomenon.)
\item[(c)] In the converse direction, a $T(n)$-homology isomorphism need not be a $v_n$-periodic equivalence. For example, Langsetmo--Stanley~\cite{LS01} construct modifications of the Adams map $\alpha\colon S^{k+ 2(p-1)}/p \to S^k/p$ that retain the property of it being a $T(1)$-homology isomorphism, but fail to be $v_1$-periodic equivalences.
\end{itemize}

In his extensive work on homotopical and homological localizations, Bousfield gives many results on the relation between the notion of $v_n$-periodic equivalences and $T(n)$-homology isomorphisms \cite{Bou94,Bou97,Bou01}. These play an important role in recent developments in unstable periodic homotopy theory \cite{heutslie} and in the study of telescopically localized algebraic $K$-theory \cite{LMMT}. In this paper we further clarify the relation between localizations (1) and (2) above. 

\subsubsection{Main results}

The starting point of our work is the following: rather than focusing on maps $f$ inducing isomorphisms on $T(n)$-homology, it turns out to be more robust to consider maps giving $T(n)$-isomorphisms `with local coefficients', in the following sense. We call a map $f\colon X \to Y$ of spaces a \emph{parametric $T(n)$-equivalence} if it induces an equivalence of $\infty$-categories
\[
f^*\colon \Fun(Y,\Sp_{T(n)}) \to \Fun(X,\Sp_{T(n)}).
\]
In other words, we want $f$ to identify the $\infty$-categories of parametrized $T(n)$-local spectra over $X$ and over $Y$, hence the term. A parametric $T(n)$-equivalence is in particular a $T(n)$-homology isomorphism; we will discuss the relation between the two notions in more detail in \Cref{sec:homologyequiv}.

The main thread of this paper will be to compare the three notions of $v_n$-periodic equivalence, $T(n)$-equivalence, and parametric $T(n)$-equivalence. Theorems \ref{thm:mainabs}, \ref{thm:mainhomologytohtpy}, and \ref{thm:mainhtpytohomology} are general comparison results that are sharpenings of Bousfield's existing work on the topic. The main novelty here is \Cref{thm:maininfloop}, which gives a rather sharp answer for infinite loop spaces. Its proof involves not only our first set of results, but also a collection of results on $T(n)$-local $\mathbb{E}_\infty$-rings that might be of independent interest.

Note that when we speak of $T(n)$, we have implicitly already chosen a prime number $p$. To state our results we adopt the following somewhat nonstandard convention: we say a space $X$ is \emph{weakly $p$-local} if for any basepoint $x \in X$, the homotopy groups $\pi_n(X,x)$ are $p$-local abelian groups for $n \geq 2$. (We are \emph{not} assuming anything about $\pi_0$ and $\pi_1$, nor any type of nilpotence condition.) Similarly, we call $X$ \emph{weakly $p^\infty$-torsion} if the homotopy groups $\pi_n(X,x)$ are $p^\infty$-torsion (meaning every element is annihilated by some power of $p$) for $n \geq 2$. In other words, $X$ is weakly $p^\infty$-torsion if it is weakly $p$-local and its simply connected cover has trivial rationalization.

\begin{thm}
\label{thm:mainabs}
Suppose $X$ is a weakly $p$-local space and $n \geq 0$ a natural number. Then $X \to *$ is a parametric $T(i)$-equivalence for every $i \leq n$ if and only if $X$ is $(n+1)$-connected and has vanishing $v_i$-periodic homotopy groups for every $i \leq n$.
\end{thm}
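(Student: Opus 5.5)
We argue by induction on $n$, proving both directions together. Two facts are used repeatedly. First, parametric $T(i)$-equivalences are stable under base change and detected fibrewise: if $F \to E \to B$ is a fibre sequence and $F \to *$ is a parametric $T(i)$-equivalence, then so is $E \to B$. This holds because $\Fun(E,\Sp_{T(i)})$ is the limit over $B$ of the categories $\Fun(E_b,\Sp_{T(i)})$. Second, for $i=0$ we use $T(0)=H\mathbb{Q}$. A map $X\to *$ is a parametric $H\mathbb{Q}$-equivalence exactly when $\Fun(X,\Mod_{H\mathbb{Q}})\simeq \Mod_{H\mathbb{Q}}$. This forces $X$ to be connected. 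It also forces $\pi_1 X$ to have no nontrivial rational representations, and $X$ to be rationally acyclic with all local coefficients.

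\textbf{Low connectivity.} We first show that a parametric $T(i)$-equivalence $X\to *$ for all $i\le n$ forces $X$ to be $(n+1)$-connected. Connectivity of $X$ is immediate, since $\pi_0$ is detected by the category of local systems already. For $\pi_1$, consider the restriction to the $1$-truncation $B\pi_1X$: the Postnikov section $X \to B\pi_1 X$ pulls back local systems fully faithfully, so $B\pi_1 X \to *$ must also induce an equivalence on $T(i)$-local systems.

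For higher homotopy we use weak $p$-locality. If $X$ is simply connected with lowest nonzero group $\pi_k X$, $k\le n+1$, we pass to the truncation $\tau_{\le k}X = K(\pi_k X,k)$. The computation of Ravenel--Wilson, together with its analogue for $T(k)$ in the form used by Bousfield, shows that $K(A,k)$ for a nonzero $p$-local $A$ has nontrivial $T(k-1)$- or $T(k)$-homology, and $k-1\le n$. Here rational $A$ is detected at height $0$, and $p$-torsion or $\mathbb{Z}_{(p)}$-type $A$ is detected at height $k$ or $k-1$. Comparing $X \to \tau_{\le k}X$ with the fibre then gives a contradiction.

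\textbf{The group $\pi_1$ and the main obstacle.} The $\pi_1$ step, with no local-ness assumption on $\pi_1$, is where I expect the main obstacle. We need that a group $G$ with $\Fun(BG,\Sp_{T(i)})\simeq \Sp_{T(i)}$ for all $i\le n$ must be trivial. At height $0$ this kills all rational representations. At height $i\ge1$, restricting to $p$-subgroups, we would use that $BC_p$ has nontrivial $T(i)$-local cohomology (it is $T(i)$-ambidextrous and nonzero). A general group with no rational representations and no $p$-torsion still needs an argument. I would first try the trivial local system with the free action, comparing $\unit[G]$ through colimits and using the equivalence of categories to deduce that every object has trivial action, hence $G$ is perfect with trivial rational and $T(i)$ group rings. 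Falling back, the connectivity range might allow $\pi_1$ to be handled via the height-$0$ and height-$1$ statements only.

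\textbf{Vanishing of $v_i$-periodic homotopy and the converse.} Given $(n+1)$-connectivity, we show that the $v_i$-periodic homotopy groups vanish. By Bousfield's results, $v_i$-periodic homotopy of $X$ is computed by $\Phi_i X$, the Bousfield--Kuhn functor. A parametric $T(i)$-equivalence $X\to *$ gives $\Sigma^\infty_+ X \to \mathbb{S}$ a $T(i)$-equivalence, including after looping, since parametric equivalences are preserved by $\Loop$ on highly connected spaces via the fibre-sequence principle. Then Kuhn's natural retraction of $\Phi_i X$ off $\Loc_{T(i)}$ of the Goodwillie tower forces $\Phi_i X=0$.

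For the converse, assume $X$ is $(n+1)$-connected with $\Phi_i X=0$ for $i\le n$. Bousfield's theorem says that an $(n+1)$-connected space with vanishing $v_i$-periodic homotopy is $T(i)$-acyclic. For the parametric version we decompose $X$ along its Postnikov tower into fibre sequences and apply the fibrewise principle. It then suffices to treat simply connected $X$, where local systems are constant on $\pi_1$. We conclude by showing that $\Sigma^\infty_+ X\otimes(-)$ and $C^*(X;-)$ are trivial $T(i)$-locally for all coefficients, using the ambidexterity of $T(i)$-local spectra over $n$-finite $p$-spaces (Carmeli--Schlank--Yanovski) to control the Eilenberg--MacLane layers above degree $n+1$.
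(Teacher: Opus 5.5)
There are genuine gaps in the forward direction, and they are not where you locate them. The $\pi_1$ step is easy. Height $0$ is always among the $i\le n$, so by \Cref{lem:parametricvsordinary} the map $\Loop X\to *$ is a rational equivalence. Then $\mathbb{Q}[\pi_1X]=H_0(\Loop X;\mathbb{Q})\cong\mathbb{Q}$, which forces $\pi_1X=1$. Equivalently, the regular representation has nontrivial action as soon as $G\neq 1$, so `no nontrivial rational representations' already means $G=1$.

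Your claim that pulling back along $X\to B\pi_1X$ is fully faithful is false for spectrum-valued local systems. Already $\Sp_{T(i)}\to\Fun(S^2,\Sp_{T(i)})$ is not fully faithful, since $\lim_{S^2}A\simeq A\oplus\susp^{-2}A$.

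The real gap is the vanishing of $\pi_kX$ for $2\le k\le n+1$. Knowing that $\Loop X$ is $T(i)$-acyclic gives no control over the $T(i)$-homology of its bottom Postnikov section. `Comparing with the fibre' would require $\tau_{>k}X$ to be acyclic, which you do not know. Moreover, for torsion $\pi_{n+1}X$ the space $K(\pi_{n+1}X,n+1)$ is $T(i)$-acyclic for every $i\le n$, so Ravenel--Wilson nonvanishing cannot see it at all.

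The missing idea is a `maps into local objects' argument rather than a homology computation. By height $0$ and weak $p$-locality, $\Loop X$ is an $H$-space with $p^\infty$-torsion homotopy. The $p$-completion of any $n$-truncated $H$-space $Z$ with $p^\infty$-torsion homotopy is $T(n)$-local by Bousfield's classification (\Cref{prop:T(n)localEMspaces}(2)). Since $\Loop X\to *$ is a $T(n)$-equivalence, this gives $\Map_*(\Loop X,Z)\simeq *$ for all such $Z$. Taking $Z=\tau_{\le n}\Loop X$ shows that $X$ is $(n+1)$-connected; this is \Cref{prop:T(n)equivtruncation}.

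Your argument for the vanishing of $v_i$-periodic homotopy also fails as written. Parametric acyclicity is not preserved by $\Loop$, even on highly connected spaces. For instance, $K(\mathbb{Z}/p,n+2)$ is parametrically $T(i)$-acyclic for all $i\le n$, but $\Loop^2K(\mathbb{Z}/p,n+2)=K(\mathbb{Z}/p,n)$ is not $T(n)$-acyclic. Nor is there a natural retraction of $\Phi_iX$ off the $T(i)$-localized Goodwillie tower for general $X$. Kuhn's splitting of $\Loc_{T(i)}E$ off $\Loc_{T(i)}\susp^\infty\Loop^\infty E$ is special to infinite loop spaces. A general such retraction would make every $T(i)$-equivalence a $v_i$-periodic equivalence, contradicting the Langsetmo--Stanley examples.

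The paper gets this step from Bousfield's unstable theory. \Cref{thm:Freudenthal} together with Bousfield's classification yields $\langle\susp W_{n+1}\rangle\ge\langle X\rangle$. Then $X\to\mathbf{P}_{\susp W_{n+1}}X\simeq *$ is a $v_i$-periodic equivalence by \Cref{thm:Pvnperiodichtpy}(1). Alternatively, parametric implies virtual, and Bousfield's \Cref{thm:virtualT(n)} with \Cref{lem:inductiveacyclic} kills $\Phi_{V_i}X$, as in \Cref{prop:parametricvn}.

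For the converse, your last step aims at the wrong statement. $X$ is already $(n+1)$-connected, and $\Fun(X,\Sp_{T(i)})$ is modules over $\Loc_{T(i)}\susp^\infty_+\Loop X$, not $\Sp_{T(i)}$. Triviality of $\susp^\infty_+X\otimes(-)$ or $C^*(X;-)$ is ordinary acyclicity, not parametric acyclicity. What you need is that $\Loop X$ is $T(i)$-acyclic.

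Your Postnikov idea does give this if you grant that $(n+1)$-connected spaces with vanishing $v_i$-periodic homotopy are $T(i)$-acyclic. Apply that to $\tau_{>n+1}\Loop X$. Then note that $\tau_{\le n+1}\Loop X=K(\pi_{n+2}X,n+1)$ has torsion group and is $T(i)$-acyclic by \Cref{prop:T(n)localEMspaces}(1). However, that input at this precise connectivity is not an off-the-shelf citation. It is what the paper proves via \Cref{thm:periodichtpyPvn} and \Cref{prop:fequivEequiv}, and those already give the parametric statement directly.
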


Of course it is desirable to upgrade \Cref{thm:mainabs} from an absolute statement about $X$ to a relative statement about a map $f\colon X \to Y$. First of all we build on Bousfield's work \cite{Bou94,Bou97} to prove the following two results:
 
\begin{thm}
\label{thm:mainhomologytohtpy}
Suppose $f\colon X \to Y$ is a parametric $T(n)$-equivalence of weakly $p^\infty$-torsion spaces. Then the truncation $\tau_{\leq n+1} f$ is an equivalence and $f$ is a $v_n$-periodic equivalence at every basepoint.
\end{thm}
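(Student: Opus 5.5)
We reduce the two claims to statements about fibers over $Y$ and then use \Cref{thm:mainabs} together with Bousfield's results.

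\textbf{Step 1: low truncations.} A parametric $T(n)$-equivalence induces an equivalence $\Fun(Y,\Sp_{T(n)}) \simeq \Fun(X,\Sp_{T(n)})$. Testing against local systems pulled back from $\pi_0$ and from finite covers shows that $f$ induces a bijection on $\pi_0$. For finite $p$-groups $G$ there are equivalences on $\Fun(BG,\Sp_{T(n)})$, by ambidexterity and the nonvanishing of $T(n)$-local group rings of $p$-groups. Using these, one sees that $f$ induces an equivalence on $p$-completed fundamental groupoids, in the sense of finite $p$-group quotients.

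\textbf{Step 2: fibers.} Because the spaces are weakly $p^\infty$-torsion, the truncations $\tau_{\le n+1}$ are built from $p$-local, indeed $p^\infty$-torsion, Eilenberg--MacLane layers. Ravenel--Wilson shows that $K(\mathbb{Z}/p^k,j)$ has nontrivial $T(n)$-homology for $j\le n$ and is $T(n)$-acyclic for $j\ge n+2$. Suppose $f$ is a parametric equivalence. Then $f^*$ has the inverse equivalence as both adjoints, so the unit $\mathrm{id}\to f_*f^*$ is an equivalence. By base change this says each homotopy fiber $F_y$ satisfies: $F_y \to *$ is a parametric $T(n)$-equivalence up to the monodromy action, which is trivialized after passing to the relevant cover. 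So we reduce to the absolute statement. One subtlety is that the fibers of a map between weakly $p^\infty$-torsion spaces are weakly $p^\infty$-torsion only above degree $2$. To handle this, we apply Step~1 first to see that $\pi_1$ is controlled, which makes the fibers connected with $p$-torsion $\pi_1$.

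\textbf{Step 3: absolute case.} We cannot invoke \Cref{thm:mainabs} as stated, since it requires parametric $T(i)$-equivalence for all $i\le n$ and we only know it for $i=n$. Instead we use the following argument. For $p^\infty$-torsion spaces, Bousfield's results say that $T(n)$-acyclicity implies $T(i)$-acyclicity for $i\le n$, because $K(n)_*$-acyclic $p$-torsion layers are $K(i)$-acyclic for $i<n$. This lets us feed the hypothesis into \Cref{thm:mainabs}, which gives that $F_y$ is $(n+1)$-connected and has vanishing $v_i$-periodic homotopy groups for $i\le n$. The long exact sequence then yields that $\tau_{\le n+1}f$ is an equivalence and that $v_n$-periodic homotopy groups agree at every basepoint. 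For the latter we need $n+1\ge 1$ connectivity of the fibers so that $v_n$-periodic homotopy, which only sees high homotopy, fits into a long exact sequence of the fibration.

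\textbf{Main obstacle.} The hardest step is Step~2: making the passage from a parametric equivalence to the triviality of fibers rigorous when $\pi_1$ acts nontrivially and the spaces are not nilpotent. I would first try to use the Beck--Chevalley identification $(f_*\mathbb{1})_y \simeq \mathbb{1}^{F_y}$, together with the analogous identification for all pulled-back local systems. The goal is to deduce that $F_y\to *$ is parametric, working in the $T(n)$-local setting where limits over $F_y$ commute appropriately. If that fails, I would instead use Bousfield's theory of $v_n$-periodic homotopy via $\Phi_n$ and the map $\Phi_n X\to \Phi_n Y$ directly.
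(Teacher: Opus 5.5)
There is a genuine gap in Step 2. What your Beck--Chevalley computation actually gives is weaker than what you need. Evaluate the unit $\mathrm{id}\to f_*f^*$ at $y$ on the constant local system with value $N\in\Sp_{T(n)}$. This gives $N\simeq \lim_{F_y} N$ for all such $N$, i.e.\ $\Loc_{T(n)}\susp^\infty_+F_y\simeq \mathbb{S}_{T(n)}$. So the fibers of a parametric $T(n)$-equivalence are $T(n)$-acyclic, but they need \emph{not} be parametrically $T(n)$-acyclic, and trivializing monodromy does not change this.

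Concretely, $f\colon *\to K(\mathbb{F}_p,n+2)$ is a parametric $T(n)$-equivalence. By \Cref{lem:parametricvsordinary} it suffices that $\Loop f\colon *\to K(\mathbb{F}_p,n+1)$ be a $T(n)$-equivalence, which is \Cref{prop:T(n)localEMspaces}(1). Yet its fiber $K(\mathbb{F}_p,n+1)$ has loop space $K(\mathbb{F}_p,n)$, which has nontrivial reduced $T(n)$-homology. Your Step 3 would also conclude that every fiber is $(n+1)$-connected, i.e.\ that $f$ is $(n+2)$-connected. This example refutes that (cf.\ \Cref{rem:Bousfieldcounterexample}), which is exactly why the theorem only asserts that $\tau_{\le n+1}f$ is an equivalence.

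Step 3 also relies on an unavailable input. \Cref{thm:Bousfield} concerns $K(n)$ and $K(i)$, whereas the telescopic statement you invoke is precisely the open Question 2 of the introduction. Moreover, $K(i)$-acyclicity does not give $T(i)$-acyclicity. So a hypothesis at the single height $n$ cannot be fed into \Cref{thm:mainabs}. Finally, Step 1 only controls $p$-group quotients, while the theorem needs $\pi_1$ on the nose; \Cref{cor:parametrictau1equiv} gives $\tau_{\le 1}f$ directly.

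The missing idea is to pass to loop spaces rather than to fibers. By \Cref{lem:parametricvsordinary}, at each basepoint $\Loop f$ is a $T(n)$-equivalence of $H$-spaces whose homotopy groups are $p^\infty$-torsion in positive degrees.

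For the truncation, test against $n$-truncated $p^\infty$-torsion $H$-spaces. Their $p$-completions are $T(n)$-local by Bousfield's criterion \Cref{prop:T(n)localEMspaces}(2), so $\tau_{\le n}\Loop f$ is an equivalence (\Cref{prop:T(n)equivtruncation}), and hence so is $\tau_{\le n+1}f$.

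For periodicity, $\Loop f$ being a $T(n)$-equivalence makes $f$ a virtual $T(n)$-equivalence. By Bousfield's \Cref{thm:virtualT(n)}, the maps $\tau_{>i}\Loop^k f$ are then $T(n)$-equivalences for $i\ge n+2$. \Cref{lem:inductiveacyclic} then shows that $\Map_*(V,\tau_{>i}f)$ is a $T(n)$-equivalence for a type $n$ complex $V$ and $i\gg 0$. Comparing with the $T(n)$-local spaces $\Loop^\infty\Phi_V(\tau_{>i}X)$ and $\Loop^\infty\Phi_V(\tau_{>i}Y)$ via two-out-of-six shows that $f$ is a $v_n$-periodic equivalence (\Cref{prop:parametricvn}).
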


For the next statement, recall that a map $f$ is said to be $k$-connected if its fiber (over any basepoint) is $(k-1)$-connected, or equivalently if $f$ induces isomorphisms on $\pi_i$ for $i < k$ and a surjection on $\pi_k$, taken at any basepoint.

\begin{thm}
\label{thm:mainhtpytohomology}
Suppose $f\colon X \to Y$ is an $(n+2)$-connected map of spaces and that $f$ is a $v_i$-periodic equivalence at every basepoint, for all $i \leq n$. Then $f$ is a parametric $T(i)$-equivalence for every $i \leq n$.
\end{thm}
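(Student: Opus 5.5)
The plan is to reduce the relative statement to the absolute one, \Cref{thm:mainabs}, by working fiberwise over $Y$. First, parametric $T(i)$-equivalences can be checked one component at a time, so we may assume $Y$ is connected. Let $F$ be the fiber of $f$ over a basepoint $y$. The question is whether $f^*\colon \Fun(Y,\Sp_{T(i)}) \to \Fun(X,\Sp_{T(i)})$ is an equivalence. This functor has a left adjoint $f_!$, given by left Kan extension, so it suffices to show two things: that the unit $\mathcal{F} \to f^* f_! \mathcal{F}$ and the counit $f_! f^* \mathcal{G} \to \mathcal{G}$ are equivalences, and that $f$ is surjective on $\pi_0$ (which is automatic, since $f$ is $(n+2)$-connected). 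The counit evaluated at $y$ is $\colim_F \mathcal{G}_y \to \mathcal{G}_y$ in $\Sp_{T(i)}$, where the monodromy of $F$ acts via $\pi_1 X$. This colimit is an equivalence for every $\mathcal{G}$ precisely when $F \to *$ is a parametric $T(i)$-equivalence; more precisely, it holds when pulling back along $F \to *$ is fully faithful, and we need this for constant coefficients only. The unit is handled similarly by base change: evaluating $f^* f_!$ at a point of $X$ gives colimits over the fiber $F$ again. So the key reduction is the claim that $f$ is a parametric $T(i)$-equivalence whenever each fiber $F \to *$ is one. This follows from base change for the pullback square of $F \to *$ along $y\colon * \to Y$, together with the fact that equivalences in $\Fun(Y,-)$ are detected pointwise.

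Next, I check the hypotheses of \Cref{thm:mainabs} on $F$. Since $f$ is $(n+2)$-connected, $F$ is $(n+1)$-connected. The long exact sequence of $v_i$-periodic homotopy groups for the fiber sequence $F \to X \to Y$ (with $v_i$-periodic homotopy taken at compatible basepoints), together with the assumption that $f$ is a $v_i$-equivalence, shows that $\pi_*^{v_i}(F)=0$ for all $i \leq n$. To apply the long exact sequence we need $F$ to be connected enough for the Bousfield--Kuhn/unstable $v_i$-periodic machinery to behave well, which $(n+1)$-connectivity supplies. The remaining issue is that \Cref{thm:mainabs} assumes $F$ is weakly $p$-local. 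To handle this, I will compare $F$ with its $p$-localization $F_{(p)}$; since $F$ is simply connected, this localization is well behaved. The map $F \to F_{(p)}$ has fibers with homotopy groups that are uniquely $p$-divisible, so it is an $H\mathbb{Z}/p$-equivalence. Hence it is a $T(i)$-homology equivalence, and indeed a parametric one, because for simply connected spaces local systems on $F_{(p)}$ are determined through the simply connected (nilpotent) structure. It is also a $v_i$-periodic equivalence, since $v_i$-periodic homotopy is $p$-local. Applying \Cref{thm:mainabs} to $F_{(p)}$ then shows that $F_{(p)} \to *$, and hence $F \to *$, is a parametric $T(i)$-equivalence for all $i\le n$.

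I expect the main obstacle to be making the fiberwise reduction precise when $\pi_1 Y$ acts nontrivially on the fiber. For $f_!f^*\to \Id$ and $\Id \to f^*f_!$ to be equivalences, one needs the statement for constant coefficients on $F$ to propagate to twisted ones. The concern is that the $\pi_1$ data of $X$ and $Y$ agree, since $f$ is $(n+2)$-connected and $n\geq 0$, so $\pi_1$ is an isomorphism; but the action on $F$ might introduce twists. The first thing I would try is to use that $F \to *$ being a parametric equivalence means $\Fun(F,\Sp_{T(i)})\simeq \Sp_{T(i)}$ canonically, so $F$ is "$T(i)$-contractible" in a way that is stable under any group action. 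Concretely, $\colim_F$ applied to a pulled-back local system is identified with evaluation, naturally in automorphisms of $F$. A secondary subtlety is whether \Cref{thm:mainabs} is needed for all $i \leq n$ simultaneously; it is, which matches the "for every $i\le n$" form of the hypotheses.
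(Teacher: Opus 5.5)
Your proposal is correct and is essentially the paper's proof. Both reduce, one component at a time, to showing that the fiber $F$ is parametrically $T(i)$-acyclic, observe that $F$ is $(n+1)$-connected with vanishing $v_i$-periodic homotopy, and apply the absolute result (\Cref{thm:mainabs}, i.e.\ \Cref{thm:parametricallyacyclic}) after $p$-localizing. One small correction: justify the $p$-localization step by noting that $F \to F_{(p)}$ and $\Loop F \to \Loop(F_{(p)}) \simeq (\Loop F)_{(p)}$ are $H\mathbb{Z}_{(p)}$-equivalences, and then invoke \Cref{lem:parametricvsordinary}; an $H\mathbb{Z}/p$-equivalence alone does not imply a $T(0)$-equivalence.
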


\begin{rem}
\label{rmk:multipleheights}
One important difference between \Cref{thm:mainhomologytohtpy} and \Cref{thm:mainhtpytohomology} is that the first focuses on a single height $n$, while the second concerns all heights $i \leq n$. This is necessary. Indeed, the statement of \Cref{thm:mainhtpytohomology} \emph{cannot} be improved to a statement of the following kind:
\begin{itemize}
\item[(*)] An $(n+2)$-connected map $f\colon X \to Y$ of spaces that is a $v_n$-periodic equivalence at every basepoint is also a parametric $T(n)$-equivalence.
\end{itemize}
Indeed, there exist highly connected spaces $X$ with trivial $v_n$-periodic homotopy groups, but nontrivial $T(n)$-homology (see example (b) earlier in this introduction or \Cref{rmk:vnversusTn} below for details). In particular, for such $X$ the map $X \to *$ is also not a parametric $T(n)$-equivalence.
\end{rem}

\begin{rem}
\label{rem:Bousfieldcounterexample}
Note that \Cref{thm:mainhtpytohomology} offers a partial converse to \Cref{thm:mainhomologytohtpy}. However, the first assumes $f$ to be $(n+2)$-connected whereas the second gives a statement about $\tau_{\leq n+1} f$, which is slightly weaker. One might hope that this is just a limitation of our proof technique and optimistically propose a statement of the following kind, for a fixed natural number $n \geq 0$:
\begin{itemize}
\item[(**)] A map $f\colon X \to Y$ of weakly $p^\infty$-torsion spaces is a parametric $T(i)$-equivalence for every $i \leq n$ if and only if $\tau_{\leq n+1} f$ is an equivalence and $f$ is a $v_i$-periodic equivalence for every $i \leq n$.
\end{itemize}
The `only if' follows from \Cref{thm:mainhomologytohtpy} and the `if' would be a strengthening of \Cref{thm:mainhtpytohomology}. However, (**) \emph{does not hold} at this level of generality. Indeed, it follows from \cite[Lemma 8.8]{Bou01} and \Cref{lem:parametricvsordinary} that the map
\[
\tau_{> 4} \Omega^\infty \mathbb{S}^4/p \to \Omega^\infty\mathbb{S}^4/p
\]
is not a parametric $K(2)$-equivalence, hence not a parametric $T(2)$-equivalence. (Here $\mathbb{S}^4/p$ is the 4-fold suspension of the mod $p$ Moore spectrum.) This map is an equivalence on 3-truncations (since both spaces are 3-connected) and a $v_i$-periodic equivalence for any $i \geq 0$, contradicting statement (**) above in the case $n=2$. A different attempt to better align the statements of \Cref{thm:mainhomologytohtpy} and \Cref{thm:mainhtpytohomology} would be to ask if the conclusion of \Cref{thm:mainhomologytohtpy} can be strengthened to say that $f$ is in fact $(n+2)$-connected. This cannot work either, since $* \to K(\mathbb{F}_p,n+2)$ is a parametric $T(n)$-equivalence (by \Cref{prop:T(n)localEMspaces} and \Cref{lem:parametricvsordinary}) which is not $(n+2)$-connected.
\end{rem}

A more promising avenue to avoid the counterexample of \Cref{rem:Bousfieldcounterexample} is to focus on spaces $X$ and $Y$ of which the $v_i$-periodic homotopy groups vanish for $i < n$. In that case we do get the following much sharper statement in the context of infinite loop spaces (but arbitrary maps between them):

\begin{thm}
\label{thm:maininfloop}
Let $E$ and $F$ be $p$-local spectra for which $\Loc_{n-1}^f E \simeq 0 \simeq \Loc_{n-1}^f F$. Let $f\colon \Loop^\infty E \to \Loop^\infty F$ be a map of pointed spaces. Then the following are equivalent:
\begin{itemize}
\item[(1)] The map $f$ is a $T(n)$-equivalence (resp. a parametric $T(n)$-equivalence).
\item[(2)] The truncation $\tau_{\leq n} f$ (resp. the truncation $\tau_{\leq n+1} f$) is an equivalence and $f$ is a $v_n$-periodic equivalence at any basepoint. 
\end{itemize}
\end{thm}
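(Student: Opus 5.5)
We compare both conditions through a fiber sequence of infinite loop spaces. The key input from the stable side is that $\Loc_{n-1}^f E\simeq 0$ forces $\Loop^\infty E$ to have vanishing $v_i$-periodic homotopy for $i<n$. It also makes $\Phi_n$, the Bousfield--Kuhn functor, recover $\Loc_{T(n)} E$. Hence $f$ is a $v_n$-periodic equivalence at every basepoint \iff $\Phi_n(f)\colon \Loc_{T(n)}E \to \Loc_{T(n)}F$ is an equivalence. A general map of pointed spaces need not be an infinite loop map. So the first step is to reduce to infinite loop maps. For this I would use the natural $T(n)$-local splitting/Kuhn-type results: the $T(n)$-localized suspension spectrum $\Loc_{T(n)}\Sigma^\infty_+\Loop^\infty E$ is controlled by $\Loc_{T(n)}E$ via the Bousfield--Kuhn functor. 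One checks that $f$ and the infinite loop map $\Loop^\infty(\text{map induced on } \Phi_n)$ agree on everything that matters.

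\textbf{(2) $\Rightarrow$ (1).} Write $f$ as the composite $\Loop^\infty E \to \Loop^\infty F$ and consider the Whitehead towers. Let $\tau_{>k}$ denote the $k$-connected covers, which are again infinite loop spaces of spectra with $\Loc^f_{n-1}$ zero up to finite pieces. The main result to prove is the $T(n)$-local Kuhn statement: for such $E$, the map $\Loop^\infty \tau_{>k}E \to \Loop^\infty E$ is a $T(n)$-equivalence for $k\ge n$, and a parametric $T(n)$-equivalence for $k\geq n+1$. This should follow from \Cref{thm:mainhtpytohomology}. Indeed the fiber is a space $\Loop^\infty$ of a bounded (truncated) spectrum, which has vanishing $v_i$-periodic homotopy for all $i$. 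The map is $(k+1)$-connected, so \Cref{thm:mainhtpytohomology} applies with $k+1 \ge n+2$ for parametric equivalence. For the plain $T(n)$-equivalence with only $k\geq n$, I would use the Ravenel--Wilson type computation (\Cref{prop:T(n)localEMspaces}) that $K(A,m)$ is $T(n)$-acyclic for $m\geq n+1$, together with a fiberwise argument. Granting this, $\tau_{\le n} f$ (resp.\ $\tau_{\le n+1}f$) being an equivalence reduces us to the connected covers $\tau_{>n}$ (resp.\ $\tau_{>n+1}$). There $f$ is a $v_n$-periodic equivalence between highly connected infinite loop spaces whose lower-height periodic homotopy vanishes. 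Now apply \Cref{thm:mainhtpytohomology}: the vanishing of $v_i$-homotopy for $i<n$ on both sides makes $f$ a $v_i$-equivalence for every $i\le n$, and connectivity is arranged by the covers.

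\textbf{(1) $\Rightarrow$ (2).} For the parametric case, \Cref{thm:mainhomologytohtpy} gives the conclusion directly once we reduce to weakly $p^\infty$-torsion spaces. Since $\Loc_0 E=0$, $E$ is rationally trivial, so $\Loop^\infty E$ is weakly $p^\infty$-torsion (being $p$-local). For the non-parametric case I would proceed in two steps. First recover $\tau_{\le n}f$ from $T(n)$-homology: low Eilenberg--MacLane layers $K(A,m)$ with $m\le n$ are detected by $K(n)$/$T(n)$-homology via Ravenel--Wilson, which should identify $\pi_{\le n}$. Second, recover $\Phi_n$ from $\Loc_{T(n)}\Sigma^\infty_+\Loop^\infty E$. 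This uses the $T(n)$-local $\mathbb{E}_\infty$-ring structure on $\Loc_{T(n)}\Sigma^\infty_+ \Loop^\infty E$: its ``spectrum of primitives''/$T(n)$-local cotangent fiber should be $\Loc_{T(n)}E$, functorially in maps of spaces preserving the $\mathbb{E}_\infty$ structure. I expect this to be the main obstacle, because $f$ is only a map of pointed spaces, not an infinite loop map. My first attempt would be to show that $\Loc_{T(n)}E$ is recovered from the $T(n)$-local suspension spectrum purely coalgebraically. Alternatively, I would apply $\Phi_n$ and use that $\Phi_n$ factors through $T(n)$-homology (Bousfield's result that $T(n)$-homology equivalences of such spaces are $v_n$-equivalences, under the vanishing of lower heights).
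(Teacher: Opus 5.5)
\textbf{There is a genuine gap at the crucial step.} The overall shape of your (2)$\Rightarrow$(1) argument matches the paper: use $\tau_{\le n}f$ to reduce to $\tau_{>n}f$, climb the Whitehead tower, then apply a homotopy-to-homology comparison. But the step you isolate as the ``$T(n)$-local Kuhn statement'' is the entire new content of the theorem, and you do not establish it.

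As written the statement is false. The map $\Loop^\infty\tau_{>k}E\to\Loop^\infty E$ has truncated fiber $\Loop^\infty\susp^{-1}\tau_{\le k}E$, so it is highly truncated rather than $(k+1)$-connected. For $E=\susp H\mathbb{F}_p$, which satisfies $\Loc^f_{n-1}E\simeq 0$, it is $*\to K(\mathbb{Z}/p,1)$, and that is not a $T(n)$-equivalence.

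The correct statement is \Cref{prop:Tnconnectivecover}: $\tau_{>m}\Loop^\infty E\to\tau_{>n}\Loop^\infty E$ is a $T(n)$-equivalence for $m>n$. Your tools do not reach it.
\begin{itemize}
\item This map is only $n$-connected, so \Cref{thm:mainhtpytohomology} does not apply.
\item \Cref{prop:T(n)localEMspaces} only handles the stages $\tau_{>m}\to\tau_{>m-1}$ with $m\ge n+2$.
\item At the critical stage $\tau_{>n+1}\Loop^\infty E\to\tau_{>n}\Loop^\infty E$ the fiber is $K(\pi_{n+1}E,n)$. By Ravenel--Wilson this is in general \emph{not} $T(n)$-acyclic, so no argument via acyclic fibers or connectivity can succeed.
\item Your argument never uses $\Loc^f_{n-1}E\simeq 0$, yet \Cref{rem:Bousfieldcounterexample} shows the conclusion fails without it.
\end{itemize}

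The missing idea is multiplicative, and the paper's argument runs as follows.
\begin{itemize}
\item The principal fibration of infinite loop spaces gives a map of $T(n)$-local commutative rings $R=\Loc_{T(n)}\susp^\infty_+K(\pi_{n+1}E,n)\to\Loc_{T(n)}\susp^\infty_+\tau_{>n+1}\Loop^\infty E$. Its base change $\mathbb{S}_{T(n)}\otimes_R(-)$ yields $\Loc_{T(n)}\susp^\infty_+\tau_{>n}\Loop^\infty E$.
\item Kuhn's theorem identifies the target ring with $\mathrm{Sym}(\Loc_{T(n)}E)$. This is where $\Loc^f_{n-1}E\simeq 0$ is used.
\item \Cref{prop:mapKpiSym} shows that every augmented ring map $R\to\mathrm{Sym}(X)$ factors through the augmentation. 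Its proof combines three facts. First, $\mathrm{cot}(R)\simeq 0$ because $K(A,n+1)$ is $T(n)$-acyclic, so $R$ nilcompletes to $\mathbb{S}_{T(n)}$. Second, there is an ambidexterity idempotent $e\in\pi_0R$ with $R[e^{-1}]\simeq\mathbb{S}_{T(n)}$. Third, $T(n)_*\mathrm{Sym}(X)\to T(n)_*\widehat{\mathrm{Sym}}(X)$ is injective.
\item Since $\mathbb{S}_{T(n)}\otimes_R\mathbb{S}_{T(n)}\simeq\mathbb{S}_{T(n)}$, the base change then changes nothing, which gives the critical stage.
\end{itemize}

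\textbf{Two further problems.} First, even granting \Cref{prop:Tnconnectivecover}, ``connectivity is arranged by the covers'' is wrong. A map between $m$-connected spaces is in general only $m$-connected, since the fiber's $\pi_m$ is the cokernel of the map on $\pi_{m+1}$. So neither $\tau_{>n}f$ nor $\tau_{>n+1}f$ is $(n+2)$-connected in general, and \Cref{thm:mainhtpytohomology} is unavailable. The paper instead applies \Cref{thm:Hmainhtpytohomology}, which needs only $(n+1)$-connectivity, to $\tau_{>n+1}f$. It then gets the parametric case by looping: for connected $E,F$ it suffices that $\Loop f\colon\Loop^\infty\susp^{-1}E\to\Loop^\infty\susp^{-1}F$ be a $T(n)$-equivalence, and $\tau_{\le n}\Loop f=\Loop\tau_{\le n+1}f$.

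Second, for (1)$\Rightarrow$(2) in the non-parametric case, the ``main obstacle'' you describe does not arise. You do not need to recover $\Loc_{T(n)}E$ from the suspension spectrum. Your preliminary reduction to infinite loop maps is unnecessary, and also not meaningful: $\Phi_n(f)$ is a map $\Loc_{T(n)}E\to\Loc_{T(n)}F$ and produces no infinite loop map $\Loop^\infty E\to\Loop^\infty F$. \Cref{thm:Hmainhomologytohtpy} applies to arbitrary maps between $H$-spaces with $p^\infty$-torsion homotopy and gives this direction at once. It rests on Bousfield's theorem that such $T(n)$-equivalences are virtual $T(n)$-equivalences, together with \Cref{prop:parametricvn} and \Cref{prop:T(n)equivtruncation}. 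Your parametric (1)$\Rightarrow$(2) via \Cref{thm:mainhomologytohtpy} agrees with the paper.
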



\begin{cor}
\label{cor:KuhnT(n)}
Suppose $E$ is a spectrum with $\Loc_{n-1}^f E \cong 0$. Then for any $m > n$ the map
\[
\tau_{>m}\Loop^\infty E \to \tau_{>n}\Loop^\infty E
\]
is a $T(n)$-equivalence.
\end{cor}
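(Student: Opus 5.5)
The plan is to deduce the corollary directly from \Cref{thm:maininfloop}, in its non-parametric version. The first step is to write both spaces as infinite loop spaces of spectra that still satisfy the hypothesis. Connective covers of infinite loop spaces are again infinite loop spaces: for any $k \geq 0$ we have $\tau_{>k}\Loop^\infty E \simeq \Loop^\infty(\tau_{>k} E)$, where $\tau_{>k}E$ is the $k$-connective cover in spectra. So the map in question is $\Loop^\infty$ of the spectrum map $g\colon \tau_{>m}E \to \tau_{>n}E$.

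Next I check the hypotheses of \Cref{thm:maininfloop} for $\tau_{>m}E$ and $\tau_{>n}E$.

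\textbf{Vanishing of $\Loc_{n-1}^f$.} There is a cofiber sequence $\tau_{>k}E \to E \to \tau_{\leq k}E$. The spectrum $\tau_{\leq k}E$ is bounded above, and bounded above spectra are killed by $\Loc_{n-1}^f$ as soon as $n-1\geq 1$. This holds because $\Loc_i^f$ for $i\geq1$ annihilates bounded above spectra: $T(j)$-homology of $H\mbb{Z}$ vanishes for $j \geq 1$, and a Postnikov/colimit argument applies.

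\textbf{Small heights.} The cases $n=0$ and $n=1$ need separate care, since then $\Loc_{n-1}^f$ is $\Loc_{-1}^f=0$ or rationalization.
\begin{itemize}
\item[(i)] For $n=0$ the hypothesis is vacuous.
\item[(ii)] For $n=1$, rationalization commutes with truncations. Since $E_\bbQ=0$, each homotopy group of $E$ is rationally trivial, and the same holds for its covers.
\end{itemize}
In all cases $\Loc_{n-1}^f(\tau_{>k}E)\simeq 0$. For $p$-locality, I would either assume $E$ is $p$-local or first replace $E$ by its $p$-localization. The latter is harmless: $T(n)$-homology only sees the $p$-completion, and the prime-to-$p$ parts are handled by the fact that rational and prime-to-$p$ Eilenberg--MacLane pieces are $T(n)$-acyclic. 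I expect this reduction to be the fussiest point, though not the conceptual obstacle.

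Then I verify condition (2) of \Cref{thm:maininfloop} in its non-parametric form, for $f=\Loop^\infty g$.

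\textbf{Truncation.} The map $\tau_{\leq n} f$ is an equivalence, since both spaces are $n$-connected (using $m>n$). Hence both $n$-truncations are contractible.

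\textbf{$v_n$-periodic homotopy.} The map $f$ is a $v_n$-periodic equivalence at every basepoint. The $v_n$-periodic homotopy groups of a space depend only on a highly connected cover. Concretely, they are computed via maps out of $\Sigma^d V$ for a type $n$ complex $V$ with $d$ arbitrarily large, after inverting a $v_n$ self-map. Therefore the covering map $\tau_{>m}Y\to\tau_{>n}Y$, which is an isomorphism on $\pi_j$ for $j>m$, induces an isomorphism on $v_n$-periodic homotopy.

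\textbf{Components.} Both spaces are connected, so only one basepoint needs checking.

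Applying (2)$\Rightarrow$(1) of \Cref{thm:maininfloop} then gives that $f$ is a $T(n)$-equivalence. The substantive content lies entirely in \Cref{thm:maininfloop}; the only care needed is the vanishing of $\Loc^f_{n-1}$ on covers and the $p$-locality reduction.
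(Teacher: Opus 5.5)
\textbf{The argument is circular.} Taken as a black box, \Cref{thm:maininfloop} does give the statement along the lines you describe, and this is how the introduction presents it. As a proof, however, it goes in a circle. The paper says it establishes the corollary \emph{first}, as \Cref{prop:Tnconnectivecover}. It then uses it to prove exactly the implication (2)$\Rightarrow$(1) of \Cref{thm:maininfloop} that you invoke. In that proof, after reducing to $\tau_{>n}f$, the case $m=n+1$ of this very statement replaces $\tau_{>n}f$ by $\tau_{>n+1}f$. That map is $(n+1)$-connected, so \Cref{thm:Hmainhtpytohomology} then applies.

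This step cannot be avoided using the general results of \Cref{sec:homologyvshomotopy}. The map $\tau_{>n+1}\Loop^\infty E\to\tau_{>n}\Loop^\infty E$ is only $n$-connected, with fiber $K(\pi_{n+1}E,n)$. That fiber generally has nonzero $T(n)$-homology (Ravenel--Wilson). So neither \Cref{thm:Hmainhtpytohomology} nor an acyclic-fiber argument applies. As you say yourself, all the content lies in the theorem, and that content is exactly the statement you are asked to prove. The steps $m>n+1$ are the easy part: the fiber of $\tau_{>m}\to\tau_{>m-1}$ is $K(\pi_mE,m-1)$ with $m-1\geq n+1$ and torsion coefficients, which is $T(n)$-acyclic by \Cref{prop:T(n)localEMspaces}.

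\textbf{What is missing} is the argument for $m=n+1$, which uses the $\mathbb{E}_\infty$-structure. Set $R=\Loc_{T(n)}\susp^\infty_+K(\pi_{n+1}E,n)$. The infinite-loop fiber sequence gives a map $f$ of augmented commutative $T(n)$-local rings from $R$ to $\Loc_{T(n)}\susp^\infty_+\tau_{>n+1}\Loop^\infty E$. By Kuhn's theorem the target is $\mathrm{Sym}(\Loc_{T(n)}E)$, and applying $\mathbb{S}_{T(n)}\otimes_R(-)$ to it gives $\Loc_{T(n)}\susp^\infty_+\tau_{>n}\Loop^\infty E$.

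The key input is \Cref{prop:mapKpiSym}: every augmented ring map $R\to\mathrm{Sym}(X)$ factors through the augmentation $R\to\mathbb{S}_{T(n)}$. Three facts combine to force the idempotent $1-f(e)$ to vanish:
\begin{itemize}
\item[(a)] $R$ carries an ambidexterity idempotent $e$ with $R[e^{-1}]\simeq\mathbb{S}_{T(n)}$ (\Cref{prop:ambidexterityidempotent}).
\item[(b)] $R$ has vanishing cotangent fiber, so its nilcompletion is $\mathbb{S}_{T(n)}$ (\Cref{lem:nilcompleteEM}).
\item[(c)] The map $T(n)_*\mathrm{Sym}(X)\to T(n)_*\widehat{\mathrm{Sym}}(X)$ is injective (\Cref{lem:nilcompleteSym}).
\end{itemize}
Consequently $\mathbb{S}_{T(n)}\otimes_R\mathrm{Sym}(\Loc_{T(n)}E)\simeq(\mathbb{S}_{T(n)}\otimes_R\mathbb{S}_{T(n)})\otimes\mathrm{Sym}(\Loc_{T(n)}E)\simeq\mathrm{Sym}(\Loc_{T(n)}E)$. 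The last step uses $\mathbb{S}_{T(n)}\otimes_R\mathbb{S}_{T(n)}\simeq\Loc_{T(n)}\susp^\infty_+K(\pi_{n+1}E,n+1)\simeq\mathbb{S}_{T(n)}$.

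\textbf{Two smaller slips} in your verification of the hypotheses:
\begin{itemize}
\item[(1)] $\Loc^f_{n-1}$ does not kill all bounded-above spectra, since it includes rationalization; $H\mathbb{Q}$ is a counterexample. You need that $\pi_*E$ is torsion, which follows from $\Loc_{n-1}^fE\simeq 0$.
\item[(2)] For $n=0$ your $v_n$-periodic step fails, because rational homotopy is not insensitive to connective covers. The statement is meant for $n\geq 1$.
\end{itemize}
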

This last corollary is the telescopic analog of a result of Kuhn \cite[Theorem 2.21, Example 2.24]{kuhnAQG} for the Morava $K$-theories, proved through a different technique. Note that the counterexample of \cite[Lemma 8.8]{Bou01} shows that the corollary cannot generally hold if one removes the hypothesis $\Loc_{n-1}^f E \cong 0$. We will also deduce the following comparison between the $\Loc_n^f$-localizations of $E$ and of $\Loop^\infty E$, which the reader might compare to Bousfield's \cite[Theorem 8.1]{Bou01}. (Here we are using $\Loc_n^f$ to denote localization with respect to the homology theory $T(0) \oplus \cdots \oplus T(n)$, both for spaces and for spectra.)

 
\begin{cor}
\label{cor:T(n)localinfiniteloop}
Let $E$ be a $p$-local spectrum with $\Loc_{n-1}^f E \cong 0$. Then the square
\[
\begin{tikzcd}
\Loc_n^f\Loop^\infty E \ar{r}\ar{d} & \Loop^\infty \Loc_n^f E \ar{d} \\
\tau_{\leq n}\Loop^\infty E \ar{r} & \tau_{\leq n}\Loop^\infty L_n^f E
\end{tikzcd}
\]
is a pullback in the $\infty$-category of pointed spaces.
\end{cor}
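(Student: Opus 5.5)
We deduce \Cref{cor:T(n)localinfiniteloop} from \Cref{thm:maininfloop} and \Cref{cor:KuhnT(n)}. Let $P$ be the pullback of the bottom-right cospan,
\[
P = \Loop^\infty \Loc_n^f E \times_{\tau_{\leq n}\Loop^\infty \Loc_n^f E} \tau_{\leq n}\Loop^\infty E .
\]
The square gives a canonical map $\Loc_n^f\Loop^\infty E \to P$. By adjunction, this map is an equivalence if we show two things: that $P$ is $\Loc_n^f$-local, and that the composite $\Loop^\infty E \to P$ is an $\Loc_n^f$-equivalence, i.e.\ a $T(i)$-homology isomorphism for all $i \le n$. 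Then $P$ is the $\Loc_n^f$-localization of $\Loop^\infty E$ and the square is a pullback.

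\emph{Locality of $P$.} The space $\Loop^\infty\Loc_n^f E$ is $\Loc_n^f$-local, being the infinite loop space of an $\Loc_n^f$-local spectrum. Any $p$-local $n$-truncated space with finite... is not automatically local, so the cleanest route is via fibers. The map $P \to \tau_{\leq n}\Loop^\infty E$ has fibers equivalent to $\tau_{>n}\Loop^\infty\Loc_n^f E$. I would argue that $P$ is local using that $\Loc_n^f$-local spaces are closed under limits, together with the statement that truncated pieces are local.

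This step is delicate, because Eilenberg--MacLane spaces $K(\bbZ_{(p)},k)$ need not be $\Loc_n^f$-local. Instead I expect the cleanest route is to test locality against $\Loc_n^f$-equivalences $A\to B$ and use the fibration $\tau_{>n}\Loop^\infty\Loc_n^fE \to P\to \tau_{\le n}\Loop^\infty E$ (fibers are infinite loop spaces of a local spectrum, hence local). I would then compare with Bousfield's theorem identifying $\Loc_n^f$-local spaces. If this gets stuck, I would fall back on showing the stronger claim that the square becomes a pullback after applying $\Loc_n^f$ and that each corner other than the top-left is local. I expect this locality step to be the \emph{main obstacle}.

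\emph{The comparison map $\Loop^\infty E\to P$ is an $\Loc_n^f$-equivalence.} Both source and target map to $\tau_{\le n}\Loop^\infty E$, with an equivalence on $\tau_{\le n}$. So it suffices to compare the $n$-connected covers, i.e.\ the map $\tau_{>n}\Loop^\infty E \to \tau_{>n}\Loop^\infty \Loc_n^f E$. Both are infinite loop spaces of spectra $E' = \tau_{>n}E$ and $F'=\tau_{>n}\Loc_n^f E$ with vanishing $\Loc_{n-1}^f$. Indeed, $\Loc_{n-1}^f E=0$ implies $\Loc_{n-1}^f\Loc_n^fE=0$, and truncation only changes things by bounded spectra, which are killed by $\Loc_{n-1}^f$ for $n\ge1$; the case $n=0$ is handled separately by rational Serre theory. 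The map $E\to\Loc_n^fE$ is a $T(n)$-equivalence of spectra, so on infinite loop spaces it is a $v_n$-periodic equivalence, and it is an equivalence on $\tau_{\le n+1}$ of the covers after possibly passing to higher covers via \Cref{cor:KuhnT(n)}. Then \Cref{thm:maininfloop} gives a $T(n)$-equivalence. For $i<n$, both covers have vanishing $v_i$-homotopy and are highly connected, so \Cref{thm:mainabs} makes both $T(i)$-acyclic.

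Finally, we pass from the covers to the total spaces. We use the fibration over the common base $\tau_{\le n}\Loop^\infty E$. A fiberwise $T(i)$-equivalence of fibrations over a common base is a $T(i)$-homology equivalence, by the Serre/colimit argument, since $T(i)$-homology commutes with colimits over the base.
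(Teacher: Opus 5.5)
\textbf{Gap: locality of $P$.} You flag this as the main obstacle, but the proposal never establishes it. Your first instinct there ($\Loc_n^f$-local spaces are closed under limits, plus locality of the truncated corners) is exactly the paper's argument. The one missing input is that the two bottom corners $\tau_{\leq n}\Loop^\infty E$ and $\tau_{\leq n}\Loop^\infty \Loc_n^f E$ are $\Loc_n^f$-local. Once you have that, $P$ is a pullback of three local spaces and you are done.

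The paper takes this fact from Bousfield \cite[Theorem 7.4]{Bou01}. It also follows from \Cref{prop:T(n)localEMspaces}. For $n \geq 1$, let $z$ be the connective spectrum with $\Loop^\infty z = \tau_{\leq n}\Loop^\infty E$; it has $p^\infty$-torsion homotopy, so $z \simeq \mathrm{fib}(z^\wedge_p \to z^\wedge_p[1/p])$. Here $\Loop^\infty z^\wedge_p$ is an $H$-space with derived $p$-complete homotopy concentrated in degrees $\leq n+1$ and torsion-free $\pi_{n+1}$ (a Tate module). It is therefore $T(n)$-local, and a fortiori $\Loc_n^f$-local. Meanwhile $\Loop^\infty (z^\wedge_p[1/p])$ is rational, hence local. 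The same argument works for $\Loc_n^f E$, which is also rationally trivial.

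Your worry about $K(\bbZ_{(p)},k)$ does not bear on this. By the arithmetic square and \Cref{prop:T(n)localEMspaces}(2), that space is $\Loc_n^f$-local for $k \leq n+1$ and only fails to be local for $k \geq n+2$. The corners in question are $n$-truncated.

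\textbf{The fallbacks do not close the gap.}
\begin{itemize}
\item The spectrum $\tau_{>n}\Loc_n^f E$ is in general not $\Loc_n^f$-local. The cofiber of $\tau_{>n}\Loc_n^f E \to \Loc_n^f E$ is bounded above with torsion homotopy, hence $\Loc_n^f$-acyclic.
\item The space $\tau_{>n}\Loop^\infty\Loc_n^f E$ is local only because it is the fiber of $\Loop^\infty \Loc_n^f E \to \tau_{\leq n}\Loop^\infty\Loc_n^f E$. That already uses the missing fact.
\item Even granting local fibers, you would still owe an argument that the total space of $P \to \tau_{\leq n}\Loop^\infty E$ is local. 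For homological localizations, unlike nullifications, local fiber and local base do not in general force a local total space.
\end{itemize}

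\textbf{The second half is sound.} It differs only mildly from the paper. The paper applies \Cref{thm:maininfloop} at each height $i \leq n$ directly to the infinite loop map $\Loop^\infty E \to P$, getting $v_i$-periodicity by two-out-of-three along $\Loop^\infty E \to P \to \Loop^\infty \Loc_n^f E$. You instead compare $n$-connected covers, via \Cref{thm:maininfloop} at height $n$ and \Cref{thm:mainabs} at heights $i<n$. You then glue over the common base, using that all fiber maps agree up to translation. The detour through \Cref{cor:KuhnT(n)} is unnecessary. The $T(n)$-half of \Cref{thm:maininfloop} only needs $\tau_{\leq n}$ to be an equivalence, which is automatic for $n$-connected covers.
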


Motivated by \Cref{thm:maininfloop} we pose the following:

\textbf{Question 1.} Suppose $f\colon X \to Y$ is a map of spaces and that the $v_i$-periodic homotopy groups of $X$ and $Y$ vanish for $i < n$ at any basepoint. Is the map $f$ a parametric $T(n)$-equivalence if and only if the truncation $\tau_{\leq n+1} f$ is an equivalence and $f$ is a $v_n$-periodic equivalence?

In \Cref{subsec:generalizations} we discuss how several aspects of the proof of \Cref{thm:maininfloop} may be generalized from infinite loop spaces to arbitrary spaces, using ideas from the theory of spectral Lie algebras. From there we give a positive answer to Question 1 in some special cases, but we do not settle it in general.

\subsubsection{The role of periodic homology equivalences of spaces}

The fact that \Cref{thm:mainhtpytohomology} does not admit a version at a single height $n$ (cf. \Cref{rmk:multipleheights}) can be seen as a consequence of the following result of Bousfield \cite[Theorem 1.1]{Bou99MK}:

\begin{thm}[Bousfield]
\label{thm:Bousfield}
Let $f\colon X \to Y$ be a $K(n)$-equivalence of spaces. Then $f$ is also a $K(i)$-equivalence for $0 < i < n$.
\end{thm}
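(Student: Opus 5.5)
Bousfield's theorem says a $K(n)$-equivalence of spaces is a $K(i)$-equivalence for $0<i<n$. My plan is to deduce it from the structure of the $K(n)$-homology of the fibres of $f$, in the spirit of Bousfield's Thomason-type "homological localization" arguments. I do not expect to rederive it from the results stated so far. The key input is Bousfield's classification of $K(n)$-acyclic spaces and of $K(n)$-localizations of spaces.

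I would first reduce to the case of a map between connected spaces. One can work componentwise, since $K(n)$-homology of a disjoint union is a direct sum and an equivalence on $\pi_0$ is forced by the $K(n)$-equivalence in degree zero. I would then reduce to the statement about acyclicity of a cofiber. The map $f$ is a $K(n)$-equivalence iff $\Sigma^\infty Y/X$, the suspension spectrum of the cofiber of $f$, is $K(n)$-acyclic. So it suffices to show the following: if $C$ is a pointed (suspension) space with $\widetilde{K(n)}_*C=0$, then $\widetilde{K(i)}_*C=0$ for $0<i<n$. Here $C$ is a suspension, in particular a co-H-space with a connected loop-space-free structure.

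The second step is the heart of the matter. For a suspension $C=\Sigma W$, the suspension spectrum $\Sigma^\infty C$ is a retract of $\Sigma^\infty\Omega\Sigma C$ up to shift, via James splitting. Consider the James splitting $\Sigma\Omega\Sigma W\simeq\bigvee_k\Sigma W^{\wedge k}$. I would show that $K(n)$-acyclicity of $C$ forces $K(n)$-acyclicity of all the smash powers. I would then use the Ravenel–Wilson/Hopkins–Ravenel–Wilson computation of the $K(n)$-homology of Eilenberg–MacLane spaces, together with the chromatic behaviour of the Bousfield class of $\Sigma^\infty$ of a space. The point is that for a \emph{space} $C$, the Bousfield class of $\Sigma^\infty C$ cannot be "concentrated at height exactly $i$ while acyclic at $n>i$". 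The tool here is that $\Sigma^\infty\Omega^\infty$-type constructions and the Goodwillie/James filtration relate height-$i$ information to height-$n$ information through Tate constructions, which vanish $T(n)$-locally.

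Concretely, I would argue as follows. Suppose $K(i)_*C\neq0$ but $K(n)_*C=0$. Then $K(n)_*(C^{\wedge p^k})_{h\Sigma_{p^k}}=0$ as well. The transchromatic character/Tate blueshift phenomenon shows that height-$i$ classes in $C$ produce, via extended powers $D_{p^k}$, nonzero height-$(i+1)$ classes. This follows from $K(i+1)_*(D_p X)$ containing a copy of $K(i)_*X$ raised through the $B\Sigma_p$ factor. Iterating from $i$ up to $n$ would yield nonzero $K(n)$-homology of some extended power appearing as a stable summand of $\Sigma^\infty$ of an iterated loop-suspension. This space is built from $C$ by colimits and products that preserve $K(n)$-acyclicity, which gives the desired contradiction.

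I expect the main obstacle to be precisely this blueshift-lifting step. It requires showing that the relevant extended powers genuinely appear, up to $K(n)$-equivalence, as retracts of a space whose $K(n)$-acyclicity follows from that of $C$, and that the map $K(i)_*X\to K(i+1)_*D_pX$ is injective for nonzero $X$. If this direct route proves too delicate, the fallback is to cite Bousfield's proof \cite[Theorem 1.1]{Bou99MK} directly. That proof establishes the result via his analysis of $K(n)$-localizations of spaces using the Bousfield–Kuhn functor and the fact that $K(n)$-acyclic spaces are closed under the relevant fibre sequences.
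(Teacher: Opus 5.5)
Your first reduction is fine. The cofiber of $f$ is not a suspension in general (for $*\to Y$ it is $Y$ itself), but suspending once changes nothing. The problem is the heart of the argument, the ``blueshift-lifting'' step, which is false.

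For any spectrum $X$ and any $j$ one has $K(j)\otimes D_pX\simeq (K(j)\otimes X^{\otimes p})_{h\Sigma_p}$. So $D_pX$ is $K(j)$-acyclic whenever $X$ is; you use exactly this one sentence earlier to conclude that $D_{p^k}C$ is $K(n)$-acyclic. Hence $K(i+1)_*D_pX$ cannot ``contain a copy of $K(i)_*X$'' in general. For $X=K(i)$ we have $K(i)_*X\neq 0$, but $K(i+1)\otimes D_pX\simeq 0$ because $K(i+1)\otimes K(i)\simeq 0$.

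In your situation, with $K(n)_*C=0$, every extended power of $\Sigma^\infty C$ is $K(n)$-acyclic whatever $K(i)_*C$ is. The same holds for every stable summand of $\Sigma^\infty\Omega\Sigma C$ or $\Sigma^\infty QC$ you would invoke. So the iteration can never produce the nonzero $K(n)$-class your contradiction needs. Moreover, for suspension spectra your lifting lemma already implies the case $n=i+1$ of the theorem, so nothing has been gained and the argument is circular. Blueshift also runs the other way: Tate constructions lower chromatic height (Hovey--Sadofsky), and the transchromatic character map likewise goes from height $n$ down to lower heights.

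The underlying issue is that the statement is false for spectra ($K(i)$ is $K(n)$-acyclic but not $K(i)$-acyclic). So some genuinely unstable input about spaces is indispensable, and none enters your sketch. The James and Snaith splittings only exhibit extended powers of $\Sigma^\infty C$ as summands. Smash powers and homotopy orbits make sense for arbitrary spectra and preserve $K(j)$-acyclicity one height at a time.

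What survives is your fallback, and that is exactly what the paper does. It gives no proof of this statement: it quotes it from \cite[Theorem 1.1]{Bou99MK} and remarks that Bousfield's argument does not readily carry over to $T(n)$. As it stands, your proposal amounts to that citation plus a heuristic that does not work. The description you append of how Bousfield's proof goes is not something the paper provides or relies on.
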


This statement contrasts starkly with stable homotopy theory: for a map $f$ of \emph{spectra}, the statement above is well-known in case $X$ and $Y$ are assumed to be finite, but completely false in general. The fact that it holds for arbitrary spaces suggests that localizing at $K(n)$-homology (or at $T(n)$-homology) should \emph{not} be thought of as making the homotopy theory of spaces `monochromatic': indeed, it still records information about all heights $i < n$ (except 0) and is therefore perhaps best considered as a `transchromatic' localization.

\begin{rem}
\label{rmk:vnversusTn}
A statement like \Cref{thm:Bousfield} does not hold for periodic homotopy groups, again illustrating the difference between periodic homotopy and periodic homology. Indeed, if $X$ is a pointed space, then the nullification map $\eta\colon X \to \mathbf{P}_{V_n} X$ with respect to a type $n$ suspension $V_n$ (see \Cref{subsec:localizationnullification}) kills the $v_n$-periodic homotopy groups but is an isomorphism on $v_i$-periodic homotopy groups for $i < n$. Observe that the map $\eta$ is also a $T(i)$-homology isomorphism for $i < n$, simply because $V_n$ is $T(i)$-acyclic. In particular, if one starts with a highly-connected space $X$ with nonvanishing $K(i)$-homology, this yields an example of a highly connected space $\mathbf{P}_{V_n} X$ with trivial $v_n$-periodic homotopy groups, but nonvanishing $K(i)$-homology, hence nonvanishing $K(n)$-homology by \Cref{thm:Bousfield}, hence nonvanishing $T(n)$-homology. Such a space was used in \Cref{rmk:multipleheights} above to contradict the statement (*).
\end{rem}

Of course these considerations naturally lead to the following question:

\textbf{Question 2.} Does Bousfield's \Cref{thm:Bousfield} hold with $K$ replaced by $T$? More precisely, is a $T(n)$-equivalence of spaces also a $T(i)$-equivalence for $0 < i < n$?

Bousfield's proof of \Cref{thm:Bousfield} does not readily generalize to the telescopic case, so a new approach would be needed. If the answer to this question is affirmative, then we would have the following strengthenings of \Cref{thm:mainabs} and \Cref{thm:mainhomologytohtpy}:

\begin{thm}
\label{thm:mainabs2}
Suppose that the answer to Question 2 is yes. Let $X$ be a weakly $p$-local space with vanishing rational homology and let $n \geq 1$ be a natural number. Then $X \to *$ is a parametric $T(n)$-equivalence if and only if $X$ is $(n+1)$-connected and has vanishing $v_i$-periodic homotopy groups for every $i \leq n$.
\end{thm}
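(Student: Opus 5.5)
The plan is to reduce \Cref{thm:mainabs2} to \Cref{thm:mainabs}. Concretely, I will show that, assuming a positive answer to Question 2 and given that $X$ has vanishing rational homology, $X \to *$ being a parametric $T(n)$-equivalence already forces it to be a parametric $T(i)$-equivalence for every $i \leq n$. Once that is known, \Cref{thm:mainabs} applies verbatim.

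For the direction ``if'': suppose $X$ is $(n+1)$-connected with vanishing $v_i$-periodic homotopy groups for $i \leq n$. By \Cref{thm:mainabs}, $X \to *$ is a parametric $T(i)$-equivalence for every $i \leq n$, in particular for $i = n$. This direction needs neither Question 2 nor the rational hypothesis.

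For the direction ``only if'': suppose $X \to *$ is a parametric $T(n)$-equivalence. I split the heights into three ranges.
\begin{enumerate}
\item \emph{Height $i = 0$.} Here $T(0) = H\mathbb{Q}$. A parametric $H\mathbb{Q}$-equivalence amounts to $X$ being connected with $\pi_1$ acting trivially on rational local systems. I will deduce this from the height $n$ statement: $f^*$ being an equivalence already forces $\pi_0 X = *$, and the analysis of $\pi_1$ carried out below will also handle the rational local systems.
\item \emph{Heights $0 < i < n$.} The key step is to upgrade Question 2 from ordinary to parametric equivalences. I will use \Cref{lem:parametricvsordinary}, which should characterize parametric $T(i)$-equivalences as ordinary $T(i)$-equivalences satisfying a condition on $\pi_0$ and $\pi_1$. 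The likely form is that $f$ is a $T(i)$-homology isomorphism after base change along all maps, or a $T(i)$-equivalence together with a condition on $\pi_1$ such as $p$-completion of the fundamental group. The plan is then to check that this auxiliary condition is independent of the height $i \geq 1$.
\item \emph{Height $i = n$.} This is the hypothesis.
\end{enumerate}

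To carry out the second range, I first apply the lemma to deduce that $X \to *$ is a $T(n)$-equivalence and that the $\pi_1$-condition holds. Question 2 then gives that $X \to *$ is a $T(i)$-equivalence for $0 < i < n$. Since the $\pi_1$-condition does not depend on the height, the lemma recombines these into parametric $T(i)$-equivalences.

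If \Cref{lem:parametricvsordinary} instead characterizes parametric equivalences via fibers, i.e.\ via all pullbacks $X \times_Y Z \to Z$, the argument is even simpler. Since Question 2 applies to every map of spaces, the pulled-back maps are $T(i)$-equivalences as well, and parametricity transfers across heights directly.

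For height $0$, the plan is to use the rational acyclicity hypothesis. Since $\tilde H_*(X;\mathbb{Q}) = 0$, the map $X \to *$ is a rational homology isomorphism. I must promote this to a parametric statement, which requires that $\pi_1 X$ have no nontrivial rational representations visible to local systems. I would derive this from the fact that $\pi_1 X$ is already forced to be trivial, or at least $p$-perfect in a suitable sense, by the parametric $T(n)$-equivalence (this is what ``$(n+1)$-connected'' ultimately requires). I will argue that a parametric $T(n)$-equivalence $X \to *$ makes every $T(n)$-local local system on $X$ constant, which pins down $\pi_1$. Combined with $H_*(X;\mathbb{Q}) = 0$, this gives the parametric $H\mathbb{Q}$-statement.

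The main obstacle I expect is precisely this $i = 0$ step, together with the exact form of \Cref{lem:parametricvsordinary} at height $0$, since Question 2 gives nothing there. If the direct argument about $\pi_1$ fails, the fallback is to inspect the proof of \Cref{thm:mainabs}. I expect it uses the $T(0)$ input only to control rational homotopy of $X$, and for weakly $p$-local $X$ with vanishing rational homology this is automatic via Serre class theory, applied to the universal cover once $\pi_1$ is controlled by the higher heights.
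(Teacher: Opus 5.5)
Reducing to \Cref{thm:mainabs} is the right plan, and your ``if'' direction is fine. The gap is in the heights $0<i<n$. There you apply Question 2 to $X \to *$ and then recover the parametric statement from a guessed form of \Cref{lem:parametricvsordinary}: either ``ordinary $T(i)$-equivalence plus a height-independent condition on $\pi_0,\pi_1$'', or ``$T(i)$-equivalence after every base change''. No characterization of either shape holds. Take $i \geq 1$ and $X = K(\mathbb{Z}/p,i+1)$. This space is simply connected and $T(i)$-acyclic by \Cref{prop:T(n)localEMspaces}(1). Yet $X \to *$ is not a parametric $T(i)$-equivalence, because $\Loop X = K(\mathbb{Z}/p,i)$ has nonzero $T(i)$-homology (Ravenel--Wilson). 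The base-change version fails for the same reason. For a map $X \to *$, the base changes are the maps $X \times Z \to Z$. Since $\susp^\infty_+(X \times Z) \simeq \susp^\infty_+ X \otimes \susp^\infty_+ Z$, these are all $T(i)$-equivalences as soon as $X$ is $T(i)$-acyclic. So the output of Question 2 applied to $X \to *$ can never be promoted to parametric $T(i)$-acyclicity.

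The lemma actually says that for connected pointed $X$, the map $X \to *$ is a parametric $E$-equivalence if and only if $\Loop X$ is $E$-acyclic. The missing idea is to apply Question 2 to $\Loop X \to *$ instead of $X \to *$:
\begin{enumerate}
\item Since $T(n) \not\simeq 0$, \Cref{cor:parametrictau1equiv} shows $X$ is simply connected.
\item \Cref{lem:parametricvsordinary} then shows $\Loop X$ is $T(n)$-acyclic.
\item A positive answer to Question 2 makes $\Loop X$ $T(i)$-acyclic for $0<i<n$.
\item The converse half of \Cref{lem:parametricvsordinary} gives parametric $T(i)$-acyclicity of $X$.
\end{enumerate}
Your height-$0$ fallback is essentially correct once phrased this way. $X$ is simply connected with $\widetilde{H}_*(X;\mathbb{Q}) = 0$, so Serre class theory makes its homotopy groups torsion. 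The loop space $\Loop X$ is simple, hence rationally acyclic; alternatively, use \Cref{prop:rationalparametricequiv}. Drop the description of parametric $H\mathbb{Q}$-equivalences as ``$\pi_1$ acting trivially on rational local systems'', which is not the right criterion. With all heights $i \leq n$ in hand, \Cref{thm:mainabs} finishes the argument.
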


\begin{thm}
\label{thm:mainhomologytohtpy2}
Suppose that the answer to Question 2 is yes. If $f\colon X \to Y$ is a parametric $T(n)$-equivalence of weakly $p^\infty$-torsion spaces, then the truncation $\tau_{\leq n+1} f$ is an equivalence and $f$ is a $v_i$-periodic equivalence at every basepoint for any $i \leq n$.
\end{thm}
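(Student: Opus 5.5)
The plan is to reduce \Cref{thm:mainhomologytohtpy2} to \Cref{thm:mainhomologytohtpy}, applied at each height $i \leq n$ separately. The only thing to show is that, granting an affirmative answer to Question 2, a parametric $T(n)$-equivalence is also a parametric $T(i)$-equivalence for every $i \leq n$ (at least when $i \geq 1$). Height $0$ needs a separate argument.

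First, the case $i=n$ is exactly \Cref{thm:mainhomologytohtpy}. It gives that $\tau_{\leq n+1} f$ is an equivalence and that $f$ is $v_n$-periodic.

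For $0<i<n$, I would pass from the parametric statement to ordinary homology statements via \Cref{lem:parametricvsordinary}. I expect this lemma to characterize parametric $T(n)$-equivalences as maps $f$ for which, for every point $y$ of $Y$ (or after base change along maps from suitable spaces), the fiberwise maps are $T(n)$-homology isomorphisms. A typical formulation is that $f$ induces a bijection on $\pi_0$ and an isomorphism on $\pi_1$ up to $p$-power issues, and that the base change of $f$ to the universal cover of each component of $Y$ is a $T(n)$-equivalence. The key point is that every condition in such a characterization is either independent of the height or is a $T(n)$-homology isomorphism of certain maps of spaces. Question 2 then converts each of the latter into a $T(i)$-homology isomorphism for $0<i<n$. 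Hence $f$ is a parametric $T(i)$-equivalence, and \Cref{thm:mainhomologytohtpy} at height $i$ shows that $f$ is $v_i$-periodic at every basepoint. If the lemma is phrased differently, I would instead argue directly. The functor $f^*$ on $T(i)$-local systems is an equivalence precisely when the unit and counit of $f_! \dashv f^*$ are equivalences. Via the Beck--Chevalley formula, this is checked on the fibers $f^{-1}(y)$ and their loop-space actions, which again reduces to $T$-homology statements about spaces.

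The case $i=0$ uses that $X$ and $Y$ are weakly $p^\infty$-torsion. The $v_0$-periodic homotopy groups are the rationalized homotopy groups, and for $k\ge2$ these vanish on both sides by the torsion assumption. The low homotopy groups $\pi_0,\pi_1$ are already matched by the truncation statement, so $f$ is a $v_0$-equivalence at every basepoint.

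I expect the main obstacle to be the middle step. Question 2 concerns plain $T$-homology, while the hypothesis is parametric, so one must verify that parametricity transfers down in height. The risk lies in the $\pi_1$/local-coefficient part: the conditions encoding $T(n)$-local systems over $BG$-like pieces might not be purely homological. To handle it, I would describe such conditions via homology of universal covers and homotopy fibers, to which Question 2 applies.
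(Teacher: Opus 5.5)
Your overall reduction is the right one: under Question 2, upgrade $f$ to a parametric $T(i)$-equivalence for $1\le i\le n$, then apply \Cref{thm:mainhomologytohtpy} at each such height (the case $i=n$ also gives the truncation statement), and handle $i=0$ via the $p^\infty$-torsion hypothesis. The gap is in the middle step as you justify it.

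Both characterizations you guess for \Cref{lem:parametricvsordinary} are necessary but not sufficient for a parametric equivalence:
\begin{itemize}
\item plain $T(n)$-acyclicity of the fibers, possibly after base change;
\item $\pi_0$- and $\pi_1$-isomorphism together with a $T(n)$-equivalence on universal covers.
\end{itemize}
So transporting them to height $i$ via Question 2 does not let you conclude that $f$ is parametric $T(i)$. A counterexample to both is $K(\mathbb{Z}/p,n+1)\to *$ with $n\geq 1$. It is simply connected. Every base change $K(\mathbb{Z}/p,n+1)\times Z\to Z$ is a $T(n)$-equivalence, because $K(\mathbb{Z}/p,n+1)$ is $T(n)$-acyclic by \Cref{prop:T(n)localEMspaces}. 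But after looping it becomes $K(\mathbb{Z}/p,n)\to *$, which is not a $T(n)$-equivalence by Ravenel--Wilson. Hence it is not a parametric $T(n)$-equivalence.

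What \Cref{lem:parametricvsordinary} actually says, combined with \Cref{lem:parametrictau1equiv}, is this: $f$ is a parametric $E$-equivalence if and only if $\pi_0 f$ is a bijection and $\Loop f$ is an $E$-equivalence at every basepoint. This is exactly the shape your argument needs. The first condition is height-independent, and the second is an ordinary homology isomorphism of a map of spaces. So Question 2 applied to $\Loop f$ immediately makes $f$ a parametric $T(i)$-equivalence for $0<i<n$.

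Your Beck--Chevalley fallback, carried out carefully, lands here. For connected $X$ and $Y$, the unit of $f_!\dashv f^*$ at $x_!\mathbf{1}$, evaluated at $x$, is $\Loc_{T(i)}\susp^\infty_+\Loop f$, and $f^*$ is conservative. In particular, the $\pi_1$/local-coefficient obstacle you anticipate never arises.

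You can even bypass parametricity at height $i$: if $\Loop f$ is a $T(i)$-equivalence, then $f$ is a virtual $T(i)$-equivalence, and \Cref{prop:parametricvn} gives $v_i$-periodicity directly.
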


This stronger version of \Cref{thm:mainhomologytohtpy} would strengthen the parallel with \Cref{thm:mainhtpytohomology}. Indeed, the two results would be each other's converse except for the appearance of $\tau_{\leq n+1} f$ in one and an $(n+2)$-connected map $f$ in the other.

\subsubsection{Consequences for $T(n)$-equivalences}

We have discussed the limitations of the notion of $T(n)$-equivalence and replaced it by the more robust notion of parametric $T(n)$-equivalence. However, Bousfield observed that if one considers maps $f\colon X \to Y$ between $H$-spaces, the behavior of $T(n)$-equivalences is more controllable (see for instance \cite{Bou97}). Specializing our arguments to maps between $H$-spaces, we will deduce the following analog of \Cref{thm:mainhomologytohtpy}:

\begin{thm}
\label{thm:Hmainhomologytohtpy}
Suppose that $X$ and $Y$ are $H$-spaces whose homotopy groups are $p^\infty$-torsion in all degrees $\geq 1$. If $f\colon X \to Y$ is a $T(n)$-equivalence, then the truncation $\tau_{\leq n} f$ is an equivalence and $f$ is a $v_n$-periodic equivalence at every basepoint.
\end{thm}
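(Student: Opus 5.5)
The strategy is to reduce the $H$-space statement to the parametric statement, \Cref{thm:mainhomologytohtpy}, by applying it to a suitable delooping or cover. I would also handle the single degree $n+1$ separately.

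First I would reduce to connected components. Because $X$ and $Y$ are $H$-spaces, all components are equivalent via translation. The $T(n)$-homology of $X$ therefore splits as $\bigoplus_{\pi_0 X} T(n)_*(X_0)$, and a $T(n)$-equivalence must induce a bijection on $\pi_0$: apply $T(n)_*$ to the components and use that $T(n)_*(\mathrm{pt})\neq 0$ for the $\pi_0$-component count. It then restricts to a $T(n)$-equivalence $f_0\colon X_0\to Y_0$ of basepoint components.

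The key step is to upgrade the $T(n)$-equivalence $f_0$ of connected $H$-spaces to a parametric $T(n)$-equivalence in a weakened sense. The idea is that for a connected $H$-space, $\pi_1$ is abelian and acts trivially on the homotopy of the universal cover. Local systems of $T(n)$-local spectra pulled back from $B\pi_1$ should then be controlled by the homology of $X_0$ and its cover.

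Concretely, I would first factor $f_0$ through the universal covers $\tilde X_0\to\tilde Y_0$. Since $\pi_1$ is an abelian $p^\infty$-torsion group, I would use an Eilenberg--Moore/Serre-type argument in $T(n)$-local spectra: the fibration $\tilde X_0\to X_0\to B\pi_1X_0$ is principal, so $T(n)$-local homology of $X_0$ is $T(n)_*(\tilde X_0)_{h\pi_1}$. Here I expect to need that $\pi_1 f$ is an isomorphism, which I would get from $\tau_{\le 1}$ considerations, e.g.\ from the $T(n)$-homology of $K(A,1)$ computed by \Cref{prop:T(n)localEMspaces}-type results, valid when $n\ge 1$. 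The case $n=0$ is just rational, hence trivial for torsion spaces.

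Once this is in hand, I would deloop. Replace $f$ by $\Omega$ applied to a delooping, or rather apply the Bousfield-style trick that for $H$-spaces $T(n)$-equivalence of $X$ implies parametric $T(n)$-equivalence of $\tau_{\ge 2}$-type covers. Then \Cref{thm:mainhomologytohtpy} gives that $\tau_{\le n+1}$ of the relevant map is an equivalence and that $f$ is a $v_n$-periodic equivalence. Because of the shift, one only retains $\tau_{\le n}f$ as an equivalence for $f$ itself. This matches the statement, which asks only for $\tau_{\le n}$, in contrast with the $\tau_{\le n+1}$ of the parametric version.

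The $v_n$-periodic equivalence is insensitive to truncations and to passing to covers, so it transfers back directly.

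The main obstacle is the middle step: showing that a plain $T(n)$-equivalence of $H$-spaces is parametric after passing to a suitable cover. This amounts to showing that $f^*$ is an equivalence on $T(n)$-local local systems, which reduces to the statement that $\pi_1$ acts on the fibre homology through a $T(n)$-locally nilpotent (unipotent) action. I would try first to prove this using that $p$-groups act unipotently on $T(n)$-local modules, via Bousfield's $H$-space results in \cite{Bou97}.
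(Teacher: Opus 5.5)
There is a genuine gap: your key step is false as stated. A $T(n)$-equivalence of $H$-spaces need not become a parametric $T(n)$-equivalence on simply connected covers. Take $n\geq 1$ and $f\colon K(\mathbb{Z}/p,n+1)\to *$. By \Cref{prop:T(n)localEMspaces}(1) this is a $T(n)$-equivalence of $H$-spaces with $p^\infty$-torsion homotopy, and both spaces are already simply connected. But by \Cref{lem:parametricvsordinary}, $f$ would be parametric only if $\Loop K(\mathbb{Z}/p,n+1)=K(\mathbb{Z}/p,n)$ were $T(n)$-acyclic, which it is not. Equivalently, if your step held, \Cref{thm:mainhomologytohtpy} would force $\tau_{\leq n+1}f$ to be an equivalence, which fails here.

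No $\pi_1$ appears in this example, so unipotence of a $\pi_1$-action is not the issue. For simply connected spaces, being parametric means $\Loop f$ is a $T(n)$-equivalence, and the $T(n)$-homology of a space does not control that of its loop space. The ``shift'' via delooping is also unavailable. $X$ and $Y$ are only $H$-spaces, and $f$ is an arbitrary map, not even an $H$-map; the theorem is used in exactly this generality in the proof of \Cref{thm:maininfloop}. Passing to $\Loop\susp f$ does not obviously help either, since the retraction $\Loop\susp X\to X$ is not natural in such $f$.

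The correct replacement is Bousfield's theorem that a $T(n)$-equivalence of $H$-spaces is a \emph{virtual} $T(n)$-equivalence (\Cref{prop:virtualequivHspace}). By \Cref{thm:virtualT(n)}, this makes $f$ parametric only on \emph{highly} connected covers $\tau_{>i+1}f$ with $i\geq n+2$. That is enough for the periodic part: the paper uses \Cref{prop:parametricvn}, and applying \Cref{thm:mainhomologytohtpy} to $\tau_{>i+1}f$ would also work.

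However, such covers say nothing about $\tau_{\leq n}f$, so the truncation statement needs a separate argument, which your plan lacks. A $K(A,1)$ computation would at best address $\pi_1$, not degrees $2,\dots,n$. The paper supplies this in \Cref{prop:T(n)equivtruncation} by a Yoneda argument. For an $n$-truncated $H$-space $Z$ with $p^\infty$-torsion homotopy, pointed maps from $X$ or $Y$ into $Z$ agree with maps into $Z^\wedge_p$. This space is $T(n)$-local by Bousfield's criterion \Cref{prop:T(n)localEMspaces}(2). Hence $f^*\colon\Map_*(Y,Z)\to\Map_*(X,Z)$ is an equivalence, and taking $Z=\tau_{\leq n}X$ and $Z=\tau_{\leq n}Y$ shows that $\tau_{\leq n}f$ is an equivalence.
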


Also, \Cref{thm:mainhtpytohomology} admits the following variation for ordinary $T(n)$-equivalences:

\begin{thm}
\label{thm:Hmainhtpytohomology}
If $f\colon X \to Y$ is an $(n+1)$-connected map that is a $v_i$-periodic equivalence at every basepoint, for all $i \leq n$, then $f$ is a $T(i)$-equivalence for all $i \leq n$.
\end{thm}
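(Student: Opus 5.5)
We plan to reduce \Cref{thm:Hmainhtpytohomology} to \Cref{thm:mainhtpytohomology} by stripping off the bottom layer of the fiber, where the extra degree of connectivity is lost. The hypothesis is one degree weaker: $f$ is only $(n+1)$-connected, so its fiber $F$ (over any basepoint) is $n$-connected rather than $(n+1)$-connected. The conclusion is correspondingly weaker, namely ordinary rather than parametric $T(i)$-equivalences.

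First we factor $f$ through a Postnikov-type stage of the map. Let $X \to Z \to Y$ be the factorization where $X \to Z$ is $(n+2)$-connected and $Z \to Y$ is $(n+2)$-truncated in the relative sense, i.e.\ $Z = \tau^{Y}_{\leq n+1} X$ is the relative truncation. The fiber of $Z\to Y$ is then a single Eilenberg--MacLane space $K(A,n+1)$, with $A = \pi_{n+1}F$ (possibly with local coefficients over $Y$), since $F$ is $n$-connected. The map $X \to Z$ is $(n+2)$-connected. Its fiber is $\tau_{>n+1}F$, and fibers of truncation maps have the same $v_i$-periodic homotopy as $F$ in high degrees. Hence $X\to Z$ is still a $v_i$-periodic equivalence for all $i\le n$, since $v_i$-periodic homotopy only sees high homotopy and finite Postnikov pieces have vanishing periodic homotopy. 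By \Cref{thm:mainhtpytohomology}, $X \to Z$ is a parametric $T(i)$-equivalence, in particular a $T(i)$-equivalence, for all $i\le n$.

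It therefore remains to show that $Z \to Y$, a fibration with fiber $K(A,n+1)$, is a $T(i)$-homology isomorphism for $i \le n$. Here we use the known $T(i)$-acyclicity of Eilenberg--MacLane spaces above height: $K(A,m)$ is $T(i)$-acyclic for $m \geq i+2$, and for $m = i+1$ the reduced $T(i)$-homology vanishes when $A$ is torsion-free or rational. The paper cites \Cref{prop:T(n)localEMspaces}, presumably giving that $K(\mathbb{F}_p,n+2)$ is $T(n)$-acyclic. The delicate case is $i=n$, $m=n+1$: $K(\mathbb{Z}/p,n+1)$ is \emph{not} $K(n)$-acyclic by Ravenel--Wilson. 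So the naive approach fails, and this is the main obstacle.

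To handle it we will use that the statement concerns ordinary rather than parametric equivalences, together with the $v_i$-periodic hypothesis on all of $F$ rather than just its truncation. We will try to show that $Z\to Y$ is in fact an equivalence or pushes into the parametric regime. Concretely, we would instead factor as $X \to \tau^Y_{\le n} X = Y$, noting $F$ is $n$-connected, and compare the fiber sequence $F \to X \to Y$ via a Serre/Eilenberg--Moore spectral sequence in $T(i)$-homology with local coefficients. The goal is to show $\tilde T(i)_*F = 0$ for $i\le n$ and that this suffices. For the fiber, we would apply \Cref{thm:mainabs}-style reasoning: $F$ is $n$-connected with vanishing $v_i$-periodic homotopy. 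Looping once, $\Omega F$ is $(n-1)$-connected, but we need homology of $F$ itself. Here we would use $\Sigma\Omega F \to F$ and the fact that for an $n$-connected space with trivial $v_{\le n}$ periodic homotopy, $F\to *$ is a $T(i)$-equivalence (ordinary) for $i\le n$. This follows by applying \Cref{thm:mainhtpytohomology} to $\tau_{>n+1}F\to *$ and then handling the single layer $K(\pi_{n+1}F,n+1)$ as a trivial-coefficient fiber over a simply connected base. Finally, a $T(i)$-acyclic fiber with $Y$ arbitrary gives a $T(i)$-equivalence $X\to Y$ by the parametrized argument: $f_!$ of the constant local system is computed fiberwise. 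The remaining worry is the layer $K(\pi_{n+1}F,n+1)$ at $i=n$, which we expect to control using the $v_n$-periodic equivalence hypothesis forcing $\pi_{n+1}F$ to interact trivially. We regard this as the step most likely to need a genuinely new idea.
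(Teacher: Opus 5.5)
There is a genuine gap, and it is the step you flag yourself: you never show that the bottom layer $K(\pi_{n+1}F,n+1)$ is $T(i)$-acyclic. Your reason for thinking this is hard rests on a misremembered fact. Ravenel--Wilson show that $K(n)_*K(\mathbb{Z}/p,q)$ is trivial for $q>n$ and nontrivial only for $q\le n$. So $K(\mathbb{Z}/p,n+1)$ \emph{is} $K(n)$-acyclic. It is even $T(n)$-acyclic by Carmeli--Schlank--Yanovski, and this is exactly \Cref{prop:T(n)localEMspaces}(1): $K(A,n+1)$ is $T(n)$-acyclic for every torsion abelian group $A$. Your dichotomy is backwards. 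In degree $n+1$ it is torsion-free groups that cause trouble; for instance $K(\mathbb{Z},2)=\mathbb{CP}^\infty$ has nonzero $K(1)$-homology.

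The missing observation is simply that $A=\pi_{n+1}F$ is torsion. This is the $i=0$ case of the hypothesis: vanishing $v_0$-periodic homotopy means vanishing rational homotopy. The $v_n$-hypothesis plays no role for this layer. Once $A$ is torsion, $K(A,n+1)$ is $T(i)$-acyclic for every $i\le n$: for $i<n$ because $n+1\ge i+2$, and for $i=n$ because $A$ is torsion.

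With this, your second route is exactly the paper's proof:
\begin{enumerate}
\item $\tau_{>n+1}F$ is parametrically, hence ordinarily, $T(i)$-acyclic by \Cref{thm:mainhtpytohomology}.
\item So $F\to\tau_{\le n+1}F=K(A,n+1)$ is a $T(i)$-equivalence, and therefore $F$ is $T(i)$-acyclic.
\item A map all of whose fibers are $T(i)$-acyclic is a $T(i)$-equivalence.
\end{enumerate}
Your first route, the relative Postnikov factorization $X\to Z\to Y$, closes the same way, since $Z\to Y$ has fibers $K(\pi_{n+1}F_y,n+1)$ with torsion coefficients. No Serre or Eilenberg--Moore spectral sequence, and no appeal to $\susp\Loop F\to F$, is needed.
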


\subsubsection{Plan of this paper}

In \Cref{sec:homologyequiv} we develop the notion of parametric $E$-equivalence, for a spectrum $E$, and compare it to ordinary $E$-equivalences and Bousfield's notion of virtual $E$-equivalences. We also record several results on the $T(n)$-homology of Eilenberg--MacLane spaces and truncated spaces that follow from the work of Carmeli--Schlank--Yanovski \cite{CSY22}. \Cref{sec:periodichomotopy} is a survey of the notion of $v_n$-periodic homotopy groups of spaces, the Bousfield--Kuhn functor, and the theory of localization and nullification of spaces developed by Bousfield \cite{Bou94} and Farjoun \cite{Dro95}. We include this material because we need certain aspects of it that cannot be found in the literature in the precise form that we need them here. In \Cref{sec:homologyvshomotopy} we get to the proofs of our first set of main results, namely Theorems \ref{thm:mainabs}, \ref{thm:mainhomologytohtpy}, \ref{thm:mainhtpytohomology}, \ref{thm:Hmainhomologytohtpy}, and \ref{thm:Hmainhtpytohomology}. The final \Cref{sec:infiniteloop} is focused on the special case of infinite loop spaces and contains the proofs of \Cref{thm:maininfloop} and its corollaries. Some of the material on $T(n)$-local commutative ring spectra in that section may be of independent interest.

\subsubsection{Acknowledgments}
GH thanks Jeremy Hahn for pointing out the relevance of the ambidexterity idempotent for the arguments in \Cref{sec:infiniteloop} and Maite Carli for comments on a draft. GH acknowledges the support of the ERC through Starting Grant no. 950048 and the NWO through Vidi grant no. 233.093.


  \section{Several notions of homology equivalence}
\label{sec:homologyequiv}

The aim of this section is to develop the notion of \emph{parametric $E$-equivalence} of spaces, for a homology theory $E$, as well as the corresponding localization of the $\infty$-category $\mathcal{S}$ of spaces. We will investigate some basic properties, as well as the relation to ordinary $E$-equivalences, in \Cref{subsec:parametricequivalences}. This includes a proof of the existence of a parametric localization functor. In \Cref{subsec:parametricT(n)} we discuss features that are particular to the case where $E$ is a telescope $T(n)$. Finally, in \Cref{subsec:virtualequivalences} we establish the relation between our parametric $E$-equivalences and Bousfield's \emph{virtual $E$-equivalences}. 

\subsection{Parametric homology equivalences}
\label{subsec:parametricequivalences}

Let $E$ be a spectrum. As usual, we call a map $f\colon X \to Y$ of spaces (resp. of spectra) an \emph{$E$-equivalence} if $E \otimes \susp^\infty_+ f$ (resp. $E \otimes f$) is an equivalence of spectra. Bousfield has proved that the localization of spectra at the $E$-equivalences exists \cite{bousfieldspectra}. To be precise, a spectrum $Z$ is \emph{$E$-local} if for any $E$-equivalence of spectra $f\colon X \to Y$, the induced map $\mathrm{Map}(Y,Z) \to \mathrm{Map}(X,Z)$ is an equivalence of spaces. (Alternatively, one can require $\mathrm{Map}(A,Z) \cong 0$ for every $E$-acyclic spectrum $A$.) We will write $\Sp_E$ for the full subcategory of $\Sp$ on the $E$-local spectra. Then the inclusion $\Sp_E \to \Sp$ admits a left adjoint $\Loc_E\colon\Sp \to \Sp_E$ with the property that for any spectrum $X$, the unit map $X \to \Loc_E X$ is an $E$-equivalence. It follows from this that the functor $\Loc_E$ exhibits $\Sp_E$ as the localization of $\Sp$ at the $E$-equivalences.

\begin{defn}
A map $f\colon X \to Y$ of spaces is a \emph{parametric $E$-equivalence} if the induced functor
\[
f^*\colon \Fun(Y,\Sp_E) \rightarrow \Fun(X,\Sp_E)
\]
is an equivalence of $\infty$-categories.
\end{defn}

Let us immediately record some basic observations.

\begin{lem}
\label{lem:paramordinaryEequiv}
A parametric $E$-equivalence is in particular an $E$-equivalence.
\end{lem}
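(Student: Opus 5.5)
The idea is to express $E$-homology of a space in terms of the categories $\Fun(-,\Sp_E)$, via left Kan extension of constant local systems. For a space $X$, let $p_X\colon X \to *$. The pullback $p_X^*\colon \Sp_E \to \Fun(X,\Sp_E)$ has a left adjoint $(p_X)_!$, computed as the colimit over $X$ in $\Sp_E$. The first step is the identification
\[
(p_X)_! p_X^* \Loc_E\mathbb{S} \simeq \colim_X \Loc_E \mathbb{S} \simeq \Loc_E(\susp^\infty_+ X),
\]
where the colimit is taken in $\Sp_E$. This holds because $\Loc_E$ is a left adjoint, so it preserves colimits, and $\colim_X \mathbb{S} \simeq \susp^\infty_+ X$ in $\Sp$.

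For the map $f\colon X\to Y$ we have $p_X = p_Y\circ f$. Hence $p_X^* \simeq f^* p_Y^*$ and $(p_X)_! \simeq (p_Y)_! f_!$. The map $\Loc_E\susp^\infty_+ f$ is then identified with the composite
\[
(p_Y)_! f_! f^* p_Y^*\Loc_E\mathbb{S} \to (p_Y)_! p_Y^*\Loc_E\mathbb{S},
\]
induced by the counit $f_!f^* \to \Id$.

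Now suppose $f^*$ is an equivalence of $\infty$-categories. Its left adjoint $f_!$ is then an inverse equivalence, so the counit $f_!f^*\to\Id$ is an equivalence. Applying $(p_Y)_!$ shows that $\Loc_E\susp^\infty_+ f$ is an equivalence. Since $\Susp^\infty_+X \to \Loc_E\susp^\infty_+X$ is an $E$-equivalence, and likewise for $Y$, two-out-of-three shows that $E\otimes\susp^\infty_+ f$ is an equivalence.

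The main point requiring care is checking that the map on colimits induced by $f$ agrees with the counit composite above. This is the standard compatibility of left Kan extension along composites: the map $\colim_X f^*(-) \to \colim_Y(-)$ of constant diagrams is exactly the one defining $\susp^\infty_+ f$ after applying $\Loc_E$. Everything else is formal.
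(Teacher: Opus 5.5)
Your proposal is correct and is essentially the paper's proof. Both arguments identify $\Loc_E\susp^\infty_+ f$ with the colimit over $Y$ of the counit $f_!f^*(\Loc_E\mathbb{S}) \to \Loc_E\mathbb{S}$, using $\colim_X F \simeq \colim_Y f_!F$, and that counit is an equivalence because $f^*$ is. Your closing step, deducing that $E \otimes \susp^\infty_+ f$ is an equivalence from $\Loc_E\susp^\infty_+ f$ being one, is simply left implicit in the paper.
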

\begin{proof}
Let $f\colon X \to Y$ be a parametric $E$-equivalence and write $f_!\colon \Fun(X,\Sp_E) \to \Fun(Y,\Sp_E)$ for the left adjoint of $f^*$. By slight abuse of notation we write $\Loc_E\mathbb{S} \in \Fun(Y,\Sp_E)$ for the constant local system with value the $E$-localization of the sphere spectrum. Then the counit map $f_!f^*(\Loc_E\mathbb{S}) \to \Loc_E\mathbb{S}$ is an equivalence by hypothesis. The following commutative diagram then shows that $f$ is an $E$-equivalence:
\[
\begin{tikzcd}
\Loc_E\susp^\infty_+ X \ar{d}{\simeq}\ar{r}{\Loc_E\susp^\infty_+ f} & \Loc_E\susp^\infty_+ Y \ar{d}{\simeq} \\
\mathrm{colim}_{Y} f_!f^* \Loc_E\mathbb{S} \ar{r}{\simeq} & \mathrm{colim}_{Y} \Loc_E\mathbb{S}. 
\end{tikzcd}
\]
For the left vertical arrow we have used that for any local system $F \in \Fun(X,\Sp_E)$, there is an evident natural equivalence $\mathrm{colim}_X F \simeq \mathrm{colim}_Y f_! F$.
\end{proof}

\begin{lem}
\label{lem:parametrictau1equiv}
Let $f\colon X \to Y$ be a map that induces a bijection on components $\pi_0 f\colon \pi_0 X \xrightarrow{\cong} \pi_0 Y$ and a parametric $E$-equivalence from each component of $X$ to the corresponding component of $Y$. Then $f$ is a parametric $E$-equivalence. The converse is true whenever $E \not\simeq 0$. 
\end{lem}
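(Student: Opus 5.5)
The idea is to decompose along connected components. A space is the coproduct of its components, $X \simeq \coprod_{\alpha \in \pi_0 X} X_\alpha$, and $\Fun(-,\Sp_E)$ sends colimits of spaces to limits of $\infty$-categories. In particular it sends coproducts to products:
\[
\Fun(X,\Sp_E) \simeq \prod_{\alpha \in \pi_0 X} \Fun(X_\alpha,\Sp_E).
\]
Write $Y_{f(\alpha)}$ for the component of $Y$ containing $f(X_\alpha)$. Under these identifications $f^*$ is a functor $\prod_\beta \Fun(Y_\beta,\Sp_E) \to \prod_\alpha \Fun(X_\alpha,\Sp_E)$ whose $\alpha$-component is the composite of the projection to the factor $\beta = f(\alpha)$ with $f_\alpha^*$, where $f_\alpha\colon X_\alpha \to Y_{f(\alpha)}$ is the restriction. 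If $\pi_0 f$ is a bijection, $f^*$ is therefore the product of the functors $f_\alpha^*$ after reindexing. A product of equivalences is an equivalence, which proves the first claim.

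For the converse, assume $E \not\simeq 0$ and that $f^*$ is an equivalence. Then $\Loc_E \mathbb{S} \neq 0$: we have $E \otimes \Loc_E \mathbb{S} \simeq E \otimes \mathbb{S} = E \neq 0$, so each $\Fun(Z,\Sp_E)$ with $Z$ nonempty is nonzero. We first show that $\pi_0 f$ is a bijection. By \Cref{lem:paramordinaryEequiv}, $f$ is an $E$-equivalence, so $E \otimes \susp^\infty_+ X \to E \otimes \susp^\infty_+ Y$ is an equivalence. Applying $\pi_0$ gives an isomorphism $\bigoplus_{\pi_0 X} E_0(\pt) \to \bigoplus_{\pi_0 Y} E_0(\pt)$ induced by $\pi_0 f$, and this alone needs $E_0 \neq 0$, which may fail.

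It is cleaner to argue categorically. Look at the idempotents of the unit, i.e. the decompositions of the $\infty$-category. The set $\pi_0 Z$ can be recovered from $\Fun(Z,\Sp_E)$ as the set of minimal nonzero product factors: each $\Fun(Z_\beta,\Sp_E)$ with $Z_\beta$ connected admits no nontrivial product decomposition. To see this, note that a product decomposition corresponds to a splitting of the constant unit $\Loc_E\mathbb{S}$ as a direct sum $e_1 \oplus e_2$ of orthogonal pieces, equivalently to idempotents in $\pi_0\End(\mathbb{1}) = \pi_0 \lim_{Z_\beta} \End(\Loc_E\mathbb{S})$. Since $Z_\beta$ is connected, $\pi_0$ of this limit maps injectively on idempotents to $\pi_0\End(\Loc_E\mathbb{S})$ at a point. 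Indeed, an idempotent in a limit over a connected space is locally constant, hence determined at one point. It remains to check that $\pi_0 \End(\Loc_E\mathbb{S})$ has no nontrivial idempotents. That is the main obstacle: it can fail for general $E$, e.g. $E = H\mathbb{Q}\oplus H\mathbb{F}_p$. I would therefore avoid idempotents entirely, as follows.

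Directly, $f^*$ commutes with the decompositions into factors indexed by $\pi_0$. The functor $f^*$ sends the object supported on $Y_\beta$ with constant value $\Loc_E\mathbb{S}$, and zero elsewhere, to the object supported on $f^{-1}(Y_\beta)$. Surjectivity of $\pi_0 f$ follows from conservativity: if $Y_\beta$ is missed, this nonzero object maps to zero. Injectivity follows from full faithfulness. If $\alpha \neq \alpha'$ both lie over $\beta$, consider the local system on $X$ that is $\Loc_E\mathbb{S}$ on $X_\alpha$ and zero elsewhere. Being in the essential image, it equals $f^*G$ for some $G$, and $G$ must be nonzero on $Y_\beta$. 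But then $f^*G$ is nonzero on $X_{\alpha'}$ as well, since pullback along a map into a connected space preserves nonvanishing pointwise, a contradiction. Once $\pi_0 f$ is a bijection, $f^*$ is the product of the functors $f_\alpha^*$, and a product of functors between nonzero categories is an equivalence only if each factor is. Essential surjectivity and full faithfulness are checked factorwise by placing zero in the other slots.
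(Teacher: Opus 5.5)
Your final argument is correct, but it takes a different route from the paper's. The paper proves the converse by a retract trick. Choosing a point in each component exhibits $\pi_0 f$ as a retract of $f$, so $(\pi_0 f)^*\colon \prod_{\pi_0 Y}\Sp_E \to \prod_{\pi_0 X}\Sp_E$ is an equivalence, and this forces $\pi_0 f$ to be bijective as soon as $\Sp_E \neq 0$. Once bijectivity is known, each restriction $X_\alpha \to Y_{f(\alpha)}$ is again a retract of $f$, hence a parametric $E$-equivalence.

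You instead stay with $f^*$ on the product decomposition and detect components through local systems supported on a single component. Conservativity gives surjectivity of $\pi_0 f$. Essential surjectivity, together with the fact that a nonzero local system on a connected space has all stalks nonzero, gives injectivity. Placing zeros in the other slots then splits off each factor. The paper's route is shorter and reduces everything to discrete spaces, where the claim is obvious. Yours needs no retract bookkeeping and makes explicit that $E \not\simeq 0$ enters only through $\Loc_E\mathbb{S} \neq 0$.

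In a write-up you should delete the two abandoned attempts, because they contain errors. First, $\pi_0(E \otimes \susp^\infty_+ X)$ is $\bigoplus_\alpha E_0(X_\alpha)$, not $\bigoplus_{\pi_0 X} E_0(\pt)$. That attempt does work if you first pass to the retract $\pi_0 f$ and use a degree $k$ with $E_k \neq 0$, which is the paper's trick in homological form. Second, $E = H\mathbb{Q} \oplus H\mathbb{F}_p$ does not produce nontrivial idempotents, since there $\Loc_E\mathbb{S} \simeq \mathbb{S}_{(p)}$; something like $H\mathbb{F}_p \oplus H\mathbb{F}_q$ with $p \neq q$ would. Finally, your injectivity step uses essential surjectivity, not full faithfulness as you state.
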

\begin{proof}
The first part of the lemma is evident. For the converse, note first that the map $\pi_0 f$ is a retract of $f$ itself, hence a parametric $E$-equivalence whenever $f$ is. But that means
\[
(\pi_0 f)^*\colon \prod_{\pi_0 Y} \Sp_E \to \prod_{\pi_0 X} \Sp_E
\]
is an equivalence of $\infty$-categories. If $\Sp_E$ is nontrivial, this can only happen if $\pi_0 f$ is a bijection. Now if $X_0$ is a component of $X$ and $Y_0$ the corresponding component of $Y$, the restricted map $f_0\colon X_0 \to Y_0$ is a retract of $f$ and hence a parametric $E$-equivalence.
\end{proof}

The case of connected $X$ and $Y$ can be described in the following terms:

\begin{lem}
\label{lem:parametricvsordinary}
If $f\colon X \to Y$ is a pointed map of pointed spaces that is a parametric $E$-equivalence, then $\Loop f$ is an $E$-equivalence. The converse is true if $X$ and $Y$ are connected.
\end{lem}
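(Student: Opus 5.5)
The plan is to reduce both directions to a comparison of the monads that the basepoint inclusions induce on $\Sp_E$.

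\textbf{Setup.} Let $e_X\colon * \to X$ and $e_Y\colon *\to Y$ be the basepoints, so that $e_Y = f\circ e_X$. Then $e_X^* f^* \simeq e_Y^*$, and passing to left adjoints gives $f_! e_{X!} \simeq e_{Y!}$. The key computation is that, as a functor $\Sp_E \to \Sp_E$, the composite $e_X^* e_{X!}$ is given by
\[
M \longmapsto \Loc_E(\susp^\infty_+ \Loop X \otimes M).
\]
This holds because $e_{X!}$ is left Kan extension along $*\to X$, and the fiber of $*\to X$ over the basepoint is $\Loop X$. The $E$-localization appears because colimits in $\Sp_E$ are localized colimits. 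So the associated monad $T_X = e_X^*e_{X!}$ is tensoring with the $E$-local ring spectrum $\Loc_E\susp^\infty_+\Loop X$. The equivalence $f_!e_{X!}\simeq e_{Y!}$ gives a morphism of monads $T_X \to T_Y$. On underlying functors it is $\Loc_E(\susp^\infty_+\Loop f\otimes -)$; this naturality check is a routine unwinding of the Kan extension formula.

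\textbf{Forward direction.} We may assume $E\not\simeq 0$; if $E \simeq 0$, every map is an $E$-equivalence and there is nothing to prove. Since $\Loop$ only sees basepoint components, \Cref{lem:parametrictau1equiv} lets us replace $X,Y$ by their basepoint components. If $f^*$ is an equivalence with inverse $f_!$, then
\[
e_Y^* f_! \simeq e_X^* f^* f_! \simeq e_X^*.
\]
Applying this to $e_{X!}\Loc_E\mathbb{S}$ identifies $T_X(\Loc_E\mathbb{S})\to T_Y(\Loc_E\mathbb{S})$ as an equivalence. Concretely, this says $\Loc_E\susp^\infty_+\Loop f$ is an equivalence, i.e.\ $\Loop f$ is an $E$-equivalence.

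\textbf{Converse.} Assume $X,Y$ are connected. The functor $e_X^*\colon \Fun(X,\Sp_E)\to\Sp_E$ preserves all limits and colimits, since they are computed pointwise. It is conservative because $X$ is connected: a map of local systems that is an equivalence at one point is an equivalence at every point. By Barr--Beck--Lurie it is therefore monadic, so $\Fun(X,\Sp_E)\simeq \mathrm{LMod}_{\Loc_E\susp^\infty_+\Loop X}(\Sp_E)$, and likewise for $Y$. The functor $f^*$ commutes with the forgetful functors, because $e_X^*f^*\simeq e_Y^*$. Under these identifications it is restriction of scalars along the monad map $T_X\to T_Y$. If $\Loop f$ is an $E$-equivalence, this monad map is an equivalence, hence so is $f^*$.

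The main obstacle is this last identification: showing rigorously that a functor between monadic $\infty$-categories which commutes with the monadic right adjoints, and whose induced map of monads is an equivalence, is itself an equivalence. I would invoke Lurie's comparison result for monadic adjunctions (Higher Algebra, 4.7.3). Alternatively, one can argue directly: $f^*$ has the left adjoint $f_!$, and the unit and counit can be checked to be equivalences after applying the conservative functors $e_X^*$ and $e_Y^*$. First one checks them on free objects $e_{X!}M$ using $f_!e_{X!}\simeq e_{Y!}$ and the equivalence $T_X\simeq T_Y$. Then one extends to all objects using the bar resolution, since all functors involved preserve colimits.
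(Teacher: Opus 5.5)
Your proposal is correct and is essentially the paper's proof: reduce to connected components, identify $\Fun(X,\Sp_E)$ with $\mathrm{LMod}_{\Loc_E\susp^\infty_+\Loop X}(\Sp_E)$ via monadicity of evaluation at the basepoint, and compare along the ring map $\Loc_E\susp^\infty_+\Loop f$. The paper phrases $f_!$ as extension of scalars where you phrase $f^*$ as restriction, and its forward direction is your argument, namely the unit of $f_!\dashv f^*$ evaluated on the free module. For the step you flag as the obstacle, your direct argument suffices: once the unit is an equivalence, the counit follows by a triangle identity, because $f^*$ is conservative (as $e_X^*f^*\simeq e_Y^*$).
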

\begin{proof}
By \Cref{lem:parametrictau1equiv} we may work one component at a time, so there is no loss of generality in assuming $X$ and $Y$ are connected throughout. Write $x\colon * \to X$ for the basepoint of $X$. Then the adjunction
\[
\begin{tikzcd}
\Sp_E \ar[shift left]{r}{x_!} & \Fun(X,\Sp_E) \ar[shift left]{l}{x^*}  
\end{tikzcd}
\]
is monadic; indeed, the right adjoint $x^*$ preserves colimits and is conservative since $X$ is connected. Since $x^*x_!(\Loc_E\mathbb{S}) \simeq \Loc_E\susp^\infty_+ \Loop X$, we conclude that $x^*$ induces an equivalence of $\infty$-categories
\[
\Fun(X,\Sp_E) \xto{\simeq} \mathrm{LMod}_{\Loc_E\susp^\infty_+ \Loop X}(\Sp_E).
\]
A similar statement applies to $Y$. Under these identifications, the pushforward $f_!$ can be identified with the functor
\[
\Loc_E\susp^\infty_+ \Loop Y \otimes_{\Loc_E\susp^\infty_+ \Loop X} -\colon \mathrm{LMod}_{\Loc_E\susp^\infty_+ \Loop X}(\Sp_E) \to \mathrm{LMod}_{\Loc_E\susp^\infty_+ \Loop Y}(\Sp_E).
\]
From this the second part of the lemma is clear. For the converse, suppose that $f$ is a parametric $E$-equivalence, so that the functor above is an equivalence of $\infty$-categories. Then the unit map yields an equivalence 
\[
\Loc_E\susp^\infty_+ \Loop X \xto{\simeq} \Loc_E\susp^\infty_+ \Loop Y \otimes_{\Loc_E\susp^\infty_+ \Loop X} \Loc_E\susp^\infty_+ \Loop X \simeq \Loc_E\susp^\infty_+ \Loop Y.
\]
\end{proof}

\begin{cor}
\label{cor:parametrictau1equiv}
Suppose $E \not\simeq 0$. Then if $f\colon X \to Y$ is a parametric $E$-equivalence, the truncation $\tau_{\leq 1}f$ is an equivalence of spaces.
\end{cor}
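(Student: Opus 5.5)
By \Cref{lem:parametrictau1equiv}, a parametric $E$-equivalence with $E \not\simeq 0$ induces a bijection $\pi_0 f$. It also restricts to a parametric $E$-equivalence $f_0\colon X_0 \to Y_0$ between corresponding components. So it suffices to show that $\pi_1 f_0$ is an isomorphism at some basepoint $x \in X_0$. For connected spaces $\pi_1$ does not depend on the basepoint up to isomorphism, so this will show that $\tau_{\leq 1} f$ is an equivalence.

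The first step is to reduce to a statement about group rings. By \Cref{lem:parametricvsordinary}, $\Loop f_0\colon \Loop X_0 \to \Loop Y_0$ is an $E$-equivalence, i.e.
\[
\Loc_E\susp^\infty_+ \Loop X_0 \to \Loc_E\susp^\infty_+ \Loop Y_0
\]
is an equivalence. This alone does not obviously detect $\pi_0 \Loop = \pi_1$, since $E$-homology can be blind to discrete data. For instance $E$-local spectra may vanish on certain group rings. It is therefore better to use the full parametric equivalence rather than only its consequence on loop spaces.

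The second step does this using discrete coefficients. Consider the map $\tau_{\leq 1} f_0\colon B\pi_1 X_0 \to B\pi_1 Y_0$. Local systems on $B G$ valued in $\Sp_E$ that are pulled back along $X_0 \to BG$ are detected by the covering spaces of $X_0$. Concretely, for a set $S$ with a $\pi_1 Y_0$-action, form the local system on $Y_0$ sending a point to $\bigoplus_S \Loc_E\mathbb{S}$, i.e.\ $\Loc_E\mathbb{S}[S]$ with its induced monodromy. Pulling back along $f_0$ gives the same construction with the restricted action. Because $f_0^*$ is an equivalence, it is fully faithful, so mapping spectra between such local systems are preserved. Take $S = \pi_1 Y_0/H$ and $S' = \pi_1 Y_0/H'$ for subgroups $H, H'$. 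The relevant mapping spectra are computed by $\Loc_E\mathbb{S}$-cochains on the homotopy orbits, namely $\lim_{Y_0}$ of the internal hom. Their $\pi_0$ contains as a summand the functions $\pi_0\mathbb{S}_E$ on the double-coset-type sets $\mathrm{Map}_{\pi_1 Y_0}(S,S')$ and $\mathrm{Map}_{\pi_1 X_0}(S,S')$.

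The third step compares these. Choose $S = \pi_1 Y_0$ with the free action, together with the trivial one-point set. Comparing $\pi_0$ of endomorphisms of the pulled-back free system then forces $\pi_1 X_0$ to act on $\pi_1 Y_0$ freely and transitively, i.e.\ $\pi_1 f_0$ is bijective. More precisely, $\pi_0 \mathrm{Map}(\Loc_E\mathbb{S}[S], \Loc_E\mathbb{S}[S])$ sees the orbit set $S/G$ via idempotents. Applying this to covering spaces gives the following: the pullback of the universal-cover system along $f_0$ is the system of the fibre product $X_0 \times_{Y_0} \widetilde{Y_0}$. Full faithfulness preserves the number of indecomposable summands, which equals $|\pi_0(X_0\times_{Y_0}\widetilde Y_0)| = |\coker \pi_1 f_0|$. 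Hence $\pi_1 f_0$ is surjective. For injectivity, apply the same argument to the covering $\widetilde{X_0} \to X_0$: the pushforward $f_{0!}$ is an inverse equivalence and preserves idempotent decompositions. Alternatively, observe that $\ker \pi_1 f_0$ acts trivially on every pulled-back system, whereas under an equivalence it must act nontrivially on some system, such as $\Loc_E\mathbb{S}[\pi_1 X_0]$.

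The main obstacle is making the idempotent/summand counting rigorous. This requires that the monoidal unit $\Loc_E\mathbb{S}$ be indecomposable in $\Sp_E$, so that $\pi_0$ of its endomorphisms has no nontrivial idempotents. I would check this carefully, or use instead that $\Sp_E \neq 0$ together with the $\pi_0$-bijection argument of \Cref{lem:parametrictau1equiv} applied to the covering maps, which are retracts in a suitable sense.
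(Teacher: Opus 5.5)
Your proposal has a genuine gap in the surjectivity of $\pi_1 f_0$; the fact you discarded in your first step is exactly what the paper's proof uses.

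\textbf{The counting step fails.} You claim that ``full faithfulness preserves the number of indecomposable summands, which equals $|\pi_0(X_0\times_{Y_0}\widetilde{Y_0})|$''. The second half is false, because the local system attached to a \emph{connected} covering can split. Take $E=H\mathbb{Q}$ (whose unit is indecomposable) or $E=T(n)$, and $Y_0=BG$ for a nontrivial finite group $G$, of order prime to $p$ in the $T(n)$ case. The universal-cover system is $b_!\Loc_E\mathbb{S}$ for $b\colon *\to BG$. Its endomorphism ring is $\Loc_E\susp^\infty_+\Loop BG\simeq\Loc_E\mathbb{S}[G]$. Then $\frac{1}{|G|}\sum_{g\in G}g$ is a nontrivial idempotent, even though $EG$ is connected.

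There are further problems. Decompositions into indecomposables need not exist or be unique. The indecomposability of $\Loc_E\mathbb{S}$, which you flag as the main obstacle, is false for general $E\not\simeq 0$: for $E=H\mathbb{F}_p\oplus H\mathbb{F}_q$ with $q\neq p$ one gets $\pi_0\Loc_E\mathbb{S}\cong\mathbb{Z}_p\times\mathbb{Z}_q$. Your monodromy argument for injectivity, via essential surjectivity on the universal-cover system of $X_0$, can be made to work. Surjectivity, however, remains unproved.

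\textbf{The missing idea is the one you dismissed.} For $E\not\simeq 0$, $E$-homology is \emph{not} blind to discrete data. For a map of sets $g\colon S\to T$, the map $E\otimes\susp^\infty_+ g$ is $\bigoplus_S E\to\bigoplus_T E$. Choose $0\neq x\in\pi_k E$. If $s\neq s'$ and $g(s)=g(s')$, the class $xe_s-xe_{s'}$ lies in the kernel on $\pi_k$. If $t\notin g(S)$, the class $xe_t$ is not in the image.

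Since $\pi_0 h$ is a retract of $h$ for any map of spaces $h$ (as in the proof of \Cref{lem:parametrictau1equiv}), every $E$-equivalence is bijective on $\pi_0$. Apply this to the $E$-equivalence $\Loop f_0$ given by \Cref{lem:parametricvsordinary}: it shows at once that $\pi_1 f_0=\pi_0\Loop f_0$ is bijective. This is the paper's proof. The spaces that can be $E$-acyclic are connected ones such as $K(A,n+1)$, not discrete sets.

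Even inside your framework, the natural repair needs this fact. From $f_{0!}f_0^*\simeq\mathrm{id}$ you get $\mathrm{colim}_{X_0}f_0^*\mathcal{G}\simeq\mathrm{colim}_{Y_0}\mathcal{G}$. For the universal-cover system this says that $X_0\times_{Y_0}\widetilde{Y_0}\to\widetilde{Y_0}$ is an $E$-equivalence. You then still need $\pi_0$-detection to conclude that $X_0\times_{Y_0}\widetilde{Y_0}$ is connected.
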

\begin{proof}
\Cref{lem:parametrictau1equiv} implies that $\pi_0 f$ is a bijection. For any choice of basepoint $x$ of $X$, \Cref{lem:parametricvsordinary} implies that the resulting map $\Loop f$ is an $E$-equivalence. If $E \not\simeq 0$ this implies that $\pi_0(\Loop f)$ is bijective. In other words, $f$ induces an isomorphism $\pi_1(X,x) \cong \pi_1(Y, f(x))$.
\end{proof}

We will conclude this section with a discussion of the existence of localizations at the parametric $E$-equivalences. First of all, recall that Bousfield \cite{Bou75} has shown that the localization of $\mathcal{S}$ at the $E$-equivalences exists. Indeed, a space $Z$ is said to be \emph{$E$-local} if for every $E$-equivalence of spaces $X \to Y$, the induced map $\mathrm{Map}(Y,Z) \to \mathrm{Map}(X,Z)$ is an equivalence. Then Bousfield shows that any space admits an \emph{$E$-localization} $X \to \Loc_E X$, which is an $E$-equivalence into an $E$-local space. Writing $\mathcal{S}_E$ for the full subcategory of $\mathcal{S}$ on the $E$-local spaces, there is an adjoint pair
\[
\begin{tikzcd}
\mathcal{S} \ar[shift left]{r}{\Loc_E} & \mathcal{S}_E \ar[shift left]{l} 
\end{tikzcd}
\]
where the right adjoint is the inclusion. The functor $\Loc_E$ exhibits the $\infty$-category $\mathcal{S}_E$ as the localization of $\mathcal{S}$ at the $E$-equivalences. It will be relevant to us that $\Loc_E$ preserves finite products; this is an immediate consequence of the fact that a finite product of $E$-equivalences is an $E$-equivalence.

In analogy with the above, we will call a space $Z$ \emph{parametrically $E$-local} if for every parametric $E$-equivalence of spaces $X \to Y$, the induced map $\mathrm{Map}(Y,Z) \to \mathrm{Map}(X,Z)$ is an equivalence. We denote by $\Loc_E^{\mathrm{par}}\mathcal{S}$ the full subcategory of $\mathcal{S}$ on the parametrically $E$-local spaces. We now establish the existence of localization with respect to parametric $E$-equivalences:

\begin{prop}
\label{prop:parametriclocalization}
\begin{itemize}
\item[(1)] A connected space $Z$ is parametrically $E$-local if and only if the loop space (at some basepoint) $\Loop Z$ is $E$-local.
\item[(2)] Any space admits a parametric $E$-equivalence into a parametrically $E$-local space.
\item[(3)] The inclusion of $\Loc_E^{\mathrm{par}}\mathcal{S}$ into $\mathcal{S}$ admits a left adjoint.
\end{itemize}
\end{prop}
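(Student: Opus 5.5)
The plan is to prove (1) first, and then build the localization for (2) and (3) by hand, one component at a time.

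\textbf{Part (1).} Let $Z$ be connected, with basepoint $z$.

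Suppose first that $Z$ is parametrically $E$-local. Let $g\colon A \to B$ be an $E$-equivalence of spaces. I want $\mathrm{Map}(B,\Loop Z) \to \mathrm{Map}(A,\Loop Z)$ to be an equivalence. Maps into $\Loop Z$ are pointed maps $\susp B_+ \to Z$, i.e.\ the fibre over the base point of $\mathrm{Map}(\susp B_+, Z) \to \mathrm{Map}(*, Z)$. So it suffices to show that $\susp g_+\colon \susp A_+ \to \susp B_+$ is a parametric $E$-equivalence.

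Both spaces are connected. By \Cref{lem:parametricvsordinary} it is enough that $\Loop\susp g_+$ is an $E$-equivalence. I would argue this via the James splitting
\[
\susp^\infty_+\Loop\susp W \simeq \bigoplus_k \susp^\infty W^{\wedge k}
\]
for $W = A_+, B_+$. Here $\susp^\infty g_+$ is an $E$-equivalence, hence so is each smash power, and hence so is the sum.

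Conversely, suppose $\Loop Z$ is $E$-local and let $f\colon X \to Y$ be a parametric $E$-equivalence. By \Cref{cor:parametrictau1equiv}, $\tau_{\leq 1} f$ is an equivalence. Reduce to connected $X$ and $Y$ using \Cref{lem:parametrictau1equiv}. Then $\mathrm{Map}(Y,Z)$ maps to $\mathrm{Map}(\tau_{\le1}Y, \tau_{\le1}Z)$, and it suffices to compare the fibres over each map $\phi\colon \pi_1 \to \pi_1$.

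The fibres are spaces of lifts through $Z \to B\pi_1 Z$. I would interpret them via the universal-cover fibration $\tilde Z \to Z \to B\pi_1Z$, but this is delicate.

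A cleaner route is to set $G=\Loop Z$ as a group object. Maps $Y \to Z$ correspond to $G$-torsors over $Y$, i.e.\ to the groupoid of local systems of $G$-torsors. So $\mathrm{Map}(Y,Z)$ is the core of a category of principal $G$-bundles. Since $G$ is $E$-local as a space, I would embed these into $\Fun(Y,\calS_E)$ and use that $f^*$ is an equivalence there.

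\textbf{Main obstacle.} The converse of (1) needs a version of ``$f^*$ is an equivalence on $\Fun(-,\calS_E)$'' from the hypothesis on $\Fun(-,\Sp_E)$. Concretely, I must show that $f$ induces an equivalence between $E$-local spaces parametrized over $Y$ and over $X$. My first attempt would be the monadic description from \Cref{lem:parametricvsordinary}. The key point is that $\Loop f$ is an $E$-equivalence. For any $E$-local space $W$ with an action of $\Loop Y$, I would then show that restricting the action along $\Loop X \to \Loop Y$ is fully faithful and essentially surjective. This should follow because $E$-localization of spaces preserves finite products, so the bar constructions for $\Loop X$ and for $\Loop Y$ become equivalent after applying $\Loc_E$ levelwise.

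\textbf{Parts (2) and (3).} Given connected $X$ with universal cover $p\colon \tilde X \to X$, I would form $\Loc_E$ fibrewise on the map $X \to B\pi_1X$. This is fibrewise localization of the fibre $\tilde X$, twisted by the $\pi_1$-action, which is possible because $\Loc_E$ is functorial. Call the result $X'$. This alone does not make $\Loop X'$ $E$-local, so I would instead define the localization as a transfinite iteration, or simply invoke presentability.

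Part (1) shows that the parametrically local spaces are exactly the spaces local with respect to a set of maps, namely $\susp g_+$ for $g$ ranging over a generating set of $E$-equivalences between small spaces; such a set exists by Bousfield's cardinality argument. Hence the small object argument gives a left adjoint $\Loc$, which proves (3).

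For (2), I must show that the unit $X \to \Loc X$ is a parametric $E$-equivalence. The unit is a transfinite composite of pushouts along the maps $\susp g_+$, which were shown above to be parametric $E$-equivalences. Parametric equivalences are closed under pushouts and transfinite composites because $\Fun(-,\Sp_E)$ sends colimits of spaces to limits of categories.
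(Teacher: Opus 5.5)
Your argument is sound in outline, but it takes a different route from the paper in two places.

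For the implication "$\Omega Z$ is $E$-local $\Rightarrow$ $Z$ is parametrically $E$-local", the paper never leaves pointed maps. For connected $X,Y,Z$ it suffices to compare $\Map_*(Y,Z)\simeq\Map_{\mathbb{E}_1}(\Omega Y,\Omega Z)$ with the analogous space for $X$. Since $\Omega f$ is an $E$-equivalence (\Cref{lem:parametricvsordinary}) and $\Loc_E$ preserves finite products, $\mathbb{E}_1$-maps into the $E$-local group $\Omega Z$ cannot tell $\Omega X$ from $\Omega Y$. Your detour through $\Fun(-,\mathcal{S}_E)$ and $G$-torsors rests on the same input, and your ``main obstacle'' does dissolve. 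Exactly as in the proof of \Cref{lem:parametricvsordinary}, $x^*\colon\Fun(Y,\mathcal{S}_E)\to\mathcal{S}_E$ is monadic with monad $\Loc_E(\Omega Y)\times(-)$. So $\Omega f$ being an $E$-equivalence makes $f^*$ an equivalence. For the torsor step, work in $\Fun(Y,\mathrm{RMod}_G(\mathcal{S}_E))$; the equivalence lifts there because $f^*$ preserves products, and $BG$ is the component of $G$ in the core since $\mathrm{Aut}_{\mathrm{RMod}_G}(G)\simeq G$. This is longer, but it shows in passing that parametric $E$-equivalences also identify $\mathcal{S}_E$-valued local systems.

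For (2)--(3) the paper is explicit. On each component, $Z\simeq B\Omega Z\to B(\Loc_E\Omega Z)$ is the localization: it is local by (1) and a parametric equivalence by \Cref{lem:parametricvsordinary}. Part (3) then follows by corepresentability. Your localization at the set $S=\{\susp g_+\}$ also works once you note that $Z$ is $S$-local iff every $\Omega_z Z$ is $E$-local. However, it needs Bousfield's set-theoretic generation and gives no formula. Also, the unit of the small object argument is not literally a composite of pushouts along maps in $S$, since pushout-products with $\partial\Delta^n\subset\Delta^n$ enter. What you actually need is that parametric $E$-equivalences form a strongly saturated class, closed under colimits in $\Fun(\Delta^1,\mathcal{S})$ and under two-out-of-three. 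This follows from the same fact you cite, that $\Fun(-,\Sp_E)$ turns colimits into limits.

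One step needs repair. The James splitting is natural only for connected $W$, and $W=A_+$ is never connected. In that case $J(A_+)\to\Omega\susp(A_+)$ is a group completion rather than an equivalence; for $A=*$ it is $\mathbb{N}\to\mathbb{Z}$. So $\bigoplus_k\susp^\infty (A_+)^{\wedge k}$ does not compute $\susp^\infty_+\Omega\susp(A_+)$ naturally in $A$. (The paper's one-line appeal to James splitting for arbitrary pointed $U$ glosses over the same point.) There are two easy fixes:
\begin{enumerate}
\item Reduce to connected $A,B$. For $E\not\simeq 0$, an $E$-equivalence is a bijection on $\pi_0$ and an $E$-equivalence on each component. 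The components of the group $\Omega Z$ are all equivalent, so it suffices to treat pointed maps and use $\susp g$ instead of $\susp g_+$.
\item Argue directly. $\susp(A_+)$ is the pushout $*\sqcup_{A_+}*$ in $\mathcal{S}$, so $\Fun(\susp(A_+),\Sp_E)\simeq\Sp_E\times_{\Fun(A_+,\Sp_E)}\Sp_E$ along the constant-diagram functors. The functor $g_+^*$ is fully faithful on constant local systems, because $\Map(cM,cN)\simeq\Map(\Loc_E(\susp^\infty_+(A\sqcup *)\otimes M),N)$. Hence $\susp g_+$ is a parametric $E$-equivalence.
\end{enumerate}
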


We will denote the left adjoint functor of item (3) by $\Loc_E^{\mathrm{par}}$. It follows from the proposition that this functor exhibits $\Loc_E^{\mathrm{par}}\mathcal{S}$ as the localization of $\mathcal{S}$ at the parametric $E$-equivalences.

\begin{proof}[Proof of \Cref{prop:parametriclocalization}]
(1). It is straightforward to see that a connected space $Z$ is parametrically $E$-local if and only if for every parametric $E$-equivalence $f\colon X \to Y$ of \emph{connected} spaces, the induced map $f^*\colon \mathrm{Map}(Y,Z) \to \mathrm{Map}(X,Z)$ is an equivalence. Pick basepoints $x$ of $X$ and $z$ of $Z$ and consider the fibration
\[
\mathrm{Map}_*(X,Z) \to \mathrm{Map}(X,Z) \xrightarrow{\mathrm{ev}_x} Z.
\]
Doing the same for $Y$, we see that $\mathrm{Map}(Y,Z) \to \mathrm{Map}(X,Z)$ is an equivalence if and only if $\mathrm{Map}_*(Y,Z) \to \mathrm{Map}_*(X,Z)$ is an equivalence. There is a commutative square
\[
\begin{tikzcd}
 \mathrm{Map}_*(Y,Z) \ar{r}{f^*}\ar{d}{\simeq}[swap]{\Loop} & \mathrm{Map}_*(X,Z) \ar{d}{\simeq}[swap]{\Loop} \\
 \mathrm{Map}_{\mathbb{E}_1}(\Loop Y,\Loop Z) \ar{r}{(\Loop f)^*} & \mathrm{Map}_{\mathbb{E}_1}(\Loop X,\Loop Z)
\end{tikzcd}
\]
where the subscript $\mathbb{E}_1$ indicates the space of $\mathbb{E}_1$-maps (or equivalently loop maps). The map $\Loop f$ is an $E$-equivalence by \Cref{lem:parametricvsordinary}. Hence if $\Loop Z$ is $E$-local, the bottom map is an equivalence, hence so is the top and we conclude that $Z$ is parametrically $E$-local. For the converse, assume that $Z$ is parametrically $E$-local and that $g\colon U \to V$ is an $E$-equivalence of pointed spaces. To see that $\mathrm{Map}_*(V,\Loop Z) \to \mathrm{Map}_*(U,\Loop Z)$ is an equivalence, it suffices to argue that $\susp g$ is a parametric $E$-equivalence. By \Cref{lem:parametricvsordinary} this is the case precisely if $\Loop\susp g$ is an $E$-equivalence. The functor $\Loop\susp$ is easily seen to preserve $E$-equivalences by the James splitting of $\susp^\infty_+\Loop\susp f$.

(2). It is straightforward to see that a space is parametrically $E$-local if and only if each of its components is, so without loss of generality we can prove the statement for a connected space $Z$. Pick a basepoint of $Z$ and consider the map 
\[
Z \simeq B(\Loop Z) \to B(\Loc_E\Loop Z)
\]
arising from the $E$-localization of $\Loop Z$. Then the space $B(\Loc_E\Loop Z)$ is parametrically $E$-local by item (1) and the map above is a parametric $E$-equivalence by \Cref{lem:parametricvsordinary}.

(3). It suffices to check that for every $X \in \mathcal{S}$, the functor
\[
\mathrm{Map}(X,-)\colon \Loc_E^{\mathrm{par}}\mathcal{S} \to \mathcal{S}
\]
is corepresentable. For this one uses (2) for the existence of a parametric $E$-equivalence $X \to Y$, with $Y$ parametrically $E$-local, and observes that $Y$ corepresents the functor above.
\end{proof}

\subsection{Parametric $T(n)$-equivalences}
\label{subsec:parametricT(n)}

In the previous section we introduced the notion of parametric $E$-equivalence. In this paper we will mainly be interested in the case $E = T(n)$. Let us first briefly analyze the special case $n=0$, corresponding to rational homology, and then proceed to higher heights.

\begin{prop}
\label{prop:rationalparametricequiv}
A map $f\colon X \to Y$ is a parametric $\mathbb{Q}$-equivalence if and only if $\tau_{\leq 1} f$ is an equivalence and for any choice of basepoint $x \in X$ and $k \geq 2$, the map $f$ induces an isomorphism 
\[\pi_k(X,x) \otimes \mathbb{Q} \xto{\cong} \pi_k(Y,f(x)) \otimes \mathbb{Q}.\]
\end{prop}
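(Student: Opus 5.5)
By \Cref{lem:parametrictau1equiv} and \Cref{lem:parametricvsordinary}, both sides of the equivalence can be checked one component at a time. So I will assume $X$ and $Y$ are connected and pointed, with $f$ pointed. (For the forward direction, \Cref{cor:parametrictau1equiv} already gives that $\tau_{\leq 1} f$ is an equivalence, since $H\mathbb{Q}\not\simeq 0$.) By \Cref{lem:parametricvsordinary}, $f$ is a parametric $\mathbb{Q}$-equivalence if and only if $\Loop f\colon \Loop X \to \Loop Y$ is a rational homology isomorphism. The statement therefore reduces to the following claim about the loop map $g = \Loop f$ of group-like $\mathbb{E}_1$-spaces. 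The map $g$ is a $\mathbb{Q}$-homology equivalence if and only if (i) $\pi_0 g$ is an isomorphism of groups and (ii) $g$ induces isomorphisms on $\pi_k\otimes\mathbb{Q}$ for all $k\geq 1$ on the identity component.

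First I would show that a $\mathbb{Q}$-homology equivalence forces (i). $H_0(\Loop X;\mathbb{Q}) = \mathbb{Q}[\pi_1 X]$ is the group ring, and $H_0(g)$ is the induced map of group rings. A group homomorphism $\phi\colon G\to H$ induces a bijection on the canonical bases, so $\mathbb{Q}[G]\to\mathbb{Q}[H]$ is an isomorphism only if $\phi$ is bijective. Given (i), translation by the group structure identifies every component of $\Loop X$ with the identity component $\Loop_0 X$, compatibly with $g$. Hence $H_*(\Loop X;\mathbb{Q}) \cong \mathbb{Q}[\pi_1X]\otimes H_*(\Loop_0 X;\mathbb{Q})$, naturally in $g$. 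It follows that $g$ is a rational homology equivalence if and only if $\pi_0 g$ is bijective and $g_0\colon \Loop_0 X\to\Loop_0 Y$ is a rational homology equivalence.

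The key remaining step is to check that $g_0$ is a rational homology equivalence if and only if it is a rational homotopy equivalence. The spaces $\Loop_0X,\Loop_0Y$ are connected $H$-spaces, hence simple, and in particular nilpotent. For simple spaces, and maps between them, the classical Serre class theory (rationalization of nilpotent spaces, à la Sullivan/Bousfield–Kan/Hilton–Mislin–Roitberg) says that $g_0$ induces an isomorphism on $\pi_*\otimes\mathbb{Q}$ if and only if it induces an isomorphism on $H_*(-;\mathbb{Q})$. A more hands-on route for $H$-spaces is the Milnor–Moore theorem: $H_*(\Loop_0 X;\mathbb{Q})$ is a connected Hopf algebra, and its primitives are isomorphic to $\pi_*(\Loop_0 X)\otimes\mathbb{Q}$ (with the Samelson bracket), naturally in $H$-maps. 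Rationally, $H_*$ is then the universal enveloping algebra of $\pi_*\otimes\mathbb{Q}$. By PBW, a map of graded Lie algebras is an isomorphism if and only if the induced map of enveloping algebras is. One caveat is that $\Loop_0 X$ may have infinite-type homology. Milnor–Moore still applies without finite type over $\mathbb{Q}$, or one can pass to the nilpotent-space rationalization argument instead.

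Finally, the reduction is completed by translating (i) and (ii) back to $f$: $\pi_0\Loop X = \pi_1 X$ and $\pi_k\Loop_0X = \pi_{k+1}X$. So (i) together with $\pi_0$-bijectivity says exactly that $\tau_{\leq 1}f$ is an equivalence, and (ii) says exactly that $f$ is a rational isomorphism on $\pi_k$ for $k\geq 2$. The main obstacle is the step on the identity components, that is, making sure the rational Hurewicz/Whitehead comparison for non-simply-connected but simple $H$-spaces is applied correctly. Everything else is bookkeeping with the lemmas above.
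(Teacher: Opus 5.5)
Your proposal is correct and follows essentially the paper's route. You reduce via \Cref{lem:parametrictau1equiv} and \Cref{lem:parametricvsordinary} to the statement that $\Loop f$ is a rational homology equivalence, split off $\pi_0\Loop X = \pi_1 X$ (the paper uses the fiber sequence $\tau_{>0}\Loop X \to \Loop X \to \pi_1 X$, where you use translation in the group-like $H$-space together with the group ring $H_0$), and then apply the rational Hurewicz--Whitehead comparison for simple spaces to the identity components; your Milnor--Moore alternative is a valid but unnecessary second route for that last step.
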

\begin{rem}
Relatedly, it follows from \Cref{prop:parametriclocalization} that a space $X$ is parametrically $\mathbb{Q}$-local if and only if at every basepoint $x \in X$, the homotopy groups $\pi_k(X,x)$ are rational vector spaces for any $k \geq 2$.
\end{rem}
\begin{proof}
Suppose that $f$ is a parametric $\mathbb{Q}$-equivalence. Then $\tau_{\leq 1} f$ is an equivalence by \Cref{cor:parametrictau1equiv}. Moreover, for any choice of basepoint in $X$ the induced map $\Loop f\colon \Loop X \to \Loop Y$ is a rational homology equivalence by \Cref{lem:parametricvsordinary}, hence also a rational homotopy equivalence by the Hurewicz theorem. (Here we rely on the fact that any loop space is simple, i.e., has an abelian fundamental group that acts trivially on higher homotopy groups.) For the converse, suppose that $f$ induces an equivalence $\tau_{\leq 1} f$ and isomorphisms on rational homotopy groups in dimensions $\geq 2$. By \Cref{lem:parametrictau1equiv} we may work one connected component at a time and thus reduce to the case that $X$ and $Y$ are connected. Consider the diagram of fiber sequences
\[
\begin{tikzcd}
\tau_{>0} \Loop X \ar{d}\ar{r} & \Loop X \ar{d}\ar{r} & \pi_1 X \ar{d}{\cong} \\
\tau_{>0} \Loop Y \ar{r} & \Loop Y \ar{r} & \pi_1 Y. 
\end{tikzcd}
\]
The left-most map is a rational homotopy equivalence of simple spaces and hence a rational homology equivalence. It follows that the middle map is a rational homology equivalence as well, which suffices by \Cref{lem:parametricvsordinary}.
\end{proof}

For the remainder of this section we fix a choice of prime number $p$ and height $n \geq 1$. We will now record several results that are specific to the $T(n)$-local setting, in particular on the behavior of parametric $T(n)$-localization on truncated spaces. Most of these results follow directly from a combination of results of Bousfield \cite{Bou01} with the work of Carmeli--Schlank--Yanovski \cite{CSY22}. 

Recall that an abelian group $A$ is said to be \emph{derived $p$-complete} if it is $p$-complete when considered as an object of the derived $\infty$-category $\mathcal{D}(\mathbb{Z})$. Equivalently, $A$ is derived $p$-complete if and only if $\mathrm{Hom}(\mathbb{Q}_p/\mathbb{Z}_p, A) = 0$ and the canonical homomorphism 
\[
A \to \mathrm{Ext}(\mathbb{Q}_p/\mathbb{Z}_p, A)
\]
is an isomorphism. (The map is induced by the short exact sequence $\mathbb{Z} \to \mathbb{Z}{[1/p]} \to \mathbb{Q}_p/\mathbb{Z}_p$.) This is the condition that Bousfield--Kan refer to as being \emph{Ext-$p$-complete}. 

\begin{prop}
\label{prop:T(n)localEMspaces}
\begin{itemize}
\item[(1)] For $A$ an abelian group and $k \geq n+2$, the Eilenberg--MacLane space $K(A,k)$ is $T(n)$-acyclic. If $A$ is torsion, then $K(A,n+1)$ is $T(n)$-acyclic as well.
\item[(2)] A truncated $H$-space $X$ is $T(n)$-local if and only if $\pi_k(X,x)$ is derived $p$-complete for any $x \in X$ and $1 \leq k \leq n+1$, torsionfree for $k = n+1$, and vanishes for $k > n+1$.
\end{itemize}
\end{prop}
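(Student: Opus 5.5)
Both parts reduce to the work of Carmeli--Schlank--Yanovski \cite{CSY22} on the $T(n)$-local (co)homology of $\pi$-finite spaces, combined with Bousfield's analysis in \cite{Bou01}.

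\emph{Part (1).} First I would treat $A = \mathbb{Z}/p$. By \cite{CSY22}, $\Sp_{T(n)}$ is $\infty$-semiadditive, and its $p$-typical height is exactly $n$. A key consequence is that for a connected $p$-space $B$ that is $m$-finite with $m > n$ (so $\pi_{>n}B \neq 0$ is allowed, but $B$ is concentrated in degrees above $n$), the map $\Loc_{T(n)}\susp^\infty_+ B \to \Loc_{T(n)}\mathbb{S}$ is an equivalence. Their result states that $\Sp_{T(n)}$ is $K(\mathbb{Z}/p,k)$-acyclic for $k > n$ (``$n$-semiadditive of height $n$''). Applying this with $k \geq n+1$ gives $T(n)$-acyclicity of $K(\mathbb{Z}/p,k)$ for all $k\ge n+1$, i.e.\ reduced $T(n)$-homology vanishes. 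By induction along Postnikov-type extensions (fiber sequences $K(\mathbb{Z}/p,k)\to K(\mathbb{Z}/p^{j+1},k)\to K(\mathbb{Z}/p^j,k)$), this extends to $\mathbb{Z}/p^j$. Here I would use that a fibration whose fiber is $T(n)$-acyclic and whose base is connected gives a $T(n)$-equivalence to the base, which holds via the parametric formulation of \Cref{lem:parametricvsordinary}. Passing to filtered colimits (homology commutes with them) extends the result to arbitrary $p$-torsion groups. Prime-to-$p$ torsion and rational groups are acyclic for $T(n)$, since $K(A,k)$ is then $\mathbb{F}_p$-acyclic and hence $T(n)$-acyclic for $n\geq1$ (with $\mathbb{Q}$ handled via $\susp^\infty$ rationally being an Eilenberg--MacLane spectrum, which is $T(n)$-acyclic). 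Writing a torsion group as the sum of its primary parts gives the torsion case for $k \geq n+1$.

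For a general $A$ and $k\ge n+2$, I would use the fiber sequence $K(A,k)\to K(A\otimes\mathbb{Q},k)\to K(A\otimes\mathbb{Q}/A',k)$ together with the sequence $K(\text{tors},k)\to K(A,k)\to K(A/\text{tors},k)$, and the resolution $A/\text{tors}\to A/\text{tors}\otimes\mathbb{Q}\to$ (torsion quotient). The torsion quotient then appears in degree $k-1\ge n+1$ after delooping. This is exactly where $k \geq n+2$ is needed. Rational Eilenberg--MacLane spaces are $T(n)$-acyclic, being $H\mathbb{Q}$-local. Combining this with the fibration lemma finishes part (1).

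\emph{Part (2).} A truncated $H$-space splits (up to $T(n)$-local considerations) via its Postnikov tower into $K(\pi_k,k)$ pieces, and $T(n)$-locality is closed under limits. For the ``if'' direction, I would show that each $K(B,k)$ with $B$ derived $p$-complete, $k\le n$, or $B$ torsionfree $p$-complete with $k=n+1$, is $T(n)$-local. Equivalently, $K(B,k)$ has to be right orthogonal to $T(n)$-equivalences. This reduces, using Bousfield's result in \cite{Bou01} that $K(\mathbb{Z}_p,n+1)$ and $K(\mathbb{Z}/p,k)$, $k\le n$, are $K(n)$/$T(n)$-local, to a CSY computation: $C^*(K(\mathbb{Z}/p,k);\Loc_{T(n)}\mathbb{S})$ detects these classes. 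For the ``only if'' direction, part (1) forces $\pi_{>n+1}=0$ and torsion-freeness of $\pi_{n+1}$, because otherwise a nontrivial map from an acyclic $K(A,k)$ would exist. Derived $p$-completeness follows since $K(\mathbb{Z}[1/p],k)\to *$ is a $T(n)$-equivalence.

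The main obstacle is the ``if'' direction of (2), in particular the locality of $K(\mathbb{Z}_p,n+1)$. For this I would rely on Bousfield's \cite{Bou01} identification of $T(n)$-local truncated $H$-spaces, and check that his $K(n)$-arguments transfer verbatim using the CSY height-$n$ semiadditivity.
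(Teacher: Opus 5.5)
Your part (1) is correct. For torsion groups it is the paper's argument. For arbitrary $A$ in degree $k\ge n+2$ you take a slightly different route. You split off $\mathrm{tors}(A)$ and then use $0\to A'\to A'\otimes\mathbb{Q}\to Q\to 0$ for the torsion-free quotient $A'$, so the torsion group $Q$ only enters through $K(Q,k-1)$ with $k-1\ge n+1$. The paper instead notes that $K(\mathbb{Z}_p,n+2)$ is the $p$-completion of $K(\mathbb{Q}_p/\mathbb{Z}_p,n+1)$. Your version treats all $A$ uniformly and is, if anything, cleaner. Your first displayed sequence only makes sense after replacing $A$ by $A'$, but the intended argument is fine.

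The genuine gap is the ``if'' direction of (2). Closure of $T(n)$-local spaces under limits does not let you assemble $X$ from local Eilenberg--MacLane layers. A truncated $H$-space need not split as a product of Eilenberg--MacLane spaces. In its Postnikov tower, $\tau_{\le k}X$ is the fiber of a $k$-invariant $\tau_{\le k-1}X\to K(\pi_k,k+1)$, and that target is in general not local. For instance, $K(\pi_{n+1},n+2)$ is $T(n)$-acyclic by your own part (1), so it is never $T(n)$-local unless $\pi_{n+1}=0$. Likewise $K(\pi_n,n+1)$ is not local as soon as $\pi_n$ has torsion. So knowing that $K(\mathbb{Z}_p,n+1)$ and $K(\mathbb{Z}/p,k)$, $k\le n$, are local does not yield the statement. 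The actual content is controlling these $k$-invariants, and your sentence about $C^*(K(\mathbb{Z}/p,k);\Loc_{T(n)}\mathbb{S})$ ``detecting these classes'' does not provide that. What is needed is Bousfield's theorem for $H$-spaces itself, \cite[Theorem 7.2]{Bou82}, which the paper cites: such an $X$ is $K(n)$-local. Contrary to your last paragraph, no transfer of Bousfield's arguments to the telescopic setting is required. Since $\langle T(n)\rangle\ge\langle K(n)\rangle$ for spectra, every $T(n)$-equivalence of spaces is a $K(n)$-equivalence, so every $K(n)$-local space is automatically $T(n)$-local.

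For the ``only if'' direction, your nullity argument is a valid alternative to the paper's. The paper maps $X$ to Bousfield's modified Postnikov section $\widehat\tau_{\le n+1}X$; this map is a $T(n)$-equivalence by (1), and the target is local by the ``if'' direction. In your argument, if $m\ge n+2$ is the top nonvanishing degree, the fiber inclusion $K(\pi_m,m)\to X$ is essential, which is impossible for a $T(n)$-acyclic source. The same works for $K(\mathrm{tors}\,\pi_{n+1},n+1)\to X$. This has the merit of not relying on the ``if'' direction.

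However, derived $p$-completeness does not follow from nullity with respect to the spaces $K(\mathbb{Z}[1/p],k)$ alone. Via the Milnor sequence, this nullity gives $\mathrm{Hom}(\mathbb{Z}[1/p],\pi_kX)=0$ for $k\ge1$, but $\mathrm{Ext}(\mathbb{Z}[1/p],\pi_kX)=0$ only for $k\ge2$. Indeed $K(\bigoplus_{\mathbb{N}}\mathbb{Z}_p,1)$ is null with respect to all these spaces, yet $\bigoplus_{\mathbb{N}}\mathbb{Z}_p$ is not derived $p$-complete. Argue as the paper does instead. For $n\ge1$, every mod $p$ homology equivalence of spaces is a $T(n)$-equivalence, so $T(n)$-local spaces are $p$-complete. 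For nilpotent spaces this forces all homotopy groups to be derived $p$-complete.
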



\begin{proof}
(1). It is proved in~\cite[Theorem~E]{CSY22} that the space $K(\mathbb{Z}/p,n+1)$ is $T(n)$-acyclic. Then it follows easily by induction that for any finite abelian group $B$, the space $K(B,n+1)$ is also $T(n)$-acyclic. Finally, a general torsion abelian group $A$ is a filtered colimit of finite abelian groups, so that $K(A,n+1)$ is $T(n)$-acyclic as well. Since $K(\mathbb{Z}_p,n+2)$ is the $p$-completion of $K(\mathbb{Q}_p/\mathbb{Z}_p,n+1)$, it is also $T(n)$-acyclic. The same inductive argument as before now shows that $K(A,n+2)$ is $T(n)$-acyclic for any abelian group. The same is then true of the higher Eilenberg--MacLane spaces $K(A,k)$ with $k \geq n+2$ since they can be built out of $K(A,n+2)$ via finite products and colimits.

(2). Bousfield~\cite[Theorem 7.2]{Bou82} proves that an $H$-space as in the statement is $K(n)$-local, thus in particular $T(n)$-local. Conversely, if $X$ is a truncated $H$-space that is $T(n)$-local, then in particular it is $p$-complete. Furthermore, if $\widehat{\tau}_{\leq n+1} X$ denotes the `modified Postnikov section' of $X$ (cf. \cite[Section 7.1]{Bou82}) which is obtained from the usual truncation $\tau_{\leq n+1} X$ by replacing $\pi_{n+1} X$ with its maximal torsionfree derived $p$-complete quotient, then the map $t\colon X \to \widehat{\tau}_{\leq n+1} X$ is a $T(n)$-equivalence by (1); indeed, its fiber is $T(n)$-acyclic. Furthermore, $\widehat{\tau}_{\leq n+1} X$ is $T(n)$-local by the first half of the argument and therefore $t$ is an equivalence.
\end{proof}

From \Cref{prop:T(n)localEMspaces} we can immediately deduce the following:

\begin{prop}
\label{prop:T(n)equivtruncation}
Let $f\colon X \to Y$ be a map between $H$-spaces and assume that the homotopy groups $\pi_k X$ and $\pi_k Y$ are $p^\infty$-torsion for $k \geq 1$. If $f$ is a $T(n)$-equivalence, then $\tau_{\leq n} f$ is an equivalence.
\end{prop}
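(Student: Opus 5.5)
The plan is to compare $f$ with its $T(n)$-localization and use the description of $T(n)$-local truncated $H$-spaces in \Cref{prop:T(n)localEMspaces}(2). First we reduce to the truncated case. Since $X$ and $Y$ are $H$-spaces, all components are equivalent and $\pi_1$ is abelian acting trivially. The Postnikov fibers $\tau_{>n} X \to X \to \tau_{\leq n+1}X$ need some care. Consider the modified Postnikov section $\widehat{\tau}_{\leq n+1}X$ of \cite[Section 7.1]{Bou82}, obtained from $\tau_{\leq n+1}X$ by replacing $\pi_{n+1}$ with its maximal torsionfree derived $p$-complete quotient. We claim the map $X \to \widehat{\tau}_{\leq n+1}X$ is a $T(n)$-equivalence. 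Its fiber is built from Eilenberg--MacLane spaces $K(A,k)$ with $k\geq n+2$, together with a $K(B,n+1)$ where $B$ is the kernel of $\pi_{n+1}X \to$ its torsionfree derived complete quotient. Because $\pi_{n+1}X$ is $p^\infty$-torsion, $B$ is torsion; indeed the maximal torsionfree derived $p$-complete quotient of a torsion group is $0$. So the fiber is $\tau_{>n}X$, which is $T(n)$-acyclic by \Cref{prop:T(n)localEMspaces}(1), using a Postnikov/colimit argument for the infinitely many layers.

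To pass from acyclic fiber to $T(n)$-equivalence, I would use that $\tau_{>n}X \to X \to \tau_{\leq n}X$ is a principal fibration of $H$-spaces. Alternatively, $T(n)$-acyclicity of the fiber for a fibration with simple (trivial monodromy) action gives an $E$-equivalence via the Serre-type argument: $\Sigma^\infty_+$ of the total space is a colimit over the base of the fiber's suspension spectra. Hence $X \to \tau_{\leq n}X$ and $Y\to\tau_{\leq n}Y$ are $T(n)$-equivalences, and $\tau_{\leq n}f$ is a $T(n)$-equivalence.

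Now $\tau_{\leq n}X$ and $\tau_{\leq n}Y$ are truncated $H$-spaces with $p^\infty$-torsion homotopy. Their $T(n)$-localizations are again $H$-spaces, since $\Loc_{T(n)}$ preserves finite products, and are truncated; I expect truncatedness to follow from Bousfield's computation. By \Cref{prop:T(n)localEMspaces}(2) we would like to identify $\Loc_{T(n)}\tau_{\leq n}X$ explicitly, with homotopy groups $\pi_{k+1}$ related to $\pi_k X$ through the $p$-completion of a torsion group: the Bousfield--Kan type identification $K(\mathbb{Z}/p^\infty, k)^\wedge_p \simeq K(\mathbb{Z}_p,k+1)$. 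Concretely, I would show the $T(n)$-localization of such a space agrees with its $p$-completion, by \cite[Theorem 7.2]{Bou82} together with the fact that $p$-completion is a mod $p$ equivalence, hence a $T(n)$-equivalence for truncated spaces. The $p$-completion of a $p^\infty$-torsion $H$-space has homotopy groups $\mathrm{Ext}(\mathbb{Z}/p^\infty,\pi_k)$ in degree $k$ and $\mathrm{Hom}(\mathbb{Z}/p^\infty,\pi_{k})$ in degree $k+1$. So the torsion groups $\pi_k$ for $k\leq n$ are recovered functorially from $\pi_*$ of the completion: as the torsion part of the Ext term, $\pi_k$ being the torsion subgroup of $\mathrm{Ext}(\mathbb{Z}/p^\infty,\pi_k)$ modulo divisible parts, with the divisible part handled via the Hom term in the next degree.

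The main obstacle is this last reconstruction step: showing a map of $p^\infty$-torsion groups inducing isomorphisms on both $\mathrm{Hom}(\mathbb{Z}/p^\infty,-)$ and $\mathrm{Ext}(\mathbb{Z}/p^\infty,-)$ is an isomorphism, in each degree $k\leq n$. Note that the top degree $n$ contributes a torsionfree $\pi_{n+1}$, which \Cref{prop:T(n)localEMspaces}(2) permits. I would argue via the fiber $F$: it is the fiber of $\tau_{\leq n}f$, and the two conditions together say $F$ has derived $p$-completion zero. For $p^\infty$-torsion groups $A$, derived completion zero forces $\mathrm{Hom}(\mathbb{Z}/p^\infty,A)=0$ and $A/p$-type conditions that imply $A=0$. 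So since $\tau_{\leq n}f$ becomes an equivalence after $p$-completion, and the spaces are $p^\infty$-torsion, it is an equivalence on homotopy groups by a five-lemma induction.
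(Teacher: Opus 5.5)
\textbf{There is a genuine gap in your reduction step.} You assert that $X \to \widehat{\tau}_{\leq n+1}X = \tau_{\leq n}X$ is a $T(n)$-equivalence because $\tau_{>n}X$ is $T(n)$-acyclic, ``using a Postnikov/colimit argument for the infinitely many layers''. No such argument exists. \Cref{prop:T(n)localEMspaces}(1) handles each Eilenberg--MacLane layer, and hence any \emph{finite} Postnikov tower. But $\tau_{>n}X$ is the \emph{limit} of its Postnikov tower, and $T(n)$-homology, being periodic rather than connective, does not commute with that limit. The proof of \Cref{prop:T(n)localEMspaces}(2) that you are imitating uses this fiber argument only for a truncated $X$.

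Your claim is in fact false. Take $X = \Loop^\infty\susp^N\mathbb{S}/p$ with $N > n$. This is an $H$-space with $p$-power torsion homotopy groups, and $\tau_{>n}X = X$. Apply $\Phi_n$ to the retraction $\Loop^\infty E \to \Loop^\infty\susp^\infty\Loop^\infty E \to \Loop^\infty E$ with $E=\susp^N\mathbb{S}/p$, and use \Cref{rem:BK}(2). This exhibits $\Loc_{T(n)}\susp^N\mathbb{S}/p \neq 0$ as a retract of $\Loc_{T(n)}\susp^\infty X$, so $X$ is not $T(n)$-acyclic. This is exactly the $v_n$-periodic information that $T(n)$-homology detects in arbitrarily high degrees. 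Consequently you never legitimately obtain that $\tau_{\leq n}f$ is a $T(n)$-equivalence, and your remaining steps have nothing to start from.

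\textbf{How to repair it.} The ingredients of your later steps are the right ones. They must be used through mapping spaces into local targets, not through a homology equivalence $X \to \tau_{\leq n}X$.
\begin{enumerate}
\item Let $Z$ be any connected $n$-truncated $H$-space with $p^\infty$-torsion homotopy groups. Your Hom/Ext computation shows that $Z^\wedge_p$ is $(n+1)$-truncated, with derived $p$-complete homotopy and torsionfree $\pi_{n+1} = \mathrm{Hom}(\mathbb{Z}/p^\infty, \pi_n Z)$. Hence $Z^\wedge_p$ is $T(n)$-local by \Cref{prop:T(n)localEMspaces}(2).
\item The fracture square identifies $Z$ with the fiber of $Z^\wedge_p \to \Loc_{\mathbb{Q}} Z^\wedge_p$. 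Since $X$ has $p^\infty$-torsion homotopy, $\Map_*(X,-)$ kills rational targets. So $\Map_*(X,Z) \simeq \Map_*(X,Z^\wedge_p)$, and likewise for $Y$.
\item Because $f$ is a $T(n)$-equivalence, $f^*\colon\Map_*(Y,Z)\to\Map_*(X,Z)$ is an equivalence for all such $Z$.
\item The spaces $\tau_{\leq n}X$ and $\tau_{\leq n}Y$ are themselves such $Z$, so the Yoneda lemma shows $\tau_{\leq n}f$ is an equivalence.
\end{enumerate}
This is the paper's proof. The passage through the rationalization fiber (the paper's $\mathrm{tors}_p$) is essential. The local object $Z^\wedge_p$ is only $(n+1)$-truncated, so maps into it cannot simply be factored through $\tau_{\leq n}$.
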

\begin{proof}
Without loss of generality we may suppose that $X$ and $Y$ are connected. For any connected nilpotent space $Z$ we will write $\mathrm{tors}_p Z$ for its $p^\infty$-torsion part, defined as the fiber of the rationalization map $Z_p^{\wedge} \to L_\mathbb{Q} Z_p^{\wedge}$. We note that for a connected $H$-space $Z$, its $p^\infty$-torsion part $\mathrm{tors}_p Z$ is $n$-truncated if and only if $Z_p^{\wedge}$ is $(n+1)$-truncated and has torsionfree $\pi_{n+1}$. By \Cref{prop:T(n)localEMspaces}(2), this is equivalent to the statement that $Z_p^{\wedge}$ is $T(n)$-local. 

Since $X$ and $Y$ have $p^\infty$-torsion homotopy groups, it follows that for any $H$-space $Z$ we have an equivalence $\mathrm{Map}_*(X, \mathrm{tors}_p Z) \xrightarrow{\simeq} \mathrm{Map}_*(X, Z_p^{\wedge})
$ and similarly for $Y$. In particular, the comments in the preceding paragraph combined with the hypothesis that $f$ is a $T(n)$-equivalence show that if $Z$ is any $n$-truncated $H$-space with $p^\infty$-torsion homotopy groups, then 
\[
f^*\colon \mathrm{Map}_*(Y, Z) \to \mathrm{Map}_*(X, Z)
\]
is an equivalence. By the Yoneda lemma and the fact that $\mathrm{Map}_*(\tau_{\leq n} X, Z) \simeq \mathrm{Map}_*(X, Z)$, it follows that the map $\tau_{\leq n} f$ is an equivalence.
\end{proof}
\begin{rem}
We expect a version of \Cref{prop:T(n)equivtruncation} to hold true without the $H$-space hypothesis. Since we do not need it, we will not pursue this version here.
\end{rem}

\begin{cor}
\label{cor:T(n)equivtruncation}
Let $f\colon X \to Y$ be a parametric $T(n)$-equivalence between weakly $p^\infty$-torsion spaces. Then $\tau_{\leq n+1} f$ is an equivalence.
\end{cor}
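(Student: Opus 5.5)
We reduce to the connected case and then apply \Cref{prop:T(n)equivtruncation} to loop spaces. First, \Cref{cor:parametrictau1equiv} shows that $\tau_{\leq 1} f$ is an equivalence; in particular $\pi_0 f$ is a bijection. By \Cref{lem:parametrictau1equiv}, $f$ restricts to a parametric $T(n)$-equivalence between corresponding components. It therefore suffices to treat connected $X$ and $Y$ and a chosen basepoint $x \in X$, and to show that $\pi_k f$ is an isomorphism for $k \leq n+1$.

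With $X$ and $Y$ connected and pointed, \Cref{lem:parametricvsordinary} shows that $\Loop f\colon \Loop X \to \Loop Y$ is a $T(n)$-equivalence. However, $\Loop X$ and $\Loop Y$ are $H$-spaces whose $\pi_0$ is the arbitrary group $\pi_1 X$, not a $p^\infty$-torsion group. So I would pass to identity components, that is, replace $\Loop f$ by $\tau_{>0}\Loop f\colon \tau_{>0}\Loop X \to \tau_{>0}\Loop Y$. These are connected $H$-spaces, being the loop spaces of the universal covers. Their homotopy groups $\pi_k$ for $k \geq 1$ are $\pi_{k+1}X$ and $\pi_{k+1}Y$, which are $p^\infty$-torsion by the weakly $p^\infty$-torsion hypothesis.

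The point to check, and the main obstacle, is that $\tau_{>0}\Loop f$ is still a $T(n)$-equivalence. Since $\pi_1 f$ is an isomorphism, we have $\Loop X \simeq \pi_1 X \times \tau_{>0}\Loop X$ as spaces, and similarly for $Y$. Under these splittings $\Loop f$ is the product of the bijection on $\pi_0$ with the map on identity components; concretely, $\Loop f$ restricted to the identity component lands in the identity component. Hence $T(n) \otimes \susp^\infty_+ \Loop f$ is a direct sum over $\pi_1 X$ of copies of $T(n)\otimes\susp^\infty_+(\tau_{>0}\Loop f)$, twisted by translation equivalences, and the summand at the identity is a retract. A retract of an equivalence is an equivalence, so the map on identity components is a $T(n)$-equivalence.

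Now \Cref{prop:T(n)equivtruncation} applies to $\tau_{>0}\Loop f$ and shows that $\tau_{\leq n}(\tau_{>0}\Loop f)$ is an equivalence. This means $\pi_k X \to \pi_k Y$ is an isomorphism for $2 \leq k \leq n+1$. Together with the isomorphisms on $\pi_0$ and $\pi_1$ from the first step, this shows $\tau_{\leq n+1} f$ is an equivalence, since $\pi_k$ at every basepoint is covered by the reduction to components.
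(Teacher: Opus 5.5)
Your proposal is correct and follows essentially the same route as the paper: reduce to connected pointed $X,Y$, use \Cref{lem:parametricvsordinary} to see that $\Loop f$ is a $T(n)$-equivalence, and apply \Cref{prop:T(n)equivtruncation}. One small misreading: that proposition only requires $\pi_k$ to be $p^\infty$-torsion for $k\geq 1$, so it applies to $\Loop f$ directly; your passage to identity components (valid, since $\pi_0\Loop f=\pi_1 f$ is a bijection) just makes explicit the ``without loss of generality connected'' step in the proposition's own proof.
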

\begin{proof}
As usual we may work one connected component at a time and assume that $X$ and $Y$ are connected. Picking a basepoint of $X$ we find that $\Loop f$ is a $T(n)$-equivalence between spaces satisfying the hypotheses of \Cref{prop:T(n)equivtruncation} and the conclusion follows.
\end{proof}

\subsection{Virtual $T(n)$-equivalences}
\label{subsec:virtualequivalences}

This section serves to clarify the relation between our notion of parametric $T(n)$-equivalence and Bousfield's notion of virtual $T(n)$-equivalence \cite[Section 11]{Bou97}, as well as to recall several useful properties of virtual $T(n)$-equivalences that will be needed later.

\begin{defn}\label{def:virtual-homology-equivalence}
A map $f\colon X \to Y$ of pointed spaces is a \emph{virtual $T(n)$-equivalence} if the map
\[
\tau_{> i}\Loc_{T(n)}(\Loop X) \to \tau_{> i}\Loc_{T(n)}(\Loop Y)
\]
is an equivalence for $i \gg 0$.
\end{defn}

In \cite[Theorem 11.10]{Bou97}, Bousfield proves that the notion of virtual $T(n)$-equivalence admits the following stronger characterization:

\begin{thm}[Bousfield]
\label{thm:virtualT(n)}
Let $f\colon X\to Y$ be a map of pointed spaces. Then the following are equivalent:
\begin{itemize}
\item[(1)] The map $f$ is a virtual $T(n)$-equivalence.
\item[(2)] For any $k \geq 1$ and $i \geq n+2$, the map $\tau_{> i}(\Loop^k X) \to \tau_{> i}(\Loop^k Y)$ is a $T(n)$-equivalence.
\item[(3)] There exist $k \geq 1$ and $i \geq n+2$ so that $\tau_{> i}(\Loop^k X) \to \tau_{> i}(\Loop^k Y)$ is a $T(n)$-equivalence.
\end{itemize}
\end{thm}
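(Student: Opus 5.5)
The plan is to prove the cycle (2) $\Rightarrow$ (3) $\Rightarrow$ (1) $\Rightarrow$ (2). The implication (2) $\Rightarrow$ (3) is trivial. The real input is the $T(n)$-acyclicity of high Eilenberg--MacLane spaces from \Cref{prop:T(n)localEMspaces}(1), which lets us move between different connective covers and different loopings without changing $T(n)$-homology.

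\textbf{Step 1: changing the truncation level.} First I will show that for a loop space $W$ and $j \geq i \geq n+2$, the map $\tau_{>j} W \to \tau_{>i} W$ is a $T(n)$-equivalence. Its fiber is the finite Postnikov piece $\tau_{\leq j}\tau_{>i}\Loop W$ of a simple space. Since $i+1 \geq n+3$, this piece is built from Eilenberg--MacLane spaces $K(A,k)$ with $k \geq n+2$, and these are $T(n)$-acyclic. The fibration is principal, being one of loop spaces, so a Serre spectral sequence argument, or an Eilenberg--Moore/bar construction argument, in $T(n)$-homology gives the claim. (Running this on $\Loop^{k-1}$ of the relevant covers, one must watch the degree shift; the condition $i \geq n+2$ is chosen so that the shifted Eilenberg--MacLane spaces still land in the acyclic range.) Consequently, condition (3) for a given $(k,i)$ implies condition (3) for $(k,j)$ with any $j \geq i$.

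\textbf{Step 2: changing the looping.} Next I compare $\tau_{>i}\Loop^k X$ with $\Loop\,\tau_{>i+1}\Loop^{k-1}X$, which are equivalent. It then suffices to show two things for a map $g$ of highly connected spaces:
\begin{itemize}
\item[(a)] if $g$ is a $T(n)$-equivalence, then so is $\Loop g$;
\item[(b)] if $\Loop g$ is a $T(n)$-equivalence, then so is $g$.
\end{itemize}
Part (b) follows from the bar construction $B\Loop \simeq \mathrm{id}$ on connected spaces, since $\susp^\infty_+$ of a geometric realization is the colimit. Part (a) is the hard direction. Here I would use Bousfield's theorem that for $T(n)$ (or $K(n)$), looping preserves equivalences between sufficiently connected spaces, proved by comparing $T(n)$-localizations via the Bousfield--Kuhn functor and nullification. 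Concretely, a $T(n)$-equivalence $g$ of $(n+2)$-connected spaces induces an equivalence $\Loc_{T(n)} g$, and $\Loop$ of a $T(n)$-local space is $T(n)$-local. The remaining task is to show that $\Loop$ of the localization map is a $T(n)$-equivalence up to truncation. I expect this to be the main obstacle: looping does not commute with $T(n)$-localization in general, and controlling this failure is precisely where passing to $\tau_{>i}$ with $i \geq n+2$ is needed.

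\textbf{Step 3: relating to virtual equivalences.} Finally I connect the statement to \Cref{def:virtual-homology-equivalence}. For (1) $\Rightarrow$ (2), the map $\tau_{>i}\Loc_{T(n)}\Loop X \to \tau_{>i}\Loop X$ should be shown to be a $T(n)$-equivalence after a suitable cover, once more using Step 1 and the characterization of $T(n)$-local truncated $H$-spaces in \Cref{prop:T(n)localEMspaces}(2). The point is that $\Loop X \to \Loc_{T(n)}\Loop X$ has a fiber that is $T(n)$-acyclic in high degrees. Combined with Steps 1 and 2, this yields (1) $\Leftrightarrow$ (3) $\Leftrightarrow$ (2), with all choices of $k$ and $i \geq n+2$ interchangeable.
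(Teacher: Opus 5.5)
Your Steps 1 and 2(b) are correct formal bookkeeping, with one small slip. The fiber of $\tau_{>j}W\to\tau_{>i}W$ is $\Loop\tau_{\leq j}\tau_{>i}W$, whose homotopy lies in degrees $i,\dots,j-1$. No loop structure is needed there, since a map whose fibers are all $T(n)$-acyclic is a $T(n)$-equivalence.

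Together these steps reduce $(3)\Rightarrow(2)$ to your claim (a) for maps of the form $g=\tau_{>i+1}\Loop^k f$ with $k\geq 1$. But (a) is the entire non-formal content of the theorem, and you do not prove it. Invoking ``Bousfield's theorem that looping preserves $T(n)$-equivalences between sufficiently connected spaces'' is circular. The sketch that follows stops exactly at what you yourself call the main obstacle. The paper gives no proof either: it cites \cite[Theorem 11.10]{Bou97}, and Bousfield reaches that theorem via his results on $H$-spaces such as \Cref{prop:virtualequivHspace}.

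Moreover, (a) is not true in the generality you state it, so any proof has to use that $\tau_{>i+1}\Loop^k X$ is a loop space when $k\geq 1$. Suppose (a) held for $T(n)$-equivalences $g$ between $d$-connected spaces with $d\geq n+3$. Then $\Loop g=\tau_{>n+2}\Loop g$ would be a $T(n)$-equivalence, so $g$ would satisfy (3) with $k=1$. Hence $g$ would be a virtual $T(n)$-equivalence, and therefore a $v_n$-periodic equivalence by \Cref{prop:parametricvn}. So $T(n)$-homology would detect $v_n$-periodic equivalences between highly connected spaces. That fails without $H$-space structure; compare example (c) in the introduction and the $H$-space hypothesis in \Cref{prop:virtualequivHspace}.

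Step 3 has the same problem. You assert that $\tau_{>i}\Loop X\to\tau_{>i}\Loc_{T(n)}\Loop X$ is a $T(n)$-equivalence (you wrote this map in the wrong direction). This does not follow from \Cref{prop:T(n)localEMspaces}(2), which only describes \emph{truncated} $T(n)$-local $H$-spaces. It is itself the case $k=1$ of $(1)\Rightarrow(2)$ for the parametric localization $X\to B\Loc_{T(n)}\Loop X$.

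Even granting it, $(3)\Rightarrow(1)$ needs an input you do not supply. You must show that for a $T(n)$-local $H$-space $Z$ the map $\Loc_{T(n)}\tau_{>i}Z\to Z$ is an equivalence in high degrees. That requires controlling $\Loc_{T(n)}$ on the fiber sequence $\tau_{>i}Z\to Z\to\tau_{\leq i}Z$.

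So the proposal organizes the formal reductions correctly, but both non-formal ingredients are missing. The first is that $\Loop$ preserves $T(n)$-equivalences between highly connected loop spaces. The second is the comparison of $\tau_{>i}\Loc_{T(n)}$ with $\Loc_{T(n)}\tau_{>i}$ on $H$-spaces.
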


We need several further properties, also proved by Bousfield on the way to establishing \Cref{thm:virtualT(n)}.

\begin{prop}[{\cite[Theorem 11.3]{Bou97}}]
\label{prop:virtualequivHspace}
A $T(n)$-equivalence of $H$-spaces is a virtual $T(n)$-equivalence.
\end{prop}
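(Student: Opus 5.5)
Let $f\colon X \to Y$ be a $T(n)$-equivalence of $H$-spaces. We must show that $\tau_{>i}\Loc_{T(n)}(\Loop X) \to \tau_{>i}\Loc_{T(n)}(\Loop Y)$ is an equivalence for $i \gg 0$. The plan is to compare $\Loc_{T(n)}\Loop X$ with $\Loop \Loc_{T(n)} X$. Since $X$ is an $H$-space, $\Loc_{T(n)} X$ is again an $H$-space, because $\Loc_{T(n)}$ preserves finite products. By hypothesis, $\Loc_{T(n)} f$ is an equivalence, so $\Loop\Loc_{T(n)} X \simeq \Loop\Loc_{T(n)} Y$. It therefore suffices to show that the natural map $\Loop X \to \Loop\Loc_{T(n)} X$, or rather the induced map $\Loc_{T(n)}\Loop X \to \Loop\Loc_{T(n)}X$, is an equivalence after applying $\tau_{>i}$ for $i$ large. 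The key point is that this control must be uniform, and natural in $X$. One first reduces to connected $H$-spaces, working on the basepoint component, since $\Loop$ only sees that component.

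The main step is to understand the fiber of $\Loop\colon$ ``$T(n)$-localize, then loop'' versus ``loop, then localize''. I would proceed in two parts. First, observe that $\Loop \Loc_{T(n)} X$ is $T(n)$-local, since loops of a local space are local. Hence there is a canonical map $\Loc_{T(n)}\Loop X \to \Loop\Loc_{T(n)}X$.

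Second, I would analyze the map $\Loop X \to \Loop\Loc_{T(n)}X$ through its effect on the fiber of $X \to \Loc_{T(n)}X$. For an $H$-space $X$, the fiber $F$ of $X\to\Loc_{T(n)}X$ is $T(n)$-acyclic; this is the $H$-space feature, via a Serre-type argument with trivial local coefficients, since $H$-spaces are simple. I would then invoke a delooping/looping acyclicity result of Bousfield type. Looping a $T(n)$-acyclic $H$-space yields a space whose $T(n)$-homology is concentrated in bounded Postnikov pieces. The relevant inputs are \Cref{prop:T(n)localEMspaces}: Eilenberg--MacLane spaces $K(A,k)$ with $k\ge n+2$ are acyclic, and truncated local $H$-spaces are described as there. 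From these one deduces that the discrepancy between $\Loc_{T(n)}\Loop X$ and $\Loop\Loc_{T(n)}X$ is concentrated in degrees $\le n+1$. Applying $\tau_{>i}$ for $i\ge n+2$ then kills it.

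The hardest step is the claim that looping a $T(n)$-acyclic $H$-space produces something whose $T(n)$-localization is truncated. To attack it, I would follow Bousfield's approach via the Bousfield--Kuhn functor and nullification with respect to a type $n+1$ suspension $V$. One shows that $T(n)$-equivalences of $H$-spaces become equivalences after applying $\mathbf{P}_{V}$ followed by sufficiently connected covers. This uses the classification of $T(n)$-local $H$-spaces as those whose highly connected covers are $v_n$-periodically controlled. The comparison $\tau_{>i}\Loc_{T(n)} \simeq \tau_{>i}\Loc_{T(n)}\tau_{>i}$ for $H$-spaces, again using \Cref{prop:T(n)localEMspaces}(1) to discard the low Postnikov stages, then finishes the argument.
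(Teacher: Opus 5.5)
The paper does not prove this statement itself; it quotes Bousfield's Theorem 11.3. Judged on its own, your proposal has a genuine gap.

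Your opening reduction is correct, but it is only a reformulation. Since $\Loc_{T(n)}X$ is an $H$-space and $\Loop\Loc_{T(n)}X$ is $T(n)$-local, the claim that $\tau_{>i}\Loc_{T(n)}\Loop X \to \tau_{>i}\Loop\Loc_{T(n)}X$ is an equivalence for $i \gg 0$ is literally the proposition for the $H$-map $X \to \Loc_{T(n)}X$. It usefully reduces arbitrary maps to localization maps, but all the content remains.

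The first substantive step is then invalid as justified. A Serre-type comparison argument does not work for $T(n)_*$: the Atiyah--Hirzebruch--Serre spectral sequence has $E^2_{s,t}$ nonzero for $t$ in both directions, so the Zeeman-style induction has no starting point. Your argument uses only that the map is a $T(n)$-equivalence with simple base, and in that generality the conclusion is false. For $n \geq 1$, the map $* \to K(\mathbb{Z}/p,n+1)$ is a $T(n)$-equivalence of simply connected $H$-spaces by \Cref{prop:T(n)localEMspaces}(1). Yet its fiber $K(\mathbb{Z}/p,n)$ has nonzero reduced $T(n)$-homology (Ravenel--Wilson). Low-dimensional failures of this kind are exactly why the conclusion can only be `virtual'.

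The later steps do not supply the missing input either. The claim that looping a $T(n)$-acyclic $H$-space $F$ gives a space with truncated $T(n)$-localization is precisely the proposition for $F \to *$. \Cref{prop:T(n)localEMspaces} concerns only Eilenberg--MacLane spaces and truncated $H$-spaces, so it gives no control over $\Loc_{T(n)}\Loop F$ for non-truncated $F$. Passing from the fiber back to the map would also need \Cref{prop:virtual2outof3} (Bousfield's Theorem 11.4). That is not formal, since $\Loc_{T(n)}$ does not preserve fiber sequences.

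Your route through $\mathbf{P}_V$ rests on a false claim. The map $\prod_{k\geq 1}K(\mathbb{Q},2k) \to *$ is a $T(n)$-equivalence of $H$-spaces. Its source is already $\mathbf{P}_V$-local for any finite $V$ of positive type, and it has homotopy in arbitrarily high degrees. So $\tau_{>j}\mathbf{P}_V$ of this map is never an equivalence.

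Even restricted to $p^\infty$-torsion $H$-spaces, your claim proves too much. $\mathbf{P}_V$ and connected covers preserve $v_i$-periodic homotopy for $i \leq n$ (\Cref{thm:Pvnperiodichtpy} and its proof). Hence your claim would make every $T(n)$-equivalence of such $H$-spaces a $v_i$-periodic equivalence for all $i \leq n$. That is more than \Cref{thm:Hmainhomologytohtpy}, which the paper deduces from the very proposition at hand. The paper obtains the lower heights only conditionally on the open Question 2 (compare \Cref{thm:mainhomologytohtpy2}).

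The `classification of $T(n)$-local $H$-spaces' you invoke is also not available beyond the truncated case of \Cref{prop:T(n)localEMspaces}(2). What is missing is a genuinely unstable argument comparing $\Loc_{T(n)}\Loop X$ with $\Loop\Loc_{T(n)}X$ for $H$-spaces, which is what the paper imports from Bousfield.
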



The following is \cite[Theorem 11.4]{Bou97}, but is also a formal consequence of \Cref{thm:virtualT(n)}:

\begin{prop}
\label{prop:virtual2outof3}
Consider a map between fiber sequences of pointed spaces:
\[
\begin{tikzcd}
F \ar{r}\ar{d} & X \ar{r}\ar{d} & Y \ar{d} \\
F' \ar{r} & X' \ar{r} & Y'.
\end{tikzcd}
\]
If two out of the three vertical maps are virtual $T(n)$-equivalences, then so is the third.
\end{prop}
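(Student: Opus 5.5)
We deduce the statement from \Cref{thm:virtualT(n)}, using the characterization in item (2) with all $k \geq 1$ and $i \geq n+2$. Looping a fiber sequence of pointed spaces gives a fiber sequence again. Since $\Loop^k$ commutes with fibers, for each $k \geq 1$ we obtain a map of fiber sequences
\[
\Loop^k F \to \Loop^k X \to \Loop^k Y, \qquad \Loop^k F' \to \Loop^k X' \to \Loop^k Y'.
\]
All terms are loop spaces, hence $H$-spaces. The fiber sequences are principal, and they remain principal after passing to $i$-connected covers $\tau_{>i}$, at least for $k \geq 2$ and after a shift in $i$.

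The plan is to take $k$ large, say $k \geq 2$, so that everything is an infinite-loop-like object in a range. Consider the sequence $\tau_{>i}\Loop^{k+1} Y \to \tau_{>i}\Loop^k F \to \tau_{>i}\Loop^k X$. It is not literally a fiber sequence, because truncation does not preserve fiber sequences at the bottom homotopy group. To fix this we compare with $\tau_{>i}$ of the fiber, using that the fiber of $\tau_{>i}\Loop^k X \to \tau_{>i}\Loop^k Y$ differs from $\tau_{>i}\Loop^k F$ only by an Eilenberg--MacLane space $K(A,i)$ with $i \geq n+2$ (coming from the cokernel of $\pi_{i+1}$). Such a space is $T(n)$-acyclic by \Cref{prop:T(n)localEMspaces}(1). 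So up to $T(n)$-equivalence we may treat the $\tau_{>i}$ pieces as forming a fiber sequence of connected $H$-spaces, in fact a principal fibration of loop spaces.

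The key step is a two-out-of-three property for $T(n)$-equivalences of principal fibrations $G \to E \to B$ of connected loop spaces. The cases where the maps on $G$ and $B$, or on $G$ and $E$, are $T(n)$-equivalences follow from the Serre/Eilenberg--Moore (bar construction) spectral sequence. Concretely, write $E$ as the geometric realization of a simplicial space with levels $G^{\times m} \times B$, or use the Rothenberg--Steenrod construction $B \simeq |E \times G^{\bullet}|$. Then use that $T(n)$-homology commutes with finite products (Künneth in $\Sp_{T(n)}$ via $\Loc_{T(n)}\susp^\infty_+$ being symmetric monoidal) and with geometric realizations.
\begin{itemize}
\item If the maps on $G$ and $E$ are $T(n)$-equivalences, then so is the map on $B$, by the realization formula.
\item If the maps on $G$ and $B$ are equivalences, the case of $E$ is handled by looking at the total space as a colimit over $B$ of a local system with fiber $\Loc_{T(n)}\susp^\infty_+ G$, using principality so the local system is induced. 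Alternatively, this case reduces to the previous one by delooping once more (replace $k$ by $k-1$) and rotating the fiber sequence.
\end{itemize}

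The main obstacle is the remaining case, deducing the fiber statement: if the maps on $E$ and $B$ are $T(n)$-equivalences, then so is the map on $G$. Here I would rotate the sequence. We have $\Loop B \to G \to E$, and the map on $\Loop B$ is a $T(n)$-equivalence by \Cref{prop:virtualequivHspace} together with item (2) of \Cref{thm:virtualT(n)}: $B$ itself is $\tau_{>i}\Loop^k$ of something, so looping again stays within characterization (2). Then $G$ is the fiber of the principal $\Loop B$-fibration over $E$, and this reduces to the total-space case. Finally, conclude by characterization (3) of \Cref{thm:virtualT(n)} for the third map, having produced $T(n)$-equivalences on $\tau_{>i}\Loop^k$ for suitable $k$ and $i$.
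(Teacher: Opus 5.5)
Your overall route is the one the paper has in mind. The paper gives no written proof; it only cites Bousfield's Theorem 11.4 and calls the statement a formal consequence of \Cref{thm:virtualT(n)}. Your plan is that formal argument: use item (2) to get $T(n)$-equivalences on all $\tau_{>i}\Loop^k$ for the two known maps, rotate the Puppe sequence, apply a bar construction, and finish with item (3).

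However, the ``key step'' is false as stated. For principal fibrations $G \to E \to B$ of connected loop spaces, two-out-of-three for $T(n)$-equivalences holds in general only for the base, via $B \simeq |E \times G^{\times\bullet}|$. Two counterexamples:
\begin{itemize}
\item The map from $K(\mathbb{Z}/p,n) \to * \to K(\mathbb{Z}/p,n+1)$ to $* \to * \to *$ is a $T(n)$-equivalence on total space and base, but not on the fiber.
\item The map from $K(\mathbb{Z}/p,n) \xrightarrow{=} K(\mathbb{Z}/p,n) \to *$ to $K(\mathbb{Z}/p,n) \to * \to K(\mathbb{Z}/p,n+1)$ (identity on fibers) defeats the total-space case.
\end{itemize}
Your local-system argument actually yields $\Loc_{T(n)}\susp^\infty_+ E \simeq \Loc_{T(n)}\susp^\infty_+ G \otimes_{\Loc_{T(n)}\susp^\infty_+ \Loop B} \mathbb{S}_{T(n)}$. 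So it requires $\Loop B \to \Loop B'$ to be a $T(n)$-equivalence, i.e.\ $B \to B'$ to be parametric. In your situation this holds, but only because $\Loop$ of the base is again covered by item (2), e.g.\ $\Loop\tau_{>i}\Loop^k Y = \tau_{>i-1}\Loop^{k+1}Y$. You do not need \Cref{prop:virtualequivHspace} for this. So every case must be fed by the virtual hypothesis, not by a general lemma. The cleanest way is to always rotate so that the unknown map sits in base position, and then use only the bar construction:
\begin{itemize}
\item $\Loop^k F \to \Loop^k X \to \Loop^k Y$ when $Y$ is unknown;
\item $\Loop^k Y \to \Loop^{k-1}F \to \Loop^{k-1}X$ with $k \geq 2$ when $X$ is unknown;
\item $\Loop^{k+1}X \to \Loop^{k+1}Y \to \Loop^k F$ when $F$ is unknown.
\end{itemize}

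Second, your truncation correction is off by one. Let $U \to V \to W$ be one of these rotated sequences and $P$ the fiber of $\tau_{>i}V \to \tau_{>i}W$. Then $\tau_{>i}P \simeq \tau_{>i}U$ and $P \simeq (\tau_{>i}P)_{hK(\pi_i P,\, i-1)}$. So what must be $T(n)$-acyclic is $K(\pi_i P, i-1)$, not $K(\pi_iP,i)$. Acyclicity of the base $K(\pi_iP,i)$ alone does not control $P$, by the same phenomenon as above. Since $\pi_i P$ is a cokernel that need not be torsion, $K(\pi_i P, n+1)$ can fail to be acyclic when $i = n+2$. Taking $i \geq n+3$ fixes this at no cost, because item (2) holds for all $i \geq n+2$ and item (3) needs only one pair $(k,i)$.
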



Finally we discuss the relation between parametric and virtual $T(n)$-equivalences. We have the following simple observation:

\begin{lem}
\label{lem:parametricimpliesvirtual}
A parametric $T(n)$-equivalence $f\colon X \to Y$ of pointed spaces is also a virtual $T(n)$-equivalence.
\end{lem}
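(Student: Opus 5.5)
Combine \Cref{lem:parametricvsordinary} with the definition of virtual $T(n)$-equivalence. Given a parametric $T(n)$-equivalence $f\colon X \to Y$ of pointed spaces, \Cref{lem:parametricvsordinary} (applied at the chosen basepoint) shows that $\Loop f\colon \Loop X \to \Loop Y$ is a $T(n)$-equivalence of spaces. Since $\Loc_{T(n)}$ is the localization of $\mathcal{S}$ at the $T(n)$-equivalences, the induced map
\[
\Loc_{T(n)}(\Loop X) \to \Loc_{T(n)}(\Loop Y)
\]
is an equivalence of spaces. Applying the functor $\tau_{>i}$ (connected cover at the basepoint) to an equivalence gives an equivalence for every $i$, in particular for all $i \gg 0$. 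This is exactly the condition of \Cref{def:virtual-homology-equivalence}, so $f$ is a virtual $T(n)$-equivalence.

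The only point to check is the basepoint bookkeeping. \Cref{lem:parametricvsordinary} is stated for pointed maps, and its proof reduces to the basepoint component via \Cref{lem:parametrictau1equiv}. The loop space $\Loop X$ only sees the basepoint component of $X$, so there is no further issue.

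One could alternatively route the argument through \Cref{thm:virtualT(n)}(3): since $\Loop f$ is a $T(n)$-equivalence, one would need $\tau_{>i}\Loop f$ to be a $T(n)$-equivalence. That is a nontrivial statement, so the direct argument via the definition is preferable and I expect no real obstacle.
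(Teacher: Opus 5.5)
Your proposal is correct and matches the paper's proof: \Cref{lem:parametricvsordinary} shows that $\Loop f$ is a $T(n)$-equivalence, so $\Loc_{T(n)}(\Loop f)$ is an equivalence, and therefore so are its connected covers $\tau_{>i}$ for all $i$. Your basepoint bookkeeping is fine, and you were right not to route the argument through \Cref{thm:virtualT(n)}.
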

\begin{proof}
According to \Cref{lem:parametricvsordinary} the map $\Loc_{T(n)}(\Loop f)$ is an equivalence. Hence it is in particular an equivalence in high degrees.
\end{proof}

This statement admits the following converse:

\begin{prop}
\label{prop:virtualvsparametric}
Let $f$ be a virtual $T(n)$-equivalence of pointed connected spaces.
\begin{itemize}
\item[(1)] If the fiber $\mathrm{fib}(f)$ is $n$-connected and $\pi_{n+1}(\mathrm{fib}(f))$ is torsion, then $f$ is a $T(n)$-equivalence.
\item[(2)] If the fiber $\mathrm{fib}(f)$ is $(n+1)$-connected and $\pi_{n+2}(\mathrm{fib}(f))$ is torsion, then $f$ is a parametric $T(n)$-equivalence. 
\end{itemize}
\end{prop}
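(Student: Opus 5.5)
Part (2) follows from part (1) applied to $\Loop f$, so I would prove (1) first and then deduce (2).

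\emph{Reduction of (2) to (1).} By \Cref{lem:parametricvsordinary}, it suffices to show that $\Loop f\colon \Loop X \to \Loop Y$ is a $T(n)$-equivalence. The map $\Loop f$ is a pointed map. Its fiber $\Loop\,\mathrm{fib}(f)$ is $n$-connected and has torsion $\pi_{n+1}$, because $\mathrm{fib}(f)$ is $(n+1)$-connected with torsion $\pi_{n+2}$.

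$\Loop f$ is again a virtual $T(n)$-equivalence. This follows from \Cref{thm:virtualT(n)}: condition (2) for $f$ (all $k \geq 1$) gives condition (3) for $\Loop f$. The only subtlety is that $\Loop X$ need not be connected. However, $\pi_0\Loop X \to \pi_0 \Loop Y$ is a bijection, since the fiber is connected enough. Moreover, all components of a loop space are equivalent to the basepoint component, compatibly with $\Loop f$. So we may apply (1) to the basepoint components $\Loop_0 X \to \Loop_0 Y$, which are virtual $T(n)$-equivalences since their loop spaces agree with those of $\Loop f$. Hence (2) reduces to (1).

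\emph{Proof of (1).} Write $F = \mathrm{fib}(f)$. Comparing the fiber sequence $F \to X \to Y$ with the fiber sequence $* \to Y \to Y$, \Cref{prop:virtual2outof3} shows that $F \to *$ is a virtual $T(n)$-equivalence. I would then aim to show that $F$ is $T(n)$-acyclic even after twisting by the action of $\Loop Y$. To do this, I would decompose $F$ via its Postnikov section $F \to \tau_{\leq i} F$ for large $i$, so $\tau_{>i}F$ is the fibre.

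\begin{itemize}
\item[(a)] \emph{The truncated piece.} $\tau_{\leq i}F$ has nontrivial homotopy only in degrees $n+1,\dots,i$, with $\pi_{n+1}$ torsion. It is therefore built by a finite Postnikov tower from $K(\pi_{n+1},n+1)$ and $K(A,k)$ with $k\geq n+2$. By \Cref{prop:T(n)localEMspaces}(1), each of these is $T(n)$-acyclic.
\item[(b)] \emph{The highly connected piece.} The virtual condition, via \Cref{thm:virtualT(n)}(2) with $k=1$ applied to $F \to *$, says that $\tau_{>i}\Loop F$ is $T(n)$-acyclic for $i \geq n+2$. I would need to deloop this once to get that $\tau_{>i}F$ is $T(n)$-acyclic. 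For this I would use that a (connected) space whose loop space is $T(n)$-acyclic is itself $T(n)$-acyclic, by the bar construction $B\Loop Z = \colim \Loop Z^{\times \bullet}$ and the fact that $\Loc_{T(n)}$ preserves finite products, i.e.\ the Rothenberg–Steenrod/Eilenberg–Moore type argument.
\end{itemize}

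\emph{Main obstacle.} Once $F$ is known to be $T(n)$-acyclic, I still need $X \to Y$ to be a $T(n)$-equivalence. This is a fibration with $T(n)$-acyclic fibers, where $\Loop Y$ acts on the fiber, and the total space is $\colim_Y$ of the local system with value $\Loc_{T(n)}\susp^\infty_+F$. That colimit is $\colim_Y \Loc_{T(n)}\bbS$ provided the fiberwise map $\susp^\infty_+ F \to \bbS$ is a $T(n)$-equivalence pointwise. This requires only $T(n)$-acyclicity of each fiber, which is exactly what (a) and (b) give, since the fibers over all points of $Y$ are equivalent to $F$. Thus the real difficulty is (b): establishing the acyclicity of a highly connected cover of $F$ from the virtual hypothesis.

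I expect the cleaner route there is to invoke Bousfield's theorem directly. Combining the fiber sequences $\tau_{>i}F\to F\to\tau_{\le i}F$ with (a), and using fiberwise acyclicity again, $F\to *$ is a $T(n)$-equivalence. This holds provided I can show that $\tau_{>i}F$ is $T(n)$-acyclic for some $i\geq n+2$. I would attempt that via condition (3) of \Cref{thm:virtualT(n)} and the delooping argument above.
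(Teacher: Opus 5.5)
Your proposal is correct and follows essentially the paper's argument. The fiber is virtually $T(n)$-acyclic by \Cref{prop:virtual2outof3}, a highly connected cover of it is $T(n)$-acyclic by \Cref{thm:virtualT(n)}, the remaining truncation is $T(n)$-acyclic by \Cref{prop:T(n)localEMspaces}(1), and part (2) comes from applying the same reasoning to $\Loop f$. You are in fact more careful than the paper on two points, both handled correctly: the delooping step, which is needed because \Cref{thm:virtualT(n)} only controls $\tau_{>i}\Loop^k$ for $k \geq 1$, and the possible non-connectedness of $\Loop X$.
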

\begin{proof}
The fiber $\mathrm{fib}(f)$ is virtually $T(n)$-acyclic by \Cref{prop:virtual2outof3}. By \Cref{thm:virtualT(n)} the space $\tau_{>n+2} \mathrm{fib}(f)$ is then $T(n)$-acyclic, so that the map $\mathrm{fib}(f) \to \tau_{\leq n+2} \mathrm{fib}(f)$ is a $T(n)$-equivalence. The space $\tau_{\leq n+2} \mathrm{fib}(f)$ can only have nontrivial homotopy groups in degrees $n+1$ and $n+2$, of which the first is torsion by hypothesis. It is therefore $T(n)$-acyclic as a consequence of \Cref{prop:T(n)localEMspaces}(1). We conclude that $\mathrm{fib}(f)$ is $T(n)$-acylic and $f$ is a $T(n)$-equivalence. For (2), we note that it suffices to prove that $\Loop f$ is a $T(n)$-equivalence. We start with the observation that also $\mathrm{fib}(\Loop f)$ is virtually $T(n)$-acyclic by \Cref{thm:virtualT(n)}. Then $\mathrm{fib}(\Loop f) \to \tau_{\leq n+2}(\mathrm{fib}(\Loop f)) = \Loop\tau_{\leq n+3}(\mathrm{fib}(f))$ is a $T(n)$-equivalence and the conclusion follows analogously.
\end{proof}




  \section{Periodic homotopy groups and localization of spaces}
\label{sec:periodichomotopy}

We will begin this section by recalling the definition of $v_n$-periodic homotopy groups and the construction of the telescopic functors of Bousfield and Kuhn. Then we review the relation between periodic homotopy groups and the theory of \emph{nullification} with respect to a given space $V$ developed by Bousfield \cite{Bou94} and Farjoun \cite{Dro95}. In particular, we discuss the (unstable) Bousfield classes of spaces. Apart from our synthesis we claim little originality here; all of the relevant ingredients were essentially already established by Bousfield \cite{Bou94, Bou01}, but we need to put them in a form that facilitates our work in later sections.

\subsection{Periodic homotopy groups}\label{sec:vn-periodic-groups}

As always we assume fixed a prime number $p$. Let $V_n$ be a finite pointed space of type $n$, equipped with a $v_n$ self-map $v\colon \susp^d V_n \to V_n$. We will take $n >0$, so that also $d > 0$. Then for any pointed space $X$ we may consider the \emph{$v$-periodic homotopy groups} of $X$ defined by $v^{-1}\pi_*\Map_*(V_n,X)$. In fact these homotopy groups arise as the homotopy groups of a spectrum \cite{Kuh08}. This spectrum is formed from the spaces $\Map_*(V_n,X)$ by using the `transition maps'
\[
\Map_*(V_n,X) \xto{v^*} \Map_*(\susp^d V_n, X) \simeq \Loop^d \Map_*(V_n,X).
\]
More precisely, Bousfield and Kuhn's telescopic functors arise as follows:

\begin{defn}
The \emph{telescopic functor} $\Phi_{V_n}\colon \mathcal{S}_* \to \Sp$ associated to $V$ is defined by
\[
\Phi_{V_n}(X) := \colim\left(\susp^\infty\Map_*(V_n, X)  \xto{v^*} \susp^{\infty-d} \Map_*(V_n, X) \xto{v^*} \susp^{\infty-2d} \Map_*(V_n, X) \xto{v^*} \cdots \right).
\]
\end{defn}

\begin{rem}
One easily verifies that the groups $\pi_*\Phi_{V_n}(X)$ are indeed isomorphic to the $v$-periodic homotopy groups of $X$ as defined above. Moreover, as the notation suggests, the functor $\Phi_{V_n}$ depends only on $V_n$ and not on the choice of self-map $v$. This is a consequence of the asymptotic uniqueness of $v_n$ self-maps \cite[Corollary 3.7]{HS98}.
\end{rem}

\begin{defn}
A map $f$ of pointed spaces is a \emph{$v_n$-periodic equivalence} if $\Phi_{V_n}(f)$ is an equivalence.
\end{defn}

The above is equivalent to demanding that $f$ induce an isomorphism on $v$-periodic homotopy groups. It is a consequence of the thick subcategory theorem \cite{HS98} that the notion of $v_n$-periodic equivalence only depends on $n$ and not on the choice of $V_n$ or of the self-map $v$.

\begin{rem}
\label{rem:BK}
The \emph{Bousfield--Kuhn functor} $\Phi_n\colon \mathcal{S}_* \to \Sp$ removes the dependence on $V$ (see \cite{Kuh08} for a detailed discussion). It is a functor satisfying the following two properties:
\begin{itemize}
\item[(1)] There is a natural equivalence $V_n^{\vee} \otimes \Phi_n \simeq \Phi_{V_n}$, where $V_n^{\vee}$ denotes the Spanier--Whitehead dual of $V$.
\item[(2)] The composite $\Sp \xto{\Loop^\infty} \mathcal{S}_* \xto{\Phi_n} \Sp$ is equivalent to the $T(n)$-localization functor.
\end{itemize}
In fact $\Phi_n$ is completely characterized by property (1), if that equivalence is also required to be natural in $V_n$. The homotopy groups of $\Phi_n$ can be interpreted as the `absolute' $v_n$-periodic homotopy groups, removing the dependence on a choice of `coefficients' $V_n$. In this paper we will not need $\Phi_n$ itself and only use the telescopic functors $\Phi_{V_n}$.
\end{rem}

Let us recall some basic properties of $\Phi_{V_n}$ for later use.

\begin{lem}
    \label{lem:basicpropertiesPhi}
    \item[(1)] For any pointed space $X$, the spectrum $\Phi_{V_n}(X)$ is $T(n)$-local.
    \item[(2)] The functor $\Phi_{V_n}$ preserves fiber sequences. In particular, a map of pointed spaces is a $v_n$-periodic equivalence if and only if $\Phi_{V_n}$ annihilates its fiber.
    \item[(3)] For a pointed space $X$, the $i$-connected cover $\tau_{>i} X \to X$ is a $v_n$-periodic equivalence for any $i \geq 0$.
    \item[(4)] The canonical map $\Map_*(V_n,X) \to \Loop^\infty \Phi_{V_n}(X)$ is a $v_n$-periodic equivalence.
\end{lem}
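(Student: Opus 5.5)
We prove the four items in order, working directly from the definition of $\Phi_{V_n}$ as a sequential colimit of the spectra $\susp^{\infty-kd}\Map_*(V_n,X)$ along the maps $v^*$.

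\emph{Item (1).} The spectrum $\Phi_{V_n}(X)$ is naturally a module over the endomorphism algebra of $V_n$, and the colimit inverts the self-map $v$. We first produce the identification
\[
\Phi_{V_n}(X) \simeq v^{-1}\bigl(V_n^\vee\otimes \Phi_{V_n}(X)\bigr)\text{-type telescope},
\]
which exhibits $\Phi_{V_n}(X)$ as a colimit of spectra of the form $V_n^{\vee}\otimes Z$ along maps induced by the $v_n$ self-map $v^\vee$. A telescope $v^{-1}(V_n^\vee\otimes Z)$ is $T(n)$-local: smashing with a type-$n$ complex with $v$ inverted is $\Loc_n^f$-local and kills $\Loc_{n-1}^f$-information, so it is $T(n)$-local. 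Concretely, one can check that $W\otimes \Phi_{V_n}(X)=0$ for type $\geq n+1$ complexes $W$ and for $T(i)$, $i<n$, and that $\Phi_{V_n}(X)$ is $\Loc_n^f$-local. Alternatively, and more cleanly, we use \Cref{rem:BK}(1): $\Phi_{V_n}\simeq V_n^\vee\otimes\Phi_n$, and $\Phi_n$ lands in $T(n)$-local spectra (Kuhn); since $V_n^\vee$ is finite, $V_n^\vee\otimes\Phi_n(X)$ is again $T(n)$-local. We will cite \cite{Kuh08} for this.

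\emph{Item (2).} The functor $X\mapsto \Map_*(V_n,X)$ preserves fiber sequences of pointed spaces. The functor $\susp^\infty$ does not, but the discrepancy is measured by cross-terms that vanish after inverting $v$. This is the standard argument: $\Phi_{V_n}$ can be rewritten as the colimit of the spectra whose $kd$-th space is $\Map_*(V_n,X)$, i.e.\ we replace $\susp^{\infty-kd}$ by the spectrum
\[
\bigl(\Map_*(V_n,X),\ \Map_*(V_n,X),\ \ldots\bigr)
\]
with structure maps $v^*$, which is finite-limit preserving in $X$. The comparison that $\susp^\infty$ versus the `spectrum of spaces' agree after $v$-telescoping is the key technical point; it follows because the $v$-periodic homotopy of $\susp^{\infty}$ of a space agrees with that of the space in the colimit (Freudenthal in a stable range, as $kd\to\infty$ the stable range grows). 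Fiber sequences of spectra are then preserved by filtered colimits. The ``in particular'' follows since a map of spectra is an equivalence if and only if its fiber vanishes.

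\emph{Item (3).} By (2) it suffices to show $\Phi_{V_n}(\tau_{\le i}X)=0$. Since $V_n$ is finite, $\Map_*(V_n,\tau_{\le i}X)$ is truncated, so $\pi_*$ of the $k$-th term vanishes above a fixed degree. The telescope then has homotopy groups
\[
\colim_k \pi_{*+kd}\Map_*(V_n,\tau_{\le i} X)=0.
\]

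\emph{Item (4).} The map $\Map_*(V_n,X)\to\Loop^\infty\Phi_{V_n}(X)$ induces on $v$-periodic homotopy the map $v^{-1}\pi_*\Map_*(V_n,\Map_*(V_n,X))\to v^{-1}\pi_*\Map_*(V_n,\Loop^\infty\Phi_{V_n}X)$. The target is $\pi_*$ of $V_n^\vee\otimes\Phi_{V_n}(X)$ after $v$-inversion, using \Cref{rem:BK}(2) and $T(n)$-locality from (1). Both sides identify with $\pi_*(V_n^\vee\otimes\Phi_{V_n}X)$, naturally in the second $V_n$-variable, so the map is an isomorphism. The main obstacle is (2), the compatibility of $\susp^\infty$ with the telescope; we will rely on \cite{Kuh08} and \cite{Bou01} for that step.
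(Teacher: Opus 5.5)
Your outline has the same shape as the paper's proof. Item (1) is Kuhn's theorem. Item (3) follows from (2) plus the vanishing of $\colim_k\pi_{*+kd}$ of a truncated mapping space. Item (4) identifies both $\Phi_{V_n}\Map_*(V_n,X)$ and $\Phi_{V_n}\Loop^\infty\Phi_{V_n}(X)$ with $V_n^\vee\otimes\Phi_{V_n}(X)$, using \Cref{rem:BK} and (1).

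The problem is item (2), which you call the main obstacle and justify by a mechanism that does not work. You choose the right object: the spectrum whose $kd$-th spaces are all $\Map_*(V_n,X)$, with structure maps $v^*$. But there are no `cross-terms' to be killed by inverting $v$, and the Freudenthal argument fails. The stable range of $Z=\Map_*(V_n,X)$ is controlled by the connectivity of $Z$, which is fixed and possibly zero. So passing to $\pi_{j+kd}Z$ with $k\to\infty$ moves you out of the stable range, not into it.

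The comparison is in fact purely formal and has nothing to do with $v$ being a $v_n$ self-map. For any pointed spaces $Z_k$ with maps $\susp^d Z_k\to Z_{k+1}$, the stabilization maps induce
\[
\colim_k\pi_{j+kd}(Z_k)\xrightarrow{\cong}\colim_k\pi^{s}_{j+kd}(Z_k)=\pi_j\bigl(\colim_k\susp^{\infty-kd}Z_k\bigr),
\]
with inverse given by iterating the structure maps. So $\colim_k\susp^{\infty-kd}Z_k$ is the spectrum associated to the prespectrum $(Z_k)$. Hence for every $m$ the space $\Loop^{\infty-md}\Phi_{V_n}(X)$ is $\colim_k\Loop^{kd}\Map_*(V_n,X)$, a sequential colimit of finite-limit-preserving functors of $X$. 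Since sequential colimits of spaces commute with finite limits, $\Phi_{V_n}$ preserves fiber sequences. This is the content of the paper's `straightforward from the construction', and no appeal to \cite{Kuh08} or \cite{Bou01} is needed. The formula for $\pi_*\Phi_{V_n}$ that you use in (3) comes from the same identification.

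Two smaller points.
\begin{enumerate}
\item In (1), drop the direct sketch. The displayed identification is circular, since $\Phi_{V_n}(X)$ appears on both sides. The criterion you list ($\Loc_n^f$-local, annihilated by $T(i)$ for $i<n$ and by complexes of type $\geq n+1$) does not imply $T(n)$-locality. The fiber of $\Loc_n^f\mathbb{S}\to\Loc_{n-1}^f\mathbb{S}$ satisfies all three conditions: it is rationally trivial, while its $T(n)$-localization $\Loc_{T(n)}\mathbb{S}$ is not, so it is not $T(n)$-local. The actual input is Kuhn's theorem, as in the paper.
\item In (4), identifying the source needs (2), in the form that $\Phi_{V_n}$ commutes with cotensoring by the finite space $V_n$; this gives $\Phi_{V_n}\Map_*(V_n,X)\simeq V_n^\vee\otimes\Phi_{V_n}(X)$. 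Also, knowing that source and target are abstractly equivalent does not make the given map an equivalence. You still have to check that the map corresponds to the identity under the two identifications, which is the verification the paper records.
\end{enumerate}
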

\begin{proof}
Item (1) is \cite[Theorem 1.1]{Kuh08}, item (2) is straightforward from the construction of the telescopic functor, and (3) follows from the combination of (2) and the observation that the $v$-periodic homotopy groups of any truncated space must vanish. To establish (4) we should argue that $\Phi_{V_n} \Map_*(V_n,X) \to \Phi_{V_n} \Loop^\infty \Phi_{V_n}(X)$ is an equivalence. Since $\Phi_{V_n}$ commutes with finite limits, it also commutes with the cotensoring of $\mathcal{S}_*$ and $\Sp$ by finite spaces, and therefore $\Phi_{V_n} \Map_*(V_n,X) \simeq V_n^{\vee} \otimes \Phi_{V_n}(X)$. By \Cref{rem:BK} we can also identify
\[
\Phi_{V_n} \Loop^\infty \Phi_{V_n}(X) \simeq V_n^{\vee} \otimes \Phi_n \Loop^\infty \Phi_{V_n}(X) \simeq V_n^{\vee} \otimes  \Phi_{V_n}(X)
\]
and under these identifications it is straightforward to verify that the map of (4) becomes the identity.
\end{proof}

The following will be a convenient criterion:

\begin{lem}\label{lem:periodic-equivalence-criterion}
Let $f\colon X \to Y$ be a map of pointed spaces and suppose that 
\[
\Map_*(V_n, f)\colon \Map_*(V_n, X) \to \Map_*(V_n,Y)
\]
is a $T(n)$-equivalence. Then $f$ is a $v_n$-periodic equivalence.
\end{lem}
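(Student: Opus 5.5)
The strategy is to apply the telescopic functor $\Phi_{V_n}$ to the map $\Map_*(V_n,f)$ and then use \Cref{lem:basicpropertiesPhi}(4) to relate the result back to $\Phi_{V_n}(f)$. Throughout, write $M_X := \Map_*(V_n,X)$ and $M_Y := \Map_*(V_n,Y)$, and write $g := \Map_*(V_n,f)\colon M_X \to M_Y$.

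As in the proof of \Cref{lem:basicpropertiesPhi}(4), the functor $\Phi_{V_n}$ commutes with finite limits and cotensors by finite spaces. This gives a natural identification $\Phi_{V_n}(M_X) \simeq V_n^\vee \otimes \Phi_{V_n}(X)$, and likewise for $Y$. The key claim is therefore that $\Phi_{V_n}(g)$ is an equivalence. Granting it, $V_n^\vee \otimes \Phi_{V_n}(f)$ is an equivalence. Since $V_n$ is of type $n$, the spectrum $V_n \otimes V_n^\vee$ contains $V_n$ as a retract after $T(n)$-localization, and tensoring with $V_n^\vee$ is conservative on $T(n)$-local spectra. Hence the cofiber $C$ of $\Phi_{V_n}(f)$, which is $T(n)$-local by \Cref{lem:basicpropertiesPhi}(1), satisfies $V_n^\vee\otimes C \simeq 0$ and therefore vanishes. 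So $f$ is a $v_n$-periodic equivalence.

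It remains to show that $\Phi_{V_n}$ sends the $T(n)$-equivalence $g$ to an equivalence. I would use the description of $\Phi_{V_n}(Z)$ as the colimit of $\susp^{\infty - kd}\Map_*(V_n,Z)$ together with the Bousfield--Kuhn functor. By \Cref{rem:BK}(1) it suffices to show that $\Phi_n(g)$ is an equivalence.

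To see this, use the classical fact (Bousfield, Kuhn) that $\Phi_n$ factors through $T(n)$-homology in the following sense. For any pointed space $Z$ there is a natural map $\Phi_n(Z) \to \Loc_{T(n)}\susp^\infty Z$ relating to the suspension spectrum, but that map alone is not enough. The cleaner route is the following. The space $\Map_*(V_n,Z)$ is the space of maps out of a suspension-type object, and the telescope is built from iterated $v^*$. Therefore $\Phi_{V_n}(M_Z)$ can be computed as $\colim_k \susp^{-kd}\susp^\infty \Map_*(V_n, M_Z)$. Precomposition with the self-map $v$ on the outer $V_n$ factor commutes with $g$, so this colimit is a functor of $M_Z$ that sends $T(n)$-equivalences to equivalences. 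Concretely, it is a colimit of $\susp^\infty$ of mapping spaces, and after $T(n)$-localization the $v$-inverted telescope of $\susp^\infty M_Z$ is $T(n)$-local. It should agree with $\Loc_{T(n)}$ of a telescope of $\susp^\infty M_Z$, which depends only on $\Loc_{T(n)}\susp^\infty M_Z$.

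The main obstacle is precisely this identification. One has to justify that the Bousfield--Kuhn telescope on $M_Z$ is determined by $\Loc_{T(n)}\susp^\infty_+ M_Z$. I would try to invoke Kuhn's theorem that $\Phi_n \circ \Loop^\infty$ is $\Loc_{T(n)}$ together with the natural transformation $\Phi_n(Z) \to \Phi_n(\Loop^\infty\susp^\infty Z) \simeq \Loc_{T(n)}\susp^\infty Z$. Here one expects this map to be split injective naturally when $Z = M_Z$ is of the form $\Map_*(V_n,-)$; this is the Kuhn splitting (the Bousfield--Kuhn functor is a retract of $T(n)$-localized suspension on such spaces). Given such a natural retraction, $\Phi_n(g)$ is a retract of the equivalence $\Loc_{T(n)}\susp^\infty g$ and is therefore an equivalence. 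This completes the argument.
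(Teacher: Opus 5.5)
\textbf{Verdict: there is a genuine gap.} It sits exactly at the step you call ``the main obstacle'', and the detour that creates it is unnecessary.

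By passing to $\Phi_{V_n}(g)$ you replace the problem with a harder one. $\Phi_{V_n}(M_X)$ is a telescope built from $\susp^\infty\Map_*(V_n,M_X)$, and knowing that $g$ is a $T(n)$-equivalence tells you nothing directly about $\Map_*(V_n,g)$. Your sentence saying this telescope ``is a functor of $M_Z$ that sends $T(n)$-equivalences to equivalences'' is false as a general principle. That functor is $\Phi_{V_n}$ itself, and $T(n)$-equivalences of spaces need not be $v_n$-periodic equivalences (example (c) of the introduction).

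What remains is the natural retraction of $\Phi_n(M_X)\to\Loc_{T(n)}\susp^\infty M_X$, which you only say ``one expects''. It can be supplied. Let $c_X\colon M_X\to\Loop^\infty\Phi_{V_n}(X)$ be the canonical map, $\tilde c_X$ its adjoint, and $\eta$ the unit of $\susp^\infty\dashv\Loop^\infty$. Then $\Loop^\infty(\tilde c_X)\circ\eta=c_X$. Also $\Phi_n(c_X)$ is an equivalence, by \Cref{lem:basicpropertiesPhi}(4), \Cref{rem:BK}(1), and conservativity of $V_n^\vee\otimes-$ on $T(n)$-local spectra. Hence $\Phi_n(c_X)^{-1}\circ\Phi_n(\Loop^\infty\tilde c_X)$ is a natural retraction. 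As written, though, this step carries the entire proof and is asserted rather than proved.

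The idea you are missing is to apply the formula you quote, $\Phi_{V_n}(Z)=\colim_k\susp^{-kd}\susp^\infty\Map_*(V_n,Z)$, to $Z=X$ rather than $Z=M_X$. Then, naturally in $X$, $\Phi_{V_n}(f)$ is the sequential colimit of the maps $\susp^{-kd}\susp^\infty g$. Each of these is a $T(n)$-equivalence by hypothesis, and a colimit of $T(n)$-equivalences is a $T(n)$-equivalence. Since $\Phi_{V_n}(X)$ and $\Phi_{V_n}(Y)$ are $T(n)$-local by \Cref{lem:basicpropertiesPhi}(1), $\Phi_{V_n}(f)$ is an equivalence.

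This is the paper's two-line proof. It needs none of the identification $\Phi_{V_n}(M_X)\simeq V_n^\vee\otimes\Phi_{V_n}(X)$, the conservativity step, or the Bousfield--Kuhn functor. Your heuristic that the answer ``should agree with $\Loc_{T(n)}$ of a telescope of $\susp^\infty M_Z$'' is literally true for $\Phi_{V_n}(X)$: it is the definition.
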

\begin{proof}
Since $\Phi_{V_n}$ is constructed as a colimit of desuspensions of the functor $\susp^{\infty}\Map_*(V_n,-)$, it follows that $\Phi_{V_n}(f)$ is a $T(n)$-equivalence of spectra. But $\Phi_{V_n}(X)$ and $\Phi_{V_n}(Y)$ are $T(n)$-local by \Cref{lem:basicpropertiesPhi}(1), so that $\Phi_{V_n}(f)$ is an equivalence.
\end{proof}

\subsection{Localization and nullification of spaces}
\label{subsec:localizationnullification}

In this section we briefly review Bousfield and Farjoun's work \cite{Bou94,Dro95} on the localization $\Loc_f$ of the $\infty$-category $\mathcal{S}$ of spaces with respect to a fixed map $f\colon V \to W$. In the special case of a map $f\colon V \to *$, this localization is usually referred to as the \emph{nullification} with respect to the space $V$ and denoted $\mathbf{P}_V$. The main technical results we will need are Bousfield's \Cref{bousfield-classification} and its implications for the determination of the Bousfield classes of spaces.

\begin{defn}
Let $f\colon V \to W$ be a map of spaces.
\begin{enumerate}
\item A space $X$ is \emph{$\Loc_f$-local} if $f^*\colon \Map(W,X) \to \Map(V,X)$ is an equivalence.
\item A map $g\colon A \to B$ is an \emph{$\Loc_f$-equivalence} if for any $\Loc_f$-local space $X$, the map $g^*\colon \Map(B,X) \to \Map(A,X)$ is an equivalence. 
\item A map $i\colon C \to D$ is \emph{$f$-cellular} if it lies in the weakly saturated class generated by $f$, i.e., if it can be obtained from $f$ via pushouts, transfinite compositions, and retracts.
\end{enumerate}
\end{defn}

In the absolute case of a map $f\colon V \to *$, the terminology we  use is as follows: we say \emph{$\mathbf{P}_V$-local} instead of $\Loc_f$-local, \emph{$\mathbf{P}_V$-equivalence} instead of \emph{$\Loc_f$-equivalence} and \emph{$V$-cellular} instead of $f$-cellular.

\begin{rem}
It is common to say \emph{$f$-local} for what we call $\Loc_f$-local and \emph{$V$-null} or \emph{$V$-periodic} for what we call $\mathbf{P}_V$-local. We stick with the terminology introduced above to limit the amount of necessary terminology and notation and to avoid confusion with the kernel of the functor $\mathbf{P}_V$.
\end{rem}

We collect some of the basic existence results of the theory of localizations in the following (see e.g. \cite{Dro95}).

\begin{prop}
\label{prop:locbasics}
   \begin{enumerate}
    \item Any space $X$ admits an $f$-cellular map $X \to \Loc_f X$ so that $\Loc_f X$ is an $\Loc_f$-local space.
    \item Any $f$-cellular map is an $\Loc_f$-equivalence.
    \item The inclusion of the full subcategory $\Loc_f \mathcal{S}$ of $\Loc_f$-local pointed spaces into all of $\mathcal{S}$ admits a left adjoint, which we will denote $\Loc_f$.
   \end{enumerate}
\end{prop}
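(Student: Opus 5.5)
The statement collects three standard facts from the theory of localizations of presentable $\infty$-categories, and I would prove them by the small object argument. Throughout, the key is that $f$ is a map between small spaces (each is $\kappa$-compact for some regular cardinal $\kappa$).

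\textbf{Step 1: the construction of $X \to \Loc_f X$.} I would form the set $J$ of maps consisting of $f$ together with the maps
\[
\partial\Delta^k \times W \cup_{\partial\Delta^k \times V} \Delta^k \times V \to \Delta^k \times W, \qquad k \geq 0.
\]
These are pushout-products of $f$ with the boundary inclusions, and they lie in the weakly saturated class generated by $f$. Indeed, the $k=0$ case is $f$ itself. For the higher cases, in the $\infty$-category $\mathcal{S}$ each such map is obtained from $f$ and from the fold map $W \amalg_V W \to W$ by pushouts. Since the fold map has the identity of $W$ as a retract-section of its pushout with $f$, it is $f$-cellular. (One checks this by noting that the codiagonal is a retract of a pushout of $f$.)

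\textbf{Step 2: the small object argument.} Next I would run the small object argument transfinitely, up to $\kappa$. At each stage I attach, via pushouts, copies of maps in $J$ along all lifting problems, and then take the colimit. By $\kappa$-compactness of the sources and targets, the resulting map $X \to \Loc_f X$ is $f$-cellular, which gives item (1). Moreover $\Loc_f X$ has the unique extension property against $J$. This is exactly the statement that $\Map(W,\Loc_f X) \to \Map(V,\Loc_f X)$ is surjective on $\pi_0$ and on all higher homotopy over every basepoint, i.e.\ an equivalence. So $\Loc_f X$ is $\Loc_f$-local.

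\textbf{Step 3: cellular maps are $\Loc_f$-equivalences (item (2)).} For a local $Z$, the class of maps $g$ for which $g^*\colon\Map(B,Z)\to\Map(A,Z)$ is an equivalence contains $f$ by definition of locality. This class is closed under pushouts, transfinite composites and retracts, because $\Map(-,Z)$ turns colimits into limits and equivalences are stable under pullback, limits of towers and retracts. Hence the class contains every $f$-cellular map.

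\textbf{Step 4: the adjoint (item (3)).} This is formal. For a local $Z$, combining items (1) and (2) gives
\[
\Map(\Loc_f X,Z) \simeq \Map(X,Z),
\]
so $\Loc_f X$ corepresents $\Map(X,-)$ restricted to local spaces. Exactly as in the proof of \Cref{prop:parametriclocalization}(3), this yields the left adjoint. The pointed version follows identically.

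The main obstacle is the step in Step 1 and Step 2 verifying that the higher pushout-product maps are $f$-cellular, so that the unique-lifting version of locality is achieved by cellular attachments. If that direct check is awkward, I would instead just cite \cite{Dro95} or the general theory of accessible localizations (Lurie's HTT 5.5.4.15) for items (1) and (3). The fact that the reflector unit is $f$-cellular would then be deduced from Farjoun's explicit construction.
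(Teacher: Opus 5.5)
Your plan matches the paper's (small object argument for (1), weak saturation for (2), corepresentability for (3)). The step you add to make the unit $f$-cellular is false, and it cannot be repaired, because (1) itself fails when ``$f$-cellular'' is read literally.

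\textbf{The false step.} Your $k=1$ horn is exactly the codiagonal $\nabla\colon W\amalg_V W\to W$. It has a section $W\to W\amalg_V W$ that is a pushout of $f$. But being a retraction of an $f$-cellular map does not make $\nabla$ a retract of one.

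\textbf{Counterexample.} Take $f\colon S^0\to *$. Pushouts of $f$ attach $1$-cells. Every $f$-cellular map is a retract of a transfinite composite of such attachments, so it is injective on $\pi_1$ at every basepoint. But $\nabla$ is $S^1\to *$, which is not. A connected example: for $f\colon S^1\to *$ the horn is $S^2\to *$, while attaching $2$-cells is injective on $H_2$.

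\textbf{Consequence for (1).} For $f\colon S^0\to *$ the local spaces are $\emptyset$ and $*$. So the unit $S^1\to\Loc_f S^1\simeq *$ is not $f$-cellular. Similarly $S^2\to\mathbf{P}_{S^1}S^2\simeq *$ is not $S^1$-cellular. The paper's one-line appeal to the small object argument glosses over the same point. Your fallback does not help either. Lurie's HTT 5.5.4.15 only places the unit in the strongly saturated class generated by $f$. Farjoun's construction attaches along horns (or suspensions) of $f$, not along $f$ itself.

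\textbf{What your argument does prove.} Steps 2--4 establish the corrected statement: $X\to\Loc_f X$ is $J$-cellular, where $J$ is your set of horns. Replace the false inclusion of $J$ in the $f$-cellular maps by the observation that each map in $J$ is an $\Loc_f$-equivalence. For local $Z$, $\Map(-,Z)$ sends the $k$-th horn to the pullback-power of the equivalence $f^*\colon\Map(W,Z)\to\Map(V,Z)$ with $\partial\Delta^k\subset\Delta^k$, which is again an equivalence. Running your Step 3 for the class generated by $J$ gives (2), and with Step 4 gives (3).

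This weaker form of (1) is all the paper uses later, for instance in \Cref{prop:fequivEequiv}. If $f$ is an $E$-equivalence, then so is every horn by two-out-of-three, since $V\to(S^{k-1}\times W)\amalg_{S^{k-1}\times V}V$ is a pushout of $S^{k-1}\times f$.
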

\begin{proof}
Item (1) is proved using the small object argument. Item (2) follows immediately from the observation that the $\Loc_f$-equivalences form a weakly saturated class. Item (3) follows from the observation that for any $X \in \mathcal{S}$, the functor
\[
\Map(X,-)\colon \Loc_f \mathcal{S} \to \mathcal{S}
\]
is corepresentable, namely by a space $\Loc_f X$ built as in item (1).
\end{proof}

\begin{example}
\label{ex:LfvsLE}
    Localization with respect to a homology theory $E$ is an example of $\Loc_f$-localization with respect to a suitably chosen map $f$ \cite[Section 2.5]{Bou97}. Indeed, $f$ can be taken to be the disjoint union of `all' $E$-equivalences; to be more precise, one considers $E$-equivalences between $\kappa$-small spaces, with $\kappa$ the cardinality of $\pi_* E$.
\end{example}

Let us observe that parametric $E$-localization is essentially also an example of $\Loc_f$-localization:

\begin{prop}
Let $E$ be a spectrum and let $f\colon A \to B$ be a map of pointed spaces so that $\Loc_f = \Loc_E$ (see \Cref{ex:LfvsLE}). Then a connected pointed space $X$ is $\Loc_{\susp f}$-local if and only if it is parametrically $E$-local.
\end{prop}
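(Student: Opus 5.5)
The plan is to reduce both conditions to a statement about $\Loop X$ and then apply \Cref{prop:parametriclocalization}(1). By that result, a connected space $X$ is parametrically $E$-local exactly when $\Loop X$ is $E$-local, i.e.\ $\Loc_f$-local, using $\Loc_f = \Loc_E$. So it suffices to show that a connected pointed space $X$ is $\Loc_{\susp f}$-local \iff $\Loop X$ is $\Loc_f$-local.

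\textbf{Step 1: pass to pointed mapping spaces.} Since $A$ and $B$ are pointed, the map $\susp f$ is a map between connected spaces, so for connected $X$ the basepoint fibrations
\[
\Map_*(\susp B, X) \to \Map(\susp B, X) \to X \quad \text{and} \quad \Map_*(\susp A, X) \to \Map(\susp A, X) \to X
\]
show that $(\susp f)^*\colon \Map(\susp B,X) \to \Map(\susp A,X)$ is an equivalence \iff its pointed version is. This is the same argument used in the proof of \Cref{prop:parametriclocalization}(1). Here $\susp$ is the reduced suspension. If $A$ or $B$ is not connected, the unreduced and reduced suspensions still agree up to the choice of basepoint, so the argument is unaffected.

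\textbf{Step 2: the adjunction.} The adjunction $\Map_*(\susp A, X) \simeq \Map_*(A, \Loop X)$ turns the pointed condition into the statement that $f^*\colon \Map_*(B,\Loop X) \to \Map_*(A,\Loop X)$ is an equivalence. It remains to compare this pointed condition with the unpointed one defining $\Loc_f$-locality of $\Loop X$.

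\textbf{Step 3: pointed versus unpointed (the main obstacle).} The danger is that $A$ and $B$ need not be connected, and $\Loop X$ is not connected. The approach is to use the evaluation fibrations
\[
\Map_*(B,\Loop X) \to \Map(B,\Loop X) \to \Loop X,
\]
taken over every point of $\Loop X$ rather than only the basepoint. The key observation is that $\Loop X$ is a group-like $H$-space. Translation by a loop therefore identifies the fiber over any point with the fiber over the basepoint, compatibly with $f^*$. Hence, on fibers over every point of $\Loop X$, $f^*$ is an equivalence \iff it is an equivalence on fibers over the basepoint. Comparing the two fibrations, which share the base $\Loop X$, $f^*$ is then an unpointed equivalence \iff it is a pointed equivalence.

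Combining the three steps: $X$ is $\Loc_{\susp f}$-local \iff $\Loop X$ is $\Loc_f$-local, which is equivalent to $\Loop X$ being $E$-local, and by \Cref{prop:parametriclocalization}(1) this holds \iff $X$ is parametrically $E$-local.
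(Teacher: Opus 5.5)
Your proof is correct and takes the same route as the paper. The paper also reduces to the claim that $X$ is $\Loc_{\susp f}$-local if and only if $\Loop X$ is $\Loc_f$-local, which it calls immediate from the definitions, and then applies \Cref{prop:parametriclocalization}(1); your Steps 1--3 spell out that reduction, and the group-like $H$-space translation in Step 3 correctly handles the comparison between pointed and unpointed mapping spaces into the non-connected space $\Loop X$.
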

\begin{proof}
It is immediate from the definitions that $X$ is $\Loc_{\susp f}$-local if and only if $\Loop X$ is $\Loc_f$-local. The result now follows from \Cref{prop:parametriclocalization}(1).
\end{proof}

We will need several results on the interplay between suspensions and localizations, of which the first is the following (see \cite{farjounsmith}, \cite[Section 3]{Bou94}):

\begin{thm}[Bousfield, Farjoun]
\label{thm:LocLoop}
Let $f\colon V \to W$ be a map of pointed spaces. Then there is a natural equivalence of functors $\Loop \Loc_{\susp f} \simeq \Loc_f \Loop$ on the $\infty$-category $\mathcal{S}_*$ of pointed spaces.
\end{thm}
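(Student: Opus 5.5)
The plan is to reduce the comparison $\Loop \Loc_{\susp f} X \simeq \Loc_f \Loop X$ to two facts: $\Loop \Loc_{\susp f} X$ is $\Loc_f$-local, and the map $\Loop X \to \Loop\Loc_{\susp f}X$ is an $\Loc_f$-equivalence. Together these say that $\Loop$ applied to the unit $X \to \Loc_{\susp f} X$ is an $\Loc_f$-localization of $\Loop X$, which gives the natural equivalence. Throughout I work with pointed spaces. Since $\Loop$ only sees the basepoint component, I will also use that $\Loc_{\susp f}$ can be computed componentwise, and that $\susp f$ is a map of connected spaces, so the basepoint component is all that matters.

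The locality statement is formal. A space $Z$ is $\Loc_{\susp f}$-local iff $\Map(\susp W, Z) \to \Map(\susp V, Z)$ is an equivalence. Fibering over evaluation at the basepoint, as in the proof of \Cref{prop:parametriclocalization}, this is equivalent to the pointed version $\Map_*(\susp W,Z)\to\Map_*(\susp V,Z)$ at every basepoint. By adjunction this is $\Map_*(W,\Loop_z Z)\to \Map_*(V,\Loop_z Z)$. Together with the analogous unpointed-to-pointed reduction for $\Loop_z Z$ (using that $\Loop_z Z$ is a group, so all components look alike), this says $\Loop_z Z$ is $\Loc_f$-local for every $z$. In particular $\Loop\Loc_{\susp f}X$ is $\Loc_f$-local.

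The substantive step is showing that $\Loop$ carries the $\susp f$-cellular map $X \to \Loc_{\susp f}X$ of \Cref{prop:locbasics}(1) to an $\Loc_f$-equivalence. I would pass through the $\mathbb{E}_1$/group-like picture: for connected $X$, the localization $\Loc_{\susp f}X$ should be $B$ of the localization of $\Loop X$ in group-like $\mathbb{E}_1$-spaces with respect to $\Loc_f$. The key input is that $\Loc_f$ preserves finite products. This holds because $\Loc_f$-equivalences are closed under products with a fixed space: the class of maps $g$ such that $g\times \mathrm{id}_K$ is an $\Loc_f$-equivalence is weakly saturated and contains $f$ (by the cotensor/exponential adjunction $\Map(K,-)$ preserving local objects). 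Consequently $\Loc_f$ of a group-like $\mathbb{E}_1$-space $G$ is again group-like $\mathbb{E}_1$, and $B G \to B\Loc_f G$ is defined.

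It then remains to check two things. First, $B\Loc_f G$ is $\Loc_{\susp f}$-local, by the previous paragraph, since its loop space is $\Loc_f G$. Second, $BG \to B\Loc_f G$ is an $\Loc_{\susp f}$-equivalence. For the latter, given a $\Loc_{\susp f}$-local connected $Z$, I would compare $\Map_*(B\Loc_f G, Z)\simeq \Map_{\mathbb{E}_1}(\Loc_f G,\Loop Z)$ with $\Map_{\mathbb{E}_1}(G,\Loop Z)$. Writing $\mathbb{E}_1$-maps as the totalization of the bar construction, $\Map(G^{\times k},\Loop Z)$ against $\Map((\Loc_f G)^{\times k},\Loop Z)$, the comparison is termwise an equivalence, because $G^{\times k}\to(\Loc_f G)^{\times k}$ is an $\Loc_f$-equivalence by product preservation and $\Loop Z$ is $\Loc_f$-local. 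This identifies $\Loc_{\susp f}X\simeq B\Loc_f\Loop X$, naturally in $X$, and looping gives the theorem.

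I expect the main obstacle to be the product-preservation claim and the resulting $\mathbb{E}_1$-structure on $\Loc_f G$, made rigorous $\infty$-categorically. Product preservation requires the observation that $\Loc_f$-locality of $Y$ implies $\Loc_f$-locality of $\Map(K,Y)$. The transfer of algebra structure is then the standard fact that a localization compatible with the cartesian monoidal structure lifts to algebras.
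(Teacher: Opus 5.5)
The paper does not prove this theorem; it cites \cite{farjounsmith} and \cite[Section 3]{Bou94}. Your argument is correct and is the standard one, recast $\infty$-categorically. The key points are that $\Loc_f$ preserves finite products and therefore lifts to group-like $\mathbb{E}_1$-spaces; group-likeness of $\Loc_f G$ follows by applying $\Loc_f$ to the shear equivalence of $G$. Then, for connected $X$, the space $B\Loc_f\Loop X$ is $\Loc_{\susp f}$-local and receives an $\Loc_{\susp f}$-equivalence from $X\simeq B\Loop X$.

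One justification is misstated, however. Mapping the bar construction $G^{\times\bullet}$ into $\Loop Z$ gives a cosimplicial space whose totalization is $\Map(BG,\Loop Z)$, not $\Map_{\mathbb{E}_1}(G,\Loop Z)$. Mapping the bar construction into $Z$ instead does compute $\Map(BG,Z)$, but this does not help. The reason is that $Z$ is only $\Loc_{\susp f}$-local and need not be $\Loc_f$-local, so the termwise comparison fails. For example, $K(\pi,1)$ with $\pi$ nontrivial is $\mathbf{P}_{S^2}$-local but not $\mathbf{P}_{S^1}$-local.

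The equivalence you actually need is $\Map_{\mathbb{E}_1}(\Loc_f G,\Loop Z)\simeq\Map_{\mathbb{E}_1}(G,\Loop Z)$. It follows from the fact you invoke in your last paragraph. Since $\Loc_f$-equivalences are stable under $-\times K$, the inclusion of $\mathbb{E}_1$-algebras in $\Loc_f$-local spaces into all $\mathbb{E}_1$-spaces has a left adjoint, computed by $\Loc_f$ on underlying spaces (cf. \cite[Section 2.2.1]{HA}). Alternatively, resolve $G$ by the free--forgetful bar resolution $\mathrm{Free}^{\bullet+1}G$. The resulting terms $\Map(\mathrm{Free}^{k}G,\Loop Z)$ are products of spaces $\Map(G^{\times j},\Loop Z)$, and there your termwise argument applies verbatim.

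With this repair the proof is complete. You should also add the routine remark that testing $\Loc_{\susp f}$-equivalences only against connected pointed $Z$ suffices, because $\susp V$ and $\susp W$ are connected.
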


It will be useful to record the following observation on the interplay between $f$-localization and $E$-equivalences:

\begin{prop}
\label{prop:fequivEequiv}
    Let $E$ be a spectrum and $f\colon V \to W$ an $E$-equivalence of pointed spaces. For a map $g \colon X \to Y$ of pointed spaces we have:
    \begin{enumerate}
        \item 
        The map $g$ is an $E$-equivalence if and only if $\Loc_f(g)$ is an $E$-equivalence.
        \item 
        If $X$ and $Y$ are connected, then $g$ is a parametric $E$-equivalence if and only if $\Loc_{\susp f}(g)$ is a parametric $E$-equivalence.
    \end{enumerate} 
\end{prop}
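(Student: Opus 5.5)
For (1), the plan is to show that the localization map $\eta_X\colon X \to \Loc_f X$ is an $E$-equivalence for every pointed space $X$, and then conclude by two-out-of-three. In the naturality square
\[
\begin{tikzcd}
X \ar{r}{g}\ar{d}{\eta_X} & Y \ar{d}{\eta_Y} \\
\Loc_f X \ar{r}{\Loc_f g} & \Loc_f Y
\end{tikzcd}
\]
the vertical maps will then be $E$-equivalences, so $g$ is an $E$-equivalence if and only if $\Loc_f g$ is.

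To see that $\eta_X$ is an $E$-equivalence, use \Cref{prop:locbasics}(1): it says $\eta_X$ is $f$-cellular, meaning it lies in the weakly saturated class generated by $f$. It therefore suffices to check that the $E$-equivalences of spaces form a weakly saturated class containing $f$. The class contains $f$ by hypothesis. The functor $\susp^\infty_+$ preserves colimits and $E \otimes -$ is exact, so the class is closed under the following operations.
\begin{itemize}
\item[(a)] Pushouts. A pushout of spaces maps to a pushout square of spectra, and a cobase change of an $E$-equivalence is again one.
\item[(b)] Transfinite compositions. These are filtered colimits, and equivalences are preserved under colimits.
\item[(c)] Retracts.
\end{itemize}
The basepoint plays no role in this argument. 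Alternatively, one can argue that every $E$-local space is $\Loc_f$-local and obtain the same conclusion via \Cref{ex:LfvsLE}, but the cellular argument is the more direct route.

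For (2), the same strategy applies with parametric $E$-equivalences and the map $\susp f$. The map $\eta_X\colon X \to \Loc_{\susp f}X$ is $\susp f$-cellular, so it is enough to show two things: the parametric $E$-equivalences form a weakly saturated class, and $\susp f$ is a parametric $E$-equivalence. The second follows from \Cref{lem:parametricvsordinary}, since $\Loop\susp f$ is an $E$-equivalence by the James splitting argument already used in the proof of \Cref{prop:parametriclocalization}, and $\susp V$, $\susp W$ are connected.

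For weak saturation, the cleanest approach is to use the description $X \mapsto \Fun(X,\Sp_E)$. This functor sends colimits of spaces to limits of $\infty$-categories, since it is the right Kan extension of $\Sp_E$ along $\mathcal{S}^{\op}$. Hence a pushout of a map inducing an equivalence on $\Fun(-,\Sp_E)$ again induces one, because a pullback of an equivalence of $\infty$-categories is an equivalence. Transfinite compositions become limits of towers of equivalences, which are equivalences, and retracts are handled by retracts of equivalences.

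Finally, since $X$ and $Y$ are connected, two-out-of-three for parametric $E$-equivalences (evident, as equivalences of functor categories satisfy it) finishes the proof. The only step requiring care is verifying that $\Fun(-,\Sp_E)$ takes colimits to limits; this is standard because $\mathcal{S}$ is freely generated by the point under colimits.
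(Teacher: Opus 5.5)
Your part (1) is the paper's argument. For part (2) you take a genuinely different route, and it is correct.

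The paper loops down. Via \Cref{lem:parametricvsordinary} and the Bousfield--Farjoun equivalence $\Loop\Loc_{\susp f}\simeq\Loc_f\Loop$ (\Cref{thm:LocLoop}), the claim for $g$ becomes part (1) applied to $\Loop g$, so only ordinary $E$-equivalences and $f$-cellularity ever appear. You stay unlooped: you show that parametric $E$-equivalences form a weakly saturated class containing $\susp f$, then rerun the cellular argument.

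What your route buys:
\begin{itemize}
\item[(a)] It avoids \Cref{thm:LocLoop} entirely.
\item[(b)] It never uses that $X$ and $Y$ are connected, despite your final sentence, so it proves (2) for all pointed spaces.
\item[(c)] More generally, it shows that $\Loc_h$ preserves and reflects parametric $E$-equivalences whenever $h$ is itself one.
\end{itemize}

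What it costs: you need $\susp f$ to be a parametric $E$-equivalence, i.e.\ that $\Loop\susp$ preserves $E$-equivalences. The paper asserts this in the proof of \Cref{prop:parametriclocalization}, but its proof of the present proposition does not need it. Also, the James splitting applies verbatim only to connected $V$. For general $V$, write $V$ as the wedge of its basepoint component with the spaces $(V_c)_+$ for the remaining components $V_c$, and use your descent property for wedges.

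One justification is slightly off. ``$\mathcal{S}$ is freely generated by the point'' only identifies a functor already known to preserve colimits, so it is not the reason $\Fun(-,\Sp_E)$ takes colimits to limits. The actual reasons are:
\begin{itemize}
\item[(a)] the inclusion $\mathcal{S}\subset\Cat_\infty$ preserves colimits, being left adjoint to taking cores;
\item[(b)] $\Fun(-,\Sp_E)$ sends colimits in $\Cat_\infty$ to limits because $\Cat_\infty$ is cartesian closed.
\end{itemize}
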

\begin{proof}
For (1) it suffices to show that the maps $X \to \Loc_f X$ and $Y \to \Loc_f Y$ are $E$-equivalences. These maps are $f$-cellular by \Cref{prop:locbasics} and $f$-cellular maps are $E$-equivalences by the hypothesis that $f$ is an $E$-equivalence. For (2), we rely on \Cref{lem:parametricvsordinary} and the fact that $\Loop \Loc_{\susp f} X \simeq \Loc_f \Loop X$ (\Cref{thm:LocLoop}) to reduce to showing that $\Loop X \to \Loc_f \Loop X$ is an $E$-equivalence (and similarly for $Y$). This follows as in the first part of the proof.
\end{proof}


For the remainder of this section we focus on the case of localization with respect to a map $f\colon V \to *$, in which case the corresponding localization functor is denoted $\mathbf{P}_V$.

\begin{defn}
The \emph{Bousfield class} of a pointed space $V$, denoted by $\langle V \rangle$, is the class of all pointed spaces $X$ for which $\mathbf{P}_V(X) \simeq 0$. These Bousfield classes are partially ordered by inclusion: we write $\langle V \rangle \geq \langle W \rangle$ if $\mathbf{P}_W(X) \simeq 0$ implies $\mathbf{P}_V(X) \simeq 0$.
\end{defn}

\begin{rem}
\label{rem:Bousfieldclasses}
There are many alternative ways to characterize the relation $\langle V \rangle \geq \langle W \rangle$, of which we highlight the following:
\begin{itemize}
\item[(1)] Every $\mathbf{P}_V$-local space is also $\mathbf{P}_W$-local.
\item[(2)] $\mathbf{P}_V(W) \simeq 0$.
\end{itemize}
\end{rem}

\Cref{thm:LocLoop} implies the following elementary relations:

\begin{cor}
\label{cor:LocLoop}
Let $V$ and $W$ be pointed spaces. Then we have the following:
\begin{itemize}
    \item[(1)] If $W$ is connected, then $\langle \susp V \rangle \geq \langle W \rangle$ if and only if $\langle V \rangle \geq \langle \Loop W \rangle$.
    \item[(2)] If $V$ is connected, then $\langle \susp \Loop V \rangle \geq \langle V \rangle$.
    \item[(3)] $\langle V \rangle \geq \langle \Loop \susp V \rangle$.
\end{itemize}
\end{cor}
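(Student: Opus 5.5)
The whole corollary should follow from \Cref{thm:LocLoop} applied to the map $V \to *$, whose suspension is $\susp V \to *$. This gives a natural equivalence $\Loop \mathbf{P}_{\susp V} \simeq \mathbf{P}_V \Loop$. We combine it with the characterization in \Cref{rem:Bousfieldclasses}(2): $\langle A \rangle \geq \langle B \rangle$ if and only if $\mathbf{P}_A(B) \simeq *$. I prove (1) first and then obtain (2) and (3) as formal special cases.

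For (1), let $W$ be connected. By \Cref{rem:Bousfieldclasses}(2), $\langle \susp V \rangle \geq \langle W \rangle$ holds if and only if $\mathbf{P}_{\susp V}(W) \simeq *$. Likewise, $\langle V \rangle \geq \langle \Loop W \rangle$ holds if and only if $\mathbf{P}_V(\Loop W) \simeq *$, and by \Cref{thm:LocLoop} the latter space is $\Loop \mathbf{P}_{\susp V}(W)$.

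It therefore suffices to know that $\mathbf{P}_{\susp V}(W)$ is connected, since a connected pointed space is contractible exactly when its loop space is. The step needing a short justification is that $\mathbf{P}_{\susp V}$ preserves connectedness. By \Cref{prop:locbasics}(1), the map $W \to \mathbf{P}_{\susp V}(W)$ is $(\susp V \to *)$-cellular. Since $\susp V$ is connected, the map $\susp V \to *$ is a bijection on $\pi_0$. Bijectivity on $\pi_0$ is preserved by pushouts, transfinite compositions and retracts, so every $(\susp V\to *)$-cellular map is a $\pi_0$-bijection. Hence $\mathbf{P}_{\susp V}(W)$ is connected and (1) follows.

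For (2), apply (1) with $\Loop V$ in place of $V$ and $V$ in place of $W$; this is allowed because $V$ is connected. The statement $\langle \susp\Loop V\rangle \geq \langle V\rangle$ becomes equivalent to $\langle \Loop V \rangle \geq \langle \Loop V\rangle$, which is trivially true.

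For (3), apply (1) with $W = \susp V$, which is connected as a suspension of a pointed space. The trivially true relation $\langle \susp V\rangle \geq \langle \susp V\rangle$ then yields $\langle V\rangle \geq \langle \Loop\susp V\rangle$.

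The only non-formal point is the connectedness of $\mathbf{P}_{\susp V}(W)$, and it is handled by the cellularity argument above.
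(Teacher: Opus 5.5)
Your proposal is correct and follows the paper's proof essentially verbatim. Part (1) comes from \Cref{thm:LocLoop} applied to $V \to *$ together with connectedness of $\mathbf{P}_{\susp V} W$, and parts (2) and (3) are the specializations of (1) to the reflexive relations $\langle \Loop V \rangle \geq \langle \Loop V \rangle$ and $\langle \susp V \rangle \geq \langle \susp V \rangle$. The paper simply asserts that $\mathbf{P}_{\susp V} W$ is connected, and your cellularity argument correctly justifies this: $\pi_0$-bijections form a weakly saturated class containing $\susp V \to *$.
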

\begin{proof}
    For (1), first note that $\mathbf{P}_{\susp V} W$ is still connected. Now \Cref{thm:LocLoop} implies that $\mathbf{P}_{\susp V} W \simeq 0$ if and only if $\mathbf{P}_V(\Loop W) \simeq 0$. For (2), we apply item (1) to see that the inequality is equivalent to $\langle \Loop V \rangle \geq \langle \Loop V \rangle$. For (3), note that $\susp V$ is connected and apply (1) to see that the inequality is equivalent to $\langle \susp V \rangle \geq \langle \susp V \rangle$.
\end{proof}
\begin{rem}
\label{rmk:LocLoop}
Note that \Cref{cor:LocLoop} in particular implies $\langle \susp K(\mathbb{Z}/p,n) \rangle \geq \langle K(\mathbb{Z}/p,n+1) \rangle$.
\end{rem}

The following was proved by Bousfield \cite[Theorems 9.10, 9.11]{Bou94} and is a crucial tool in analyzing the Bousfield classes of spaces:

\begin{thm}[Bousfield]\label{bousfield-classification}
Let $n > 0$ be a natural number and let $X$ be a pointed space for which $\widetilde{H}_*(X)$ consists of $p^\infty$-torsion, $\widetilde{H}_i(X) = 0$ for $i < n$ and $\widetilde{H}_n(X) \neq 0$. Then for each $k \geq 1$ there is an equality of Bousfield classes
\[
\langle \susp X \rangle = \langle \susp^k X \rangle \vee \langle K(A, n+1) \rangle,
\]
where $A = \mathbb{Z}/p$ if $H_n(X;\mathbb{Z}/p) \neq 0$ and $A = \mathbb{Q}/\mathbb{Z}_{(p)}$ if $H_n(X;\mathbb{Z}/p) = 0$.
\end{thm}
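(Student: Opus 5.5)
We prove the equality as two inequalities of Bousfield classes, where $\langle V\rangle\vee\langle W\rangle=\langle V\vee W\rangle$ is the least upper bound. Throughout we use \Cref{rem:Bousfieldclasses}: $\langle V\rangle\geq\langle W\rangle$ holds \iff $\mathbf{P}_V(W)\simeq 0$. The inequality $\langle \susp X\rangle\geq\langle \susp^k X\rangle$ is formal. A suspension of a $\susp X$-acyclic space is again $\susp X$-acyclic, because $\mathbf{P}_{\susp X}$ commutes with suspension up to acyclic cofibres (equivalently, apply \Cref{cor:LocLoop}(1) iteratively). So the substance of the proof is the two remaining statements
\[
\text{(a)}\ \ \mathbf{P}_{\susp X}K(A,n+1)\simeq 0, \qquad \text{(b)}\ \ \mathbf{P}_{\susp^k X\vee K(A,n+1)}(\susp X)\simeq 0 .
\]

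For (a) I would argue in two steps. First I show $\langle \susp X\rangle\geq\langle M(A,n+1)\rangle$, where $M(A,n+1)$ is the Moore space, with $M(\mathbb{Q}/\mathbb{Z}_{(p)},n+1)$ the colimit of the $M(\mathbb{Z}/p^j,n+1)$. Let $Z=\mathbf{P}_{\susp X}M(A,n+1)$. This space is $n$-connected, since nullification by an $n$-connected space preserves $n$-connectivity, and $\pi_{n+1}Z$ is a quotient of $A$. Locality gives $\Map_*(\susp X,Z)\simeq *$. Evaluating on the bottom cell through Hurewicz, this yields $\mathrm{Hom}(H_nX,\pi_{n+1}Z)=0$. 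The choice of $A$ then forces $\pi_{n+1}Z=0$:
\begin{itemize}
\item if $H_n(X;\mathbb{Z}/p)\neq 0$, then $H_nX$ has a $\mathbb{Z}/p$ quotient;
\item otherwise $H_nX$ is a nonzero $p$-divisible $p$-torsion group, so it surjects onto $\mathbb{Q}/\mathbb{Z}_{(p)}$.
\end{itemize}
I would then iterate this up the Postnikov/homology tower of $Z$, using that $\susp X$ is a suspension so $\Map_*(\susp X,-)$ is controlled by $\Loop$, to get $Z\simeq *$. Second, I use the known cellularity fact that $K(A,n+1)$ is $M(A,n+1)$-cellular, so $\langle M(A,n+1)\rangle\geq\langle K(A,n+1)\rangle$. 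This can be obtained by writing $K(A,n+1)$ as the colimit of the Ganea/fibre-cofibre construction starting from $M(A,n+1)$, each stage of which is a cofibre of a suspension-type map out of Moore-space cellular pieces.

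For (b), write $L=\mathbf{P}_{\susp^k X\vee K(A,n+1)}$ and argue by downward induction on $k$. It suffices to prove, for each $j\geq 1$, that
\[
\langle \susp^j X\rangle\leq \langle \susp^{j+1}X\rangle\vee\langle K(A,n+1)\rangle .
\]
I would first study $W=\mathbf{P}_{\susp^{j+1}X}(\susp^j X)$. By \Cref{thm:LocLoop}, $\Loop W\simeq \mathbf{P}_{\susp^{j}X}(\Loop\susp^j X)$. The James splitting shows that $\susp\Loop\susp^jX$ is a wedge of $\susp^{j}X$ and smash powers of $\susp^jX$. The smash powers are at least $2n$-connected, and by Bousfield's smash-product estimate they are $\susp^{j+1}X$-acyclic once one exploits the $p$-torsion homology. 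The goal is to show that $W$ has only finitely many nonzero homotopy groups, located in degrees $n+j$ up to about $2n+j$, and that these groups are $p$-torsion and "$A$-generated": built from $\mathbb{Z}/p$'s, or from $p$-divisible groups in the $\mathbb{Q}/\mathbb{Z}_{(p)}$ case.

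Granting that, $W$ is a finite Postnikov tower of Eilenberg--MacLane spaces $K(B,m)$ with $m\geq n+1$ and $B$ of the right type. Each such space is $K(A,n+1)$-acyclic, using \Cref{rmk:LocLoop} and the analogue of the step in (a) that $K(B,m)$ is built from $K(A,m)$, hence from $K(A,n+1)$. A nullification kills a fibre sequence whose base and fibre it kills, so $L(W)\simeq 0$. Since $\susp^jX\to W$ is $\susp^{j+1}X$-cellular, we get $L(\susp^jX)\simeq 0$.

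I expect the main obstacle to be the control of $\mathbf{P}_{\susp^{j+1}X}(\susp^jX)$ in (b): proving that this nullification is truncated with homotopy of the correct torsion type. Bousfield's original argument handles this through a careful analysis of $\Loop\susp$ and of the fibre of the nullification map, and it is where the hypothesis that $\widetilde H_*(X)$ is $p^\infty$-torsion enters essentially. I would first attempt to deduce it from the James splitting together with the connectivity bound for the fibre of $X\to\mathbf{P}_{\susp X}X$. Failing that, I would simply cite \cite[Theorems 9.10, 9.11]{Bou94} for this step.
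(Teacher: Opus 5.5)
The paper does not prove this statement; it quotes it from \cite[Theorems 9.10, 9.11]{Bou94}, so your proposal has to stand on its own. The reduction to two inequalities is fine. Part (b) is also fine in outline, and the truncation it needs is Bousfield's \cite[Theorem 7.2]{Bou94}, the same input used in \Cref{lem:VnWn}. Indeed, $W=\mathbf{P}_{\Sigma^{j+1}X}(\Sigma^jX)$ is the fibre of $W\to\mathbf{P}_{\Sigma^jX}(\Sigma^jX)\simeq *$, so it is $(n+j)$-truncated and $(n+j-1)$-connected. Hence $W$ is a single $K(B,n+j)$ with $B$ a quotient of $H_nX$, and the rest of your argument for (b) goes through. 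The genuine gap is in part (a).

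The first step of (a), $\langle \Sigma X\rangle\geq\langle M(A,n+1)\rangle$, is false in general.
\begin{itemize}
\item Take $X=K(\mathbb{Z}/p,n)$. It satisfies every hypothesis, with $A=\mathbb{Z}/p$.
\item By Miller's theorem (the Sullivan conjecture), $\mathrm{Map}_*(K(\mathbb{Z}/p,n),S^m)\simeq *$. Hence $\mathrm{Map}_*(\Sigma X,S^m)\simeq\Omega\,\mathrm{Map}_*(X,S^m)\simeq *$, so every sphere is $\mathbf{P}_{\Sigma X}$-local.
\item For $m\geq n+2$, the pinch map $\Sigma^{m-n-2}M(\mathbb{Z}/p,n+1)=M(\mathbb{Z}/p,m-1)\to S^m$ is essential. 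So $S^m$ is not $\mathbf{P}_{M(\mathbb{Z}/p,n+1)}$-local.
\item Therefore $\mathbf{P}_{\Sigma X}M(\mathbb{Z}/p,n+1)\not\simeq *$.
\end{itemize}
Conceptually, $M(\mathbb{Z}/p,n+1)$ has type $1$ and detects $v_1$-periodic homotopy, which $\Sigma K(\mathbb{Z}/p,n)$ does not see. Your mechanism also cannot work on its own terms. The condition $\mathrm{Map}_*(\Sigma X,Z)\simeq *$ only excludes those homomorphisms $H_nX\to\pi_{n+1}Z$ that are realized by actual maps $\Sigma X\to Z$, and realization is obstructed in higher cohomology. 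It also places no constraint on the higher homotopy of $Z$, so ``iterating up the Postnikov tower'' has nothing to iterate on.

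One way to repair (a) is to skip Moore spaces and use that nullification preserves infinite loop spaces.
\begin{itemize}
\item By \Cref{cor:LocLoop}(1), it suffices to show $\langle X\rangle\geq\langle K(A,n)\rangle$.
\item Iterating \Cref{thm:LocLoop} gives $L:=\mathbf{P}_XK(A,n)\simeq\Omega^j\mathbf{P}_{\Sigma^jX}K(A,n+j)$, compatibly with the units. So $L=\Omega^\infty E$, and the unit is $\Omega^\infty u$ for a map of spectra $u\colon\Sigma^nHA\to E$.
\item Locality of $L$ gives $\mathrm{Map}_{\mathrm{Sp}}(\Sigma^\infty X,E)\simeq\mathrm{Map}_*(X,L)\simeq *$. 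Spectra $S$ with $\mathrm{Map}_{\mathrm{Sp}}(S,E)\simeq *$ are closed under colimits, and connective spectra are generated under colimits by $\mathbb{S}$. So this vanishing persists for $\Sigma^\infty X\otimes H\mathbb{F}_p$ and for $\Sigma^\infty X\otimes H\mathbb{Z}$.
\item Both of these split as sums of shifted Eilenberg--MacLane spectra, and $\Sigma^nHA$ is a retract of one of them. In the first case this is because $H_n(X;\mathbb{Z}/p)\neq 0$. In the second case, $H_nX$ is a nonzero divisible $p$-group, so it has $\mathbb{Q}/\mathbb{Z}_{(p)}$ as a direct summand.
\item Hence $u\simeq 0$, so the unit $K(A,n)\to L$ is null. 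This forces $L\simeq *$, since the identity of $L$ corresponds to the unit under $\mathrm{Map}(L,L)\simeq\mathrm{Map}(K(A,n),L)$.
\end{itemize}
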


We explicitly record the following simple consequence for later use:
\begin{cor}
\label{cor:bousfield-classification}
Let $X$ be as in \Cref{bousfield-classification}. Then for each $k \geq 1$ we have
 \[
\langle \susp X \rangle \leq \langle \susp^k X \rangle \vee \langle K(\mathbb{Z}/p, n+1) \rangle.
\]   
\end{cor}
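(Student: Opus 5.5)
The plan is to deduce the inequality from \Cref{bousfield-classification} by splitting into the two possible values of $A$. Applying the theorem gives
\[
\langle \susp X \rangle = \langle \susp^k X \rangle \vee \langle K(A, n+1) \rangle .
\]
If $A = \mathbb{Z}/p$, this equality already gives the claimed inequality. So the only real case is $A = \mathbb{Q}/\mathbb{Z}_{(p)}$.

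In that case it suffices to prove $\langle K(\mathbb{Q}/\mathbb{Z}_{(p)}, n+1)\rangle \leq \langle K(\mathbb{Z}/p,n+1)\rangle$, together with the fact that the join is monotone in each variable. I read the join $\langle V\rangle \vee \langle W\rangle$ as $\langle V \vee W\rangle$. A space is $\mathbf{P}_{V\vee W}$-local exactly when it is both $\mathbf{P}_V$-local and $\mathbf{P}_W$-local. By \Cref{rem:Bousfieldclasses}(1), monotonicity is then immediate.

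With the paper's ordering convention, the inequality $\langle K(\mathbb{Q}/\mathbb{Z}_{(p)}, n+1)\rangle \leq \langle K(\mathbb{Z}/p,n+1)\rangle$ says exactly that $\mathbf{P}_{V}K(\mathbb{Q}/\mathbb{Z}_{(p)},n+1) \simeq *$, where $V = K(\mathbb{Z}/p,n+1)$ (\Cref{rem:Bousfieldclasses}(2)). I will call a pointed space $V$-acyclic if $\mathbf{P}_V$ of it is contractible, equivalently if every $\mathbf{P}_V$-local space $Z$ has $\Map_*(-,Z)$ of it contractible.

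I prove this in two steps.

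First, by induction on $j$, each $K(\mathbb{Z}/p^j,n+1)$ is $V$-acyclic. The case $j=1$ is trivial. For the inductive step, use the fibration
\[
K(\mathbb{Z}/p^{j-1},n+1) \to K(\mathbb{Z}/p^j,n+1) \to K(\mathbb{Z}/p,n+1),
\]
induced by the short exact sequence $\mathbb{Z}/p^{j-1} \to \mathbb{Z}/p^j \to \mathbb{Z}/p$. The fiber is $V$-acyclic by induction. I then invoke the standard Bousfield–Farjoun fact that a map with $V$-acyclic fibers is a $\mathbf{P}_V$-equivalence. Hence $\mathbf{P}_V K(\mathbb{Z}/p^j,n+1) \simeq \mathbf{P}_V K(\mathbb{Z}/p,n+1) \simeq *$.

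Second, $K(\mathbb{Q}/\mathbb{Z}_{(p)},n+1)$ is the sequential colimit in $\mathcal{S}_*$ of the spaces $K(\mathbb{Z}/p^j,n+1)$ along the inclusions $\mathbb{Z}/p^j \hookrightarrow \mathbb{Z}/p^{j+1}$. This holds because homotopy groups commute with filtered colimits. For any $\mathbf{P}_V$-local $Z$ we therefore get
\[
\Map_*(K(\mathbb{Q}/\mathbb{Z}_{(p)},n+1), Z) \simeq \lim_j \Map_*(K(\mathbb{Z}/p^j,n+1), Z) \simeq *.
\]
So $K(\mathbb{Q}/\mathbb{Z}_{(p)},n+1)$ is $V$-acyclic, which finishes the argument.

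The main point requiring care is the fibration step. It relies on the fiberwise nullification result, namely that a map whose homotopy fibers are $V$-acyclic is a $\mathbf{P}_V$-equivalence \cite{Dro95}, which is not stated earlier in the paper. If I wanted to avoid citing it, I would argue by hand. Given $\mathbf{P}_V$-local $Z$, the space of pointed maps from the total space $K(\mathbb{Z}/p^j,n+1)$ to $Z$ can be computed as maps over the base $K(\mathbb{Z}/p,n+1)$ out of the fibration. Restricted to each fiber, such maps are contractible because the fiber is $V$-acyclic, so all maps factor essentially uniquely through the base.
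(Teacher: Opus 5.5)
Your proposal is correct and follows essentially the same route as the paper: reduce via the two cases of \Cref{bousfield-classification} to $\langle K(\mathbb{Q}/\mathbb{Z}_{(p)}, n+1)\rangle \leq \langle K(\mathbb{Z}/p,n+1)\rangle$, then obtain $\mathbb{Q}/\mathbb{Z}_{(p)}$ from $\mathbb{Z}/p$ by extensions and a directed colimit. The only difference is that you check by hand that the class of groups $A$ with $\mathbf{P}_{K(\mathbb{Z}/p,n+1)}K(A,n+1)\simeq 0$ is closed under extensions and sequential colimits, via fiberwise nullification and a limit of mapping spaces, whereas the paper cites \cite[Lemma 7.4]{Bou94} for this.
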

\begin{proof}
Considering the two possible cases in \Cref{bousfield-classification}, it is clear that it suffices to prove $\langle K(\mathbb{Q}/\mathbb{Z}_{(p)}, n+1) \rangle \leq \langle K(\mathbb{Z}/p, n+1) \rangle$. The class of abelian groups $A$ for which $\mathbf{P}_{K(\mathbb{Z}/p, n+1)}(K(A,n+1)) \simeq 0$ is closed under extensions and directed colimits (cf. \cite[Lemma 7.4]{Bou94} and its proof). This class contains $\mathbb{Z}/p$ by definition, hence it also contains $\mathbb{Z}/p^k$ and $\varinjlim_k \mathbb{Z}/p^k \cong \mathbb{Q}/\mathbb{Z}_{(p)}$.
\end{proof}

Combining \Cref{bousfield-classification} with the Hopkins--Smith thick subcategory theorem, Bousfield easily deduces the following classification of finite suspensions \cite[Theorem 9.15]{Bou94}:

\begin{cor}
\label{cor:unstableBousfieldclasses}
Let $V$ and $W$ be finite $p$-local spaces of type $> 0$. Then the following statements are equivalent:
\begin{itemize}
\item[(1)] $\langle \susp V \rangle \geq \langle \susp W \rangle$,
\item[(2)] $\mathrm{type}(V) \leq \mathrm{type}(W)$ and $\mathrm{conn}(\susp V) \leq \mathrm{conn}(\susp W)$.
\end{itemize}
\end{cor}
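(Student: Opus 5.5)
We prove the two implications separately, using \Cref{bousfield-classification} (via \Cref{cor:bousfield-classification}) together with the Hopkins--Smith thick subcategory theorem. Throughout, write $v = \mathrm{type}(V)$, $w = \mathrm{type}(W)$, $c_V = \mathrm{conn}(\susp V)$ and $c_W = \mathrm{conn}(\susp W)$.

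\emph{(1) $\Rightarrow$ (2).} Assume $\langle \susp V \rangle \geq \langle \susp W \rangle$, which by \Cref{rem:Bousfieldclasses} means $\mathbf{P}_{\susp V}(\susp W) \simeq 0$. The map $\susp W \to *$ is then a $\mathbf{P}_{\susp V}$-equivalence, so it lies in the saturated class generated by $\susp V \to *$.

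For connectivity: any $\susp V$-cellular space is built from $\susp V$ by colimits, hence is $c_V$-connected. Since $\susp W$ is $\susp V$-acyclic, it is built this way as well, and we conclude $c_V \leq c_W$. (Alternatively, $K(\pi, c_V)$ is $\mathbf{P}_{\susp V}$-local, and one applies this to the first nonzero homotopy group of $\susp W$.)

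For type: we use a homology theory that detects type. The map $\susp V \to *$ is a $K(i)$-equivalence for every $i < v$, because $\widetilde{K(i)}_*(V) = 0$. The class of $K(i)$-equivalences is saturated, so $\susp W \to *$ is also a $K(i)$-equivalence. This gives $\widetilde{K(i)}_*(W)=0$ for all $i<v$, hence $w \geq v$.

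\emph{(2) $\Rightarrow$ (1).} Assume $v \leq w$ and $c_V \leq c_W$. First, by \Cref{cor:LocLoop}(3) and the resulting inequality $\langle \susp^k V\rangle \geq \langle \susp^{k+1} V\rangle$, we have $\langle \susp V \rangle \geq \langle \susp^k V \rangle$ for all $k \geq 1$.

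Next we compare stably. By the thick subcategory theorem, $\Sigma^\infty W$ lies in the thick subcategory generated by $\Sigma^\infty V$. Hence for $k$ large, $\susp^k W$ can be built from suspensions of $\susp^{k'} V$ by finitely many cofiber sequences and retracts. This is Freudenthal applied to a desuspension of the stable construction. Cofibers preserve $\mathbf{P}_{\susp^{k'} V}$-acyclicity, since the cofiber of a map between acyclic spaces is acyclic. Retracts do as well. Therefore
\[
\langle \susp^{k'} V\rangle \geq \langle \susp^{k} W\rangle .
\]
Here we choose $k$ large enough that the unstable realization of the thick subcategory argument works in the range.

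Finally, we apply \Cref{cor:bousfield-classification} to $X=W$ with $n = c_W$, the bottom nonvanishing homology degree (note $\susp W$ has connectivity $c_W$, with first homology in degree $c_W+1$). This gives
\[
\langle \susp W\rangle \leq \langle \susp^k W\rangle \vee \langle K(\mathbb{Z}/p, n+1)\rangle .
\]
It therefore suffices to show that $\langle \susp V\rangle \geq \langle K(\mathbb{Z}/p, c_W+1)\rangle$, that is, that $\mathbf{P}_{\susp V}$ kills $K(\mathbb{Z}/p,c_W+1)$. Applying \Cref{bousfield-classification} to $V$ itself, $\langle \susp V\rangle \geq \langle K(A,c_V+1)\rangle$ with $A$ equal to $\mathbb{Z}/p$ or $\mathbb{Q}/\mathbb{Z}_{(p)}$. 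Iterating \Cref{rmk:LocLoop}, together with the observation that $K(\mathbb{Z}/p,m)$ for $m \ge c_V+1$ is reached from $K(A,c_V+1)$ by suspensions and fiber/cofiber closure, gives $\langle K(\mathbb{Z}/p,c_W+1)\rangle \leq \langle K(A,c_V+1)\rangle$ since $c_W \geq c_V$. In the case $A=\mathbb{Q}/\mathbb{Z}_{(p)}$, $K(\mathbb{Z}/p, m+1)$ is a fiber of $K(A,m)\to K(A,m)$, and one uses Bousfield's closure properties.

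The main obstacle is the unstable version of the thick subcategory step. Cofiber sequences of spectra desuspend only in a stable range, so we must check that $\susp^k W$ really lies in the closure of $\susp^{k'}V$ under cofibers and retracts for $k \gg k'$. We will handle this by mapping $\susp^{k}$ of a finite stable construction and using that \Cref{cor:bousfield-classification} lets us take $k$ as large as needed.
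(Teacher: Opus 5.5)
Your route is the one the paper has in mind. The paper gives no proof of its own: it cites \cite[Theorem 9.15]{Bou94} and remarks that the result follows from \Cref{bousfield-classification} together with the thick subcategory theorem. Your two directions are exactly that deduction: $K(i)$-homology plus truncated local spaces for (1)$\Rightarrow$(2), and the thick subcategory theorem plus $\langle \susp W\rangle \leq \langle \susp^k W\rangle \vee \langle K(\mathbb{Z}/p,c_W+1)\rangle$ for (2)$\Rightarrow$(1).

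The branch $A=\mathbb{Q}/\mathbb{Z}_{(p)}$ of your last step is wrong as written. The fiber of multiplication by $p$ on $K(\mathbb{Q}/\mathbb{Z}_{(p)},m)$ is $K(\mathbb{Z}/p,m)$, not $K(\mathbb{Z}/p,m+1)$. Acyclic classes are also not closed under fibers. Worse, the inequality $\langle K(\mathbb{Z}/p,c_W+1)\rangle\leq\langle K(\mathbb{Q}/\mathbb{Z}_{(p)},c_V+1)\rangle$ that you assert is false when $c_W=c_V$. Indeed, $\Map_*(K(\mathbb{Q}/\mathbb{Z}_{(p)},m),K(\mathbb{Z}/p,m))$ has homotopy groups $\widetilde{H}^{m-j}(K(\mathbb{Q}/\mathbb{Z}_{(p)},m);\mathbb{Z}/p)$, which all vanish because $\mathrm{Hom}(\mathbb{Q}/\mathbb{Z}_{(p)},\mathbb{Z}/p)=0$. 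So $K(\mathbb{Z}/p,m)$ is $\mathbf{P}_{K(\mathbb{Q}/\mathbb{Z}_{(p)},m)}$-local rather than acyclic.

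The repair is that this branch never occurs. $V$ is finite, $p$-local and rationally acyclic, so $H_{c_V}(V)$ is a nonzero finite $p$-group. Hence $H_{c_V}(V;\mathbb{Z}/p)\cong H_{c_V}(V)/p\neq 0$ and $A=\mathbb{Z}/p$. Iterating $\langle K(\mathbb{Z}/p,m)\rangle\geq\langle\susp K(\mathbb{Z}/p,m)\rangle\geq\langle K(\mathbb{Z}/p,m+1)\rangle$ (using \Cref{rmk:LocLoop}) from $m=c_V+1$ up to $c_W+1$ then finishes the argument.

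Three smaller points:
\begin{enumerate}
\item In (1)$\Rightarrow$(2), a $\mathbf{P}_{\susp V}$-acyclic space need not be $\susp V$-cellular, so `it is built this way as well' is unjustified. Keep only your parenthetical argument: every $c_V$-truncated space is $\mathbf{P}_{\susp V}$-local, so $\susp W\to\tau_{\leq c_V}\susp W$ is null, which forces $c_W\geq c_V$. The type argument is fine once `saturated' means strongly saturated.
\item For the unstable thick-subcategory step, the clean statement to prove, by induction over the thick subcategory generated by $\susp^\infty V$, is this: sufficiently high suspensions of any finite space realizing a given $Y$ are $\susp^{k'}V$-acyclic. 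Acyclics are closed under suspension, retracts, cofibers of maps between acyclics, and extensions in cofiber sequences. The missing closure under fibers is avoided by $\susp Y_1\simeq\mathrm{cof}(Y_2\to Y_3)$, at the cost of one extra suspension. All maps involved are realized unstably in the stable range.
\item The inequality $\langle A\rangle\geq\langle\susp A\rangle$ does not need \Cref{cor:LocLoop}. It follows directly from $\Map_*(\susp A,Z)\simeq\Loop\Map_*(A,Z)$.
\end{enumerate}
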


Closely related to \Cref{thm:LocLoop} and \Cref{bousfield-classification}, Bousfield proves that the localization functors $\mathbf{P}_{\susp V}$ preserve finite limits up to `low-dimensional error terms'. We finish this section by giving a precise statement to be used later. First of all, observe that for pointed spaces $A$ and $X$, the mapping space $\Map_*(A,\mathbf{P}_{\susp V} X)$ is again $\mathbf{P}_{\susp V}$-local. Hence we find a natural map
\[
\eta\colon \mathbf{P}_{\susp V}\Map_*(A, X) \to \Map_*(A,\mathbf{P}_{\susp V} X).
\]

\begin{thm}[Bousfield, {\cite[Theorem 8.3]{Bou94}}]
\label{thm:Bousfieldfib}
Suppose $A$ is a finite pointed space and suppose that $V$ is a space for which $\widetilde{H}_*(V)$ consists of $p^\infty$-torsion, $\widetilde{H}_i(V) = 0$ for $i < n$, and $H_n(V;\mathbb{Z}/p) \neq 0$. Then the fiber $F$ of $\eta$ over any point of $\Map_*(A,\mathbf{P}_{\susp V} X)$ is $n$-truncated.
\end{thm}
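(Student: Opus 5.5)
The plan is to induct on the number of cells of the finite pointed space $A$, using that $\Map_*(-,X)$ turns cofiber sequences into fiber sequences. Write $A$ as a pushout $A = A' \cup_{S^{m-1}} D^m$, giving a fiber sequence
\[
\Map_*(S^m,X) \to \Map_*(A,X) \to \Map_*(A',X).
\]
This fiber sequence is natural in $X$, so $\eta$ induces a map from ($\mathbf{P}_{\susp V}$ applied to) it to the analogous fiber sequence with $X$ replaced by $\mathbf{P}_{\susp V} X$. For the inductive step I would use a Bousfield--Farjoun fibration lemma: $\mathbf{P}_{\susp V}$ preserves a fiber sequence up to an error term that is itself killed by $\susp V$. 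I would compare the two long exact sequences of homotopy groups in degrees above $n$; the five lemma then shows that if the outer comparison maps have $n$-truncated fibers, so does the middle one. Taking $A = S^0$ as the base case, $\eta$ is the identity. Retracts and finite wedges are handled the same way, so everything reduces to the case $A = S^m$.

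For $A = S^m$ I would further induct on $m$, writing $\Loop^m = \Loop \circ \Loop^{m-1}$. By \Cref{thm:LocLoop} there is a natural equivalence $\Loop \mathbf{P}_{\susp V} Y \simeq \mathbf{P}_V \Loop Y$. Under this equivalence, $\eta$ for $A = S^1$ becomes the comparison map $\mathbf{P}_{\susp V}(\Loop Y) \to \mathbf{P}_V(\Loop Y)$, which exists because every $\mathbf{P}_V$-local space is $\mathbf{P}_{\susp V}$-local. So the key claim is the following.

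\textbf{Key claim.} For any pointed space $Z$ (in our application a loop space), the fiber of $\mathbf{P}_{\susp V} Z \to \mathbf{P}_V Z$ is $n$-truncated.

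Granting this, the case $S^m$ follows by iterating, together with the fiber-sequence bookkeeping from the first paragraph. Indeed, $\Loop$ applied to an $n$-truncated fiber remains $n$-truncated, and composites of maps with $n$-truncated fibers again have $n$-truncated fibers, since fibers of composites are extensions of fibers.

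The key claim is where I expect the real difficulty, and it is exactly where the homological hypothesis on $V$ enters. Let $F$ be the fiber of the map $W := \mathbf{P}_{\susp V} Z \to \mathbf{P}_V W \simeq \mathbf{P}_V Z$. Then $F$ is $\mathbf{P}_{\susp V}$-local, being a fiber of a map between $\mathbf{P}_{\susp V}$-local spaces. On the other hand, $F$ is ``$V$-acyclic up to one delooping'', since the map $W \to \mathbf{P}_V W$ is $V$-cellular. My first attempt would be to show $\tau_{>n} F \simeq *$ via the Bousfield-class computation. Since $\widetilde H_*(V)$ is $p^\infty$-torsion, starts in degree $n$, and has $H_n(V;\mathbb{Z}/p)\neq 0$, \Cref{bousfield-classification} and \Cref{cor:bousfield-classification} give $\langle \susp V\rangle \le \langle \susp^k V\rangle \vee \langle K(\mathbb{Z}/p,n+1)\rangle$ (and in fact equality in the first case). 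I would use this to argue that the $V$-cellular part of a $\susp V$-local space can only contribute Eilenberg--MacLane pieces of the form $K(\mathbb{Z}/p,n)$-type (or $\mathbb{Q}/\mathbb{Z}_{(p)}$) in degrees at most $n$. The reason is that anything in higher degrees would be killed by $\susp^k V$ for large $k$, and hence by $\susp V$, contradicting $\mathbf{P}_{\susp V}$-locality. Making this precise, in particular controlling the higher homotopy of $F$ rather than only its Bousfield class, is the main obstacle. If it proves too slippery, I would instead follow Bousfield's route: reduce to $V$ being a Moore-type space via the class equality and compute directly there.
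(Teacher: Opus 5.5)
The paper gives no proof of this statement; it is imported from Bousfield, so I am judging your outline on its own terms. The sphere step is sound. $\eta_{S^m}$ factors as $\eta_{S^1}$ for $\Loop^{m-1}X$ followed by $\Loop(\eta_{S^{m-1}})$, and via $\Loop\mathbf{P}_{\susp V}\simeq\mathbf{P}_V\Loop$ this reduces spheres to your key claim.

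There is a genuine gap, though: the two remaining steps carry all the content, and neither is supplied.

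\textbf{The cell induction is not bookkeeping.} After applying $\mathbf{P}_{\susp V}$ the top row is no longer a fiber sequence, so there is no five lemma. What you actually need is that the comparison map from $\mathbf{P}_{\susp V}$ of a fiber to the fiber of the localized map has $n$-truncated fibers. (It is cleaner to use the principal fibration $\Map_*(A,X)\to\Map_*(A',X)\to\Map_*(S^{m-1},X)$. Then $\eta_A$ is that comparison map followed by a map of fibers controlled by $\eta_{A'}$ and $\eta_{S^{m-1}}$; with your orientation you lose a degree.)

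The lemma you quote, with an error term ``killed by $\susp V$'', is false as stated. The error is a fiber of a map between $\mathbf{P}_{\susp V}$-local spaces, hence itself $\mathbf{P}_{\susp V}$-local, so if it were also $\mathbf{P}_{\susp V}$-acyclic it would be contractible. Concretely, take $V=M(\mathbb{Z}/p,m)$ and $X=K(\mathbb{Z}/p,m+1)$. By \Cref{bousfield-classification} one has $\mathbf{P}_{\susp V}X\simeq *$, while $K(\mathbb{Z}/p,m)$ is $\mathbf{P}_{\susp V}$-local. Hence the comparison map for $\Loop X\to *\to X$, which is $\eta_{S^1}$, is $K(\mathbb{Z}/p,m)\to *$, with noncontractible fiber. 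A fibration lemma with $n$-truncated error is at least as strong as the theorem; your key claim is just its case $\Loop Y\to *\to Y$. So invoking it is circular unless you prove it, and that proof is where the hypotheses on $V$ have to be used.

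\textbf{The key claim is also left unproved, and the Bousfield-class sketch does not reach it.} Bousfield classes only record which spaces are annihilated, not homotopy groups. Your step ``killed by $\susp^kV$, hence by $\susp V$'' uses only the trivial inequality $\langle\susp V\rangle\geq\langle\susp^kV\rangle$, not Bousfield's theorem. Nothing in your argument shows that the high-degree part of $F$ is $\susp^kV$-acyclic, and high connectivity alone does not make a space so.

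There is a correct mechanism nearby. Since $F$ is $\mathbf{P}_{\susp V}$-local, it would suffice to show that $\tau_{>n}F$ is $\mathbf{P}_{\susp V}$-acyclic. Then $\tau_{>n}F\to F$ is null, so $\pi_kF=0$ for $k>n$. Proving that acyclicity is the real work, and it is exactly where the torsion hypothesis and $H_n(V;\mathbb{Z}/p)\neq 0$ must enter, e.g.\ through $\langle\susp V\rangle\geq\langle K(\mathbb{Z}/p,n+1)\rangle$.

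These hypotheses are genuinely needed. In the example above with $m>n$, $V$ has $p^\infty$-torsion homology vanishing below degree $n$ but $H_n(V;\mathbb{Z}/p)=0$. There the fiber of $\eta_{S^1}$ is $K(\mathbb{Z}/p,m)$, which is not $n$-truncated; with $m=n$ the same example shows the bound is sharp. Your outline never uses these hypotheses in any concrete step, which is a symptom that its core is missing.
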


\subsection{The $v_n$-periodization of spaces}

In this section we review the relation between $v_n$-periodic homotopy groups and the nullification $\mathbf{P}_{V_{n+1}}$ with respect to a finite space $V_{n+1}$ of type $n+1$. For a chosen such space $V_{n+1}$, we will write $W_{n+1} := V_{n+1} \vee K(\mathbb{Z}/p,n+1)$. (It is not difficult to deduce from \Cref{bousfield-classification} and \Cref{cor:unstableBousfieldclasses} that the Bousfield class of $W_{n+1}$ does not depend on the choice of $V_{n+1}$, although we will not need this.) Bousfield proved, roughly speaking, that a map of highly connected pointed spaces is a $\mathbf{P}_{\susp W_{n+1}}$-equivalence if and only if it is a $v_i$-periodic equivalence for every $i \leq n$ (see \cite[Theorem 11.10]{Bou94}). The precise version that we need is captured by the following two results:

\begin{thm}
\label{thm:Pvnperiodichtpy}
Let $V_{n+1}$ be a finite pointed space of type $n+1$ and $W_{n+1} = V_{n+1} \vee K(\mathbb{Z}/p,n+1)$. Then for any pointed space $X$ we have:
\begin{itemize}
\item[(1)] The map $X \to \mathbf{P}_{\susp W_{n+1}} X$ is a $v_i$-periodic equivalence for every $i \leq n$.
\item[(2)] If $X$ is an $H$-space, then $X \to \mathbf{P}_{W_{n+1}} X$ is a $v_i$-periodic equivalence for every $i \leq n$.
\end{itemize}
\end{thm}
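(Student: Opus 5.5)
The plan is to prove (2) first and deduce (1) from it by looping. Put $W = W_{n+1}$. By \Cref{thm:LocLoop} we have $\Loop \mathbf{P}_{\susp W} X \simeq \mathbf{P}_W \Loop X$, compatibly with the units. By \Cref{lem:basicpropertiesPhi}(2) the functor $\Phi_{V_i}$ preserves fiber sequences, so $\Phi_{V_i}(\Loop Z) \simeq \susp^{-1}\Phi_{V_i}(Z)$ naturally in $Z$. Also $\Phi_{V_i}$ only sees the basepoint component, and passing to a connected cover is harmless by \Cref{lem:basicpropertiesPhi}(3). Hence $X \to \mathbf{P}_{\susp W} X$ is a $v_i$-periodic equivalence as soon as $\Loop X \to \mathbf{P}_W \Loop X$ is one. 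This holds by (2), since $\Loop X$ is an $H$-space. The case $i = 0$, rational homotopy, I would handle separately: $W$ is rationally trivial, so all $W$-cellular maps are rational equivalences, and for nilpotent (e.g.\ $H$-) spaces this gives rational homotopy equivalences.

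For (2), fix $1 \le i \le n$ and a type $i$ space $V_i$ with self-map $v\colon \susp^d V_i \to V_i$, and let $X$ be an $H$-space. Write $M_k(-) := \Map_*(\susp^{kd}V_i, -)$. Since $\Phi_{V_i}$ is the colimit along $v^*$ of $\susp^{\infty - kd} M_0$, it suffices to show that the induced map $M_k(X) \to M_k(\mathbf{P}_W X)$ is an equivalence after applying $\Phi_{V_i}$ to the telescope. I would aim for the stronger statement that it is an equivalence on $\pi_j$ for all $j$ above a bound $c$ independent of $k$.

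The key input is \Cref{thm:Bousfieldfib} with $A = \susp^{kd}V_i$. Since $X$ is an $H$-space, $\mathbf{P}_W X \simeq \mathbf{P}_{\susp W}X$ up to a bounded error: apply \Cref{bousfield-classification}/\Cref{cor:bousfield-classification} to $W$, whose bottom homology is $p$-torsion in some fixed degree. Then the comparison map
\[
\mathbf{P}_{\susp W} M_k(X) \to M_k(\mathbf{P}_{\susp W} X)
\]
has $c$-truncated fibers with $c$ independent of $k$. So it remains to see that $M_k(X) \to \mathbf{P}_{\susp W} M_k(X)$ is a $v_i$-periodic equivalence in the limit over $k$. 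The $K(\mathbb{Z}/p,n+1)$ summand of $W$ only affects a range of degrees bounded in terms of $n$, which is killed by the truncation insensitivity of \Cref{lem:basicpropertiesPhi}(3). For the $V_{n+1}$ summand, I would use that $v^N \wedge V_{n+1}$ is null for $N \gg 0$, because $V_i \wedge V_{n+1}$ has type $> i$ and $v_i$ self-maps are nilpotent there. Consequently, on the telescope, the spaces $\Map_*(V_{n+1}, M_k(X)) = \Map_*(V_{n+1}\wedge \susp^{kd}V_i, X)$ become contractible through the transition maps. In other words, the telescope of the $M_k(X)$ is already $\susp V_{n+1}$-null in a pro-sense, so nullification does not change it after applying $\Phi_{V_i}$.

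The main obstacle is this last step: making the pro-nullity argument precise. The $\mathbf{P}_{\susp W}$-localizations of the individual $M_k(X)$ need not stabilize, and one must show that the nullity of the transition maps passes through the cellular construction of $\mathbf{P}_{\susp W}$ uniformly in $k$. My first attempt would be to run the small-object construction of $\mathbf{P}_{\susp W}$ and observe that each attached cone on $\susp^{j}W$ is sent to zero by $\Phi_{V_i}$. The reason is that $\Phi_{V_i}$ of the relevant maps factors through $V_i^\vee \otimes \susp^\infty V_{n+1}$-type terms, which vanish $T(i)$-locally, and $\Phi_{V_i}$ commutes with the filtered colimits and pushouts involved up to the bounded truncation error.
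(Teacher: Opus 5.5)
There is a genuine gap in your proof of (2). Your deduction of (1) from (2) by looping is fine, and \Cref{thm:Bousfieldfib} with a truncation bound uniform in $k$ is the right input. The gap sits exactly at the step you flag as the obstacle, and the fix you suggest would not close it.

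If your ``stronger statement'' is meant levelwise in $k$, it is false. $M_k(\mathbf{P}_W X)$ has no $v_{n+1}$-periodic homotopy, while $M_k(X)$ generally does, and that homotopy lives in arbitrarily high degrees. Only the telescopes can agree in high degrees.

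Running the small-object construction and noting that $\Phi_{V_i}$ kills each attached cone proves nothing. $\Phi_{V_i}$ preserves fiber sequences and filtered colimits, but not pushouts. Understanding how a cell attachment affects $\Phi_{V_i}$ is essentially the content of \Cref{thm:Bousfieldfib}, which you would be reproving.

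The missing idea is to commute the nullification with the telescope. For this you must first trade $W_{n+1}$ for the finite space $V_{n+1}$. By Bousfield, $\langle \susp^j W_{n+1}\rangle = \langle \susp V_{n+1}\rangle$ for some $j$. Moreover, the fiber of $\mathbf{P}_{\susp^j W_{n+1}}Y \to \mathbf{P}_{\susp W_{n+1}}Y$ is $(n+j)$-truncated (\Cref{lem:VnWn}); this is the precise form of your remark about the $K(\mathbb{Z}/p,n+1)$ summand. Hence $\Phi_{V_i}$ cannot tell $\mathbf{P}_{\susp V_{n+1}}X$ from $\mathbf{P}_{\susp W_{n+1}}X$.

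Now $\susp V_{n+1}$ is compact, so $\mathbf{P}_{\susp V_{n+1}}$ commutes with filtered colimits, and
\[
\colim_k \mathbf{P}_{\susp V_{n+1}} M_k(X) \simeq \mathbf{P}_{\susp V_{n+1}}\bigl(\colim_k M_k(X)\bigr) \simeq \colim_k M_k(X).
\]
The last equivalence holds because $\colim_k M_k(X) = \Loop^\infty \Phi_{V_i}(X)$ is the infinite loop space of a $T(i)$-local spectrum. Since $V_{n+1}$ is $T(i)$-acyclic, this space is already $\mathbf{P}_{\susp V_{n+1}}$-local.

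This is the rigorous form of your ``pro-nullity''. No nilpotence of $v$ on $V_i \wedge V_{n+1}$ and no cellular bookkeeping is needed. With $K(\mathbb{Z}/p,n+1)$ still present this step is unavailable, since that space is not compact.

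Now apply \Cref{thm:Bousfieldfib} with $A = \susp^{kd}V_i$. Its fibers are truncated uniformly in $k$, so the colimit of the fibers is truncated too. Hence the map of telescopes $\colim_k M_k(X) \to \colim_k M_k(\mathbf{P}_{\susp V_{n+1}}X)$ is an equivalence in high degrees. It is then an equivalence, because both sides have periodic homotopy groups.

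This argument never uses an $H$-structure on $X$, so it proves (1) directly for every pointed $X$; this is what the paper does. Part (2) then follows by writing the $H$-space $X$ as a retract of $\Loop\susp X$. By \Cref{thm:LocLoop}, the map $\Loop\susp X \to \mathbf{P}_{W_{n+1}}\Loop\susp X$ is $\Loop$ applied to $\susp X \to \mathbf{P}_{\susp W_{n+1}}\susp X$.

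Your order, (2) before (1), instead needs an extra claim: that $\mathbf{P}_W X$ and $\mathbf{P}_{\susp W}X$ differ by a bounded error for $H$-spaces. \Cref{bousfield-classification} does not give this, since it compares $\susp W$ with $\susp^k W$, not $W$ with $\susp W$. The claim is true: for $X = \Loop Y$ it is \Cref{thm:Bousfieldfib} with $A = S^1$ combined with \Cref{thm:LocLoop}, and a retract argument handles general $H$-spaces. But it is a detour around a computation that works for all $X$.
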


In the opposite direction, we have the following:

\begin{thm}
\label{thm:periodichtpyPvn}
Let $f\colon X \to Y$ be a map of $p$-local pointed spaces that is a $v_i$-periodic equivalence for every $i \leq n$ and let $W_{n+1}$ be as in \Cref{thm:Pvnperiodichtpy}. If $f$ is $(n+2)$-connected, then $f$ is a $\mathbf{P}_{\susp W_{n+1}}$-equivalence.
\end{thm}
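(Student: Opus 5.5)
We have to show that $f$ becomes an equivalence after applying $\mathbf{P}_{\susp W_{n+1}}$. Equivalently, for every $\mathbf{P}_{\susp W_{n+1}}$-local space $Z$ the map $f^*\colon \Map(Y,Z) \to \Map(X,Z)$ should be an equivalence.

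\textbf{Reduction to the fiber.} By \Cref{thm:LocLoop} and \Cref{cor:LocLoop}, a space is $\mathbf{P}_{\susp W_{n+1}}$-local exactly when its loop spaces at all basepoints are $\mathbf{P}_{W_{n+1}}$-local. Locality is a condition on loop spaces, so it is insensitive to $\pi_0$ and $\pi_1$ data. Working one component at a time, we may assume $X$ and $Y$ are connected and pointed. Let $F = \mathrm{fib}(f)$. Since $f$ is $(n+2)$-connected, $F$ is $(n+1)$-connected. By \Cref{lem:basicpropertiesPhi}(2), $\Phi_{V_i}(F)\simeq 0$ for all $1\le i\le n$. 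Moreover $F$ is rationally trivial, since rationally $f$ is an equivalence; this is part of the hypothesis if $i=0$ means rational homotopy. Finally $F$ is $p$-local.

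The plan is to show two things:
\begin{enumerate}
\item $\mathbf{P}_{\susp W_{n+1}} F \simeq *$.
\item For a map with $\mathbf{P}_{\susp W_{n+1}}$-acyclic fiber, the map itself is a $\mathbf{P}_{\susp W_{n+1}}$-equivalence.
\end{enumerate}
Step (2) is a standard fact about nullifications, available in Bousfield and Farjoun \cite{Bou94,Dro95}: for a connected base, a nullification $\mathbf{P}_A$ with $A$ connected sends a fiber sequence with $\mathbf{P}_A$-acyclic fiber to an equivalence on the total space and base. I would cite this directly, justifying it by fiberwise nullification.

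\textbf{Killing the fiber.} For step (1), consider $\eta\colon F\to \mathbf{P}_{\susp W_{n+1}}F =: P$. This map is $\susp W_{n+1}$-cellular. Since $\susp W_{n+1}$ is $(n+1)$-connected, $\eta$ is highly connected in low degrees, so $P$ is again $(n+1)$-connected. By \Cref{thm:Pvnperiodichtpy}(1), $\eta$ is a $v_i$-periodic equivalence for $i\le n$, so $P$ still has vanishing $v_i$-periodic homotopy for $i\le n$ and is rationally trivial. On the other hand, $P$ is $\mathbf{P}_{\susp V_{n+1}}$-local and $\mathbf{P}_{K(\mathbb{Z}/p,n+2)}$-local; the latter uses $\langle \susp K(\mathbb{Z}/p,n+1)\rangle \geq \langle K(\mathbb{Z}/p,n+2)\rangle$ from \Cref{rmk:LocLoop}.

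We now need to show that such a $P$ is contractible. Suppose $P\not\simeq *$, and let $m\ge n+2$ be the bottom degree with $\pi_m P\neq 0$. Since $P$ is $p$-local and rationally trivial, $\pi_m P$ is $p^\infty$-torsion, so it admits a nonzero map from $\mathbb{Z}/p$. The idea is to show that a highly connected $p$-torsion space whose $v_i$-periodic homotopy vanishes for $i\le n$ cannot be both $\susp V_{n+1}$-null and $K(\mathbb{Z}/p,n+2)$-null unless it is trivial. For this I would use Bousfield's characterization \cite[Theorem 11.10]{Bou94}, identifying $\mathbf{P}_{\susp W_{n+1}}$-equivalences of highly connected spaces with $v_{\le n}$-periodic equivalences. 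This is exactly the result being refined here, so the genuine input must come from Bousfield's classification. Concretely, use \Cref{bousfield-classification} and \Cref{cor:bousfield-classification} to compare $\langle \susp W_{n+1}\rangle$ with the classes $\langle \susp^k V_i\rangle$ for $i\le n$ and large $k$. Combine this with \Cref{thm:Bousfieldfib} applied to $A = V_i$: the map $\mathbf{P}_{\susp W_{n+1}}\Map_*(V_i,F)\to \Map_*(V_i,P)$ has truncated fibers. Then use the $v_i$-periodicity of $\Map_*(V_i,-)$ from \Cref{lem:basicpropertiesPhi}(4) to argue inductively on $i$, peeling off heights via the nullifications $\mathbf{P}_{\susp V_{i+1}}$.

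\textbf{Main obstacle.} The hardest step is this last one: proving that an $(n+1)$-connected $p$-torsion space with vanishing $v_{\le n}$-periodic homotopy is $\susp W_{n+1}$-acyclic. This is where the $(n+2)$-connectivity hypothesis enters essentially. It guarantees that the low-dimensional error term from \Cref{thm:Bousfieldfib}, which is $n$-truncated, together with the $K(\mathbb{Z}/p,n+1)$ summand in \Cref{bousfield-classification}, only sees degrees that vanish in $F$. My first attempt would be an induction on $n$. For $n=0$ the statement says that an $(n+1)$-connected rationally trivial $p$-local space is $\susp K(\mathbb{Z}/p,1)\vee\susp V_1$-acyclic. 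That case follows because every such space is built from Moore spaces, which are $V_1$-cellular up to $K(\mathbb{Z}/p,\cdot)$ terms. The inductive step would use $\mathbf{P}_{\susp W_{n}}$ and the fiber of $\mathbf{P}_{\susp W_{n+1}}\to\mathbf{P}_{\susp W_n}$.
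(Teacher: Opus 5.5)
\textbf{There is a genuine gap: the step that carries the content of the theorem is never proved.}

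Your reduction to showing that $P:=\mathbf{P}_{\susp W_{n+1}}F$ is contractible agrees with the paper. So does your list of properties of $P$: it is $(n+1)$-connected, $p$-local and rationally trivial, has vanishing $v_i$-periodic homotopy for $i\le n$ by \Cref{thm:Pvnperiodichtpy}(1), and is $\susp V_{n+1}$-null and $K(\mathbb{Z}/p,n+2)$-null. You also correctly isolate the key claim that such a space must be contractible. But you do not prove that claim, and the routes you sketch do not reach it:
\begin{itemize}
\item Appealing to \cite[Theorem 11.10]{Bou94} is circular, as you note yourself.
\item \Cref{thm:Bousfieldfib} only compares $\mathbf{P}_{\susp W_{n+1}}\Map_*(V_i,F)$ with $\Map_*(V_i,P)$. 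It says nothing about whether $P$ is $\susp V_i$-null.
\item The proposed induction on $n$ has no actual inductive step.
\item The bottom-degree argument cannot work. An element of order $p$ in the \emph{lowest} nonvanishing group $\pi_mP$ gives a nontrivial map from a Moore space, not from $K(\mathbb{Z}/p,m)$. Nothing you know makes $P$ null with respect to Moore spaces.
\end{itemize}
To use $K(\mathbb{Z}/p,\cdot)$-nullity you must first know that $P$ is truncated, and then look at its \emph{top} homotopy group. Truncatedness is exactly what the vanishing of the $v_i$-periodic homotopy groups has to supply, and your sketch has no mechanism for producing it.

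\textbf{The missing idea} is a chain of finite complexes linking consecutive heights. Choose $U_0=S^k$ and $U_1,\dots,U_{n+1}$ with $v_i$ self-maps $w_i\colon\susp^{d_i}U_i\to U_i$ and $U_{i+1}=\cof(w_i)$, and set $V_i=\susp^{n-i+1}U_i$. Prove by downward induction on $i$ that $P$ is $\susp V_i$-null. The base case $i=n+1$ is $\langle\susp W_{n+1}\rangle\ge\langle\susp V_{n+1}\rangle$. For the inductive step:
\begin{itemize}
\item The cofiber sequence $\susp V_{i+1}\to\susp^{d_i+1}V_i\to\susp V_i$ gives a fiber sequence $\Map_*(\susp V_i,P)\to\Map_*(\susp^{d_i+1}V_i,P)\to\Map_*(\susp V_{i+1},P)$.
\item Its last term is contractible by induction, so $w_i^*$ is an equivalence.
\item Hence $\Map_*(\susp V_i,P)\simeq\colim_k\Map_*(\susp^{1+kd_i}V_i,P)=\Loop^\infty\Phi_{\susp V_i}(P)\simeq *$, since the $v_i$-periodic homotopy of $P$ vanishes.
\end{itemize}
At $i=0$ the space $\susp V_0$ is a sphere, so $P$ is truncated.

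Now suppose $P\not\simeq *$ and let $t\ge n+2$ be its top nonvanishing degree. Since $\Map_*(K(\mathbb{Z}/p,t),\tau_{\le t-1}P)\simeq *$, one gets $\pi_0\Map_*(K(\mathbb{Z}/p,t),P)\cong\mathrm{Hom}(\mathbb{Z}/p,\pi_tP)$. This is nonzero because $\pi_tP$ is nonzero and $p^\infty$-torsion. That contradicts the $K(\mathbb{Z}/p,t)$-nullity of $P$, which follows from $\langle\susp W_{n+1}\rangle\ge\langle\susp K(\mathbb{Z}/p,n+1)\rangle\ge\langle K(\mathbb{Z}/p,n+2)\rangle\ge\langle K(\mathbb{Z}/p,t)\rangle$.
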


To prepare for the proofs, we first recall the relation between nullification with respect to $V_{n+1}$ and $W_{n+1}$. 

\begin{lem}
\label{lem:VnWn}
There exists $j \geq 1$ so that for any pointed space $X$, the natural map
\[
\mathbf{P}_{\susp V_{n+1}} X \to \mathbf{P}_{\susp W_{n+1}} X
\]
has $(n+j)$-truncated fiber. In particular, the map induces an equivalence on $(n+j+1)$-connected covers.
\end{lem}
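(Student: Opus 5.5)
We need to show that the fiber of $\mathbf{P}_{\susp V_{n+1}} X \to \mathbf{P}_{\susp W_{n+1}} X$ is $(n+j)$-truncated for some $j \geq 1$ independent of $X$. Since $\susp W_{n+1} = \susp V_{n+1} \vee \susp K(\mathbb{Z}/p,n+1)$, a space is $\mathbf{P}_{\susp W_{n+1}}$-local if and only if it is both $\mathbf{P}_{\susp V_{n+1}}$-local and $\mathbf{P}_{\susp K(\mathbb{Z}/p,n+1)}$-local. Hence $\mathbf{P}_{\susp W_{n+1}} X \simeq \mathbf{P}_{\susp W_{n+1}}\mathbf{P}_{\susp V_{n+1}} X$. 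Writing $Y = \mathbf{P}_{\susp V_{n+1}} X$, we are reduced to bounding the truncation level of the fiber of the unit $Y \to \mathbf{P}_{\susp W_{n+1}} Y$ for an arbitrary $\mathbf{P}_{\susp V_{n+1}}$-local space $Y$.

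The key input is \Cref{cor:bousfield-classification}, applied to $X = V_{n+1}$. We may assume $V_{n+1}$ is $p$-local with $p^\infty$-torsion reduced homology. Let $m$ be its connectivity, so that $\widetilde H_i(V_{n+1}) = 0$ for $i<m$ and $\widetilde H_m \neq 0$. Bousfield's classification then gives, for every $k\geq 1$,
\[
\langle \susp V_{n+1} \rangle \leq \langle \susp^k V_{n+1}\rangle \vee \langle K(\mathbb{Z}/p,m+1)\rangle .
\]
We may also choose $V_{n+1}$ with $m \geq n$; the Bousfield class changes only by a $K(\mathbb{Z}/p,\cdot)$ term, which is absorbed at the end. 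More robustly, \Cref{rmk:LocLoop} gives $\langle \susp K(\mathbb{Z}/p,n)\rangle \geq \langle K(\mathbb{Z}/p,n+1)\rangle$, and iterating yields $\langle K(\mathbb{Z}/p,n+1)\rangle \geq \langle K(\mathbb{Z}/p,m+1)\rangle$ for $m\ge n$. Combining, we get an inequality of the form
\[
\langle \susp W_{n+1}\rangle \geq \langle \susp V_{n+1}\rangle \geq \langle \susp W_{n+1}\rangle \vee (\text{something killed by }\susp W_{n+1}) ,
\]
so the two nullifications differ only by nullifying $K(\mathbb{Z}/p,n+1)$-type pieces on top of a $\susp V_{n+1}$-local space.

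The main step, and the main obstacle, is to turn this into a truncation bound. First I would note that for a $\mathbf{P}_{\susp V_{n+1}}$-local $Y$, the space $\mathbf{P}_{\susp W_{n+1}}Y = \mathbf{P}_{\susp K(\mathbb{Z}/p,n+1)}Y$. To see this, observe that $\mathbf{P}_{\susp K}$ of a $\susp V$-local space stays $\susp V$-local, using $\langle \susp K(\mathbb{Z}/p,n+1)\rangle \geq \langle \susp V_{n+1}\rangle$ after replacing $V_{n+1}$ by a suitable suspension, which is allowed by \Cref{cor:unstableBousfieldclasses}; this fixes $j$ in terms of the connectivity. Next I would use the standard fact (Bousfield \cite{Bou94}) that nullification with respect to $\susp K(\mathbb{Z}/p,n+1)$ changes a space only through a fiber with homotopy bounded by $n+2$: the fiber is $\susp K$-acyclic. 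Alternatively, I would apply \Cref{thm:Bousfieldfib} with $A = S^0$ and $V = K(\mathbb{Z}/p,n+1)$ together with a loop/Postnikov argument. Compare via \Cref{thm:LocLoop}: $\Loop$ of the map is the $\mathbf{P}_{K(\mathbb{Z}/p,n+1)}$-nullification of $\Loop Y$, whose fiber is built from $K(\mathbb{Z}/p,n+1)$-cellular pieces. The hard part is showing these pieces are bounded above rather than merely highly connected. For this I would exploit the fact that $Y$ is $\susp V_{n+1}$-local, so its high connected covers are $\susp^k V_{n+1}$-local, and hence already $\susp K$-local above degree $n+j$ by the classification inequality. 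This is what forces the fiber to be $(n+j)$-truncated.

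Finally, the ``in particular'' follows because a map with $(n+j)$-truncated fiber induces isomorphisms on $\pi_i$ for $i > n+j+1$ and an injection on $\pi_{n+j+1}$. To upgrade this to an equivalence of $(n+j+1)$-connected covers, I would enlarge $j$ by one if needed. The statement only requires existence of some $j$, so this loses nothing.
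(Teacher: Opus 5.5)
Your reduction to the unit $Y\to\mathbf{P}_{\susp W_{n+1}}Y$ for a $\mathbf{P}_{\susp V_{n+1}}$-local space $Y$ is fine. The step meant to control this map, however, rests on a Bousfield class inequality in the wrong direction.

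To get $\mathbf{P}_{\susp W_{n+1}}Y\simeq\mathbf{P}_{\susp K(\mathbb{Z}/p,n+1)}Y$ you use $\langle\susp K(\mathbb{Z}/p,n+1)\rangle\geq\langle\susp^a V_{n+1}\rangle$, i.e.\ that every $\mathbf{P}_{\susp K(\mathbb{Z}/p,n+1)}$-local space is $\mathbf{P}_{\susp^a V_{n+1}}$-local. This fails for every $a$. Since $\susp^a V_{n+1}$ is a finite complex, Miller's theorem (the Sullivan conjecture, extended from $B\mathbb{Z}/p$ to $K(\mathbb{Z}/p,n+1)$ in the standard way) gives $\Map_*(K(\mathbb{Z}/p,n+1),\susp^a V_{n+1})\simeq *$. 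So $\susp^a V_{n+1}$ is itself $\mathbf{P}_{\susp K(\mathbb{Z}/p,n+1)}$-local, but it is obviously not $\mathbf{P}_{\susp^a V_{n+1}}$-local. Note that \Cref{cor:unstableBousfieldclasses} only compares finite suspensions and cannot produce such an inequality. Your displayed chain would even force $\langle\susp V_{n+1}\rangle=\langle\susp W_{n+1}\rangle$, which fails once $\susp V_{n+1}$ is $(n+2)$-connected.

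The truncation step has the same defect. The fiber of $Y\to\mathbf{P}_{\susp K(\mathbb{Z}/p,n+1)}Y$ is not bounded above in general: iterating \Cref{rmk:LocLoop} gives $\mathbf{P}_{\susp K(\mathbb{Z}/p,n+1)}K(\mathbb{Z}/p,q)\simeq *$ for all $q\geq n+2$. So any bound must come from the $\susp V_{n+1}$-locality of $Y$. Your sentence ``hence already $\susp K$-local above degree $n+j$'' would need $\langle\susp^k V_{n+1}\rangle\geq\langle\susp K(\mathbb{Z}/p,n+1)\rangle$. That fails once $\susp^k V_{n+1}$ is $(n+2)$-connected, because then $\mathbf{P}_{\susp^k V_{n+1}}$ does not change $\pi_{n+2}(\susp K(\mathbb{Z}/p,n+1))=\mathbb{Z}/p$. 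As it stands, nothing in the proposal produces a truncation bound.

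The argument needs two ingredients, and neither appears in your proposal.
\begin{enumerate}
\item The inequality in the opposite direction: $\susp V_{n+1}$-local spaces are $\susp^j K(\mathbb{Z}/p,n+1)$-local for a suitable $j$. This gives $\langle\susp V_{n+1}\rangle=\langle\susp^j W_{n+1}\rangle$, hence $\mathbf{P}_{\susp V_{n+1}}=\mathbf{P}_{\susp^j W_{n+1}}$. It follows from \cite[Theorem 9.12]{Bou94}, or directly from \Cref{bousfield-classification}.
\item A general result turning a difference in suspension degree into a truncation bound: the fiber of $\mathbf{P}_{\susp^j W_{n+1}}X\to\mathbf{P}_{\susp W_{n+1}}X$ is $(n+j)$-truncated (\cite[Theorem 7.2]{Bou94}, \cite{farjounsmith}). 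This is the analogue of $\tau_{\leq n+j}X\to\tau_{\leq n+1}X$ when $W$ is a sphere. It is a genuinely nontrivial theorem, and it is exactly the ``main obstacle'' your proposal leaves open.
\end{enumerate}

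A minor point: the ``in particular'' needs no enlargement of $j$. The $(n+j+1)$-connected cover only involves $\pi_i$ for $i\geq n+j+2$, and there the long exact sequence already gives isomorphisms.
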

\begin{proof}
It follows from \cite[Theorem 9.12]{Bou94} that there is some $j \geq 1$ so that $\langle \susp^j W_{n+1} \rangle = \langle \susp V_{n+1} \rangle$. (Alternatively, this can be deduced directly from \Cref{bousfield-classification}.) The fiber of the natural map
\[
\mathbf{P}_{\susp^j W_{n+1}} X \to \mathbf{P}_{\susp W_{n+1}} X
\]
is $(n+j)$-truncated by \cite[Theorem 7.2]{Bou94} or \cite{farjounsmith}.   
\end{proof}

\begin{proof}[Proof of \Cref{thm:Pvnperiodichtpy}]
We begin with item (1). Fix a finite pointed space $V_i$ of type $i$ with a $v_i$ self-map $v\colon \susp^d V_i \to V_i$. We aim to show that $\Phi_{V_i}(X) \to \Phi_{V_i}(\mathbf{P}_{\susp W_{n+1}} X)$ is an equivalence of spectra. The functor $\Phi_{V_i}$ is insensitive to passing to highly connected covers, so invoking \Cref{lem:VnWn} we may replace $\mathbf{P}_{\susp W_{n+1}} X$ with $\mathbf{P}_{\susp V_{n+1}} X$. Since $\Phi_{V_i}(X)$ and $\Phi_{V_i}(\mathbf{P}_{\susp V_{n+1}} X)$ are periodic, it will suffice to check that the associated map of infinite loop spaces is an equivalence, which by construction is the top horizontal map in the  following commutative diagram:
\[
\begin{tikzcd}
\mathrm{colim}_k \Map_*(\susp^{kd} V_i, X) \ar{r}\ar{d} & \mathrm{colim}_k \Map_*(\susp^{kd} V_i, \mathbf{P}_{\susp V_{n+1}} X) \\
\mathbf{P}_{\susp V_{n+1}}\mathrm{colim}_k \Map_*(\susp^{kd} V_i, X) \ar{r} & \mathrm{colim}_k \mathbf{P}_{\susp V_{n+1}} \Map_*(\susp^{kd} V_i, X). \ar{u}
\end{tikzcd}
\]
The bottom horizontal equivalence arises from the fact that $\mathbf{P}_{\susp V_{n+1}}$ preserves filtered colimits, since $V_{n+1}$ is finite. The space on the upper left, which is $\Loop^\infty \Phi_{V_i} X$, is $T(i)$-local by \Cref{lem:basicpropertiesPhi}. It is therefore also $\mathbf{P}_{\susp V_{n+1}}$-local, since $V_{n+1}$ is $T(i)$-acyclic, so that the left vertical map is an equivalence. Finally, the vertical map on the right is an equivalence in high degrees by \Cref{thm:Bousfieldfib}. It follows that the top horizontal map is an equivalence in high degrees, but then it is in fact an equivalence since the spaces involved have periodic homotopy groups.

Now suppose that $X$ is an $H$-space. Then it is a retract of $\Loop \susp X$, so for item (2) it will suffice to show that $\Loop \susp X \to \mathbf{P}_{W_{n+1}}(\Loop \susp X)$ is a $v_i$-periodic equivalence for $i \leq n$. According to \Cref{thm:LocLoop} we may identify this map with the image under $\Loop$ of the map $\susp X \to \mathbf{P}_{\susp W_{n+1}} \susp X$, which is a $v_i$-periodic equivalence by the first part of the proof.
\end{proof}

\begin{proof}[Proof of \Cref{thm:periodichtpyPvn}]
Write $F$ for the fiber of $f$ at an arbitrary basepoint $y$ of $Y$. By hypothesis this is an $(n+1)$-connected space with vanishing $v_i$-periodic homotopy groups for $i \leq n$. It will suffice to show that $\mathbf{P}_{\susp W_{n+1}} F$ is contractible. Observe that the space $\mathbf{P}_{\susp W_{n+1}} F$ is still $(n+1)$-connected (because $\susp W_{n+1}$ is) and has vanishing $v_i$-periodic homotopy groups by \Cref{thm:Pvnperiodichtpy}(1). Fix a sequence of finite pointed spaces $U_i$, with $0 \leq i \leq n+1$, such that:
\begin{itemize}
\item[(1)] $U_0$ is a sphere $S^k$,
\item[(2)] each $U_i$ admits a $v_i$ self-map $w_i\colon \susp^{d_i} U_i \to U_i$,
\item[(3)] $U_i = \mathrm{cof}(w_{i-1})$ for $1 \leq i \leq n+1$.
\end{itemize}
Such a sequence exists as a consequence of the Hopkins--Smith periodicity theorem. Now set $V_i := \susp^{n-i+1} U_i$. We will prove by a finite downward induction on $i$ that the space $\mathbf{P}_{\susp W_{n+1}} F$ is $\mathbf{P}_{\susp V_i}$-local. The base case $i = n+1$ follows from the fact that $\langle \susp W_{n+1} \rangle \geq \langle \susp V_{n+1} \rangle$ (where we may use \Cref{cor:unstableBousfieldclasses} and \Cref{bousfield-classification} to see that it does not matter which type $n+1$ space $V_{n+1}$ we picked previously). For the inductive step, consider the cofiber sequence
\[
U_{i+1} \to \susp^{d_i+1} U_i \xrightarrow{\susp w_i} \susp U_i
\]
and the resulting fiber sequence
\[
\mathrm{Map}_*(\susp V_i, \mathbf{P}_{\susp W_{n+1}} F) \xrightarrow{w_i^*} \mathrm{Map}_*(\susp^{d_i+1} V_i, \mathbf{P}_{\susp W_{n+1}} F) \to \mathrm{Map}_*(\susp V_{i+1}, \mathbf{P}_{\susp W_{n+1}} F).
\]
The right-hand term is contractible by the inductive hypothesis, so the first map is an equivalence. It follows that the natural map
\[
\mathrm{Map}_*(\susp V_i, \mathbf{P}_{\susp W_{n+1}} F) \to \mathrm{colim}_k \mathrm{Map}_*(\susp^{1 + kd_i} V_i, \mathbf{P}_{\susp W_{n+1}} F) = \Loop^\infty \Phi_{\susp V_i}(\mathbf{P}_{\susp W_{n+1}} F)
\]
is an equivalence. The right-hand side is contractible because $\mathbf{P}_{\susp W_{n+1}} F$ has vanishing $v_i$-periodic homotopy groups. Hence the left-hand side is contractible and $\mathbf{P}_{\susp W_{n+1}} F$ is $\mathbf{P}_{\susp V_i}$-local, as desired. Setting $i=0$ we conclude that $\mathbf{P}_{\susp W_{n+1}} F$ is $\mathbf{P}_{S^{k+n+1}}$-local, meaning it is $k+n$-truncated. At the same time, this space is $p^\infty$-torsion (since its $v_0$-periodic homotopy groups vanish), and $n+1$-connected. Also, $\mathbf{P}_{\susp W_{n+1}} F$ does not admit nontrivial maps from $\susp K(\mathbb{Z}/p,n+1)$ because $\langle \susp W_{n+1} \rangle \geq \langle \susp K(\mathbb{Z}/p,n+1) \rangle$. An easy downward induction on the Postnikov tower of $\mathbf{P}_{\susp W_{n+1}} F$ now shows that it is contractible, as desired.
\end{proof}


  \section{Parametric homology and periodic homotopy}
\label{sec:homologyvshomotopy}

In this section we combine the methods of previous sections to prove most of our main results, namely \Cref{thm:mainabs}, \Cref{thm:mainhomologytohtpy}, \Cref{thm:mainhtpytohomology}, \Cref{thm:Hmainhomologytohtpy}, and \Cref{thm:Hmainhtpytohomology}.

\subsection{The absolute case}

First we will prove \Cref{thm:mainabs} and \Cref{thm:mainhtpytohomology}. The prime $p$ is fixed throughout. We begin with the following characterization of the maps $f$ inducing $T(i)$-equivalences for $i \leq n$, which is due to Bousfield \cite[Section 13]{Bou94}. We will also briefly review the proof for the reader's convenience.

\begin{thm}[Bousfield]
\label{thm:Freudenthal}
    Let $n \geq  0$ be a natural number. For a map of spaces $f \colon X \to Y$ the following are equivalent:
    \begin{enumerate}
        \item 
        The map $f$ is a $T(i)$-equivalence for all $i \leq n$.
        \item
        For any choice of basepoint in $X$, the map $\susp^k f$ is a $v_i$-periodic equivalence for all $i \leq n$ and $k\gg 0$.
        \item 
        For any pointed finite space $V_{n+1}$ of type $n+1$, the map $\mathbf{P}_{\susp V_{n+1}}(\susp^k f_{(p)})$ is an equivalence for $k 
        \gg 0$.
    \end{enumerate}
\end{thm}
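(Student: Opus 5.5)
The plan is to pass everything to the stable category, where $T(i)$-homology and $v_i$-periodic homotopy coincide, by combining the Freudenthal suspension theorem with Bousfield's results on nullification from \Cref{subsec:localizationnullification}. We may assume $X$ and $Y$ are pointed and connected after suspending once. Suspending does not change $T(i)$-homology up to a shift, so $f$ is a $T(i)$-equivalence if and only if $\susp^k f$ is. We may also $p$-localize: $T(i)$-equivalences for $i\geq 1$ are insensitive to $p$-localization, and for $i=0$ we use rational homology. So it suffices to compare statements about $\susp^k f_{(p)}$ for large $k$.

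\emph{(1)$\Rightarrow$(2).} Let $C = \mathrm{cof}(\susp^k f)$. Hypothesis (1) says that $\susp^\infty C$ is $T(i)$-acyclic for $i\le n$. Consider the fiber $F$ of $\susp^k f$. For highly connected suspensions, Freudenthal-type (Blakers--Massey) estimates show that the map $\susp^k X \to \susp^k Y$ behaves stably in a range of degrees growing with $k$. The natural tool is \Cref{lem:basicpropertiesPhi}: $\Phi_{V_i}$ is invariant under passing to highly connected covers, and on suspension spectra in the stable range it is computed by $\Phi_{V_i}\Loop^\infty\susp^\infty$. Using \Cref{rem:BK}(2), $\Phi_{V_i}(\susp^k f)$ is then identified with $V_i^\vee\otimes \Loc_{T(i)}\susp^{k}\susp^\infty f$. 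I would argue this via Bousfield's result that for $k$ large, $\susp^k X\to \Loop^\infty\susp^\infty \susp^k X$ is a $v_i$-periodic equivalence. This is exactly where the unstable input from \cite[Section 13]{Bou94} enters, and I expect it to be the main obstacle. The cleanest route seems to be: $\susp^k X \to Q\susp^k X$ has highly connected fiber whose periodic homotopy is controlled by the Goodwillie tower, and the higher layers are $T(i)$-acyclic on suspensions. Failing that, I would cite Bousfield directly.

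\emph{(2)$\Rightarrow$(3).} Assume (2). Then $\susp^k f_{(p)}$ is a $v_i$-periodic equivalence for $i\le n$. For $k\ge n+3$ it is also $(n+2)$-connected after adjusting: its rational part is handled by the $i=0$ case, and its low-degree homotopy is handled as follows. By \Cref{thm:periodichtpyPvn} applied to the fiber, together with the fact that suspension spaces of high connectivity kill the $K(\mathbb Z/p,n+1)$ summand (\Cref{lem:VnWn}), $\mathbf{P}_{\susp W_{n+1}}$ and $\mathbf{P}_{\susp V_{n+1}}$ agree up to a truncated error. Concretely, I would show that the cofiber $C$ of $\susp^k f_{(p)}$ satisfies $\mathbf{P}_{\susp V_{n+1}}C\simeq *$, using \Cref{cor:unstableBousfieldclasses}: $C$ is a highly connected suspension with vanishing $v_i$ homotopy. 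Then $\mathbf{P}_{\susp V_{n+1}}$ sends the cofiber sequence to an equivalence.

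\emph{(3)$\Rightarrow$(1).} Assume (3). By \Cref{prop:fequivEequiv}(1), since $\susp V_{n+1}\to *$ is a $T(i)$-equivalence for $i\le n$ (type $n+1$ means $T(i)$-acyclic), $\susp^kf_{(p)}$ is a $T(i)$-equivalence. Hence so is $f$ for $i\ge1$. For $i=0$ the $p$-localization loses rational information at other primes, but $T(0)$ at $p$ is $H\mathbb{Q}$. So we need a separate rational check: the sphere case $U_0$ in the proof of \Cref{thm:periodichtpyPvn} shows that $\mathbf{P}_{\susp V_{n+1}}$-equivalences are rational equivalences.
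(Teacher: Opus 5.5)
\textbf{The gap is in (1)$\Rightarrow$(2).} Your key step is that for $k\gg 0$ the map $\susp^k X\to\Loop^\infty\susp^\infty\susp^k X$ is a $v_i$-periodic equivalence, so that $\Phi_{V_i}(\susp^k f)\simeq V_i^\vee\otimes\Loc_{T(i)}\susp^k\susp^\infty f$. This is false, and no Freudenthal or Blakers--Massey estimate can yield it: those control homotopy groups in a range of degrees, while $v_i$-periodic homotopy groups are periodic, so agreement in a range says nothing about them. For example, at an odd prime the $v_1$-periodic homotopy groups of $S^{2m+1}$ have exponent $p^m$ (Thompson). By contrast, $\Phi_1(QS^{2m+1})\simeq\Loc_{T(1)}\susp^{2m+1}\mathbb{S}$ by \Cref{rem:BK}(2), and its image-of-$J$ homotopy has unbounded exponent. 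Relatedly, Arone--Mahowald show that the layers $D_{p^j}$, $1\le j\le n$, of the Goodwillie tower of an odd sphere are $v_n$-periodically nontrivial, so the higher layers are not negligible on suspensions. Falling back on citing Bousfield directly would just cite the theorem being proved.

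The missing idea is that you never need $\susp^k X$ to be stable. You only need the $T(i)$-homology of $\Map_*(V_i,\susp^k X)$ to be controlled by that of $X$. By Snaith splitting, for connected $Z$ the spectrum $\susp^\infty\Loop^\ell\susp^\ell Z$ is naturally a wedge of extended powers $\mathrm{Conf}_j(\mathbb{R}^\ell)_+\wedge_{\Sigma_j}Z^{\wedge j}$, so $\Loop^\ell\susp^\ell$ preserves $T(i)$-equivalences. Taking $\ell=\dim V_i<k$ and $Z=\susp^{k-\ell}X$ makes $\Loop^\ell\susp^k f$ a $T(i)$-equivalence. \Cref{lem:inductiveacyclic} (induction over the cells of $V_i$, via principal fibrations and bar constructions) upgrades this to $\Map_*(V_i,\susp^k f)$. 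Since $\Phi_{V_i}$ is a colimit of desuspensions of $\susp^\infty\Map_*(V_i,-)$, the map $\Phi_{V_i}(\susp^k f)$ is a $T(i)$-equivalence between $T(i)$-local spectra, hence an equivalence (\Cref{lem:periodic-equivalence-criterion}). This is the paper's argument.

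\textbf{Smaller points.} In (2)$\Rightarrow$(3) no ``adjusting'' is needed. For $k\ge n+3$ the map $\susp^k f_{(p)}$ is automatically $(n+2)$-connected, since both ends are $(k-1)$-connected, so \Cref{thm:periodichtpyPvn} applies as is. Your appeal to \Cref{lem:VnWn} is also the right ingredient. However, your concrete cofiber route fails, for three reasons:
\begin{itemize}
\item $\Phi_{V_i}$ is exact on fiber sequences, not cofiber sequences, so vanishing periodic homotopy of $\mathrm{cof}(\susp^k f)$ is not available without first proving (2)$\Rightarrow$(1).
\item \Cref{cor:unstableBousfieldclasses} only concerns finite spaces.
\item A map with $\mathbf{P}_V$-acyclic cofiber need not be a $\mathbf{P}_V$-equivalence; for example $S^1\vee S^1\to S^1\times S^1$ with $V=S^2$.
\end{itemize}
Use the fiber $F$ instead. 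The proof of \Cref{thm:periodichtpyPvn} gives $\mathbf{P}_{\susp W_{n+1}}F\simeq *$, so by \Cref{lem:VnWn} the space $\mathbf{P}_{\susp V_{n+1}}F$ is $(n+j)$-truncated. Since $F$ is $(k-2)$-connected, the unit $F\to\mathbf{P}_{\susp V_{n+1}}F$ is null once $k\ge n+j+2$, which forces $\mathbf{P}_{\susp V_{n+1}}F\simeq *$.

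Your (3)$\Rightarrow$(1) matches the paper, but the separate rational check is unnecessary. $p$-localization of a simply connected space is a rational equivalence, and $\susp V_{n+1}$ is rationally acyclic, so \Cref{prop:fequivEequiv}(1) already covers $i=0$.
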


To prepare for the proof of \Cref{thm:Freudenthal} we establish the following:

\begin{lem}
\label{lem:inductiveacyclic}
    Let $f\colon X \to Y$ be a map of pointed spaces, $k \geq 0$ a natural number, and $E$ a spectrum. If $\Loop^k f$ is an $E$-equivalence, then the map $\mathrm{Map}_*(V,f)$ is an $E$-equivalence for any finite space $V$ of dimension $\leq k$.
\end{lem}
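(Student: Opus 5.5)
I will argue by induction on the cells of $V$: build the finite space $V$ from a point by attaching cells of dimension $\leq k$, and prove the claim one attachment at a time. The tool is that a map of fiber sequences whose base map and fiber map are $E$-equivalences need not be an $E$-equivalence in general. So I will not attach cells one at a time naively. Instead I will exploit the fact that $\Map_*(-,f)$ turns cofiber sequences of finite spaces into fiber sequences over the relevant components, together with a more robust statement: if $\Loop^k f$ is an $E$-equivalence, then so is $\Loop^j f$ for all $j \geq k$, and $\Map_*(V,f)$ is an $E$-equivalence.

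The key observation is that the hypothesis propagates. If $\Loop^k f$ is an $E$-equivalence, then each component map of $\Loop^{k}f$ is one, and by the Eilenberg--Moore/Serre-type argument for principal fibrations we get that $\Loop^{k+1} f = \Loop(\Loop^k f)$ is an $E$-equivalence. Cleaner is to use that $\Loop^{k+1}f$ is a retract-free consequence: $\Loop^{k+1} f$ is a map of group-like $\mathbb{E}_1$-spaces and $\Loop^k f \simeq B(\Loop^{k+1} f)$ on basepoint components. Rather than that route, I will induct on $\dim V$ and the number of cells, with the inductive step handled by a bar-construction argument, as follows.

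Write $V = V' \cup_{S^{m-1}} D^m$ with $m \leq k$. There is then a fiber sequence
\[
\Loop^m X \to \Map_*(V,X) \to \Map_*(V',X),
\]
and similarly for $Y$. This is a principal fibration: the total space carries an action of the group-like $\mathbb{E}_1$-space $\Loop^m X$, and its image in the base is a union of components. On each component $\Map_*(V,X)$ is the homotopy quotient of $\Loop^m X$ acting on a fiber. I would express $\susp^\infty_+\Map_*(V,X)$ over each component via a bar construction
\[
|\mathrm{Bar}(\susp^\infty_+ \Map_*(V,X),\, \susp^\infty_+\Loop^m X,\, \susp^\infty_+ \ast)|.
\]
This is where the argument needs care, and it is the step I expect to be the main obstacle: handling $\pi_0$ correctly, since components of the base need not all be hit, and making the principal-fibration comparison precise. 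The cleanest approach I see is to use \Cref{lem:parametricvsordinary}-style descent. Over each component of $\Map_*(V',X)$, the map $\Map_*(V,X) \to \Map_*(V',X)$ is pulled back from the path fibration of the attaching map $\Map_*(V',X) \to \Map_*(S^{m-1},X) = \Loop^{m-1}X$. So $\Map_*(V,X)$ is the fiber of a map into $\Loop^{m-1} X$, and $\Loc_E\susp^\infty_+$ of a fiber product can be computed as a cobar/tensor over $\Loc_E\susp^\infty_+\Loop^m X$-modules. For this I would reduce, via the Rothenberg--Steenrod/Eilenberg--Moore spectral sequence for the fiber square $\Map_*(V,X) = \Map_*(V',X) \times_{\Loop^{m-1}X} \ast$, to the three corners $\Map_*(V',f)$, $\Loop^{m-1} f$ and the point.

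With this in hand, the corner maps $\Map_*(V',f)$ and $\Loop^{m-1}f$ are $E$-equivalences by induction, since $\dim S^{m-1} < k$ is covered by the inductive hypothesis applied to the sphere. The fiber product is then identified with $\Map_*(V',X)\times_{\Loop^{m-1}X}\ast$ as a module over $\Loop^m X$. The $E$-localized chains on it can be computed as the relative tensor product
\[
\Loc_E\susp^\infty_+ P \otimes_{\Loc_E\susp^\infty_+\Loop^m X} \Loc_E\mathbb{S},
\]
where $P$ is the pullback over the path space. This expression is invariant under $E$-equivalences of all inputs, which gives the claim. The base case $V = S^k$ is exactly the hypothesis, and $V=\ast$ is trivial.
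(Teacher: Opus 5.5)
There is a genuine gap in your inductive step. You try to show $\Map_*(V,f)$ is an $E$-equivalence using the three corners of the pullback $\Map_*(V,X)\simeq\Map_*(V',X)\times_{\Loop^{m-1}X}*$, via an Eilenberg--Moore/cobar argument. That is a limit construction, and $E$-equivalences are not stable under pullback, quite apart from convergence issues. Take $V'=*$ and $V=S^m$: your principle would then say that if $\Loop^{m-1}f$ is an $E$-equivalence, so is $\Loop^m f$. This already fails for $B\mathbb{Z}/p\to *$ with $E=H\mathbb{Q}$. It also fails for $K(\mathbb{Z}/p,n+1)\to *$ with $E=T(n)$: that map is a $T(n)$-equivalence by \Cref{prop:T(n)localEMspaces}, although $K(\mathbb{Z}/p,n)$ has nontrivial $T(n)$-homology.

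The same examples refute your ``propagation'' claim that $\Loop^j f$ is an $E$-equivalence for $j\geq k$. The propagation that does hold goes downward: $\Loop^\ell f$ is an $E$-equivalence for $\ell\leq k$, on basepoint components, because $B$ preserves $E$-equivalences of grouplike spaces.

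Your final relative tensor product also runs the wrong way. In $\Loc_E\susp^\infty_+P\otimes_{\Loc_E\susp^\infty_+\Loop^mX}\Loc_E\mathbb{S}$, the space $P$ (the pullback of the path fibration) is $\Map_*(V,X)$ itself. So the expression computes $\susp^\infty_+$ of the homotopy orbits $P_{h\Loop^mX}$, i.e.\ of components of the \emph{base} $\Map_*(V',X)$. It takes as input exactly the space you are trying to control.

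The missing idea is to write the \emph{total} space as a colimit. This works because $\susp^\infty_+$ commutes with geometric realization and turns finite products into smash products, and both operations preserve $E$-equivalences. The paper inducts on $k$, applying the inductive hypothesis to $\Loop f$ rather than to $f$, and attaches all $k$-cells at once. The cofiber sequence $\mathrm{sk}_{k-1}V\to V\to\bigvee S^k$ gives a principal fiber sequence $\Map_*(\mathrm{sk}_{k-1}V,\Loop X)\to\prod\Loop^kX\to\Map_*(V,X)$. Hence $\Map_*(V,X)\simeq|\Map_*(\mathrm{sk}_{k-1}V,\Loop X)^{\times\bullet}\times\prod\Loop^kX|$. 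Here $\prod\Loop^kf$ is an $E$-equivalence by hypothesis, and $\Map_*(\mathrm{sk}_{k-1}V,\Loop f)$ is one by induction, since $\Loop^{k-1}(\Loop f)=\Loop^kf$.

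Attaching the top cells simultaneously is essential. If you attach a single $m$-cell to $V'$, you get $\Map_*(V,X)$ as $\Loop^mX$ modulo $\Map_*(V',\Loop X)$. When $V'$ already contains $k$-cells, controlling $\Map_*(V',\Loop f)$ would require $\Loop^{k+1}f$, which is not given.

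Your $\pi_0$ worry is legitimate, but it concerns the hypotheses rather than the technique. The realization only recovers the part of $\Map_*(V,X)$ lying over the basepoint component of $\Map_*(\mathrm{sk}_{k-1}V,X)$. In fact the statement needs $X$ and $Y$ to be suitably connected: it fails for $f\colon *\to B\mathbb{Z}/2$ with $k=2$ and $V=S^1$. The paper's proof assumes this tacitly, and it holds in every application there, where the spaces are highly connected ($\susp^k$ of a space with $k>\dim V$, or $\tau_{>i}$ of a space with $i$ large).
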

\begin{proof}
For $k=0$ this is the statement that a finite product of $E$-equivalences is an $E$-equivalence. We proceed by induction and assume we have proved the statement for $k-1$. First observe that the hypothesis of the lemma implies that $\Loop^{\ell} f$ is an $E$-equivalence for every $\ell \leq k$ by applying the bar construction $k - \ell$ times. Fix a finite cell structure on $V$ and consider the cofiber sequence $\mathrm{sk}_{k-1} V \to V \to \bigvee S^k$, with the wedge being over the $k$-cells of $V$. Then we have a principal fiber sequence 
\[
\mathrm{Map}_*(\mathrm{sk}_{k-1} V, \Loop X) \to \prod \Loop^k X \to \mathrm{Map}_*(V,X)
\]
and in particular an associated equivalence
\[
\mathrm{colim}_{\mathbf{\Delta}^{\mathrm{op}}} \bigl(\mathrm{Map}_*(\mathrm{sk}_{k-1} V, \Loop X)^{\times \bullet} \times \prod \Loop^k X\bigr) \xrightarrow{\simeq} \mathrm{Map}_*(V,X).
\]
Hence it suffices to observe that the maps $\prod \Loop^k f$ and $\mathrm{Map}_*(\mathrm{sk}_{k-1} V, \Loop f)$ are $E$-equivalences. The first is true by the hypothesis of the lemma, the second by the inductive hypothesis. 
\end{proof}

\begin{proof}[Proof of \Cref{thm:Freudenthal}]
Without loss of generality we may choose a basepoint in $X$ and take $f$ to be a pointed map. To prove $(1 \Rightarrow 2)$ it suffices to show that $\susp^k f$ is a $v_n$-periodic equivalence for $k \gg 0$; indeed, the same argument applied with $i$ in place of $n$ will then show $\susp^k f$ is also a $v_i$-equivalence. By \Cref{lem:periodic-equivalence-criterion} it suffices to argue that $\susp^\infty \mathrm{Map}_*(V_n, \susp^k f)$ is a $T(n)$-equivalence of spectra for $k \gg 0$. Take $k > \mathrm{dim}(V_n)$. According to Lemma \ref{lem:inductiveacyclic} it then suffices to argue that $\Loop^{\ell}\susp^k f$ is a $T(n)$-equivalence for $\ell = \mathrm{dim}(V_n)$. Snaith splitting implies that the functor $\Loop^{\ell}\susp^{\ell}$ preserves $T(n)$-equivalences of connected spaces; applying this to the map $\susp^{k-\ell} f$ gives the result. The implication $(2\Rightarrow 3)$ follows from \Cref{thm:periodichtpyPvn} and the fact that $\langle W_{n+1} \rangle \geq \langle V_{n+1} \rangle$. Finally, for $(3) \Rightarrow (1)$ we conclude from \Cref{prop:fequivEequiv} and the fact that $V_{n+1}$ is $T(i)$-acyclic for $i\leq n$ that $\susp^k f_{(p)}$ is a $T(i)$-equivalence. This implies that $f$ itself is a $T(i)$-equivalence.
\end{proof}

We are now in position to deduce our first main results. Note that the equivalence between (1) and (2) in the following proves \Cref{thm:mainabs}.

\begin{thm}
\label{thm:parametricallyacyclic}
For a weakly $p$-local pointed space $X$, the following are equivalent:
\begin{enumerate}
\item The space $X$ is parametrically $T(i)$-acyclic for every $i \leq n$.
\item The space $X$ is $(n+1)$-connected and has trivial $v_i$-periodic homotopy groups for every $i \leq n$. 
\item The space $X$ satisfies $\mathbf{P}_{\susp W_{n+1}}(X) \simeq 0$, with $W_{n+1}$ as in \Cref{thm:Pvnperiodichtpy}.
\end{enumerate}
\end{thm}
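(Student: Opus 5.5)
I would prove the cycle $(1)\Rightarrow(2)\Rightarrow(3)\Rightarrow(1)$. Throughout I reduce to $X$ connected. In $(1)$, $\pi_0$ must be trivial by \Cref{lem:parametrictau1equiv}, and in $(2)$ and $(3)$ connectivity is built in, since $\susp W_{n+1}$ is connected and $\mathbf{P}_{\susp W_{n+1}}$ preserves components.

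\emph{$(2)\Rightarrow(3)$.} Apply \Cref{thm:periodichtpyPvn} to the map $X\to *$. Since $X$ is $(n+1)$-connected, this map is $(n+2)$-connected, and it is a $v_i$-periodic equivalence for $i\le n$ by hypothesis. The theorem asks for $p$-local spaces. Because $X$ is simply connected and weakly $p$-local, it is $p$-local; for the case $n=0$, weak $p$-locality gives $\pi_k$ $p$-local for $k\geq 2$, and $\pi_1=0$. Hence $X\to *$ is a $\mathbf{P}_{\susp W_{n+1}}$-equivalence, i.e.\ $\mathbf{P}_{\susp W_{n+1}}X\simeq 0$.

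\emph{$(3)\Rightarrow(1)$.} Since $V_{n+1}$ is $T(i)$-acyclic for $i\le n$, \Cref{prop:T(n)localEMspaces}(1) shows that $K(\mathbb{Z}/p,n+1)$ is $T(i)$-acyclic for $1\le i\le n$. For $i=0$ it is rationally acyclic. So $W_{n+1}\to *$ is a $T(i)$-equivalence for all $i\le n$. Now \Cref{prop:fequivEequiv}(2), applied with $f\colon W_{n+1}\to *$ and $g\colon X\to *$, shows that $X\to *$ is a parametric $T(i)$-equivalence exactly when $\mathbf{P}_{\susp W_{n+1}}X\to *$ is one, and the latter is an equivalence.

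\emph{$(1)\Rightarrow(2)$.} This is the main step. For the vanishing of periodic homotopy, \Cref{lem:parametricvsordinary} shows that $\Loop X$ is $T(i)$-acyclic for all $i\le n$. Applying \Cref{thm:Freudenthal} $(1\Rightarrow 3)$ to $\Loop X\to *$ gives $\mathbf{P}_{\susp V_{n+1}}(\susp^k\Loop X)\simeq 0$ for $k\gg0$. I would rather get at $X$ directly. For $V_i$ of type $i$, consider $\mathrm{Map}_*(V_i,X)$ and try to show via \Cref{lem:inductiveacyclic} that it is $T(i)$-acyclic, then conclude with \Cref{lem:periodic-equivalence-criterion}. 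For this I need that $\Loop^\ell X$ is $T(i)$-acyclic for $\ell=\dim V_i$; this follows from the $T(i)$-acyclicity of $\Loop X$ by applying the bar-construction argument in reverse, via the James/Eilenberg--Moore-type observation already used in \Cref{lem:inductiveacyclic}. If that does not work, I would instead use the virtual route: $X$ is parametrically acyclic, hence virtually acyclic by \Cref{lem:parametricimpliesvirtual}. Then \Cref{thm:virtualT(n)} (at height $i$) makes $\tau_{>j}\Loop^k X$ $T(i)$-acyclic for $j\geq i+2$. Since $\Phi_{V_i}$ only sees connected covers (\Cref{lem:basicpropertiesPhi}(3)), \Cref{lem:inductiveacyclic} and \Cref{lem:periodic-equivalence-criterion} applied to the highly connected cover then give $\Phi_{V_i}X\simeq0$.

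\emph{Connectivity.} I would induct on the Postnikov tower. At $i=0$, \Cref{prop:rationalparametricequiv} gives that $X$ is simply connected with rationally trivial homotopy, so $X$ is weakly $p^\infty$-torsion. Then \Cref{cor:T(n)equivtruncation} with $i=n$ (or $n=0$ handled rationally) applied to $X\to *$ shows that $\tau_{\le n+1}X\simeq *$, i.e.\ $X$ is $(n+1)$-connected. I expect the obstacle to be checking that \Cref{cor:T(n)equivtruncation} applies in the edge case $n=0$ and that the periodic-homotopy deduction is clean; the virtual-equivalence route via \Cref{thm:virtualT(n)} is what I would try first there.
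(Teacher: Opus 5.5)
Your steps $(2)\Rightarrow(3)$ and $(3)\Rightarrow(1)$ coincide with the paper's. Applying \Cref{thm:periodichtpyPvn} to $X\to *$, whose fiber $X$ is $(n+1)$-connected, is the form that actually meets its $(n+2)$-connectivity hypothesis.

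In $(1)\Rightarrow(2)$, however, you must discard your first idea. $T(i)$-acyclicity of $\Loop X$ does not propagate to $\Loop^\ell X$: the bar construction only transports $E$-equivalences from $\Loop^{\ell}$ to $\Loop^{\ell-1}$, and there is no converse. For instance, $X=K(\mathbb{Z}/p,n+2)$ satisfies (1) by \Cref{lem:parametricvsordinary} and \Cref{prop:T(n)localEMspaces}(1). Yet $\Loop^2X=K(\mathbb{Z}/p,n)$ has nonzero $T(n)$-homology by Ravenel--Wilson. The low-dimensional homotopy has to be discarded first, and your fallback does exactly that. It is essentially the proof of \Cref{prop:parametricvn} applied to $X\to *$ at each height $1\le i\le n$. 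It is correct and not circular, since that proposition rests only on \Cref{thm:virtualT(n)}, \Cref{lem:inductiveacyclic} and \Cref{lem:basicpropertiesPhi}. Combine it with \Cref{prop:rationalparametricequiv} for $i=0$ (which also makes $X$ weakly $p^\infty$-torsion) and \Cref{cor:T(n)equivtruncation} at height $n$. Then your $(1)\Rightarrow(2)$ is just \Cref{thm:mainhomologytohtpy} applied to $X\to *$ at every height $i\le n$. For $n=0$ the whole theorem reduces to \Cref{prop:rationalparametricequiv}, so the edge case you worry about is harmless.

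The paper instead proves $(1)\Rightarrow(3)$ directly by a Bousfield-class computation:
\begin{itemize}
\item $\Loop X$ is $p^\infty$-torsion and $n$-connected, so \Cref{cor:bousfield-classification} gives $\langle\susp\Loop X\rangle\le\langle\susp^k\Loop X\rangle\vee\langle K(\mathbb{Z}/p,n+2)\rangle$.
\item \Cref{thm:Freudenthal} translates (1) into $\langle\susp V_{n+1}\rangle\ge\langle\susp^k\Loop X\rangle$.
\item \Cref{rmk:LocLoop} and \Cref{cor:LocLoop}(2) combine these into $\langle\susp W_{n+1}\rangle\ge\langle X\rangle$.
\item Condition (2) then follows from (3) via \Cref{thm:Pvnperiodichtpy}(1).
\end{itemize}
That route avoids Bousfield's theorem on virtual $T(n)$-equivalences and treats all heights $\le n$ at once through $W_{n+1}$. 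The price is Bousfield's classification of Bousfield classes of suspensions and the Snaith-splitting argument inside \Cref{thm:Freudenthal}. Your route trades these for \Cref{thm:virtualT(n)} and works one height at a time. Its merit is that it shows the `only if' half of \Cref{thm:mainabs} is a formal consequence of \Cref{thm:mainhomologytohtpy} together with the rational case.
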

\begin{proof}
Clearly each of the three conditions implies that $X$ is simply connected (where we rely on \Cref{cor:parametrictau1equiv} in the case of item (1)), so we take $X$ simply-connected throughout the proof. The hypothesis that $X$ is weakly $p$-local then implies it is actually $p$-local. We pick an arbitrary basepoint $x \in X$. 

\Cref{lem:parametricvsordinary} implies that (1) is equivalent to $\Loop X$ being $T(i)$-acyclic for all $i \leq n$. In particular, this implies that $\Loop X$ is $p^\infty$-torsion (since it is rationally acyclic) and $n$-connected (by \Cref{prop:T(n)equivtruncation}). \Cref{cor:bousfield-classification} then gives the inequality
\[
\langle \susp \Loop X \rangle \leq \langle \susp^k \Loop X \rangle \vee \langle K(\mathbb{Z}/p, n+2) \rangle
\]
for $k \geq 1$. Furthermore, \Cref{thm:Freudenthal} shows that (1) is equivalent to $\mathbf{P}_{\susp V_{n+1}}(\susp^k \Loop X) \simeq 0$ for $k \gg 0$, or in other words $\langle \susp V_{n+1} \rangle \geq\langle \susp^k \Loop X \rangle$. This implies
\[
\langle \susp W_{n+1} \rangle = \langle \susp V_{n+1} \rangle \vee \langle \susp K(\mathbb{Z}/p, n+1) \rangle \geq \langle \susp^k \Loop X \rangle \vee \langle K(\mathbb{Z}/p, n+2)\rangle \geq \langle \susp \Loop X\rangle \geq \langle X \rangle
\]
where the first inequality uses \Cref{rmk:LocLoop}, the second inequality was established above, and the third is \Cref{cor:LocLoop}(2). The resulting inequality $\langle \susp W_{n+1} \rangle \geq \langle X \rangle$ is precisely item (3). If $X$ satisfies (3), then it is certainly $(n+1)$-connected (since $\susp W_{n+1}$ is $(n+1)$-connected) and \Cref{thm:Pvnperiodichtpy}(1) shows that it has vanishing $v_i$-periodic homotopy groups for $i \leq n$, showing that (2) holds. Conversely, applying \Cref{thm:periodichtpyPvn} to the map $* \to X$ shows that (2) implies (3). Finally, suppose that $X$ satisfies (3). First observe that $W_{n+1}$ is $T(i)$-acyclic for every $i \leq n$ (see \Cref{prop:T(n)localEMspaces}(1)). Hence \Cref{prop:fequivEequiv} implies that $X \to \mathbf{P}_{\susp W_{n+1}}(X)$ is a parametric $T(i)$-equivalence for $i \leq n$ and (1) follows immediately.
\end{proof}

Our next result now follows rather easily:

\begin{proof}[Proof of \Cref{thm:mainhtpytohomology}]
We take $f\colon X \to Y$ to be an $(n+2)$-connected map of spaces such that $f$ is a $v_i$-periodic equivalence at every basepoint, for all $i \leq n$. Working one component at a time we may suppose that $X$ and $Y$ are connected. After picking a basepoint of $X$, it will suffice to show that the fiber $F$ of $f$ is parametrically $T(i)$-acyclic for every $i \leq n$. But $F$ is $(n+1)$-connected and has trivial $v_i$-periodic homotopy groups for $i \leq n$ by hypothesis, so this follows from \Cref{thm:parametricallyacyclic}. (Note that we may without loss of generality replace $\Loop F$ by its $p$-localization, as this does not affect its $v_i$-periodic homotopy groups or its $T(i)$-homology.)
\end{proof}

\subsection{From parametric $T(n)$-homology to $v_n$-periodic homotopy}

We will proceed to prove \Cref{thm:mainhomologytohtpy}. We rely on the following sharpening of \cite[Theorem 12.5]{Bou94}:

\begin{prop}
\label{prop:parametricvn}
Let $f\colon X \to Y$ be a virtual $T(n)$-equivalence of pointed spaces. Then $f$ is also a $v_n$-periodic equivalence.
\end{prop}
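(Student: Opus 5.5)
The plan is to reduce to the characterization of virtual $T(n)$-equivalences in \Cref{thm:virtualT(n)} and then to \Cref{lem:periodic-equivalence-criterion}. Fix a finite pointed type $n$ space $V_n$ with $v_n$ self-map $v\colon \susp^d V_n \to V_n$. Since $\Phi_{V_n}$ only depends on $V_n$ up to the thick subcategory theorem, and $\Phi_{\susp^m V_n}(X) \simeq \Loop^{m}\Phi_{V_n}(X)$, it suffices to show $\Phi_{\susp^m V_n}(f)$ is an equivalence for one convenient $m$. We will choose $m$ large so that $\Map_*(\susp^m V_n, X) \simeq \Map_*(V_n, \Loop^m X)$ can be computed through highly connected covers.

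The key observation is that $\Map_*(V_n, \Loop^{m} X)$ only depends, up to a $v_n$-periodic equivalence, on a highly connected cover: by \Cref{lem:basicpropertiesPhi}(3), $\tau_{>i}\Loop^k X \to \Loop^k X$ is a $v_n$-periodic equivalence for any $i$, and $\Phi_{V_n}$ commutes with $\Loop^k$ up to shift. So it suffices to show that $g := \tau_{>i}\Loop^k f\colon \tau_{>i}\Loop^k X \to \tau_{>i}\Loop^k Y$ is a $v_n$-periodic equivalence for some $k\ge 1$, $i \geq n+2$. By \Cref{thm:virtualT(n)}(2), the map $\tau_{>i}(\Loop^{k} X) \to \tau_{>i}(\Loop^{k} Y)$ is a $T(n)$-equivalence for every $k\geq 1$ and $i\ge n+2$. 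Now take $\ell = \dim V_n$ and apply this with $k$ replaced by $k+\ell$: then $\Loop^\ell g = \tau_{>i-\ell}\Loop^{k+\ell}f$ up to a further connected cover; more precisely $\Loop^\ell(\tau_{>i}\Loop^k X) \simeq \tau_{>i-\ell}\Loop^{k+\ell}X$, and choosing $i \geq n+2+\ell$ makes $i-\ell\geq n+2$, so $\Loop^\ell g$ is a $T(n)$-equivalence by \Cref{thm:virtualT(n)}(2).

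Then \Cref{lem:inductiveacyclic} (with $E=T(n)$ and the dimension bound $\dim V_n\le \ell$) shows that $\Map_*(V_n, g)$ is a $T(n)$-equivalence, and \Cref{lem:periodic-equivalence-criterion} gives that $g$ is a $v_n$-periodic equivalence. Since $\Phi_{V_n}(\tau_{>i}\Loop^k X)\simeq \Phi_{V_n}(\Loop^k X)\simeq \Loop^k\Phi_{V_n}(X)$ naturally, and $\Loop^k$ is an equivalence on spectra, $\Phi_{V_n}(f)$ is an equivalence.

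The main point needing care is the bookkeeping of connected covers and loops: the identity $\Loop^\ell \tau_{>i} Z \simeq \tau_{>i-\ell}\Loop^\ell Z$ (on the basepoint component, which is all that matters for pointed mapping spaces out of connected $V_n$; if $V_n$ is not connected one suspends it first) and checking that \Cref{lem:inductiveacyclic} applies to $g$ with its hypothesis on $\Loop^\ell g$ rather than on $g$ itself. I expect no genuine obstacle beyond this, since the heavy lifting is contained in Bousfield's \Cref{thm:virtualT(n)}.
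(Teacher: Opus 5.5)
Your proof is correct and follows essentially the paper's route: \Cref{thm:virtualT(n)}(2) together with \Cref{lem:inductiveacyclic} makes $\Map_*(V_n,-)$ applied to a highly connected cover of $f$ a $T(n)$-equivalence, and $\Phi_{V_n}$ does not see the passage to connected covers. The differences are cosmetic. The paper works with $\tau_{>i}X$ directly, so your extra $\Loop^k$ is unnecessary. It also concludes via a lifting/two-out-of-six argument through $\Loop^\infty\Phi_V$, whereas your direct appeal to \Cref{lem:periodic-equivalence-criterion} is a slightly cleaner finish.
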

\begin{proof}
By \Cref{thm:virtualT(n)}, the induced map $\tau_{>i} \Loop^k X \to \tau_{>i} \Loop^k Y$ is a $T(n)$-equivalence for every $k \geq 1$ and $i \geq n+2$. If $V$ is a finite pointed space, \Cref{lem:inductiveacyclic} then implies that 
$\mathrm{Map}_*(V,\tau_{>i}X) \to \mathrm{Map}_*(V,\tau_{>i}Y)$ is an equivalence for $i \geq \mathrm{dim}(V)+n+2$. Now let $V$ be of type $n$, suppose it admits a $v_n$ self-map, and consider the commutative diagram
\[
\begin{tikzcd}
\mathrm{Map}_*(V,\tau_{>i}X) \ar{d}{f_*}\ar{r} & \Loop^\infty\Phi_V(\tau_{>i }X) \ar{d}\\
\mathrm{Map}_*(V,\tau_{>i}Y) \ar{r} & \Loop^\infty\Phi_V(\tau_{>i}Y). 
\end{tikzcd}
\]
According to \Cref{lem:basicpropertiesPhi} the horizontal maps are $v_n$-periodic equivalences and the two spaces on the right are $T(n)$-local. Since the left vertical map is a $T(n)$-equivalence, there exists a lift in the square. By two-out-of-six, every map in the square must then be a $v_n$-periodic equivalence. In particular this holds true for the right vertical map. Since $\Phi_V(\tau_{>i }X) \simeq \Phi_V(X)$, and similarly for $Y$, this completes the proof.
\end{proof}

\begin{proof}[Proof of \Cref{thm:mainhomologytohtpy}]
A parametric $T(n)$-equivalence $f$ is in particular a virtual $T(n)$-equivalence by \Cref{lem:parametricimpliesvirtual}. The theorem thus follows by combining \Cref{prop:parametricvn} (showing that $f$ is a $v_n$-periodic equivalence) and \Cref{cor:T(n)equivtruncation} (showing that $\tau_{\leq n+1} f$ is an equivalence).
\end{proof}

\subsection{Consequences for $T(n)$-equivalences}

Finally, we now deal with \Cref{thm:Hmainhomologytohtpy} and \Cref{thm:Hmainhtpytohomology}.

\begin{proof}[Proof of \Cref{thm:Hmainhomologytohtpy}]
A $T(n)$-equivalence $f$ of $H$-spaces is a virtual $T(n)$-equivalence by \Cref{prop:virtualequivHspace} and hence, by \Cref{prop:parametricvn}, a $v_n$-periodic equivalence. The fact that $\tau_{\leq n} f$ is also an equivalence is the content of \Cref{prop:T(n)equivtruncation}.
\end{proof}

\begin{proof}[Proof of \Cref{thm:Hmainhtpytohomology}]
We consider an $(n+1)$-connected map $f\colon X \to Y$ that is a $v_i$-periodic equivalence at every basepoint, for all $i \leq n$. Choose an arbitrary basepoint $x \in X$. Then the fiber $F$ of $f$ at $f(x)$ is an $n$-connected space with vanishing $v_i$-periodic homotopy groups for $i \leq n$. It will suffice to show that $F$ is $T(i)$-acyclic. The $(n+1)$-connected cover $\tau_{> n+1} F$ still has vanishing $v_i$-periodic homotopy groups for $i \leq n$ and is therefore parametrically $T(i)$-acyclic by \Cref{thm:mainhtpytohomology} and in particular $T(i)$-acyclic (see \Cref{lem:paramordinaryEequiv}). It follows that the map $F \to \tau_{\leq n+1} F$ is a $T(i)$-equivalence. But $\tau_{\leq n+1} F$ is a $K(A,n+1)$ for some torsion abelian group $A$ (since $F$ has vanishing rational homotopy groups) and therefore $T(i)$-acyclic by \Cref{prop:T(n)localEMspaces}.
\end{proof}

  
\section{The case of infinite loop spaces}
\label{sec:infiniteloop}

In the case of infinite loop spaces $\Loop^\infty E$ of spectra with $\Loc_{n-1}^f E \simeq 0$ our main results admit a significant sharpening, namely \Cref{thm:maininfloop}. The proof of this result will rely on our previous results, but we also need some extra ideas concerning the nilcompletion of commutative $T(n)$-local ring spectra. 

\Cref{subsec:nilcompletion} starts with a discussion of nilcompletion of commutative ring spectra. We deduce consequences for the $T(n)$-homology of the Whitehead tower of an infinite loop space in \Cref{subsec:Tnghighlyconnected}, which includes a proof of \Cref{cor:KuhnT(n)}. In \Cref{subsec:Tnlocalinfloop} we then prove \Cref{thm:maininfloop} on parametric $T(n)$-equivalences of infinite loop spaces and \Cref{cor:T(n)localinfiniteloop} on the $\Loc_n^f$-localization of infinite loop spaces. Finally, in \Cref{subsec:generalizations} we indicate some generalizations of our arguments from the case of infinite loop spaces to the case of general spaces, which provide a positive answer to Question 1 of the introduction in some special cases.

\subsection{Nilcompletion of $T(n)$-local commutative rings}
\label{subsec:nilcompletion}

We begin by reviewing some general facts on the completion of commutative ring spectra. We will then specialize to the $T(n)$-local context and consider the behavior of completion on two types of rings, namely free $T(n)$-local commutative algebras and the $T(n)$-localized suspension spectra of Eilenberg--MacLane spaces.
 
Let $\mathcal{C}$ be a stable presentably symmetric monoidal $\infty$-category and write $\mathrm{CAlg}^{\mathrm{aug}}(\mathcal{C})$ for the $\infty$-category of augmented commutative algebras in $\mathcal{C}$ (where the augmentation is to the unit $\mathbf{1}$ of $\mathcal{C}$). Then there is an adjoint pair of functors
\[
\begin{tikzcd}
   \mathrm{CAlg}^{\mathrm{aug}}(\mathcal{C}) \ar[shift left]{r}{\mathrm{cot}} & \mathcal{C}, \ar[shift left]{l}{\mathrm{triv}}  
\end{tikzcd}
\]
where the right adjoint sends an object $X$ of $\mathcal{C}$ to the trivial square-zero extension $\mathbf{1} \oplus X$ and the left adjoint takes the \emph{cotangent fiber} of an augmented commutative algebra (also often referred to as \emph{topological Andr\'{e}--Quillen homology}). Moreover, this adjunction exhibits $\mathcal{C}$ as the stabilization of $\mathrm{CAlg}^{\mathrm{aug}}(\mathcal{C})$ \cite[Proposition 6.3]{heutskoszul}. From this perspective, the functor $\mathrm{triv}$ is analogous to $\Loop^\infty$ and the functor $\mathrm{cot}$ to $\susp^\infty$.

In fact, to every $R \in \mathrm{CAlg}^{\mathrm{aug}}(\mathcal{C})$ one can naturally associate a tower of augmented commutative algebras
\[
\begin{tikzcd}
& \vdots \ar{d} \\
& t_3 R \ar{d} \\
& t_2 R \ar{d} \\
R \ar{r}\ar{ur}\ar{uur} & t_1 R = \mathrm{triv}(\mathrm{cot}(R)).    
\end{tikzcd}
\]
This is the nilcompletion tower considered in \cite[Section 9]{heutskoszul} or alternatively the Goodwillie tower of the identity on the $\infty$-category $\mathrm{CAlg}^{\mathrm{aug}}(\mathcal{C})$ (cf. \cite[Section 3]{kuhnAQG}). (This same tower also arises from the adic filtration of commutative algebras considered in \cite[Section 5.1]{brantnermathew}.) We refer to the map $R \to \lim_k t_k R$ as the \emph{nilcompletion} of $R$. This nilcompletion is an analog in higher algebra of the completion of an ordinary augmented commutative ring at its augmentation ideal.

We will need a few basic properties of the nilcompletion tower. In the following, we write 
\[
\mathrm{Sym}(X) = \bigoplus_{k \geq 0} X^{\otimes k}_{h\Sigma_k}
\]
for the free augmented commutative algebra on an object $X \in \mathcal{C}$. The functor $\mathrm{Sym}$ can be constructed as the left adjoint of the augmentation ideal functor
\[
\mathfrak{m}\colon \mathrm{CAlg}^{\mathrm{aug}}(\mathcal{C}) \to \mathcal{C}\colon (R \xto{\varepsilon} \mathbf{1}) \mapsto \mathrm{fib}(\varepsilon).
\]
For the proof of the following proposition, item (1) can be found in \cite[Theorem 9.2]{heutskoszul} or \cite[Theorem 3.10]{kuhnAQG}, whereas item (2) is immediate from the construction of $t_k$ in \cite{heutskoszul} or stated explicitly as \cite[Example 3.14]{kuhnAQG}.

\begin{prop}
\label{prop:nilcompletion}
\begin{enumerate}
\item The layers of the nilcompletion tower are given by
\[
\mathrm{fib}(t_k R \to t_{k-1} R) \cong \mathrm{cot}(R)^{\otimes k}_{h\Sigma_k}.
\]
More succinctly, the associated graded of the nilcompletion tower of $R$ is the free commutative algebra on the cotangent fiber $\mathrm{cot}(R)$.
\item The nilcompletion tower of $\mathrm{Sym}(X)$ can be identified with the system of natural projection maps
\[
\bigoplus_{m = 0}^\infty X^{\otimes m}_{h\Sigma_m} \to \bigoplus_{m \leq k} X^{\otimes m}_{h\Sigma_m}
\]
as $k$ ranges from 1 to $\infty$. In particular, the nilcompletion $\lim_k t_k \mathrm{Sym}(X)$ can be identified with the infinite product $\prod_m X^{\otimes m}_{h\Sigma_m}$.
\end{enumerate}
\end{prop}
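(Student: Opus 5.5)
The plan is to construct the nilcompletion tower explicitly as a Goodwillie tower of the identity, or equivalently as the tower of truncations with respect to the adic filtration, and then to read off both items from that construction.

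\emph{Setup.} Write $\mathrm{CAlg}^{\mathrm{nu}}(\mathcal{C})$ for non-unital commutative algebras. Taking augmentation ideals, $R \mapsto \mathfrak{m}(R)$, gives an equivalence $\mathrm{CAlg}^{\mathrm{aug}}(\mathcal{C}) \simeq \mathrm{CAlg}^{\mathrm{nu}}(\mathcal{C})$. The nonunital commutative operad $\mathrm{Com}^{\mathrm{nu}}$ has arity-$m$ term $\mathbf{1}$ for $m\ge 1$, and it is filtered by the truncations $\tau_{\le k}\mathrm{Com}^{\mathrm{nu}}$, which kill all operations of arity $>k$. Define $t_k R$ as the algebra obtained by first pulling $R$ back along the map of operads $\mathrm{Com}^{\mathrm{nu}} \to \tau_{\le k}\mathrm{Com}^{\mathrm{nu}}$ and then applying the derived left adjoint (the relative indecomposables). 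For $k=1$ the truncated operad is trivial, and this recovers $\mathrm{triv}\circ\mathrm{cot}$. This description is the one used in \cite{heutskoszul}, and it is also the adic tower of \cite{brantnermathew}.

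\emph{Item (2).} Take $R=\mathrm{Sym}(X)$, which is the free algebra. The functor $t_k$ is a left adjoint of algebras, and it sends the free $\mathrm{Com}^{\mathrm{nu}}$-algebra on $X$ to the free $\tau_{\le k}\mathrm{Com}^{\mathrm{nu}}$-algebra on $X$. The underlying object of that free algebra is $\bigoplus_{1\le m\le k} X^{\otimes m}_{h\Sigma_m}$. The maps $t_k\to t_{k-1}$ are induced by the operad maps, so they are the evident projections. Adding back the unit summand gives the stated formula. Passing to the limit, and using that finite sums are products in the stable category $\mathcal{C}$, identifies $\lim_k t_k\mathrm{Sym}(X)$ with $\prod_m X^{\otimes m}_{h\Sigma_m}$.

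\emph{Item (1).} Both sides of the identification preserve sifted colimits in $R$. On the right, $\mathrm{cot}$ is a left adjoint and $(-)^{\otimes k}_{h\Sigma_k}$ preserves sifted colimits. On the left, $t_k$ preserves them because it is a composite of left adjoints, and the forgetful functor from truncated-operad algebras preserves sifted colimits. Every $R$ is a sifted colimit (the bar resolution) of free algebras $\mathrm{Sym}(X)$. So it suffices to construct a natural equivalence on free algebras, compatibly with maps between them. For $R=\mathrm{Sym}(X)$, item (2) shows that the fiber of $t_k\to t_{k-1}$ is $X^{\otimes k}_{h\Sigma_k}$, and $\mathrm{cot}(\mathrm{Sym}(X))\simeq X$.

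The main obstacle is naturality. On a free algebra, the fiber of $t_k\to t_{k-1}$ must be identified with $\mathrm{cot}(R)^{\otimes k}_{h\Sigma_k}$ functorially in all algebra maps, not only in maps of the form $\mathrm{Sym}(g)$. Without this, the left Kan extension argument above does not apply.

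I would handle this operadically rather than by hand. The fiber of $\tau_{\le k}\mathrm{Com}^{\mathrm{nu}}\to\tau_{\le k-1}\mathrm{Com}^{\mathrm{nu}}$ is concentrated in arity $k$, and it is a trivial (square-zero) bimodule over the truncated operad. Hence for any algebra $A$ over $\tau_{\le k}\mathrm{Com}^{\mathrm{nu}}$, the corresponding fiber depends only on the indecomposables of $A$, and it is given by $\mathrm{cot}(A)^{\otimes k}_{h\Sigma_k}$. Finally, $t_k$ preserves the cotangent fiber, because $\mathrm{cot}$ factors through the truncation to arity $1$. This gives the required natural identification.
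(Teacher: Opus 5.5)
Your argument is correct and follows the route the paper relies on by citation. The paper's $t_k$ is the construction of \cite{heutskoszul}, which is your $t_k=\phi_k^*\phi_{k!}$ for the truncation $\phi_k\colon \mathrm{Com}^{\mathrm{nu}}\to\tau_{\le k}\mathrm{Com}^{\mathrm{nu}}$. Item (2) is then read off on free algebras exactly as the paper indicates. Your square-zero fiber computation is the standard argument behind the cited layer formula \cite[Theorem 9.2]{heutskoszul}, \cite[Theorem 3.10]{kuhnAQG}. Since that computation applies to every $R$ directly, your sifted-colimit reduction is unnecessary.

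Three small corrections. First, the order is ``first induce along $\phi_k$, then restrict.'' Second, $t_k$ itself is not a left adjoint; only $\phi_{k!}$ is. Third, the final step should say that the $\tau_{\le k}\mathrm{Com}^{\mathrm{nu}}$-indecomposables of $\phi_{k!}R$ are $\mathrm{cot}(R)$. The commutative cotangent fiber $\mathrm{cot}(t_kR)$ is in general not $\mathrm{cot}(R)$: already for $k=1$, $\mathrm{cot}\circ\mathrm{triv}$ is not the identity.
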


We will now specialize to the case $\mathcal{C} = \Sp_{T(n)}$. For a free commutative algebra $\mathrm{Sym}(X)$ we use the shorthand $\widehat{\mathrm{Sym}}(X) := \lim_k t_k \mathrm{Sym}(X)$ for its nilcompletion.

\begin{lem}
\label{lem:nilcompleteSym}
Let $X$ be a $T(n)$-local spectrum. Then the homomorphism
\[
T(n)_* \mathrm{Sym}(X) \to T(n)_* \widehat{\mathrm{Sym}}(X)
\]
induced by nilcompletion is injective.
\end{lem}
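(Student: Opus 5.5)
The map $\mathrm{Sym}(X) \to \widehat{\mathrm{Sym}}(X)$ is, by \Cref{prop:nilcompletion}(2), the canonical map
\[
\bigoplus_{m \geq 0} X^{\otimes m}_{h\Sigma_m} \to \prod_{m\geq 0} X^{\otimes m}_{h\Sigma_m}
\]
in $\Sp_{T(n)}$. Here the sum is the $T(n)$-local coproduct, i.e. $\Loc_{T(n)}$ of the ordinary sum, and the product is the ordinary product, which is already $T(n)$-local. I plan to show that after applying $T(n)_*$ this map is split injective, or at least injective. The plan is to compare both sides with a simpler object through which the map factors, with the comparison maps controlled on $T(n)$-homology.

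First, $T(n)_*$ of the $T(n)$-local coproduct equals $T(n)_*$ of the ordinary coproduct, since $\Loc_{T(n)}$ is a $T(n)$-equivalence. So the source has $T(n)_*$ equal to $\bigoplus_m T(n)_*(X^{\otimes m}_{h\Sigma_m})$. For each fixed $m$, the composite
\[
\bigoplus_m X^{\otimes m}_{h\Sigma_m} \to \prod_m X^{\otimes m}_{h\Sigma_m} \to X^{\otimes m}_{h\Sigma_m}
\]
is the projection onto the $m$-th summand. Consider an element $a$ of $T(n)_*\mathrm{Sym}(X)$ that maps to zero. Write $a$ as a finite sum $\sum_{m\le N} a_m$. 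Then $a$ maps to zero under each projection $T(n)_*\widehat{\mathrm{Sym}}(X) \to T(n)_*(X^{\otimes m}_{h\Sigma_m})$. The composite of the inclusion with this projection sends $a$ to $a_m$, so every $a_m$ vanishes and hence $a = 0$.

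The argument is therefore purely formal. Injectivity follows because each component projection of the product factors the summand projection of the coproduct, and $T(n)_*$ of the coproduct is the direct sum of the $T(n)_*$ of the summands. The one point to verify is this last identification. It holds because $T(n)\otimes -$ commutes with arbitrary colimits in $\Sp$, and $\Loc_{T(n)}$ does not change $T(n)$-homology. I expect no real obstacle beyond identifying the map with the canonical sum-to-product map; the paper's \Cref{prop:nilcompletion}(2) supplies that identification, and it is compatible with the projections to each graded piece.
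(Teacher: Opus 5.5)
Your proof is correct. It has the same skeleton as the paper's proof: an element of $T(n)_*\mathrm{Sym}(X)$ only involves finitely many summands, and the projections out of the product detect those summands. The difference lies in how you get the finiteness.

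\begin{itemize}
\item The paper stays inside $\Sp_{T(n)}$. It writes $T(n)=L_{T(n)}V$ for a type $n$ complex $V$ and identifies $T(n)_*Y\cong\pi_*\mathrm{map}(L_{T(n)}V^{\vee},Y)$ for $T(n)$-local $Y$. Since $L_{T(n)}V^{\vee}$ is compact in $\Sp_{T(n)}$, any map into the $T(n)$-local sum factors through a finite sub-sum. That finite sub-sum is a retract of the product, so a map that becomes null in the product was already null.
\item You instead compute $T(n)_*$ of the $T(n)$-local sum directly as $\bigoplus_m T(n)_*(X^{\otimes m}_{h\Sigma_m})$. This uses that $\Loc_{T(n)}$ is a $T(n)$-equivalence and that $T(n)\otimes-$ preserves colimits in $\Sp$. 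The identification is induced by the summand inclusions, so your component-by-component check goes through.
\end{itemize}

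Your route is a bit more elementary and more general. It works verbatim for $E$-homology of any family of $E$-local spectra. It needs neither the identification $T(n)\simeq L_{T(n)}V$ nor compactness of the corepresenting object. The paper's version, in exchange, never leaves the $T(n)$-local category and is phrased directly in terms of maps out of a compact object.
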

\begin{proof}
The spectrum $T(n)$ is the telescope of a $v_n$-self map on a finite spectrum $V$; in fact, $T(n) = L_{T(n)} V$. For a $T(n)$-local spectrum $Y$, we can therefore interpret the groups $T(n)_*Y$ as the homotopy groups of the mapping spectrum $\mathrm{map}(L_{T(n)} V^{\vee}, Y)$. Since $L_{T(n)}V^{\vee}$ is a compact object of $\Sp_{T(n)}$ and $\mathrm{Sym}(X)$ is an infinite direct sum of terms $X^{\otimes k}_{h\Sigma_k}$ in $\Sp_{T(n)}$, any map $f\colon L_{T(n)}V^{\vee} \to \mathrm{Sym}(X)$ factors through a finite sum. In particular, if the composition of $f$ with the map $\mathrm{Sym}(X) \to \widehat{\mathrm{Sym}}(X)$ to the infinite product is null, then $f$ itself must have been null.
\end{proof}


The preceding lemma shows that the nilcompletion of a free commutative algebra does not really lose any information. Quite the opposite is true for commutative algebras of the following kind:

\begin{lem}
\label{lem:nilcompleteEM}
Let $A$ be a torsion abelian group and consider the augmented commutative algebra $R := L_{T(n)}\susp^\infty_+K(A,n) \in \mathrm{CAlg}^{\mathrm{aug}}(\Sp_{T(n)})$. Then $\mathrm{cot}(R) \cong 0$ and hence the augmentation $R \to \mathbb{S}_{T(n)}$ exhibits the unit as the nilcompletion of $R$.
\end{lem}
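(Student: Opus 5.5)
The plan is to compute the cotangent fiber of $R$ directly by a universal property, using that $K(A,n)$ is an infinite loop space, namely $K(A,n) \simeq \Loop^\infty \susp^n HA$. The second claim will then follow formally from \Cref{prop:nilcompletion}(1).

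\textbf{Step 1: compatibility with localization.} $\Loc_{T(n)}\colon \Sp \to \Sp_{T(n)}$ is a symmetric monoidal left adjoint. It therefore induces a left adjoint $\mathrm{CAlg}^{\mathrm{aug}}(\Sp) \to \mathrm{CAlg}^{\mathrm{aug}}(\Sp_{T(n)})$. Moreover, for $M$ a $T(n)$-local spectrum, the $T(n)$-local trivial square-zero extension $\mathbf{1}\oplus M$ is just the underlying trivial square-zero extension of $\mathbb{S}_{T(n)}$ by $M$. By comparing right adjoints we get
\[
\mathrm{cot}_{T(n)}(\Loc_{T(n)} S) \simeq \Loc_{T(n)}\mathrm{cot}(S)
\]
for $S \in \mathrm{CAlg}^{\mathrm{aug}}(\Sp)$. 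Applying this to $S = \susp^\infty_+ K(A,n)$, it suffices to show that $\Loc_{T(n)}\mathrm{cot}(\susp^\infty_+ K(A,n)) \simeq 0$.

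\textbf{Step 2: cotangent fiber of $\susp^\infty_+\Loop^\infty E$.} We claim that for a connective spectrum $E$ there is an equivalence $\mathrm{cot}(\susp^\infty_+\Loop^\infty E) \simeq E$. To see this, view $\Loop^\infty E$ as a grouplike $\mathbb{E}_\infty$-space, augmented via $E \to 0$. For any spectrum $M$ we compute
\[
\mathrm{Map}_{\Sp}(\mathrm{cot}(\susp^\infty_+\Loop^\infty E), M) \simeq \mathrm{Map}_{\mathrm{CAlg}^{\mathrm{aug}}}(\susp^\infty_+\Loop^\infty E, \mathbb{S}\oplus M).
\]
Since $\susp^\infty_+$ is left adjoint to the multiplicative-monoid functor on $\mathbb{E}_\infty$-rings, and $\Loop^\infty E$ is grouplike, this mapping space is the space of augmented $\mathbb{E}_\infty$-maps from $\Loop^\infty E$ into the units of $\mathbb{S}\oplus M$ lying over $1 \in \Loop^\infty\mathbb{S}$. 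For a trivial square-zero extension, that fiber is $\Loop^\infty M$, with its additive structure ($1+m$ is a unit, and $(1+m)(1+m') = 1+m+m'$). Hence the mapping space is $\mathrm{Map}_{\Sp_{\geq 0}}(E,\tau_{\geq 0}M) \simeq \mathrm{Map}_{\Sp}(E,M)$, and the Yoneda lemma gives the claim.

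The main obstacle is making this identification of the augmented units of $\mathbb{S}\oplus M$ with $\Loop^\infty M$ precise as $\mathbb{E}_\infty$-spaces, naturally in $M$. This is where I would be most careful. A fallback is to note that $\mathrm{cot}\circ\susp^\infty_+\circ\Loop^\infty\colon \Sp_{\geq 0}\to\Sp$ preserves colimits, and to identify it with the inclusion via the stabilization statement cited from \cite{heutskoszul}.

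\textbf{Step 3: conclusion.} Apply Step 2 with $E = \susp^n HA$ to get
\[
\Loc_{T(n)}\mathrm{cot}(\susp^\infty_+K(A,n)) \simeq \Loc_{T(n)}\susp^n HA \simeq 0,
\]
since $T(n)$-homology of Eilenberg--MacLane spectra vanishes for $n\geq 1$. (Torsion of $A$ is not even needed for this step.) Hence $\mathrm{cot}(R)\simeq 0$. By \Cref{prop:nilcompletion}(1) every layer $\mathrm{cot}(R)^{\otimes k}_{h\Sigma_k}$ of the nilcompletion tower with $k\geq 1$ vanishes. Since $t_0 R = \mathbf{1}$, every $t_k R$ is then equivalent to $\mathbf{1}$ via the augmentation. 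So $\lim_k t_k R \simeq \mathbb{S}_{T(n)}$, and the augmentation $R\to\mathbb{S}_{T(n)}$ exhibits the unit as the nilcompletion of $R$.
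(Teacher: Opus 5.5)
Your proof is correct, but it takes a different route from the paper's. The paper never identifies a cotangent fiber. Since $\mathrm{cot}$ preserves colimits and lands in a stable $\infty$-category, it suffices to show that the suspension of $R$ in $\mathrm{CAlg}^{\mathrm{aug}}(\Sp_{T(n)})$ is trivial. That suspension is the bar construction $\mathbb{S}_{T(n)}\otimes_R\mathbb{S}_{T(n)}\simeq \Loc_{T(n)}\susp^\infty_+K(A,n+1)$, which is $\mathbb{S}_{T(n)}$ because $K(A,n+1)$ is $T(n)$-acyclic for torsion $A$ (\Cref{prop:T(n)localEMspaces}, i.e.\ the Carmeli--Schlank--Yanovski input).

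You instead combine the standard identification $\mathrm{cot}(\susp^\infty_+\Loop^\infty E)\simeq E$ for connective $E$ with the elementary vanishing $\Loc_{T(n)}HA\simeq 0$. This avoids the deep acyclicity theorem and, as you note, works for every abelian group $A$.

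The step you flag closes cleanly. The right adjoint $\mathrm{CAlg}^{\mathrm{aug}}(\Sp)\to\mathrm{CMon}(\mathcal{S})$ (fiber of the multiplicative $\Loop^\infty$ over $1$) preserves limits, and $\mathrm{triv}(M)\simeq\Loop\,\mathrm{triv}(\susp M)$ in $\mathrm{CAlg}^{\mathrm{aug}}(\Sp)$. Hence the fiber over $1$ is an infinite loop object of $\mathrm{CMon}(\mathcal{S})$ whose underlying spaces are $\Loop^\infty\susp^k M$ with the standard structure maps. So it is $\Loop^\infty M$ with its additive structure. Equivalently, iterating bar constructions gives $\mathrm{cot}(\susp^\infty_+\Loop^\infty E)\simeq\colim_k\susp^{-k}\susp^\infty\Loop^\infty\susp^kE\simeq E$. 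From this angle, the paper's proof is the observation that, $T(n)$-locally and for torsion $A$, already the first stage of this colimit is trivial.

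What the paper's route buys is a stronger conclusion: the bar construction of $R$ itself vanishes, so even the $\mathbb{E}_1$-cotangent fiber of $R$ is zero (\Cref{rem:E1indec}). This is what feeds the $\mathbb{E}_1$-analog in \Cref{subsec:generalizations}. Your argument only sees the $\mathbb{E}_\infty$-cotangent fiber and cannot detect this. For $A=\mathbb{Z}$ your conclusion $\mathrm{cot}(R)\simeq 0$ still holds, while $\Loc_{T(n)}\susp^\infty_+K(\mathbb{Z},n+1)\not\simeq\mathbb{S}_{T(n)}$, since $K(\mathbb{Z},n+1)$ has nontrivial reduced $K(n)$-homology (Ravenel--Wilson).
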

\begin{proof}
Since the functor $\mathrm{cot}$ preserves colimits and its codomain is stable, it suffices to argue that the suspension of $R$ is trivial (i.e., equivalent to the zero object $\mathbb{S}_{T(n)}$) in the $\infty$-category $\mathrm{CAlg}^{\mathrm{aug}}(\Sp_{T(n)})$. This suspension is precisely the bar construction
\[
\mathbb{S}_{T(n)} \otimes_R \mathbb{S}_{T(n)} \cong L_{T(n)}\susp^\infty_+ BK(A,n) = L_{T(n)}\susp^\infty_+ K(A,n+1).
\]
The space $K(A,n+1)$ is $T(n)$-acyclic by \Cref{prop:T(n)localEMspaces}, giving the conclusion.
\end{proof}

\begin{rem}
\label{rem:E1indec}
In fact the proof of \Cref{lem:nilcompleteEM} shows something stronger: the $\mathbb{E}_1$-cotangent fiber of the ring $R$ already vanishes. Indeed, up to a shift, the $\mathbb{E}_1$-cotangent fiber is precisely the bar construction \cite[Theorem 1.3]{basterramandell}. We will come back to this point in \Cref{subsec:generalizations}.
\end{rem}

\subsection{The $T(n)$-homology of highly connected covers}
\label{subsec:Tnghighlyconnected}

In the introduction we stated \Cref{cor:KuhnT(n)} as a consequence of \Cref{thm:maininfloop}. In fact, our proof will proceed by establishing the corollary first; we state it here for the reader's convenience.

\begin{prop}
\label{prop:Tnconnectivecover}
Suppose $E$ is a spectrum with $\Loc^f_{n-1} E = 0$. Then for any $m > n$ the map $\tau_{>m} \Loop^\infty E \to \tau_{>n} \Loop^\infty E$ is a $T(n)$-equivalence.
\end{prop}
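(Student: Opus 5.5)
The plan is to reduce to one step of the Whitehead tower. Then I would show each step is a $T(n)$-equivalence by working with $T(n)$-local augmented commutative algebras and their nilcompletions.

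\emph{Reduction.} Since $\Loc_0 E \simeq 0$, the homotopy groups of $E$ are torsion. Write $X = \Loop^\infty E$ and $A_k = \pi_{k+1}E$. It suffices to show that $\tau_{>k+1}X \to \tau_{>k}X$ is a $T(n)$-equivalence for every $k \geq n$; composing these maps then gives the proposition. This map is a map of infinite loop spaces, and its fiber is $K(A_k,k)$ with $A_k$ torsion. Passing to the bar construction therefore gives a pushout of augmented commutative algebras in $\Sp_{T(n)}$:
\[
R_k \simeq R_{k+1} \otimes_{B_k} \mathbb{S}_{T(n)}, \qquad R_j := \Loc_{T(n)}\susp^\infty_+ \tau_{>j}X,\quad B_k := \Loc_{T(n)}\susp^\infty_+ K(A_k,k).
\]
For $k \geq n+1$ the space $K(A_k,k)$ is $T(n)$-acyclic by \Cref{prop:T(n)localEMspaces}(1). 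Hence $B_k \simeq \mathbb{S}_{T(n)}$, and the step is an equivalence with nothing more to do. This already proves the claim for all steps above $n+1$, so the only genuine case is $k = n$, i.e.\ the map $\tau_{>n+1}X \to \tau_{>n}X$.

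\emph{The case $k=n$.} Here $B_n = \Loc_{T(n)}\susp^\infty_+K(A,n)$ is nontrivial. However, \Cref{lem:nilcompleteEM} gives $\mathrm{cot}(B_n) \simeq 0$. The functor $\mathrm{cot}$ preserves pushouts, so $\mathrm{cot}(R_{n+1}) \to \mathrm{cot}(R_n)$ is an equivalence. By \Cref{prop:nilcompletion}(1) the map of nilcompletion towers is then an equivalence on every layer, and hence on nilcompletions. The map $R_{n+1}\to R_n$ thus becomes an equivalence after nilcompletion. It remains to upgrade this to an equivalence before nilcompletion.

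\emph{The main obstacle} is precisely that upgrade, and it is where $\Loc_{n-1}^f E \simeq 0$ enters in full. I would aim to show that $T(n)_*R_j \to T(n)_*\widehat{R_j}$ is injective for $j = n,n+1$, in the spirit of \Cref{lem:nilcompleteSym}. Injectivity for $R_{n+1}$ gives injectivity of $T(n)_*R_{n+1}\to T(n)_*R_n$, since the composite to $\widehat{R}_{n+1}\simeq\widehat{R}_n$ is injective.

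To get at $T(n)_*R_j$, I would compare $R_j$ with a free algebra $\mathrm{Sym}(\Loc_{T(n)}\tau_{>j}E)$. I would use the algebra map induced by the unit $\susp^\infty\Loop^\infty \to \mathrm{id}$ and the Bousfield--Kuhn identification $\Phi_n\Loop^\infty \simeq \Loc_{T(n)}$ of \Cref{rem:BK}(2). The goal is a Kuhn-type statement that, because $E$ has no $v_i$-periodic homotopy for $i<n$, this map is injective (or even an isomorphism) on $T(n)$-homology. I would first try to establish this using the cotangent-fiber equivalence together with \Cref{thm:mainhtpytohomology}; the latter kills the relevant low-height contributions, since $\Phi_{V_i}$ of everything in sight vanishes for $i<n$. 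Surjectivity of $T(n)_*R_{n+1}\to T(n)_*R_n$ would then come from the base change description: $R_n = R_{n+1}\otimes_{B_n}\mathbb{S}_{T(n)}$, and the bar spectral sequence has edge map $T(n)_*R_{n+1}\to T(n)_*R_n$. I expect this injectivity step, i.e.\ identifying the $T(n)$-homology of the highly connected covers with something free enough to be detected by the nilcompletion, to be the hardest part.
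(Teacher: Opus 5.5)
\textbf{There is a genuine gap: the surjectivity step does not work as proposed, and the key ingredient (the ambidexterity idempotent) is missing.}

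What is correct: your reduction to the single step $\tau_{>n+1}\Loop^\infty E\to\tau_{>n}\Loop^\infty E$ matches the paper. Your observation that $\mathrm{cot}(B_n)\simeq 0$ forces $\widehat{R}_{n+1}\simeq\widehat{R}_n$ is right. So is the deduction that injectivity of $T(n)_*R_{n+1}\to T(n)_*\widehat{R}_{n+1}$ gives injectivity of $T(n)_*R_{n+1}\to T(n)_*R_n$.

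\textbf{Surjectivity.} The edge map of a bar spectral sequence for $R_{n+1}\otimes_{B_n}\mathbb{S}_{T(n)}$ only hits the filtration-zero part of the abutment. It is surjective precisely when the higher filtration vanishes, which is what you are trying to prove. Abstractly you only know $\mathbb{S}_{T(n)}\otimes_{B_n}\mathbb{S}_{T(n)}\simeq\mathbb{S}_{T(n)}$, i.e.\ that $-\otimes_{B_n}\mathbb{S}_{T(n)}$ is a smashing localization of $B_n$-modules. The unit of a smashing localization need not be surjective on homology (compare $\mathbb{Z}\to\mathbb{Q}$), even with your injectivity in hand.

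\textbf{The missing idea} is \Cref{prop:ambidexterityidempotent}. Because $t_!\simeq t_*$ for $t\colon K(A,n+1)\to *$, this localization is \emph{split}: there is an idempotent $e\in\pi_0 B_n$ with $B_n[e^{-1}]\simeq\mathbb{S}_{T(n)}$. Write $f\colon B_n\to R_{n+1}$ for the ring map. Then $R_n\simeq R_{n+1}[f(e)^{-1}]$ is a direct factor of $R_{n+1}$. So $R_{n+1}\to R_n$ is split surjective, and it is an equivalence if and only if the idempotent $1-f(e)$ vanishes. With this splitting your injectivity argument would close the proof.

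The paper runs essentially this argument in \Cref{prop:mapKpiSym}:
\begin{itemize}
\item The Hurewicz image of $1-e$ dies in $T(n)_0$ of the nilcompletion, since $\widehat{B}_n\simeq\mathbb{S}_{T(n)}$.
\item Hence it dies in $T(n)_0\mathrm{Sym}(\Loc_{T(n)}E)$ by \Cref{lem:nilcompleteSym}.
\item So $f$ factors through the augmentation, and $R_n\simeq R_{n+1}\otimes(\mathbb{S}_{T(n)}\otimes_{B_n}\mathbb{S}_{T(n)})\simeq R_{n+1}$.
\end{itemize}

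\textbf{Injectivity.} Your sketched route does not produce the injectivity you need either. Comparing cotangent fibers along the map $\mathrm{Sym}(\Loc_{T(n)}E)\to R_{n+1}$ (built from the counit and $\Phi_n\Loop^\infty\simeq\Loc_{T(n)}$) only identifies nilcompletions. That yields injectivity of $T(n)_*\mathrm{Sym}(\Loc_{T(n)}E)\to T(n)_*R_{n+1}$, not of $T(n)_*R_{n+1}\to T(n)_*\widehat{R}_{n+1}$.

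For the latter you need the comparison map to be an equivalence, which is a convergence statement. \Cref{thm:mainhtpytohomology} concerns highly connected maps of spaces and says nothing about an algebra map out of $\mathrm{Sym}$ that is not induced by a map of spaces. The paper instead cites Kuhn's theorem \cite[Theorem 2.10, Theorem 2.12]{kuhnAQG} for the equivalence $R_{n+1}\simeq\mathrm{Sym}(\Loc_{T(n)}E)$ of $T(n)$-local commutative rings. You should invoke it rather than try to rederive it. Injectivity for $R_n$ is neither needed nor helpful.
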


Let $A$ be a finite group. Carmeli--Schlank--Yanovski \cite[Section 4.2]{CSYcyclotomic} show that $T(n)$-local commutative rings of the kind  $L_{T(n)}\susp^{\infty}_+ K(A,n)$ admit interesting idempotents, splitting them into factors. Moreover, one of these factors is the unit $\mathbb{S}_{T(n)}$. The precise statement we need is the following:

\begin{prop}
\label{prop:ambidexterityidempotent}
Let $A$ be a finite group and $R := L_{T(n)}\susp^{\infty}_+ K(A,n)$. Then there exists an idempotent $e \in \pi_0 R$ so that the augmentation $R \to \mathbb{S}_{T(n)}$ sends $e$ to 1 and the resulting map $R[e^{-1}] \to \mathbb{S}_{T(n)}$ is an equivalence.
\end{prop}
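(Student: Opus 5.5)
The idempotent will come from higher semiadditivity of $\Sp_{T(n)}$, following Carmeli--Schlank--Yanovski. Write $B := K(A,n)$. This is a $\pi$-finite $p$-space once we reduce to $A$ a $p$-group; the prime-to-$p$ part of $A$ contributes a $T(n)$-acyclic, in fact $\mathbb{S}_{T(n)}$-trivial, factor. Since $\Sp_{T(n)}$ is $\infty$-semiadditive, $B$ is $T(n)$-ambidextrous, so the norm map $\mathrm{Nm}\colon q_! \to q_*$ for $q\colon B \to *$ is an equivalence. In particular
\[
R = L_{T(n)}\susp^\infty_+ B = q_! q^* \mathbb{S}_{T(n)} \simeq q_* q^* \mathbb{S}_{T(n)} = \mathbb{S}_{T(n)}^{B},
\]
and this identification is compatible with the ring structures up to the usual twist by the cardinality. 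The augmentation $R \to \mathbb{S}_{T(n)}$ is induced by $B \to *$. Under the identification it corresponds to integration $\int_B\colon \mathbb{S}^{B} \to \mathbb{S}$, i.e. the transfer along $q$.

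For the candidate idempotent, consider the composite $\mathbb{S}_{T(n)} \to R \to \mathbb{S}_{T(n)}$. The first map is given by the ambidextrous transfer / inclusion of the basepoint, normalized by $|B|^{-1}$. The key input from \cite{CSYcyclotomic}, together with \cite[Theorem E]{CSY22}, is that the cardinality $|K(A,n)| \in \pi_0\mathbb{S}_{T(n)}$ is invertible. The reason is that $|\Loop K(A,n)| = |K(A,n-1)|$ iterates down, and $|K(A,n)|$ is invertible exactly because $K(A,n+1)$ is $T(n)$-acyclic, which \Cref{prop:T(n)localEMspaces} already uses. Given invertibility, I would set
\[
e := |B|^{-1}\cdot \mathrm{tr}(1),
\]
where $\mathrm{tr}$ is the transfer $\pi_0\mathbb{S}_{T(n)} \to \pi_0 R$ along $q$. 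Equivalently, $e$ is the image under the norm equivalence of the function $\mathbb{S}^B$ supported at the "fundamental class". Concretely, by the projection formula, $e$ equals $|B|^{-1}$ times the composite $R \xrightarrow{\text{diag}} R\otimes R \to R$ evaluated after the counit; this is the element CSY call the ambidexterity idempotent.

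The verifications proceed in order. First, $e^2 = e$. By the projection formula, $\mathrm{tr}(1)\cdot \mathrm{tr}(1) = \mathrm{tr}(q^*q_!\mathrm{tr}(1)) = |B|\cdot \mathrm{tr}(1)$, using that $\mathrm{tr}(1)$ is pulled back from the point after augmentation; the key identity is $\varepsilon(\mathrm{tr}(1)) = |B|$. Dividing by $|B|^2$ gives $e^2 = e$. Second, $\varepsilon(e) = |B|^{-1}|B| = 1$. Third, $R[e^{-1}] = eR \to \mathbb{S}_{T(n)}$ is an equivalence. The map $\mathbb{S}_{T(n)} \to R$ given by multiplication by $e$ is a section of $\varepsilon$ restricted to $eR$. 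So I need $eR \simeq \mathbb{S}_{T(n)}$, i.e. that $e\cdot x = \varepsilon(x) e$ for all $x$. This follows from the projection formula $x\cdot\mathrm{tr}(1) = \mathrm{tr}(q^*\dots)$ applied to a module-level statement: multiplication by $\mathrm{tr}(1)$ on $R$ factors as $R \xrightarrow{\varepsilon} \mathbb{S}_{T(n)} \xrightarrow{\mathrm{tr}} R$. That factorization is the ambidexterity (Beck--Chevalley) square for the diagonal of $B$, using that $B$ is connected (via $\Loop B$).

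The main obstacle is this last factorization together with the invertibility of $|B|$. The plan is to cite CSY for the invertibility of $|K(A,n)|$ at height $n$ (their "semisimplicity / cardinality" results), and to derive the factorization from the fact that, for connected $B$, the transfer composite $\mathbb{S}\to \susp^\infty_+B\to \mathbb{S}$ along a point inclusion is the integral over $\Loop B$.
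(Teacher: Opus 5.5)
Your route differs from the paper's and can be completed, but the step you flag as the main obstacle is argued incorrectly.

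\textbf{How the two proofs compare.} The paper never writes $e$ down. Acyclicity of $K(A,n+1)$ makes $t^*$ fully faithful, so $\mathrm{LMod}_R(\Sp_{T(n)})\simeq\mathrm{Fun}(K(A,n+1),\Sp_{T(n)})$ is a recollement. The equivalence $t_!\simeq t_*$ then splits it, and $e$ is the projector onto the $\Sp_{T(n)}$ factor.

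You instead write down the higher analogue of the averaging idempotent $\frac{1}{|G|}\sum_g g$ in $\mathbb{Q}[G]$. Namely, $e=|B|^{-1}\mathrm{tr}(1)$ with $B=K(A,n)$ and $\mathrm{tr}=q^!$. This works once you know $\varepsilon\circ\mathrm{tr}=|B|$ and
\[
\mu\circ(\mathrm{id}\otimes\mathrm{tr})\simeq\mathrm{tr}\circ\varepsilon\colon R\to R
\]
as maps of spectra; an elementwise statement on $\pi_0$ would not suffice. Given these, multiplication by $e$ is $|B|^{-1}\mathrm{tr}\circ\varepsilon$. This idempotent splits through $\mathbb{S}_{T(n)}$ because $\varepsilon\circ|B|^{-1}\mathrm{tr}=1$, which gives $e^2=e$, $\varepsilon(e)=1$ and $R[e^{-1}]=eR\simeq\mathbb{S}_{T(n)}$.

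The invertibility of $|B|$ is fine, and it is essentially the same input the paper uses. Take $X=K(A,n+1)$, $a\colon *\to X$ and $q\colon X\to *$. Then $a^!q^!=\mathrm{id}$, and $a^!a_*=|\Loop X|$ by base change. Acyclicity makes $q_*$ inverse to $a_*$, so $q^!=a_*q_*q^!=a_*|X|$, and hence $|\Loop X|\cdot|X|=1$.

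\textbf{The gap: how you justify the displayed identity.} The product on $R$ is the group-ring product induced by $m\colon B\times B\to B$. The norm equivalence $R\simeq\mathbb{S}_{T(n)}^{B}$ does \emph{not} match it with the pointwise product, up to a twist or otherwise. Under the pointwise product, $\mathrm{tr}(1)$ is the constant function $1$, i.e.\ the unit, so your $e$ would be the scalar $|B|^{-1}$, which is not idempotent. For the same reason $\mathrm{tr}(1)$ is neither the basepoint class nor a function supported at a point.

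Nor can the identity come from the diagonal of $B$ or from $a^!a_*=|\Loop B|$. The latter is a scalar identity and says nothing about how $\mathrm{tr}(1)$ multiplies against other classes.

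\textbf{The missing idea: translation invariance of the transfer.} Write $\mathrm{id}\otimes\mathrm{tr}=\mathrm{pr}_1^!$ for $\mathrm{pr}_1\colon B\times B\to B$. Let $\sigma(b,c)=(b,bc)$ be the shear equivalence, so $\mathrm{pr}_1\sigma=\mathrm{pr}_1$ and $\mathrm{pr}_2\sigma=m$. Then
\[
\mu\circ(\mathrm{id}\otimes\mathrm{tr})=m_*\mathrm{pr}_1^!=\mathrm{pr}_{2*}\sigma_*\sigma^!\mathrm{pr}_1^!=\mathrm{pr}_{2*}\mathrm{pr}_1^!.
\]
Beck--Chevalley for the pullback square $B\times B=B\times_{*}B$ gives $\mathrm{pr}_{2*}\mathrm{pr}_1^!=q^!q_*=\mathrm{tr}\circ\varepsilon$. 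Connectivity of $B$ plays no role.

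With this in place your argument is a valid alternative. It gives an explicit formula for $e$ and applies verbatim to any $\pi$-finite $\mathbb{E}_\infty$-group with invertible cardinality. The paper's recollement argument is more conceptual and avoids manipulating transfers altogether.
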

\begin{proof}
Start with the standard identification of stable $\infty$-categories
\[
\mathrm{LMod}_R(\Sp_{T(n)}) \simeq \mathrm{Fun}(K(A,n+1), \Sp_{T(n)}).
\]
Write $t\colon K(A,n+1) \to *$ for the unique such map. Then there is a triple of adjoint functors
\[
\begin{tikzcd}
\mathrm{Fun}(K(A,n+1), \Sp_{T(n)}) \ar[shift left = 2ex]{r}{t_!} \ar[shift right = 2ex]{r}[swap]{t_*} & \Sp_{T(n)}, \ar[l, "{t^*}"{description, fill=white, inner sep=1pt}]
\end{tikzcd}
\]
with left adjoints written above the corresponding right adjoints. The functor $t_!$ computes the colimit over $K(A,n+1)$, the functor $t_*$ computes the limit. We claim that $t^*$ is fully faithful. Indeed, for any $X \in \Sp_{T(n)}$ the counit map $t_!t^*(X) \to X$ is an equivalence, as can be seen by identifying this map with $X \otimes K(A,n+1)_+ \to X$ and observing that the space $K(A,n+1)$ is $T(n)$-acyclic by \Cref{prop:T(n)localEMspaces}. It now follows that the three adjoints above form part of a recollement (in the sense of \cite[Section A.8.1]{HA}), assembling the $\infty$-category $\mathrm{Fun}(K(A,n+1), \Sp_{T(n)})$ out of $\Sp_{T(n)}$ and its right orthogonal complement $\mathrm{ker}(t_*)$. Ambidexterity in the $T(n)$-local category provides an equivalence $t_! \cong t_*$, forcing this recollement to be \emph{split} (see \cite[Proposition 4.1.4]{CSYheight}): that is, we find an equivalence of $\infty$-categories
\[
\mathrm{Fun}(K(A,n+1), \Sp_{T(n)}) \simeq \Sp_{T(n)} \times \mathrm{ker}(t_*).
\]
This splitting of the category induces a splitting of the unit object of $\mathrm{Fun}(K(A,n+1), \Sp_{T(n)})$. The endomorphism ring of this unit is precisely $R$; we take $e$ to be the idempotent corresponding to the factor $\Sp_{T(n)}$. The desired properties are clear.
\end{proof}

The following is the crux of our proof of \Cref{prop:Tnconnectivecover}:

\begin{prop}
\label{prop:mapKpiSym}
Let $A$ be a torsion abelian group, write $R := \Loc_{T(n)} \susp^\infty_+ K(A,n)$, and let $X$ be a $T(n)$-local spectrum. Then the mapping space $\mathrm{Map}_{\mathrm{CAlg}^{\mathrm{aug}}(\Sp_{T(n)})}(R, \mathrm{Sym}(X))$ is contractible. In particular, every map $R \to \mathrm{Sym}(X)$ factors canonically over the augmentation $R \to \mathbb{S}_{T(n)}$.
\end{prop}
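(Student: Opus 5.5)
We will show that every augmented map $\varphi\colon R \to \mathrm{Sym}(X)$ factors uniquely through the augmentation $R \to \mathbb{S}_{T(n)}$. The proof combines three earlier ingredients: the nilcompletion of $R$ is trivial (\Cref{lem:nilcompleteEM}), nilcompletion of $\mathrm{Sym}(X)$ is injective on $T(n)$-homology (\Cref{lem:nilcompleteSym}), and, for finite $A$, $R$ splits off the unit via an idempotent (\Cref{prop:ambidexterityidempotent}).

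We first reduce to finite $A$. Write $A = \colim A_\alpha$ as a filtered colimit of finite subgroups. Then $K(A,n) = \colim K(A_\alpha,n)$, and $R \simeq \colim_\alpha R_\alpha$ in $\mathrm{CAlg}^{\mathrm{aug}}(\Sp_{T(n)})$, where $R_\alpha = L_{T(n)}\susp^\infty_+K(A_\alpha,n)$; this holds because $L_{T(n)}\susp^\infty_+$ is symmetric monoidal and preserves colimits, and filtered colimits of commutative algebras are computed underlying. Hence $\mathrm{Map}(R,\mathrm{Sym}(X)) \simeq \lim_\alpha \mathrm{Map}(R_\alpha, \mathrm{Sym}(X))$, and it suffices to treat finite $A$.

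For finite $A$, the mapping space $\mathrm{Map}_{\mathrm{CAlg}^{\mathrm{aug}}}(\mathbb{S}_{T(n)}, \mathrm{Sym}(X))$ is contractible, since the unit is initial. We therefore aim to show that precomposition with the augmentation $\varepsilon\colon R\to\mathbb{S}_{T(n)}$ induces an equivalence $\mathrm{Map}(\mathbb{S}_{T(n)},\mathrm{Sym}(X)) \to \mathrm{Map}(R,\mathrm{Sym}(X))$. Since $\varepsilon$ exhibits $\mathbb{S}_{T(n)} \simeq R[e^{-1}]$ by \Cref{prop:ambidexterityidempotent}, this is a localization of commutative rings. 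So $\mathrm{Map}(\mathbb{S}_{T(n)}, S) \to \mathrm{Map}(R,S)$ is the inclusion of those components of maps $\varphi$ with $\varphi(e)$ invertible in $\pi_0 S$. This holds in the augmented setting as well, since augmented mapping spaces are fibers of the unaugmented ones over the respective maps to $\mathbb{S}_{T(n)}$. It therefore remains to show that for every augmented $\varphi\colon R \to \mathrm{Sym}(X)$, the idempotent $\varphi(e)$ equals $1$.

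The image $\varphi(e)$ is an idempotent in $\pi_0\mathrm{Sym}(X)$. Consider $1-\varphi(e)$, also an idempotent, and map it into $\widehat{\mathrm{Sym}}(X)$. By naturality of the nilcompletion tower, the composite $R \to \mathrm{Sym}(X) \to \widehat{\mathrm{Sym}}(X)$ factors through the nilcompletion of $R$, which is $\mathbb{S}_{T(n)}$ by \Cref{lem:nilcompleteEM}. Since $\varepsilon(e) = 1$, the image of $1-\varphi(e)$ in $\pi_0\widehat{\mathrm{Sym}}(X)$ vanishes.

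This is the main obstacle: lifting vanishing from the nilcompletion back to $\mathrm{Sym}(X)$. \Cref{lem:nilcompleteSym} gives injectivity only on $T(n)_*$, not on $\pi_0$, so we argue with the idempotent directly. Let $f = 1-\varphi(e)$. Then $\mathrm{Sym}(X)$ splits as $\mathrm{Sym}(X)[f^{-1}] \times \mathrm{Sym}(X)[(1-f)^{-1}]$, and correspondingly $T(n)_*\mathrm{Sym}(X)$ splits, with the first factor equal to $f\cdot T(n)_*\mathrm{Sym}(X)$. Multiplication by $f$ is compatible with the ring map to the nilcompletion, where $f$ maps to $0$. So the image of $f\cdot T(n)_*\mathrm{Sym}(X)$ in $T(n)_*\widehat{\mathrm{Sym}}(X)$ is zero, and by injectivity $f\cdot T(n)_*\mathrm{Sym}(X)=0$. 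Thus $T(n)_*\mathrm{Sym}(X)[f^{-1}] = 0$. A $T(n)$-local spectrum with vanishing $T(n)$-homology is zero, so $\mathrm{Sym}(X)[f^{-1}] \simeq 0$, which means $f = 0$ and $\varphi(e)=1$.

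Finally, passing back through the filtered limit over $\alpha$, a limit of contractible spaces is contractible, which gives the proposition.
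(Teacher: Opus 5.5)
Your proof is correct and follows the paper's argument. You reduce to finite $A$, use that $R$ has trivial nilcompletion together with injectivity of $T(n)_*\mathrm{Sym}(X)\to T(n)_*\widehat{\mathrm{Sym}}(X)$ to force the idempotent $1-\varphi(e)$ to vanish, and conclude via $R[e^{-1}]\simeq\mathbb{S}_{T(n)}$. The differences are cosmetic: you use the $\pi_0\mathrm{Sym}(X)$-module structure on $T(n)_*\mathrm{Sym}(X)$ instead of making $T(n)$ a ring spectrum and taking Hurewicz images, and you spell out the universal property of localization to obtain contractibility of the mapping space rather than just factorization.
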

\begin{proof}
Writing $A$ as a filtered colimit of finite groups, we reduce to the case where $A$ is finite. Consider a map of augmented commutative algebras $f\colon R \to \mathrm{Sym}(X)$. Together with the nilcompletions of these rings, this yields the following commutative square:
\[
\begin{tikzcd}
R \ar{r}{f}\ar{d} & \mathrm{Sym}(X) \ar{d} \\
\mathbb{S}_{T(n)} \ar{r} & \widehat{\mathrm{Sym}}(X).
\end{tikzcd} 
\]
Here we have applied \Cref{lem:nilcompleteEM} to identify the nilcompletion of $R$. Without loss of generality we can arrange that the telescope $T(n)$ is a ring spectrum (e.g. by inverting a $v_n$ self-map on the endomorphism spectrum of a type $n$ complex). Write $\overline{e} \in T(n)_0R$ for the image of the idempotent $e \in \pi_0 R$ under the Hurewicz map. It follows from \Cref{prop:ambidexterityidempotent} and the square above that the image of $1-\overline{e}$ in $T(n)_0\widehat{\mathrm{Sym}}(X)$ is 0. By \Cref{lem:nilcompleteSym}, the image of $1-\overline{e}$ in $T(n)_0\mathrm{Sym}(X)$ is therefore 0 as well. It follows that the localization $\mathrm{Sym}(X)[(1-f(e))^{-1}]$ vanishes. Since $1-f(e)$ is idempotent, we must have $1-f(e) = 0$. We conclude that $f$ factors over the localization $R \to R[e^{-1}]$. The proof is completed by applying \Cref{prop:ambidexterityidempotent}.
\end{proof}


\begin{proof}[Proof of \Cref{prop:Tnconnectivecover}]
We begin with the critical case $m = n+1$. Our aim is to show that for a spectrum $E$ with vanishing $v_i$-periodic homotopy groups for $i < n$, the map $\tau_{>n+1} \Loop^\infty E \to \tau_{>n} \Loop^\infty E$ is a $T(n)$-equivalence. Consider the principal fiber sequence
\[
K(\pi_{n+1} E, n) \to \tau_{>n+1} \Loop^\infty E \to \tau_{>n} \Loop^\infty E.
\]
Set $R := \Loc_{T(n)} \susp^\infty_+ K(\pi_{n+1} E, n)$. We note that $\pi_{n+1} E$ is torsion, since we have assumed the rational homotopy groups of $E$ to vanish. The two maps above are infinite loop maps, so in particular we obtain a morphism of commutative $T(n)$-local rings $R \to \Loc_{T(n)}\susp^\infty_+ \tau_{>n+1} \Loop^\infty E$. Furthermore, the fiber sequence gives an equivalence
\[
\mathbb{S}_{T(n)} \otimes_{R} \Loc_{T(n)}\susp^\infty_+ \tau_{>n+1} \Loop^\infty E \xrightarrow{\cong}  \Loc_{T(n)} \susp^\infty_+ \tau_{>n} \Loop^\infty E.
\]
A result of Kuhn \cite[Theorem 2.10, Theorem 2.12]{kuhnAQG} implies that there is an equivalence of $T(n)$-local commutative rings
\[
\Loc_{T(n)}\susp^\infty_+ \tau_{>n+1} \Loop^\infty E \cong \mathrm{Sym}(\Loc_{T(n)} E).
\]
\Cref{prop:mapKpiSym} therefore shows that the ring map $R \to \Loc_{T(n)}\susp^\infty_+ \tau_{>n+1} \Loop^\infty E$ factors over the augmentation and we obtain equivalences
\[
\mathbb{S}_{T(n)} \otimes_{R} \Loc_{T(n)}\susp^\infty_+ \tau_{>n+1} \Loop^\infty E \cong (\mathbb{S}_{T(n)} \otimes_{R} \mathbb{S}_{T(n)}) \otimes \Loc_{T(n)}\susp^\infty_+ \tau_{>n+1} \Loop^\infty E \cong \Loc_{T(n)}\susp^\infty_+ \tau_{>n+1} \Loop^\infty E.
\]
Here we have used $\mathbb{S}_{T(n)} \otimes_R \mathbb{S}_{T(n)} \cong \mathbb{S}_{T(n)}$, which we already showed in the proof of \Cref{lem:nilcompleteEM}. It follows that the map $\Loc_{T(n)}\susp^\infty_+ \tau_{>n+1} \Loop^\infty E \to \Loc_{T(n)}\susp^\infty_+ \tau_{>n} \Loop^\infty E$ is an equivalence, completing the proof of the case $m = n+1$.

The cases $m > n+1$ can now be handled by an easy inductive argument. Indeed, consider the map $\tau_{>m} \Loop^\infty E \to \tau_{>m-1} \Loop^\infty E$. Its fiber is $K(\pi_m E, m-1)$. Since $m - 1 > n$ and $\pi_m E$ is torsion, this space is $T(n)$-acyclic by \Cref{prop:T(n)localEMspaces} and the map is a $T(n)$-equivalence.
\end{proof}

\subsection{On the $T(n)$-localization of infinite loop spaces}
\label{subsec:Tnlocalinfloop}

In this section we prove \Cref{thm:maininfloop} and \Cref{cor:T(n)localinfiniteloop}.

\begin{proof}[Proof of \Cref{thm:maininfloop}]

Let $E$ and $F$ be $p$-local spectra with $\Loc_{n-1}^f E \simeq 0 \simeq \Loc_{n-1}^f F$ and let $f\colon \Loop^\infty E \to \Loop^\infty F$ be a map of pointed spaces (but not necessarily a loop map). If $f$ is a $T(n)$-equivalence, then \Cref{thm:Hmainhomologytohtpy} implies that $\tau_{\leq n} f$ is an equivalence and $f$ is a $v_n$-periodic equivalence. If $f$ is moreover a parametric $T(n)$-equivalence, then \Cref{thm:mainhomologytohtpy} implies that also $\tau_{\leq n + 1} f$ is an equivalence.

Conversely, assume that $\tau_{\leq n} f$ is an equivalence and $f$ is a $v_n$-periodic equivalence at any basepoint. Consider the diagram of fiber sequences
\[
\begin{tikzcd}
\tau_{>n} \Loop^\infty E \ar{r}\ar{d}{\tau_{>n} f} & \Loop^\infty E \ar{r}\ar{d}{f} & \tau_{\leq n}\Loop^\infty E \ar{d}{\tau_{\leq n} f}[swap]{\simeq} \\
\tau_{>n} \Loop^\infty F \ar{r} & \Loop^\infty F \ar{r} & \tau_{\leq n}\Loop^\infty F.
\end{tikzcd}
\]
To show that $f$ is a $T(n)$-equivalence, it suffices to show that the map of fibers $\tau_{>n}f$ is a $T(n)$-equivalence. Applying \Cref{prop:Tnconnectivecover}, it will in fact suffice to show that $\tau_{>n+1} f$ is a $T(n)$-equivalence. Note that this is an $n$-connected map that is still a $v_n$-periodic equivalence. Moreover, the $v_i$-periodic homotopy groups of $\Loop^\infty E$ and $\Loop^\infty F$ vanish for $i < n$ by hypothesis. Hence \Cref{thm:Hmainhtpytohomology} applies and the desired conclusion follows.

Finally, suppose that $\tau_{\leq n+1} f$ is an equivalence and $f$ is a $v_n$-periodic equivalence at any basepoint. To show that $f$ is a parametric $T(n)$-equivalence, it suffices to work one component at a time and reduce to the case where $E$ and $F$ are connected. Then it suffices to show that $\Loop f$ is a $T(n)$-equivalence; since $\tau_{\leq n} \Loop f = \Loop \tau_{\leq n+1} f$, this follows from our previous argument.
\end{proof}

\begin{proof}[Proof of \Cref{cor:T(n)localinfiniteloop}]
Let $E$ be a $p$-local spectrum with $\Loc_{n-1}^f E \cong 0$ and define an infinite loop space $X$ by the pullback square
\[
\begin{tikzcd}
X \ar{r}\ar{d} & \Loop^\infty \Loc_n^f E \ar{d} \\
\tau_{\leq n}\Loop^\infty E \ar{r} & \tau_{\leq n}\Loop^\infty \Loc_n^f E.
\end{tikzcd}
\]
Then there is an evident infinite loop map $f\colon \Loop^\infty E \to X$. The corollary follows if we prove that $X$ is an $\Loc_n^f$-local space and that $f$ is an $\Loc_n^f$-equivalence, i.e., a $T(i)$-equivalence for every $i \leq n$. First observe that $\Loop^\infty \Loc_n^f E$ is $\Loc_n^f$-local. Also, the two spaces on the bottom row are $\Loc_n^f$-local by \cite[Theorem 7.4]{Bou01}. (Alternatively, this can also be deduced from \Cref{prop:T(n)localEMspaces}.) It follows that $X$ is indeed $\Loc_n^f$-local.

We will show that $f$ is a $T(i)$-equivalence for each $i \leq n$ by checking that it satisfies condition (2) of \Cref{thm:maininfloop}. First note that $f$ induces an equivalence on $n$-truncations by construction. To see that $f$ is a $v_i$-periodic equivalence for $i \leq n$, consider the maps $\Loop^\infty E \to X \to \Loop^\infty \Loc_n^f E$. The composite is a $v_i$-periodic equivalence by construction, whereas the second map is a $v_i$-periodic equivalence by inspection of the pullback square defining $X$. Indeed, the bottom map is clearly a rational equivalence and also a $v_i$-periodic equivalence for $i \geq 1$ simply because both objects are truncated, hence $v_i$-periodically trivial. By two-out-of-three we conclude that the initial map $\Loop^\infty E \to X$ must be a $v_i$-periodic equivalence as well, for every $i \leq n$, completing the argument. 
\end{proof}

\subsection{Generalizations}
\label{subsec:generalizations}

In the introduction we raised the following question. Let $f\colon X \to Y$ be a map of spaces with vanishing $v_i$-periodic homotopy groups for $i < n$. If $\tau_{\leq n+1} f$ is an equivalence and $f$ is a $v_n$-periodic equivalence, then does it follow that $f$ is a parametric $T(n)$-equivalence? In this section we indicate how some of the techniques we have used for infinite loop spaces might be adapted to approach this.

In fact, the proof of \Cref{thm:maininfloop} would go through essentially unchanged if we could show that for a pointed space $X$ with vanishing $v_i$-periodic homotopy groups for $i < n$, the map $\tau_{>n+1} \Loop X \to \tau_{>n} \Loop X$ is a $T(n)$-equivalence (cf. \Cref{prop:Tnconnectivecover}). Consider the fiber sequence of loop maps
\[
K(\pi_{n+2} X, n) \to \tau_{>n+1} \Loop X \to \tau_{>n} \Loop X.
\]
This sequence in particular yields a map of augmented $T(n)$-local $\mathbb{E}_1$-rings 
\[
R := \Loc_{T(n)} \susp^\infty_+ K(\pi_{n+2} X, n) \xrightarrow{f} \Loc_{T(n)} \susp^\infty_+ \tau_{>n+1} \Loop X
\]
and an equivalence
\[
\mathbb{S}_{T(n)} \otimes_R \Loc_{T(n)} \susp^\infty_+ \tau_{>n+1} \Loop X \xrightarrow{\simeq} \Loc_{T(n)} \susp^\infty_+ \tau_{>n} \Loop X.
\]
If $f$ factors over the augmentation $R \to \Loc_{T(n)}\mathbb{S}$, then the argument in the proof of \Cref{prop:Tnconnectivecover} goes through to show that $\tau_{>n+1} \Loop X \to \tau_{>n} \Loop X$ is a $T(n)$-equivalence.

The $\infty$-category $\mathrm{Alg}^{\mathrm{aug}}(\Sp_{T(n)})$ of augmented $\mathbb{E}_1$-algebras also admits a nilcompletion tower (see \cite[Section 9]{heutskoszul}): for any augmented $\mathbb{E}_1$-ring $R$ there is a tower $\{t_k X\}_{k \geq 1}$ under $R$ of which the associated graded is the free $\mathbb{E}_1$-algebra on the $\mathbb{E}_1$-cotangent fiber (or $\mathbb{E}_1$-indecomposables) $\mathrm{cot}_{\mathbb{E}_1}(R)$ of $R$. According to \cite[Remark 2.14]{heutsland}, this object agrees up to a shift with the bar construction: $\mathrm{cot}_{\mathbb{E}_1}(R) \simeq \susp^{-1} \mathrm{Bar}(R)$. In particular, we obtain the following analog of \Cref{lem:nilcompleteEM}:

\begin{lem}
    Let $A$ be a torsion abelian group and consider the augmented $T(n)$-local $\mathbb{E}_1$-algebra $R := \Loc_{T(n)}\susp^\infty_+ K(A,n)$. Then the augmentation $R \to \mathbb{S}_{T(n)}$ exhibits the unit as the nilcompletion (in the sense of $\mathbb{E}_1$-algebras) of $R$. 
\end{lem}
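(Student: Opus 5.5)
The plan is to mirror the proof of \Cref{lem:nilcompleteEM}, replacing the commutative cotangent fiber by its $\mathbb{E}_1$ analogue. The nilcompletion tower $\{t_k R\}$ in $\mathrm{Alg}^{\mathrm{aug}}(\Sp_{T(n)})$ has layers given by the homogeneous pieces of the free augmented $\mathbb{E}_1$-algebra on $\mathrm{cot}_{\mathbb{E}_1}(R)$, i.e. $\mathrm{fib}(t_k R \to t_{k-1} R) \simeq \mathrm{cot}_{\mathbb{E}_1}(R)^{\otimes k}$, and its bottom stage is $t_1 R = \mathrm{triv}(\mathrm{cot}_{\mathbb{E}_1}(R))$. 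This is the $\mathbb{E}_1$-version of \Cref{prop:nilcompletion}(1), taken from \cite[Section 9]{heutskoszul}. Granting that $\mathrm{cot}_{\mathbb{E}_1}(R) \simeq 0$, we get $t_1 R \simeq \mathbb{S}_{T(n)}$, and by induction on $k$ each $t_k R \to t_{k-1} R$ is an equivalence, since its fiber $0^{\otimes k}$ vanishes. Hence $\lim_k t_k R \simeq \mathbb{S}_{T(n)}$. The map $R \to \lim_k t_k R$ is then a map of augmented algebras to the unit, which is a terminal (zero) object of $\mathrm{Alg}^{\mathrm{aug}}(\Sp_{T(n)})$, so it must be the augmentation.

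So the real content is the vanishing of $\mathrm{cot}_{\mathbb{E}_1}(R)$. By \cite[Remark 2.14]{heutsland} (or \cite[Theorem 1.3]{basterramandell}) we have $\mathrm{cot}_{\mathbb{E}_1}(R) \simeq \susp^{-1}\mathrm{Bar}(R)$. The bar construction of the augmented algebra $R$ is the relative tensor product $\mathbb{S}_{T(n)} \otimes_R \mathbb{S}_{T(n)}$, computed in $\Sp_{T(n)}$. Since $\Loc_{T(n)}\susp^\infty_+$ is symmetric monoidal and preserves colimits, it carries the simplicial bar construction of the group $K(A,n)$ to that of $R$. Therefore
\[
\mathrm{Bar}(R) \simeq \Loc_{T(n)}\susp^\infty_+ BK(A,n) \simeq \Loc_{T(n)}\susp^\infty_+ K(A,n+1),
\]
understood as an augmented object, i.e. up to splitting off the basepoint copy of $\mathbb{S}_{T(n)}$. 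Since $A$ is torsion, \Cref{prop:T(n)localEMspaces}(1) says $K(A,n+1)$ is $T(n)$-acyclic. So the reduced bar construction vanishes, and with it $\mathrm{cot}_{\mathbb{E}_1}(R)$. This is precisely the observation already made in \Cref{rem:E1indec}.

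The main point requiring care is the bookkeeping for the $\mathbb{E}_1$ nilcompletion tower. One must confirm that the tower of \cite{heutskoszul} exists in the $T(n)$-local presentably monoidal setting with the stated layers, and that ``bar construction'' means the reduced version, so that the acyclicity of $K(A,n+1)$ really kills the cotangent fiber rather than leaving a copy of the unit. Both are formal once the statements are transcribed from the commutative case. The remainder of the argument is the same one-line induction as in \Cref{lem:nilcompleteEM}.
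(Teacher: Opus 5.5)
Your proposal is correct and follows the same route as the paper. The paper deduces the lemma from the identification $\mathrm{cot}_{\mathbb{E}_1}(R) \simeq \Sigma^{-1}\mathrm{Bar}(R)$ together with the computation $\mathbb{S}_{T(n)} \otimes_R \mathbb{S}_{T(n)} \simeq \Loc_{T(n)}\susp^\infty_+ K(A,n+1) \simeq \mathbb{S}_{T(n)}$ from the proof of \Cref{lem:nilcompleteEM} (cf.\ \Cref{rem:E1indec}); the vanishing $\mathbb{E}_1$-cotangent fiber then makes the nilcompletion tower constant at the unit, exactly as you argue.
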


To imitate the argument we gave in the case of infinite loop spaces, we would now need to know that the $T(n)$-homology of $\susp^{\infty}_+\tau_{>n+1} \Loop X$ injects into the $T(n)$-homology of its nilcompletion. For an infinite loop space $X$, we identified the $T(n)$-localization of this suspension spectrum with a free commutative algebra. In the case of a general loop space the situation is not so straightforward, but a useful observation is that the result only depends on the $v_n$-periodic homotopy type of $X$. In fact, in \Cref{prop:suspspectrumunivenv} we will identify $\susp^{\infty}_+\tau_{>n+1} \Loop X$ with the universal enveloping algebra of a spectral Lie algebra naturally associated with $X$.

In the following we use that the values of the Bousfield--Kuhn functor $\Phi_n(X)$ naturally come equipped with the structure of a spectral Lie algebra \cite{heutslie}. Moreover, the stabilization of the $\infty$-category of spectral Lie algebras yields a left adjoint functor that we denote
\[
\mathrm{CE}\colon \mathrm{Lie}(\Sp_{T(n)}) \to \Sp_{T(n)},
\]
since it is a version of the Chevalley--Eilenberg complex appropriate to this context. We write $\mathrm{CE}_+$ for the functor $\mathbb{S}_{T(n)} \oplus \mathrm{CE}$. The relevance of this functor is that (analogously to $\susp^\infty_+$) it admits a symmetric monoidal structure with respect to the cartesian product on $\mathrm{Lie}(\Sp_{T(n)})$ and the smash product on $\Sp_{T(n)}$. In particular, it sends the group object $\Phi_n(\Loop X)$ in $\mathrm{Lie}(\Sp_{T(n)})$ to an $\mathbb{E}_1$-algebra in $\Sp_{T(n)}$.

\begin{prop}
\label{prop:suspspectrumunivenv}
    Let $X$ be a pointed space with vanishing $v_i$-periodic homotopy groups for $i < n$. Then there is a natural equivalence of $\mathbb{E}_1$-algebras
    \[
    \Loc_{T(n)} \susp^\infty_+ \tau_{>n+1} \Loop X \simeq \mathrm{CE}_+(\Phi_n(\Loop X)).
    \]
\end{prop}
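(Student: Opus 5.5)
The plan is to reduce to the case where the space is a $v_n$-periodic model built from the Lie algebra, and then compare via Kuhn-type arguments for Lie algebras. The key input is the Heuts equivalence $\mathcal{S}_{v_n} \simeq \mathrm{Lie}(\Sp_{T(n)})$ \cite{heutslie}, which is implemented by $\Phi_n$ and has a left adjoint $\Theta$ sending a Lie algebra $L$ to a $v_n$-periodic space. For a pointed space $Y$ in $\mathcal{S}_{v_n}$ one expects $\Loc_{T(n)}\susp^\infty Y \simeq \mathrm{CE}(\Phi_n Y)$: this is the unstable analogue of item (2) of \Cref{rem:BK}, and it should follow because both sides are left adjoints $\mathcal{S}_{v_n}\simeq \mathrm{Lie}(\Sp_{T(n)}) \to \Sp_{T(n)}$ whose right adjoints agree, namely $\Omega^\infty$ composed with the periodization, matching trivial Lie algebras. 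Applying this to $Y$ and to products $Y^{\times k}$, and using the symmetric monoidality of $\mathrm{CE}_+$ and $\susp^\infty_+$, upgrades it to an equivalence of $\mathbb{E}_1$-algebras when $Y$ is a group object.

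The steps in order are as follows.

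(i) Let $Y$ be the $v_n$-periodic space associated to $\Loop X$, for instance $\Theta\Phi_n(\Loop X)$, which is a group object. There is a natural comparison map between $\tau_{>n+1}\Loop X$ and $Y$ (or a suitable connected cover of $Y$) that is a $v_n$-periodic equivalence. Since $\Loop X$ has vanishing $v_i$-periodic homotopy groups for $i<n$, the realization functor can be chosen so that $Y$ also has vanishing $v_i$-periodic homotopy for $i<n$; for example, take $\Theta$ landing in $\mathbf{P}_{\susp W_{n+1}}$-local spaces, which are $(n+1)$-connected.

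(ii) Show that this comparison map, restricted to $(n+1)$-connected covers, is a $T(n)$-equivalence. Both sides are $(n+1)$-connected, loop-type spaces with vanishing $v_i$-periodic homotopy for $i<n$. The natural tool is \Cref{thm:Hmainhtpytohomology}, but it needs $v_i$-equivalences for all $i\le n$; for $i<n$ these hold because both sides are trivial. A cleaner route is to compute the fiber $F$ of the comparison: it has all $v_i$-periodic homotopy vanishing for $i\le n$ and is $n$-connected, so \Cref{thm:Hmainhtpytohomology} applied to $F\to *$, together with a loop-space/bar-construction argument as in \Cref{lem:parametricvsordinary}, gives the $T(n)$-equivalence.

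(iii) Combine (i) and (ii) with the identification $\Loc_{T(n)}\susp^\infty_+ Y\simeq \mathrm{CE}_+(\Phi_n Y)$ and the equivalence $\Phi_n Y\simeq \Phi_n(\Loop X)$. Then check that all maps are $\mathbb{E}_1$, which holds because they are induced by loop maps and symmetric monoidal functors.

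The main obstacle is step (i) together with the identification $\Loc_{T(n)}\susp^\infty_+ Y \simeq \mathrm{CE}_+(\Phi_n Y)$. One must produce an honest space realizing $\Phi_n(\Loop X)$ whose $T(n)$-homology equals the Chevalley--Eilenberg complex. This requires controlling the unit of the Heuts adjunction homologically and not merely $v_n$-periodically, and in particular it requires the $T(n)$-acyclicity of the low-dimensional error terms. I would first try to use $\Theta$ with values in $(n+1)$-connected $\mathbf{P}_{\susp W_{n+1}}$-local spaces, and \Cref{thm:parametricallyacyclic} to see that periodic equivalences between such spaces are parametric $T(i)$-equivalences for all $i\le n$. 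Where the intermediate heights $i<n$ enter, the hypothesis on $X$ is exactly what makes this work.
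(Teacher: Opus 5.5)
Your outline has the right shape, but the step that carries the proposition is only asserted. You never say which space $Y$ realizes $\Phi_n(\Loop X)$, or what the ``natural comparison map'' to $\tau_{>n+1}\Loop X$ is, and you flag this yourself as the main obstacle. The repair you suggest does not work as stated:
\begin{itemize}
\item $\mathbf{P}_{\susp W_{n+1}}$-local spaces are not $(n+1)$-connected. Every $(n+1)$-truncated space is local; what is true is that the functor $\mathbf{P}_{\susp W_{n+1}}$ preserves $(n+1)$-connectivity.
\item Nullification preserves $v_i$-periodic homotopy for all $i\le n$ (\Cref{thm:Pvnperiodichtpy}), so it is not what makes the lower heights of $Y$ vanish.
\item Heuts proves that $\Phi_n$ is an equivalence onto $\mathrm{Lie}(\Sp_{T(n)})$ for a specific model, not for yours.
\end{itemize}
The same problem affects (iii). $\Loc_{T(n)}\susp^\infty$ is not defined on the abstract localization $\mathcal{S}_{v_n}$, because $T(n)$-homology is not a $v_n$-periodic invariant (think of truncated spaces). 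So ``both are left adjoints with agreeing right adjoints'' only makes sense once a model inside $\mathcal{S}_*$ is fixed, and the agreement of right adjoints is then exactly what has to be proved.

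The missing idea is to work in Heuts' model $\mathcal{M}_n^f$. Its objects are the $d$-connected $\mathbf{P}_{V_{n+1}}$-local spaces with vanishing $v_i$-periodic homotopy for $i<n$, where $V_{n+1}$ is a type $n+1$ suspension with bottom homotopy in degree $d$. On this model:
\begin{itemize}
\item $\Phi_n$ is an equivalence onto $\mathrm{Lie}(\Sp_{T(n)})$.
\item $\Loc_{T(n)}\susp^\infty$ exhibits $\Sp_{T(n)}$ as the stabilization \cite[Proposition 3.19]{heutslie}.
\item Since $\mathrm{CE}$ is by definition the stabilization of $\mathrm{Lie}(\Sp_{T(n)})$, these give $\Loc_{T(n)}\susp^\infty_+\simeq\mathrm{CE}_+\circ\Phi_n$ on $\mathcal{M}_n^f$.
\end{itemize}
Your right-adjoint heuristic can be made into a proof of the last point. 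There the right adjoint is $E\mapsto\tau_{>d}\Loop^\infty E$. Applying $\Phi_n$ to it gives a left exact functor out of a stable $\infty$-category. Such a functor factors through the stabilization $\Sp_{T(n)}$ of $\mathrm{Lie}(\Sp_{T(n)})$, so it has the form $\mathrm{triv}\circ G$, and $G\simeq\mathrm{id}$ by \Cref{rem:BK}(2).

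The realization is then simply $Y=\mathbf{P}_{V_{n+1}}\tau_{>d}\Loop X\in\mathcal{M}_n^f$. Two maps need to be $T(n)$-equivalences:
\begin{itemize}
\item $\tau_{>d}\Loop X\to Y$ is one because it is $V_{n+1}$-cellular and $V_{n+1}$ is $T(n)$-acyclic (\Cref{prop:fequivEequiv}(1)).
\item $\tau_{>d}\Loop X\to\tau_{>n+1}\Loop X$ is one by Postnikov induction using \Cref{prop:T(n)localEMspaces}.
\end{itemize}
So there are no low-dimensional error terms to control. Your step (ii) is correct and could handle both maps instead. Each is a $v_i$-periodic equivalence for all $i\le n$ between $(n+1)$-connected spaces, so \Cref{thm:Hmainhtpytohomology} applies; it gives ordinary rather than parametric $T(n)$-equivalences, which is all you need. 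However, it brings in the Bousfield-class machinery for something that is elementary here.
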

\begin{proof}
This statement is a fairly direct consequence of the work in \cite{heutslie}, but not explicitly stated there. Let us recall the relevant ingredients. Fix a finite type $n+1$ suspension $V_{n+1}$. Let $d$ be the smallest natural number so that $\pi_d V_{n+1} \neq 0$ and write $\mathcal{L}_n^f$ for the full subcategory of $\mathbf{P}_{V_{n+1}}\mathcal{S}_*$ consisting of $d$-connected spaces. (By \cite[Theorem 3.7]{heutslie}, the $\infty$-category $\mathcal{L}_n^f$ is precisely the localization of the $\infty$-category of $d$-connected spaces at the maps inducing $v_i$-periodic equivalences for $i \leq n$.) Write $\mathcal{M}_n^f$ for the full subcategory of $\mathcal{L}_n^f$ on spaces with trivial $v_i$-periodic homotopy groups for $i < n$. (According to \cite[Section 3.2]{heutslie}, the $\infty$-category $\mathcal{M}_n^f$ is equivalent to $\mathcal{S}_{v_n}$.) By \cite[Proposition 3.19]{heutslie}, the functor $\Loc_{T(n)}\susp^\infty \colon \mathcal{M}_n^f \to \Sp_{T(n)}$ exhibits $\Sp_{T(n)}$ as the stabilization of $\mathcal{M}_n^f$. \cite[Theorem 2.6]{heutslie} shows that the Bousfield--Kuhn functor gives an equivalence of $\infty$-categories $\Phi_n\colon \mathcal{M}_n^f \to \mathrm{Lie}(\Sp_{T(n)})$. The Chevalley--Eilenberg functor was defined as the stabilization of $\mathrm{Lie}(\Sp_{T(n)})$, so that we arrive at a commutative diagram
\[
\begin{tikzcd}
    \mathcal{M}_n^f \ar{r}{\Loc_{T(n)}\susp^\infty_+}\ar{d}{\Phi_n}[swap]{\simeq} & \Sp_{T(n)}. \\
    \mathrm{Lie}(\Sp_{T(n)}) \ar{ur}[swap]{\mathrm{CE}_+} &
\end{tikzcd}
\]
Note that we have added basepoints both on $\susp^\infty$ and $\mathrm{CE}$ here. Now, let $X$ be as in the statement of the proposition. Then the map $\tau_{>d} \Loop X \to \tau_{>n+1} \Loop X$ is a $T(n)$-equivalence by the same straightforward induction on the Postnikov tower used in the proof of \Cref{prop:Tnconnectivecover}. The space $\mathbf{P}_{V_{n+1}} \tau_{>d}\Loop X$ lies in the subcategory $\mathcal{M}_n^f$ (where we use \cite[Proposition 3.6]{heutslie} to see that this space is still $d$-connected). We conclude that there is a string of equivalences
\[
\Loc_{T(n)}\susp^\infty_+ \tau_{>d} \Loop X \simeq \Loc_{T(n)}\susp^\infty_+ \mathbf{P}_{V_{n+1}}\tau_{>d} \Loop X \simeq \mathrm{CE}_+(\Phi_n(\Loop X)),
\]
where we have used the commutativity of the preceding diagram for the second equivalence and \Cref{prop:fequivEequiv}(1) for the first equivalence.
\end{proof}
\begin{rem}
According to \cite[Theorem 1.8]{ACBH}, the $\mathbb{E}_1$-algebra $\mathrm{CE}_+(\Phi_n(\Loop X))$ can also be interpreted as the universal enveloping algebra of the spectral Lie algebra $\Phi_n(X)$.
\end{rem}

Putting the ingredients together, we get the following extension of \Cref{prop:Tnconnectivecover}:

\begin{prop}
\label{prop:T(n)loopspaceconnectivecover}
    Let $X$ be a pointed space with vanishing $v_i$-periodic homotopy groups for $i < n$. If the nilcompletion map (in the sense of $\mathbb{E}_1$-algebras) of $\mathrm{CE}_+(\Phi_n(\Loop X))$ induces an injection on $T(n)$-homology, then the map $\tau_{>n+1}\Loop X \to \tau_{>n}\Loop X$ is a $T(n)$-equivalence of spaces.
\end{prop}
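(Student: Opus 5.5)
We imitate the proof of \Cref{prop:Tnconnectivecover}, with augmented $\mathbb{E}_1$-algebras in place of commutative ones. Consider the fiber sequence of loop maps
\[
K(\pi_{n+2} X, n) \to \tau_{>n+1} \Loop X \to \tau_{>n} \Loop X,
\]
and put $R := \Loc_{T(n)}\susp^\infty_+ K(\pi_{n+2}X,n)$ and $S := \Loc_{T(n)}\susp^\infty_+ \tau_{>n+1}\Loop X$. The induced map $g\colon R \to S$ is a map of augmented $\mathbb{E}_1$-algebras in $\Sp_{T(n)}$, and the fiber sequence gives $\mathbb{S}_{T(n)} \otimes_R S \simeq \Loc_{T(n)}\susp^\infty_+\tau_{>n}\Loop X$.

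The first step is to check that $\pi_{n+2}X$ is torsion, so that the results above apply. Since the $v_0$-periodic homotopy groups of $X$ vanish, its rational homotopy groups vanish; for $n=0$ there is nothing to prove. The second step reduces to the case that $A := \pi_{n+2}X$ is finite, by writing $A$ as a filtered colimit of finite subgroups. This is compatible with the factorization argument: it suffices to show that each composite $\Loc_{T(n)}\susp^\infty_+K(A',n) \to S$ factors canonically through the augmentation. If instead one argues directly with idempotents, it suffices to treat finite $A'$.

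The main step is to show that $g$ factors through the augmentation $R \to \mathbb{S}_{T(n)}$. Let $e \in \pi_0R$ be the idempotent of \Cref{prop:ambidexterityidempotent}; this needs only the underlying ring structure. Choose $T(n)$ to be a ring spectrum. By naturality of nilcompletion in augmented $\mathbb{E}_1$-algebras, $g$ induces a map of nilcompletions. By the $\mathbb{E}_1$-analog of \Cref{lem:nilcompleteEM} (the preceding lemma), the nilcompletion of $R$ is $\mathbb{S}_{T(n)}$. Hence the image of $1-\overline{e}$ in $T(n)_0$ of the nilcompletion $\widehat{S}$ vanishes, since it factors through $T(n)_0\mathbb{S}_{T(n)}$, where the augmentation sends $e$ to $1$.

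By \Cref{prop:suspspectrumunivenv}, $S \simeq \mathrm{CE}_+(\Phi_n(\Loop X))$ as $\mathbb{E}_1$-algebras, naturally enough that the nilcompletion maps agree. The hypothesis then says that $T(n)_0 S \to T(n)_0 \widehat{S}$ is injective, so the image of $1-\overline{e}$ in $T(n)_0S$ vanishes. Now $g(1-e) \in \pi_0 S$ is an idempotent, so $S$ splits as a product $S[g(1-e)^{-1}] \times S[g(e)^{-1}]$. The factor $S[g(1-e)^{-1}]$ is $T(n)$-local with vanishing $T(n)$-homology, because its $T(n)$-homology is obtained from $T(n)_*S$ by inverting the image of $1-\overline e$, which is zero. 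A $T(n)$-local spectrum with zero $T(n)$-homology is zero, so $g(1-e)=0$. Hence $g$ factors through $R[e^{-1}] \simeq \mathbb{S}_{T(n)}$, compatibly with augmentations.

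The localization $R \to R[e^{-1}]$ is a smashing localization, hence an $\mathbb{E}_1$-epimorphism, so this factorization is a genuine factorization of $\mathbb{E}_1$-maps and not only of ring maps up to homotopy. Given it,
\[
\mathbb{S}_{T(n)} \otimes_R S \simeq (\mathbb{S}_{T(n)}\otimes_R \mathbb{S}_{T(n)}) \otimes S \simeq S,
\]
using $\mathbb{S}_{T(n)}\otimes_R\mathbb{S}_{T(n)} \simeq \Loc_{T(n)}\susp^\infty_+K(A,n+1)\simeq \mathbb{S}_{T(n)}$ from \Cref{prop:T(n)localEMspaces}. This identifies the map $S \to \Loc_{T(n)}\susp^\infty_+\tau_{>n}\Loop X$ with an equivalence.

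I expect the main obstacle to be this middle step. One must verify that the identification of \Cref{prop:suspspectrumunivenv} is compatible with nilcompletion towers, so that the hypothesis really applies to $S$. One must also justify the factorization through $R[e^{-1}]$ in the $\mathbb{E}_1$ rather than the commutative setting; here $e$ is central, since $R$ is in fact commutative, and this should make the factorization through $R[e^{-1}]$ go through.
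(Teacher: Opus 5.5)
Your proposal is correct and takes essentially the same route as the paper: the paper gets this proposition by rerunning the arguments of \Cref{prop:mapKpiSym} and \Cref{prop:Tnconnectivecover} with $\mathbb{E}_1$-nilcompletion, the $\mathbb{E}_1$-analogue of \Cref{lem:nilcompleteEM}, and the identification of \Cref{prop:suspspectrumunivenv}, exactly as you do. One small repair: commutativity of $R$ does not by itself make $g(1-e)$ central in $\pi_0 S$, so \emph{$S$ splits as a product of rings} needs justification. Either note that $K(\pi_{n+2}X,n)\to\tau_{>n+1}\Loop X$ is central (loop the action map of the principal fibration $\tau_{>n+2}X\to\tau_{>n+1}X$), or, more simply, split $S$ as a left module via right multiplication by $g(1-e)$: the image summand is $T(n)$-local with zero $T(n)$-homology, hence vanishes, which forces $g(1-e)=0$.
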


To conclude, let us briefly point out two examples where \Cref{prop:T(n)loopspaceconnectivecover} applies. If $X = \Loop^\infty E$ is an infinite loop space, then the $\mathbb{E}_1$-nilcompletion and $\mathbb{E}_\infty$-nilcompletion of $\Loc_{T(n)}\susp^\infty_+ \tau_{>n+1} X \simeq \mathrm{Sym}(L_{T(n)} E)$ are related as in the following diagram:
\[
\mathrm{Sym}(\Loc_{T(n)}E) \to \mathrm{Sym}(\Loc_{T(n)}E)^{\wedge}_{\mathbb{E}_1} \to \mathrm{Sym}(\Loc_{T(n)}E)^{\wedge}_{\mathbb{E}_\infty}.
\]
The first map is the $\mathbb{E}_1$-nilcompletion, the composite of the two is the $\mathbb{E}_\infty$-nilcompletion. The composite is therefore injective on $T(n)$-homology, as observed in the proof of \Cref{prop:Tnconnectivecover}, and hence the first map is injective on $T(n)$-homology as well. A second example is the case where $X$ is a pointed space for which spectral Lie algebra $\Phi_n(X)$ is free. (This happens, for example, when there exists a type $n$ complex $V_n$ with $v_n$ self-map so that a highly connected cover of $X$ is $T(n)$-equivalent to $\susp^2 V$, cf. \cite[Theorem 2.13]{heutslie}.) In that case its universal enveloping algebra is a free $T(n)$-local $\mathbb{E}_1$-algebra:
\[
\mathrm{CE}_+(\Phi_n(\Loop X)) \cong \Loc_{T(n)} \bigoplus_{k \geq 0} (\susp^{-1}\mathrm{CE}(\Phi_n(X)))^{\otimes k}.
\]
The $\mathbb{E}_1$-nilcompletion of this algebra is the natural map from direct sum to product, which is injective on $T(n)$-homology as in the proof of \Cref{lem:nilcompleteSym}.





    \printbibliography[heading=bibintoc]

\end{document}